\documentclass[12pt,reqno] {amsart}

 \usepackage{amsmath, wasysym}
 \usepackage{bbm, dsfont}
 \usepackage{amssymb}
\usepackage{graphicx}%
\usepackage{hyperref}
\usepackage{cleveref}
\usepackage{comment}
 \usepackage{mathrsfs}
 \usepackage{float}
 \usepackage{tikz, tikz-cd}
  \usetikzlibrary{arrows.meta, knots, calc}
 \usepackage{bm}
 \usepackage{mathtools} 
\usepackage{extarrows} 
\usepackage{bbm}

 \usepackage{enumerate}
  \usepackage{mathdots}
\usetikzlibrary{decorations.markings}
\usetikzlibrary{decorations.pathreplacing}

\usepackage{pgfplots}

\usepgfplotslibrary{fillbetween}

\pgfplotsset{compat=1.18}

\theoremstyle{plain}
\newtheorem{theorem}{Theorem}[section]

\newtheorem{corollary}[theorem]{Corollary}
\newtheorem{lemma}[theorem]{Lemma}
\newtheorem{proposition}[theorem]{Proposition}
\newtheorem*{ac}{Acknowledgement}

\newtheorem{remark}[theorem]{Remark}

\newtheorem{definition}[theorem]{Definition}

\newtheorem{example}[theorem]{Example}

\newcommand{\leftLineHalfLoops}[3]{%
    \vcenter{\hbox{\begin{tikzpicture}[scale=0.65]
        \draw [blue] (-1.1, 2.3)--(-1.1, -0.8);
        \begin{scope}[shift={(-1.1, 0.8)}]
            \draw [fill=white] (-0.3, -0.3) rectangle (0.3, 0.3);
            \node at (0, 0) {\tiny $#1$};  
        \end{scope}
        
        \begin{scope}[shift={(0,1.5)}]
            \draw [blue] (-0.5, 0.8)--(-0.5, 0) .. controls +(0, -0.6) and +(0,-0.6).. (0.5, 0)--(0.5, 0.8); 
            \begin{scope}[shift={(0.5, 0.3)}]
                \draw [fill=white] (-0.3, -0.3) rectangle (0.3, 0.3);
                \node at (0, 0) {\tiny $#2$};
            \end{scope}
        \end{scope}
        
        \draw [blue] (-0.5, -0.8)--(-0.5, 0) .. controls +(0, 0.6) and +(0,0.6).. (0.5, 0)--(0.5, -0.8);
        \begin{scope}[shift={(0.5, -0.3)}]
            \draw [fill=white] (-0.3, -0.3) rectangle (0.3, 0.3);
            \node at (0, 0) {\tiny $#3$};
        \end{scope}
    \end{tikzpicture}}}%
}
\tikzset{->-/.style={decoration={
markings,
mark=at position #1 with {\arrow{>}}},postaction={decorate}}}
\tikzset{-<-/.style={decoration={
markings,
mark=at position #1 with {\arrow{<}}},postaction={decorate}}}
\tikzset{
partial ellipse/.style args={#1:#2:#3}{
    insert path={+ (#1:#3) arc (#1:#2:#3)}
}
}

\newcommand{\leftLineOnlyBox}[1]{%
    \vcenter{\hbox{\begin{tikzpicture}[scale=0.65]
        \draw [blue] (-1.1, 2.3)--(-1.1, -0.8);
        \begin{scope}[shift 
={(-1.1, 0.8)}]
            \draw [fill=white] (-0.3, -0.3) rectangle (0.3, 0.3);
            \node at (0, 0) {\tiny $#1$};  
        \end{scope}
        
        \begin{scope}[shift={(0,1.5)}]
            \draw [blue] (-0.5, 0.8)--(-0.5, 0) .. controls +(0, -0.6) and +(0,-0.6).. (0.5, 0)--(0.5, 0.8); 
        \end{scope}
        
        \draw [blue] (-0.5, -0.8)--(-0.5, 0) .. controls +(0, 0.6) and +(0,0.6).. (0.5, 0)--(0.5, -0.8);
    \end{tikzpicture}}}%
}

\newcommand{\cM}{\mathcal{M}}
\newcommand{\cN}{\mathcal{N}}
\newcommand{\cL}{\mathcal{L}}
\newcommand{\cZ}{\mathcal{Z}}
\newcommand{\cJ}{\mathcal{J}}
\newcommand{\cR}{\mathcal{R}}
\newcommand{\cA}{\mathcal{A}}
\newcommand{\cP}{\mathcal{P}}
\newcommand{\cH}{\mathcal{H}}
\newcommand{\sI}{\mathscr{I}}
\newcommand{\sT}{\mathscr{T}}
\newcommand{\bC}{\mathbb{C}}
\newcommand{\bZ}{\mathbb{Z}}
\newcommand{\bE}{\mathbb{E}}
\newcommand{\bN}{\mathbb{N}}
\newcommand{\bR}{\mathbb{R}}
\newcommand{\bfX}{\mathbf{X}}
\newcommand{\bfY}{\mathbf{Y}}
\newcommand{\id}{\mathrm{Id}}
\newcommand{\Ker}{\mathrm{Ker}}
\newcommand{\Irr}{\mathrm{Irr}}
\newcommand{\K}{\mathbf{K}}
\newcommand{\oM}{\widetilde{\mathcal{M}^{\oplus}}}
\newcommand{\CS}{\mathfrak{CS}}
\newcommand{\fF}{\mathfrak{F}}
\newcommand{\Div}{\mathrm{Div}}
\newcommand{\Ran}{\mathrm{Ran }}
\newcommand{\Dom}{\mathrm{Dom }}
\newcommand{\aPhi}{\Phi^{(a)}}
\newcommand{\diag}{\mathrm{Diag }}
\newcommand{\Tr}{\mathrm{Tr}}
\newcommand{\grad}{\mathrm{grad}}
\newcommand{\ric}{\mathrm{Ric}}
\newcommand{\fix}{\mathscr{M}}
\newcommand{\nfix}{\mathscr{N}}
\newcommand{\jinsong}{\textcolor{red}}
\newcommand{\CBE}{\mathcal{CBE}}
\newcommand{\BE}{\mathcal{BE}}

\title{Intertwining Properties for Bimodule Quantum Markov Semigroups}
\author{Chunlan Jiang}
\address{Chunlan Jiang, Hebei Normal University}
\email{cljiang@hebtu.edu.cn }

\author{Jincheng Wan}
\address{Jincheng Wan, Tsinghua University, Beijing}
\email{wanjc23@mails.tsinghua.edu.cn }

\author{Jinsong Wu}
\address{Jinsong Wu, Beijing Institute of Mathematical Sciences and Applications, Beijing, 101408, China}
\email{wjs@bimsa.cn}

\date{}

\begin{document}

\begin{abstract}
In this paper, we study the Bakry-\'{E}mery estimates for GNS- and KMS-symmetric semigroups in terms of the Fourier multiplier of the gradient form and the iterated gradient form in the framework of quantum Fourier analysis. 
We also systematically investigate the intertwining properties for bimodule GNS- and KMS-symmetric quantum Markov semigroups and compare with the Bakry-\'{E}mery estimates. 
A number of examples of GNS- and KMS-symmetric semigroups satisfying these intertwining properties are presented.
\end{abstract}

\maketitle

\section{Introduction}

The lower Ricci curvature bound is an important concept in classical differential geometry.
An interesting question is how to characterize this bound when the underlying manifold is noncommutative. 
The investigation of lower Ricci curvature bounds in noncommutative geometry was initiated by Carlen and Maas\cite{CarMaa17,  CarMaa20} and subsequently developed by many others\cite{MitMie17, LJL20, WirZha21, WirZha21b}. 
In the commutative setting, lower Ricci curvature can be characterized either by the convexity of entropy on Wasserstein space or by the Bakry-\'{E}mery $\Gamma_2$-criterion\cite{BakEme85}. 
Following the work of Lott, Sturm, and Villani\cite{Stu06a, Stu06b, LotVil09}, Carlen and Maas\cite{CarMaa17} introduced a noncommutative analog of the 2-Wasserstein metric and defined noncommutative lower Ricci curvature via the geodesic semi-convexity of entropy for GNS symmetric quantum Markov semigroups.

In \cite{LJL20}, Li, Junge, and LaRacuente studied the complete version of lower Ricci curvature for tracially symmetric quantum Markov semigroups. 
This was further investigated by Li \cite{Li20} and by Brannan, Gao, and Junge \cite{BGJ22, BGJ22b}. 
In \cite{WirZha21b}, Wirth and Zhang obtained a noncommutative version of curvature-dimension bounds for tracially symmetric quantum Markov semigroups, leading to further functional inequalities and showed that the intertwining properties introduced by Carlen and Maas\cite{CarMaa17} imply the stronger Bakry - Émery estimate and gradient estimate based on dimension for tracially symmetric quantum Markov semigroups.

An alternative way to investigate quantum Markov semigroups is to impose the structure of quantum symmetries. 
In \cite{WuZha25}, the third author and Zhao set up the framework of bimodule quantum Markov semigroups in the language of quantum Fourier analysis \cite{JJLRW20}. 
They completely characterize bimodule GNS quantum Markov semigroups and obtain the modified logarithmic Sobolev inequality for bimodule GNS symmetric quantum Markov semigroups. 
In \cite{JWW25}, the authors systematically investigate bimodule KMS symmetric quantum Markov semigroups, which are quite different from the GNS symmetric quantum Markov semigroups. 
We not only set up the gradient flow for KMS symmetric quantum Markov semigroups with finite quantum symmetries, but also obtain the modified logarithmic Sobolev inequalities for the KMS symmetric semigroup by introducing a modified intertwining property for KMS symmetric quantum groups.

In this paper, we interpret the gradient form and the iterated gradient form as 3-box elements within the framework of quantum Fourier analysis\cite{JJLRW20}. 
By taking advantage of this perspective, the $\Gamma_2$ -criterion reduces to comparing pictorial representations, and we obtain a number of properties of the Bakry-Émery estimate. We find that the Bakry-\'{E}mery estimate for KMS-symmetric quantum Markov semigroups differs significantly from that for GNS-symmetric ones. 
We also give a pictorial representation of the intertwining property, which helps us convert an analytic property into a topological one. Under the intertwining property of GNS- (or KMS-)symmetric quantum Markov semigroups, we obtain some noncommutative versions of curvature–dimension bounds for these semigroups, which partially answers the question raised by Wirth and Zhang. 
By studying the quantum Markov semigroups for quantum symmetries, we find that the commute relation for planar algebra and the flatness of $\alpha$-induced connection imply the intertwining property of the corresponding semigroups.
We also provide a number of examples of GNS- and KMS-symmetric quantum Markov semigroups and obtain the modified logarithmic Sobolev inequalities. 
Finally, by noting the topological interpretation of the intertwining property, we speculate that the optimal value for the intertwining properties is close to the topological properties of the semigroup.

The paper is organized as follows.
In Section 2, we recall the basics of $\lambda$-extensions, Fourier multipliers of bimodule quantum channels and bimodule quantum Markov semigroups, as well as the gradient form and iterated gradient form.
In Section 3, we characterize the Bakry-\'{E}mery estimate for bimodule quantum Markov semigroups and compute the estimate for dephasing semigroups and for a KMS symmetric quantum Markov semigroup on $2\times 2$ matrices.
In Section 4, we recall the intertwining property for (bimodule) GNS-symmetric semigroups and obtain improved Bakry-\'{E}mery and gradient estimates in this setting.
In Section 5, we recall the intertwining property for (bimodule) KMS-symmetric semigroups and obtain improved Bakry-\'{E}mery and gradient estimates in this setting.
In Section 6, we present a number of (bimodule) GNS- or KMS-symmetric quantum Markov semigroups arising from irreducible inclusions and characterize their intertwining properties. Consequently, we obtain the corresponding modified logarithmic Sobolev inequalities and Bakry-\'{E}mery estimates.
In Section 7, we present a number of (bimodule) GNS- or KMS-symmetric quantum Markov semigroups on fermion algebras and characterize their intertwining properties.

\begin{ac}
J. Wu was supported by grants from Beijing Institute of Mathematical Sciences and Applications.
J.~W. was supported by NSFC (Grant no. 12371124). 
C. ~Jiang was supported by Hebei Natural Science Foundation (Grant No. A2023205045) and NSFC (Grant no. 12471120).
The authors would like to thank Zhengwei Liu for discussion.
\end{ac}

\section{Preliminaries}

\subsection{$\lambda$-extensions}

Let $\cN\subset \cM$ be a unital inclusion of finite von Neumann algebras and $\tau$ a normal faithful tracial state on $\cM$. 
The Hilbert space $L^2(\cM, \tau)$ is the Gelfand-Naimark-Segal (GNS) Hilbert space of $\tau$, with cyclic separating vector $\Omega$ and modular conjugation $J$ given by $Jx\Omega = x^*\Omega$ for all $x\in\cM$. 
We denote by $e_1$ the Jones projection and $\bE_{\cN}$ the $\tau$-preserving conditional expectation of $\cM$ onto $\cN$.  
The basic construction $\cM_1=\langle \cM, e_1\rangle$ is the von Neumann algebra generated by $\cM$ and $e_1$. 
The inclusion $\cN\subset\cM$ is called finite if $\cM_1$ is a finite von Neumann algebra, and irreducible if $\cN'\cap \cM=\mathbb{C}$.  
For a finite inclusion, we have $\cM_1 = J\cN'J$, where $\cN'$ is the commutant of $\cN$ on $L^2(\cM,\tau)$. 

Suppose $\tau_1$ is a faithful normal trace on $\cM_1$ extending $\tau$, and let $\mathbb{E}_{\cM}$ be the trace-preserving conditional expectation onto $\cM$. 
The pair $(\cM_1, \tau_1)$ is called a $\lambda$-extension of $\cN\subset\cM$ if $\tau_1|_{\cM}=\tau$ and $\bE_{\cM}(e_1)=\lambda$ for some positive constant $\lambda$. 
The index of the extension is defined as $[\cM:\cN] = \lambda^{-1}$. 
We denote by $\Omega_1$ the cyclic and separating vector in $L^2(\cM_1, \tau_1)$, and by $e_2$ the Jones projection onto $L^2(\cM,\tau_1)$. 
The $\lambda$-extension is called extremal if for all $x\in\cN'\cap \cM$, $\tau_1(x) = \tau_1(Jx^*J)$. 
In this note, $\lambda$-extensions are always assumed to be extremal. 
Let $\mathbb{E}_{\cN'}$ be the $\tau_1$-preserving conditional expectation from $\cM_1$ onto $\cN'\cap \cM_1$. 
Let $\cM_2 = \langle \cM_1,e_2\rangle$ be the basic construction of the inclusion $\cM\subset \cM_1$, with a normal faithful trace $\tau_2$ extending $\tau_1$. 
We assume $\cM_1\subset \cM_2$ is a $\lambda$-extension of $\cM\subset\cM_1$, i.e. $\mathbb{E}_{\cM_1}(e_2) = \lambda$, where $\mathbb{E}_{\cM_1}$ is the $\tau_2$-preserving conditional expectation onto $\cM_1$. 
Denote by $\mathbb{E}_{\cM'}$ the $\tau_2$-preserving conditional expectation from $\cM_2$ onto $\cM'\cap \cM_2$.

For a finite inclusion, there exists a finite set $\{\eta_j\}_{j=1}^m$ of operators in $\cM$ called Pimsner-Popa basis for $\cN\subset \cM$, which satisfies $\displaystyle x=\sum_{j=1}^m  \bE_{\mathcal{N}}(x\eta^*_j)\eta_j$, for all $x\in\cM$\cite{PimPop86}. 
In terms of Jones projection, this condition is expressed as $\displaystyle \sum_{j=1}^m\eta_j^*e_1\eta_j =1$. 
This implies that any operator in $\cM_1$ is a finite sum of operators of the form $ae_1b$ with $a,b\in \cM$. 
As a consequence, for any $y\in \cM_1$, there is a unique $x\in\cM$ such that $ye_1 = xe_1$.
This indicates that $x=\lambda^{-1}\bE_{\cM}(ye_1)$.
That $\cN\subset\cM$ is a $\lambda$-extension implies $\displaystyle \sum_{j=1}^m\eta_j^*\eta_j =\lambda^{-1}$. 
We shall assume that the basis is orthogonal, that is $\bE_{\cN}(\eta_k\eta_j^*)=0$ for $k\neq j$. 
The conditional expectation $\bE^{\cN'}_{\cM'}$ from $\cN' = J\cM_1 J$ onto $\cM' = J\cM J$ can be written as 
\begin{align*}
    \bE^{\cN'}_{\cM'} (x) = \lambda\sum^m_{j=1} \eta^*_j x \eta_j,\quad x\in \cM_2.
\end{align*}
Note that this implies that $\mathbb{E}_{\cM'}(e_1) = \lambda$. 
We also have $\mathbb{E}_{\cM'}(yx) = \mathbb{E}_{\cM'}(xy)$ for all $y\in \cM$ and $x\in \cM_2$. 

The Pimsner-Popa inequality \cite{PimPop86} for the inclusion states that $\bE_{\cN}(x)\geq \lambda_{\cN\subset\cM} x$ for any $0\leq x\in \cM$, where $\lambda_{\cN\subset\cM}$ is the Pimsner-Popa constant. 

The basic construction from a $\lambda$-extension is assumed to be iterated to produce the Jones tower 
\begin{align*}
\cN\subset\cM\subset\cM_1\subset\cM_2\subset\cdots.
\end{align*}
The sequence of higher relative commutants consists the standard invariant of the inclusion initial. 
The standard invariants are axiomatized by planar algebras in \cite{Jones2021}. 

\subsection{Fourier Transforms and Fourier Multipliers}
The Fourier transform $\mathfrak{F}: \cN'\cap \cM_1 \to \cM'\cap \cM_2$ is
\begin{align}\label{eqn:: Fourier transform}
    \mathfrak{F}(x)=&\lambda^{-3/2}\bE_{\cM'}(xe_2e_1), \quad x\in \cN'\cap \cM_1.
\end{align}
The convolution between $x,y\in \cM'\cap \cM_2$ is
\begin{align*}
    x*y =& \mathfrak{F}^{-1}(\mathfrak{F}(y)\mathfrak{F}(x))\\
    =& \lambda^{-9/2}\mathbb{E}_{\cM'}(e_1e_2\mathbb{E}_{\cM_1}(e_2e_1y)\mathbb{E}_{\cM_1}(e_2e_1x)). 
\end{align*}
The shift $\gamma_{1, +}: \cM_1'\cap \cM_3 \to \cN'\cap \cM_1$ is an isomorphism given by
\begin{align*}
\gamma_{1, +}(x)e_3=\lambda^{-2}e_3e_2e_1 x e_1e_2 e_3, \quad x\in \cM_1'\cap \cM_3.
\end{align*}
The inverse $\gamma_{1, +}^{-1}: \cN'\cap \cM_1\to \cM_1'\cap \cM_3$ is given by
\begin{align*}
\gamma_{1, +}^{-1}(x)e_1=\lambda^{-2}e_1e_2 e_3 x e_3e_2e_1, \quad x\in \cN'\cap \cM_1.
\end{align*}
Moreover, we have that $\gamma_{k, +}: \cM_1'\cap \cM_{2k+1}\to \cN'\cap \cM_{2k-1}$ defined as 
\begin{align*}
\gamma_{k, +}(x) e_{2k+1} =\lambda^{-2k}e_{2k+1} e_{2k}\cdots e_1 x e_1 \cdots e_{2k} e_{2k+1}, \quad x\in \cM'\cap \cM_{2k+1}.
\end{align*}
For more details on this string Fourier transform, we refer to \cite{Liu16,JLW16,JLW2019,JJLRW20} etc.

A linear map $\Phi:\cM\rightarrow \cM$ is called positive if it preserves the positive cone $\cM_+$. 
The map $\Phi$ is called completely positive if $\Phi\otimes id_n$ is positive on $\cM\otimes M_n(\mathbb{C})$ for all $n\geq 1$, and completely bounded if $\displaystyle \sup_{n \geq 1}\Vert \Phi\otimes id_n\Vert$ is finite. 
The map $\Phi$ is unital if $\Phi(1)=1$.
A quantum channel is a normal unital completely positive map on $\cM$. 

For a finite inclusion $\cN\subset \cM$ of finite von Neumann algebras, a linear map $\Phi$ is said to be $\cN$-bimodule, if 
\begin{align*}
\Phi(y_1 x y_2) = y_1\Phi(x)y_2,
\end{align*}
for all $y_1, y_2\in\cN$ and $x\in\cM$. 
The Fourier multiplier $\widehat{\Phi}$ is the unique element in $\cM'\cap\cM_2$ such that 
\begin{equation}\label{eqn:: bilinear form induced by Fourier multiplier}
    \langle \widehat{\Phi}(x_1e_1y_1\Omega_1), x_2e_1y_2\Omega_1\rangle = \lambda^{3/2}\tau(y^*_2\Phi(x^*_2x_1)y_1),\quad x_1, x_2,y_1, y_2\in \cM.
\end{equation}
The bimodule map $\Phi$ can be written in terms of the Fourier multiplier $\widehat{\Phi}$ as follows:
\begin{align}\label{eq:multiplierphi}
    \Phi(x)=\lambda^{-5/2} \bE_{\mathcal{M}}(e_2e_1\widehat{\Phi} x e_1e_2).
\end{align}
We shall denote $\lambda^{-5/2} \bE_{\mathcal{M}}(e_2e_1\widehat{\Phi} x e_1e_2)$ by $x*\widehat{\Phi}$ for simplicity.
Recall that $\Phi$ is completely positive if and only if $\widehat{\Phi} \geq 0$.
If $\cN \subset \cM$ admits a downward basic construction, then $\Phi$ is positive if and only if $\Phi$ is completely positive.

The limit $\displaystyle \mathbb{E}_{\Phi} = \lim_{m\rightarrow \infty}\frac{1}{m}\sum^m_{k=1}\Phi^{k}$ exists as a $\cN$-bimodule quantum channel, with the property that $\mathbb{E}^2_{\Phi} = \mathbb{E}_{\Phi}$. 
Moreover, the image of $\mathbb{E}_{\Phi}$ is $\fix(\Phi)$. 
Taking Fourier multiplier gives: 
\begin{align*}
    \widehat{\mathbb{E}}_{\Phi} = \lim_{m \rightarrow \infty}\frac{1}{m}\sum^m _{k=1} \widehat{\Phi}^{(*k)},
\end{align*}
with $\widehat{\mathbb{E}}_{\Phi}*\widehat{\mathbb{E}}_{\Phi} =  \widehat{\mathbb{E}}_{\Phi}$. 
Therefore, $\mathbb{E}_{\Phi}$ is an idempotent with positive Fourier multiplier. 
For more details on bimodule quantum channels, we refer to \cite{HJLW23,HJLW24}.

\subsection{Extensions}
Recall that the associated Jones projections for the inclusion $\cN\subset \cM_1$ are $\widetilde{e}_1=\lambda^{-1} e_2e_1e_3e_2$ and $\widetilde{e}_2=\lambda^{-1} e_4e_3e_5e_4$.

\begin{proposition}\label{prop:extension1}
Suppose that $\Phi: \cM\to \cM$ and $\Psi: \cM_1 \to \cM_1$ are bimodule maps.
Then $\Psi|_{\cM}=\Phi$ if and only if 
\begin{align}\label{eq:extension1}
   \lambda^{-9/2} \bE_{\cM_4}(e_5e_4e_3e_2\widehat{\Psi}e_2e_3e_4e_5)=\widehat{\Phi}.
\end{align}
Pictorially, we have that 
\begin{align*}
    \vcenter{\hbox{
    \begin{tikzpicture}
       \draw [blue] (-0.45, -0.3).. controls+(0, -0.3) and +(0, -0.3).. (-0.75, -0.3)--(-0.75, 0.3) .. controls +(0, 0.3) and +(0, 0.3)..(-0.45, 0.3) (-0.15, -0.6)--(-0.15, 0.6) (0.15, -0.6)--(0.15, 0.6) (0.45, -0.6)--(0.45, 0.6);
       \draw[fill=white] (-0.6, -0.3) rectangle (0.6, 0.3);
       \node at (0, 0) {\tiny $\widehat{\Psi}$};
    \end{tikzpicture}
    }}
    =  \vcenter{\hbox{
    \begin{tikzpicture}
       \draw [blue]   (-0.15, -0.6)--(-0.15, 0.6) (0.15, -0.6)--(0.15, 0.6) (0.45, -0.6)--(0.45, 0.6);
       \draw[fill=white] (-0.35, -0.3) rectangle (0.35, 0.3);
       \node at (0, 0) {\tiny $\widehat{\Phi}$};
    \end{tikzpicture}
    }}.
\end{align*}
\end{proposition}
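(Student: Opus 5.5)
Since both $\Phi$ and $\Psi|_{\cM}$ are $\cN$-bimodule maps on $\cM$ (note $\Psi$ is $\cN$-bimodule on $\cM_1$, and it preserves $\cM$ exactly when the restriction is a map $\cM\to\cM$), and a bimodule map on $\cM$ is uniquely determined by its Fourier multiplier through the sesquilinear identity \eqref{eqn:: bilinear form induced by Fourier multiplier}, it suffices to show that the element
\[
X:=\lambda^{-9/2}\bE_{\cM_4}(e_5e_4e_3e_2\widehat{\Psi}e_2e_3e_4e_5)
\]
satisfies $\langle X(x_1e_1y_1\Omega_1), x_2e_1y_2\Omega_1\rangle=\lambda^{3/2}\tau(y_2^*\Psi(x_2^*x_1)y_1)$ for $x_i,y_i\in\cM$. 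Both directions then follow together. If $\Psi|_{\cM}=\Phi$, the right side equals the defining form of $\widehat{\Phi}$, so $X=\widehat{\Phi}$. Conversely, if $X=\widehat{\Phi}$, then $\tau(y_2^*(\Psi(x)-\Phi(x))y_1)=0$ for all $y_i$, forcing $\Psi(x)=\Phi(x)$ and in particular $\Psi(\cM)\subset\cM$.

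The first step is to check that $X$ lies in $\cM'\cap\cM_2$. The planar picture handles this: capping the leftmost pair of strings of the $3$-box $\widehat{\Psi}\in\cM'\cap\cM_4$ (the $\cM_1$-level multiplier with respect to the tower for $\cN\subset\cM_1$, generated by $\widetilde e_1,\widetilde e_2$) yields a $3$-box. Algebraically, commutation with $\cM$ comes from the fact that the $e_j$, $j\ge 2$, commute with $\cM$. The image lies in $\cM_2$ after identifying $\bE_{\cM_4}$ composed with the conjugation by $e_5e_4e_3e_2$ with the partial trace of the picture, which I will take as standard in the planar algebra formalism of \cite{JJLRW20}.

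The main computation evaluates the pairing with $X$ using the Fourier-multiplier formula for $\Psi$ relative to $\cN\subset\cM_1$. That formula is the analogue of \eqref{eq:multiplierphi} with $e_1,e_2$ replaced by $\widetilde e_1,\widetilde e_2$, and $\lambda$ by $\lambda^2$, since the index is squared. For $x\in\cM\subset\cM_1$, I write $\Psi(x)=\lambda^{-5}\bE_{\cM_1}(\widetilde e_2\widetilde e_1\widehat{\Psi}x\widetilde e_1\widetilde e_2)$. Substituting $\widetilde e_1=\lambda^{-1}e_2e_1e_3e_2$ and $\widetilde e_2=\lambda^{-1}e_4e_3e_5e_4$, and using the fact that $x\in\cM$ commutes with $e_2,e_3,\dots$, the expression reduces with the Jones relations $e_ie_{i\pm1}e_i=\lambda e_i$ and the Markov properties $\bE_{\cM_k}(e_{k+1})=\lambda$ to
\[
\lambda^{-5/2}\bE_{\cM}\bigl(e_2e_1\,X\,x\,e_1e_2\bigr)=x*X.
\]
Pictorially, the strings of $\widehat\Psi$ belonging to the "$\cM$-part" get capped off, which is exactly the displayed diagram. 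Hence $\Psi(x)=x*X$, and comparing with \eqref{eq:multiplierphi} gives the required pairing identity.

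I expect the main obstacle to be tracking the powers of $\lambda$ and the order of the Jones projections in this reduction, in particular the step that moves $\bE_{\cM_4}$ past the $e_4,e_5$ factors and collapses the words $e_5e_4e_3e_2\,\widetilde e_1\widetilde e_2$. I would do this first in the planar-algebra picture, where it is an isotopy together with a count of closed loops (each contributing $\delta=\lambda^{-1/2}$), and then read the algebraic constant $\lambda^{-9/2}$ off the picture.
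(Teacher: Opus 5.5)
Your strategy is to identify the capped element as the Fourier multiplier of $\Psi|_{\cM}$ and then invoke injectivity of $\Phi\mapsto\widehat{\Phi}$. That would be a legitimate alternative to the paper's direct computation, but as you set it up it lives in the wrong box space, and the step you defer to a ``standard'' partial trace is false in general.

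\textbf{The element $X$ is not a 2-box.} The multiplier of an $\cN$-bimodule map on $\cM_1$ lies in $\cM_1'\cap\cM_5$ and has four strings. Capping the leftmost one leaves three. So $X=\lambda^{-9/2}\bE_{\cM_4}(e_5e_4e_3e_2\widehat{\Psi}e_2e_3e_4e_5)$ is in general a genuine element of $\cM'\cap\cM_3$, the same kind of object as $\widehat{\partial}$ for the map $\partial:\cM\to\cM_1$. It is not an element of $\cM'\cap\cM_2$.

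Correspondingly, the identity $\Psi(x)=x*X$ cannot hold for a general bimodule $\Psi$. The right-hand side lies in $\cM$, while $\Psi(x)\in\cM_1$ need not. For example, $\Psi(y)=e_1ye_1$ is $\cN$-bimodular on $\cM_1$ because $e_1\in\cN'$, yet $\Psi(1)=e_1\notin\cM$.

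The fact that $X$ reduces to $\widehat{\Phi}$ with an extra through string on the right is exactly the content of the proposition. It encodes $\Psi(\cM)\subset\cM$, which your argument neither uses nor derives. So the main obstacle is not the bookkeeping of powers of $\lambda$; it is this dropped string.

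\textbf{The converse fails for the same reason.} Since $y_1,y_2\in\cM$, you have $\tau_1(y_2^*\Psi(x)y_1)=\tau(y_2^*\bE_{\cM}(\Psi(x))y_1)$. Vanishing of these pairings therefore only gives $\bE_{\cM}\Psi(x)=\Phi(x)$. It does not give $\Psi(x)=\Phi(x)$, and certainly not $\Psi(\cM)\subset\cM$.

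To repair your route you would need three things:
\begin{enumerate}
\item a representation formula and an injectivity statement for $\cN$-bimodule maps $\cM\to\cM_1$ whose multipliers lie in $\cM'\cap\cM_3$;
\item a proof that $X$ is the multiplier of $\Psi|_{\cM}$ in that sense;
\item the observation that $\Phi$, regarded as a map into $\cM_1$, has multiplier $\widehat{\Phi}$ with an extra through string.
\end{enumerate}

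The paper avoids all of this. It works directly with $\Psi(x)\widetilde{e}_2=\lambda^{-3}\widetilde{e}_2\widetilde{e}_1\widehat{\Psi}x\widetilde{e}_1\widetilde{e}_2$, which records all of $\Psi(x)\in\cM_1$ rather than just $\bE_{\cM}\Psi(x)$. It substitutes $\Phi(x)$ and multiplies by $e_1e_2$ on the left. It then strips off $x$ using a Pimsner--Popa basis and cuts down by $\widetilde{e}_1,e_3,e_4,e_5$, arriving at $e_4e_3e_2\widehat{\Psi}e_2e_3e_4=\lambda^{7/2}\widehat{\Phi}e_4$.
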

\begin{proof}
Suppose that $\Psi|_{\cM}=\Phi$.
For any $x\in \cM$, we have that 
\begin{align*}
   \lambda^{-3} \widetilde{e}_2\widetilde{e}_1 \widehat{\Psi} x \widetilde{e}_1 \widetilde{e}_2
   = \Phi(x)\widetilde{e}_2 
   = \lambda^{-5/2} \bE_{\cM}(e_2e_1 \widehat{\Phi} x e_1e_2) \widetilde{e}_2. 
\end{align*}
Multiplying $e_1e_2$ from the left hand side, we obtain that 
\begin{align*}
   \lambda^{-3} e_1e_2\widetilde{e}_2\widetilde{e}_1 \widehat{\Psi} x \widetilde{e}_1 \widetilde{e}_2
    =& \lambda^{-3/2} e_1e_2e_1 \widehat{\Phi} x e_1e_2 \widetilde{e}_2.
\end{align*}
By the fact that $e_1e_2 \widetilde{e}_2\widetilde{e}_1=\lambda^2 e_1e_4e_3e_2$, we have that 
\begin{align*}
 \lambda^{-1} e_1 x e_4e_3e_2  \widehat{\Psi} \widetilde{e}_1 \widetilde{e}_2
    =& \lambda^{-1/2} e_1 x \widehat{\Phi} e_1e_2 \widetilde{e}_2.
\end{align*}
Applying the Pimsner-Popa basis, we obtain that 
\begin{align*}
  e_4e_3e_2  \widehat{\Psi} \widetilde{e}_1 \widetilde{e}_2
    =& \lambda^{1/2}  \widehat{\Phi} e_1e_2 \widetilde{e}_2.
\end{align*}
Multiplying $\widetilde{e}_1$ from the right hand side, we see that 
\begin{align*}
      e_4e_3e_2  \widehat{\Psi} \widetilde{e}_1
    =& \lambda^{1/2}  \widehat{\Phi} e_1e_4e_3e_2.
\end{align*}
Expanding $\widetilde{e}_1$, we have that 
\begin{align*}
    e_4e_3e_2  \widehat{\Psi} e_2e_1e_3e_2
    =& \lambda^{3/2}  \widehat{\Phi} e_1e_4e_3e_2.
\end{align*}
Multiplying $e_3$ from the right hand side, we see that 
\begin{align*}
      e_4e_3e_2  \widehat{\Psi} e_2e_1e_3 
    =& \lambda^{3/2}  \widehat{\Phi} e_1e_4e_3.
\end{align*}
By taking the conditional expectation $\bE_{\cM'}$, we have that 
\begin{align*}
          e_4e_3e_2  \widehat{\Psi} e_2 e_3
    =& \lambda^{5/2} \widehat{\Phi} e_4e_3.
\end{align*}
Multiplying $e_4$ from the right hand side, we have that 
\begin{align}\label{eq:extension2}
e_4e_3e_2  \widehat{\Psi} e_2 e_3e_4
    =\lambda^{7/2} \widehat{\Phi} e_4.
\end{align}
Multiplying $e_5$ from the both sides, we have that 
\begin{align*}
  e_5 e_4e_3e_2  \widehat{\Psi} e_2 e_3e_4 e_5
    =\lambda^{9/2} \widehat{\Phi} e_5 . 
\end{align*}
By taking the conditional expectation $\bE_{\cM_4}$, we have that Equation \eqref{eq:extension1} is true.

Suppose that Equation \eqref{eq:extension1} is true.
We see that Equation \eqref{eq:extension2} is true.
By the previous computation, we see that $\Psi|_{\cM}=\Phi$.
\end{proof}

\begin{remark}
Suppose that $\bC\subset M_n(\bC)$ is the inclusion and $\widehat{\Phi}=\vcenter{\hbox{\begin{tikzpicture}[scale=0.65]
    \begin{scope}[shift={(0,1.5)}]
    \draw [blue] (-0.5, 0.8)--(-0.5, 0) .. controls +(0, -0.6) and +(0,-0.6).. (0.5, 0)--(0.5, 0.8);    
\begin{scope}[shift={(0.5, 0.3)}]
\draw [fill=white] (-0.3, -0.3) rectangle (0.3, 0.3);
\node at (0, 0) {\tiny $v$};
\end{scope}
    \end{scope}
\draw [blue] (-0.5, -0.8)--(-0.5, 0) .. controls +(0, 0.6) and +(0,0.6).. (0.5, 0)--(0.5, -0.8);
\begin{scope}[shift={(0.5, -0.3)}]
\draw [fill=white] (-0.3, -0.3) rectangle (0.3, 0.3);
\node at (0, 0) {\tiny $v^*$};
\end{scope}
\end{tikzpicture}}}$.
The Fourier multiplier of the extension $\Psi$ of $\Phi$ could be 
\begin{enumerate}[(1)]
    \item $\widehat{\Psi}= \vcenter{\hbox{\begin{tikzpicture}[scale=0.65]
    \begin{scope}[shift={(0,1.5)}]
    \draw [blue] (-0.5, 0.8)--(-0.5, 0) .. controls +(0, -0.6) and +(0,-0.6).. (0.5, 0)--(0.5, 0.8);   \draw [blue] (-1, 0.8)--(-1, 0) .. controls +(0, -0.8) and +(0,-0.8).. (1, 0)--(1, 0.8);  
\begin{scope}[shift={(0.5, 0.3)}]
\draw [fill=white] (-0.3, -0.3) rectangle (0.3, 0.3);
\node at (0, 0) {\tiny $v$};
\end{scope}
    \end{scope}
\draw [blue] (-0.5, -0.8)--(-0.5, 0) .. controls +(0, 0.6) and +(0,0.6).. (0.5, 0)--(0.5, -0.8);
\draw [blue] (-1, -0.8)--(-1, 0) .. controls +(0, 0.8) and +(0,0.8).. (1, 0)--(1, -0.8); 
\begin{scope}[shift={(0.5, -0.3)}]
\draw [fill=white] (-0.3, -0.3) rectangle (0.3, 0.3);
\node at (0, 0) {\tiny $v^*$};
\end{scope}
\end{tikzpicture}}}$.
We shall call this extension as a standard lifting of $\widehat{\Phi}$;
\item $\widehat{\Psi}= \lambda^{1/2}\vcenter{\hbox{\begin{tikzpicture}[scale=0.65]
    \begin{scope}[shift={(0,1.5)}]
    \draw [blue] (-0.8, 0.8)--(-0.8, -0.8) (1, 0.8)--(1, -0.8);
    \draw [blue] (-0.5, 0.8)--(-0.5, 0) .. controls +(0, -0.6) and +(0,-0.6).. (0.5, 0)--(0.5, 0.8);    
\begin{scope}[shift={(0.5, 0.3)}]
\draw [fill=white] (-0.3, -0.3) rectangle (0.3, 0.3);
\node at (0, 0) {\tiny $v$};
\end{scope}
    \end{scope}
    \draw [blue] (-0.8, 0.8)--(-0.8, -0.8) (1, 0.8)--(1, -0.8);
\draw [blue] (-0.5, -0.8)--(-0.5, 0) .. controls +(0, 0.6) and +(0,0.6).. (0.5, 0)--(0.5, -0.8);
\begin{scope}[shift={(0.5, -0.3)}]
\draw [fill=white] (-0.3, -0.3) rectangle (0.3, 0.3);
\node at (0, 0) {\tiny $v^*$};
\end{scope}
\end{tikzpicture}}}$;
\item $\widehat{\Psi}=\displaystyle \sum_{j=1}^m  \vcenter{\hbox{\begin{tikzpicture}[scale=0.65]
    \begin{scope}[shift={(0,1.5)}]
    \draw [blue] (-0.5, 0.8)--(-0.5, 0) .. controls +(0, -0.6) and +(0,-0.6).. (0.5, 0)--(0.5, 0.8);  
    \draw [blue] (-1.2, 0.8)--(-1.2, 0) .. controls +(0, -0.8) and +(0,-0.8).. (1.2, 0)--(1.2, 0.8);  
\begin{scope}[shift={(0.5, 0.3)}]
\draw [fill=white] (-0.3, -0.3) rectangle (0.3, 0.3);
\node at (0, 0) {\tiny $v$};
\end{scope}
\begin{scope}[shift={(1.2, 0.3)}]
\draw [fill=white] (-0.3, -0.3) rectangle (0.3, 0.3);
\node at (0, 0) {\tiny $\overline{w_j}$};
\end{scope}
    \end{scope}
\draw [blue] (-0.5, -0.8)--(-0.5, 0) .. controls +(0, 0.6) and +(0,0.6).. (0.5, 0)--(0.5, -0.8);
\draw [blue] (-1.2, -0.8)--(-1.2, 0) .. controls +(0, 0.8) and +(0,0.8).. (1.2, 0)--(1.2, -0.8); 
\begin{scope}[shift={(0.5, -0.3)}]
\draw [fill=white] (-0.3, -0.3) rectangle (0.3, 0.3);
\node at (0, 0) {\tiny $v^*$};
\end{scope}
\begin{scope}[shift={(1.2, -0.3)}]
\draw [fill=white] (-0.3, -0.35) rectangle (0.3, 0.35);
\node at (0, 0) {\tiny $\overline{w_j^*}$};
\end{scope}
\end{tikzpicture}}}$, where $\displaystyle \sum_{j=1}^m w_jw_j^*=1$;
\item $\widehat{\Psi}= \lambda^{1/2}\vcenter{\hbox{\begin{tikzpicture}[scale=0.65]
    \begin{scope}[shift={(0,1.5)}]
    \draw [blue] (-0.8, 0.8)--(-0.8, -0.8) (1, 0.8)--(1, -0.8);
    \draw [blue] (-0.5, 0.8)--(-0.5, 0) .. controls +(0, -0.6) and +(0,-0.6).. (0.5, 0)--(0.5, 0.8);    
\begin{scope}[shift={(0.5, 0.3)}]
\draw [fill=white] (-0.3, -0.3) rectangle (0.3, 0.3);
\node at (0, 0) {\tiny $v$};
\end{scope}
    \end{scope}
    \draw [blue] (-0.8, 0.8)--(-0.8, -0.8) (1, 0.8)--(1, -0.8);
\draw [blue] (-0.5, -0.8)--(-0.5, 0) .. controls +(0, 0.6) and +(0,0.6).. (0.5, 0)--(0.5, -0.8);
\begin{scope}[shift={(0.5, -0.3)}]
\draw [fill=white] (-0.3, -0.3) rectangle (0.3, 0.3);
\node at (0, 0) {\tiny $v^*$};
\end{scope}
    \begin{scope}[shift={(1, 0.7)}]
\draw [fill=white] (-0.3, -0.3) rectangle (0.3, 0.3);
\node at (0, 0) {\tiny $p_j$};
\end{scope}
\begin{scope}[shift={(-0.8, 0.7)}]
\draw [fill=white] (-0.3, -0.3) rectangle (0.3, 0.3);
\node at (0, 0) {\tiny $q_j$};
\end{scope}
\end{tikzpicture}}}$ where $\displaystyle \sum_{j=1}^m \tau(q_j)p_j=1$ and $p_j, q_j \geq 0$.
\item the convex combination of the above forms.
\end{enumerate}
\end{remark}

\begin{remark}
Suppose that $\cN \subset \cM$ is irreducible and $\widehat{\Phi}=p\in \cM'\cap \cM_2$ is a minimal projection. 
Recall the Jones-Wenzl-Liu formula for the minimal projections is
\begin{align*}
   \vcenter{\hbox{ \begin{tikzpicture} 
  \draw [blue] (-0.2, -0.5)--(-0.2, 0.5) (0.2, -0.5)--(0.2, 0.5) (0.6, -0.5)--(0.6, 0.5);
        \draw [fill=white] (-0.4, -0.25) rectangle (0.4, 0.25);
        \node at (0, 0) {\tiny $p$};
  \end{tikzpicture}}}
  = \sum_{j=1}^{m}\frac{\lambda^{1/2} }{\tau_2(p)} \vcenter{\hbox{ \begin{tikzpicture}
    \draw [blue] (-0.6, 1)--(-0.6, -1);
    \draw [blue] (-0.2, 1)--(-0.2, 0.35).. controls +(0, -0.3) and +(0, -0.3).. (0.2, 0.35)--(0.2, 1);
    \draw [blue] (-0.2, -1)--(-0.2, -0.35).. controls +(0, 0.3) and +(0, 0.3).. (0.2, -0.35)--(0.2, -1);
    \begin{scope}[shift={(-0.4, 0.6)}]
        \draw [fill=white] (-0.4, -0.25) rectangle (0.4, 0.25);
        \node at (0, 0) {\tiny $v_j$};
    \end{scope}
    \begin{scope}[shift={(-0.4, -0.6)}]
        \draw [fill=white] (-0.4, -0.25) rectangle (0.4, 0.25);
        \node at (0, 0) {\tiny $v_j^*$};
    \end{scope}
    \end{tikzpicture}}}
    +    \vcenter{\hbox{ \begin{tikzpicture} 
  \draw [blue] (-0.2, -1.25)--(-0.2, 0.5) (0.2,  -1.25)--(0.2, 0.5) (0.6,  -1.25)--(0.6, 0.5);
        \draw [fill=white] (-0.4, -0.25) rectangle (0.4, 0.25);
        \node at (0, 0) {\tiny $p$};
        \begin{scope}[shift={(0, -0.75)}]
        \draw [fill=white] (-0.4, -0.25) rectangle (0.8, 0.25);
        \node at ( 0.2,0) {\tiny $s_{3, +}$};
        \end{scope}
  \end{tikzpicture}}},
\end{align*}
where $s_{3, +}$ is the central projection in $\cM'\cap \cM_3$ onto the complement of the ideal generated by the Jones projections and $v_j^*v_j=p$, $\{v_jv_j^*\}_{j=1}^m$ is a maximal orthogonal family of projections.
Similarly, 
\begin{align*}
   \vcenter{\hbox{ \begin{tikzpicture} 
  \draw [blue] (-0.2, -0.5)--(-0.2, 0.5) (0.2, -0.5)--(0.2, 0.5) (-0.6, -0.5)--(-0.6, 0.5);
        \draw [fill=white] (-0.4, -0.25) rectangle (0.4, 0.25);
        \node at (0, 0) {\tiny $p$};
  \end{tikzpicture}}}
  = \sum_{j=1}^{m}\frac{\lambda^{1/2} }{\tau_2(p)} \vcenter{\hbox{ \begin{tikzpicture}
    \draw [blue] (-0.6, 1)--(-0.6, -1);
    \draw [blue] (0.2, 1)--(0.2, 0.35).. controls +(0, -0.3) and +(0, -0.3).. (-0.2, 0.35)--(-0.2, 1);
    \draw [blue] (0.2, -1)--(0.2, -0.35).. controls +(0, 0.3) and +(0, 0.3).. (-0.2, -0.35)--(-0.2, -1);
    \begin{scope}[shift={(-0.4, 0.6)}]
        \draw [fill=white] (-0.4, -0.25) rectangle (0.4, 0.25);
        \node at (0, 0) {\tiny $v_j$};
    \end{scope}
    \begin{scope}[shift={(-0.4, -0.6)}]
        \draw [fill=white] (-0.4, -0.25) rectangle (0.4, 0.25);
        \node at (0, 0) {\tiny $v_j^*$};
    \end{scope}
    \end{tikzpicture}}}
    +    \vcenter{\hbox{ \begin{tikzpicture} 
  \draw [blue] (-0.2, -1.25)--(-0.2, 0.5) (0.2,  -1.25)--(0.2, 0.5) (0.6,  -1.25)--(0.6, 0.5);
        \draw [fill=white] (0, -0.25) rectangle (0.8, 0.25);
        \node at (0.4, 0) {\tiny $p$};
        \begin{scope}[shift={(0, -0.75)}]
        \draw [fill=white] (-0.4, -0.25) rectangle (0.8, 0.25);
        \node at ( 0.2,0) {\tiny $s_{3, -}$};
        \end{scope}
  \end{tikzpicture}}},
\end{align*}
where $s_{3, -}$ is the central projection in $\cN'\cap \cM_2$ onto the complement of the ideal generated by the Jones projections and $v_j^*v_j=p$, $\{v_jv_j^*\}_{j=1}^m$ is a maximal orthogonal family of projections.
The Fourier multiplier of the extension $\Psi$ of $\Phi$ could be
\begin{enumerate}
    \item $\widehat{\Psi} =\lambda^{-1/2} \vcenter{\hbox{
    \begin{tikzpicture}
       \draw [blue]   (-0.15, -0.6)--(-0.15, 0.6) (0.15, -0.6)--(0.15, 0.6) (0.45, -0.6)--(0.45, 0.6) (-0.45, -0.6)--(-0.45, 0.6);
       \draw[fill=white] (-0.35, -0.3) rectangle (0.35, 0.3);
       \node at (0, 0) {\tiny $p$};
    \end{tikzpicture}
    }}$;
    \item $\widehat{\Psi} =\vcenter{\hbox{\scalebox{0.8}{
        \begin{tikzpicture}[scale=1.2]
           \draw [blue]  (0.2, 0.7)--(0.2, -1.2) (0.7, 0.7)--(0.7, -1.2);
           \draw [blue] (-0.2, 0.7)--(-0.2, -0.3).. controls +(0, -0.3) and +(0, -0.3) .. (-0.7, -0.3)--(-0.7, 0.7);
           \draw [fill=white] (-0.4, -0.3) rectangle (0.4, 0.3);
           \node at (0, 0) {\tiny $p$};
           \begin{scope}[shift={(0, -1.2)}]
           \draw [blue]  (0.2, -0.7)--(0.2, 0.3) (0.7, 0.7)--(0.7, -0.7);
              \draw [blue] (-0.2, -0.7)--(-0.2, 0.3).. controls +(0, 0.3) and +(0, 0.3) .. (-0.7, 0.3)--(-0.7, -0.7);
               \draw [fill=white] (-0.4, -0.3) rectangle (0.4, 0.3);
           \node at (0, 0) {\tiny $ p$};
           \end{scope}
        \end{tikzpicture}}}} $;
    \item $\displaystyle \widehat{\Psi}=\sum_{j=1}^m \frac{\lambda^{-1/2}}{\tau_2(p)}\vcenter{\hbox{\scalebox{0.8}{
        \begin{tikzpicture}[scale=1.2]
           \draw [blue]  (-0.2, 0.7)--(-0.2, -0.3).. controls +(0, -0.3) and +(0, -0.3).. (-0.6, -0.3)--(-0.6, 0.7) (0.2, 0.7)--(0.2, -0.3).. controls +(0, -0.3) and +(0, -0.3).. (0.6, -0.3)--(0.6, 0.7);
           \draw [fill=white] (-0.4, -0.3) rectangle (0.4, 0.3);
           \node at (0, 0) {\tiny $v_j$};
        \begin{scope}[shift={(0, -1.3)}]
             \draw [blue]  (-0.2, -0.7)--(-0.2, 0.3).. controls +(0, 0.3) and +(0, 0.3).. (-0.6, 0.3)--(-0.6, -0.7) (0.2, -0.7)--(0.2, 0.3).. controls +(0, 0.3) and +(0, 0.3).. (0.6, 0.3)--(0.6, -0.7);
           \draw [fill=white] (-0.4, -0.3) rectangle (0.4, 0.3);
           \node at (0, 0) {\tiny $v_j^*$};          
        \end{scope}
        \end{tikzpicture}}}} + \lambda^{1/2} \vcenter{\hbox{ \begin{tikzpicture} 
  \draw [blue] (-0.2, -1.25)--(-0.2, 0.5) (0.2,  -1.25)--(0.2, 0.5) (0.6,  -1.25)--(0.6, 0.5) (-0.6,  -1.25)--(-0.6, 0.5);
        \draw [fill=white] (-0.4, -0.25) rectangle (0.4, 0.25);
        \node at (0, 0) {\tiny $p$};
        \begin{scope}[shift={(0, -0.75)}]
        \draw [fill=white] (-0.4, -0.25) rectangle (0.8, 0.25);
        \node at ( 0.2,0) {\tiny $s_{3, +}$};
        \end{scope}
  \end{tikzpicture}}}$;
\item $\widehat{\Psi} =\vcenter{\hbox{\scalebox{0.8}{
        \begin{tikzpicture}[scale=1.2]
           \draw [blue]  (0.2, 0.7)--(0.2, -1.2) (-0.2, 0.7)--(-0.2, -1.2);
           \draw [white, line width=3] (0.7, -0.2).. controls +(0, -0.4) and +(0, -0.4) .. (-0.7, -0.2);
           \draw [blue] (0.7, 0.7)--(0.7, -0.2).. controls +(0, -0.4) and +(0, -0.4) .. (-0.7, -0.2)--(-0.7, 0.7);
           \draw [fill=white] (-0.4, -0.25) rectangle (0.4, 0.25);
           \node at (0, 0) {\tiny $p$};
           \begin{scope}[shift={(0, -1.2)}]
           \draw [blue]  (0.2, -0.7)--(0.2, 0) (-0.2, -0.7)--(-0.2, 0);
           \draw [white, line width=3] (0.7, 0.2).. controls +(0, 0.4) and +(0, 0.4) .. (-0.7, 0.2);
              \draw [blue] (0.7, -0.7)--(0.7, 0.2).. controls +(0, 0.4) and +(0, 0.4) .. (-0.7, 0.2)--(-0.7, -0.7);
               \draw [fill=white] (-0.4, -0.25) rectangle (0.4, 0.25);
           \node at (0, 0) {\tiny $ p$};
           \end{scope}
        \end{tikzpicture}}}}$, where the associated planar algebra admits braidings.
\item the convex combination of the above forms.
\end{enumerate}
\end{remark}

\subsection{Bimodule Markov Semigroups}

Suppose $\cN\subset \cM$ is a finite inclusion of finite von Neumann algebras.
A continuous family $\{\Phi_t:\cM\to \cM\}_{t\geq 0}$ of quantum channels is a quantum Markov semigroup if
\begin{enumerate}[(1)]
    \item $\Phi_t\Phi_s=\Phi_{t+s}$ for all $t, s\geq 0$.
    \item $\Phi_0=\id$.
    \item $\Phi_t$ is normal for all $t\geq 0$.
\end{enumerate}
We say $\{\Phi_t\}_{t\geq 0}$ is a bimodule quantum Markov semigroup if $\Phi_t$ is a bimodule quantum channel for $t\geq 0$ with respect to $\cN$.
For more details on bimodule quantum Markov semigroups, we refer to \cite{WuZha25}.

Let $\cL$ be the generator of a bimodule quantum Markov semigroup $\{\Phi_t\}_{t\geq 0}$, i.e. $e^{-t\cL}=\Phi_t$, which is also called Lindbladian.
The Fourier multiplier $\widehat{\cL}$ is related to the Fourier multipliers $\{\widehat{\Phi}_t\}_{t\geq 0}$ as $\displaystyle \widehat{\cL}=\lim_{t\to 0} \frac{\lambda^{-1/2} e_2- \widehat{\Phi}_t}{t}$. 
We are interesting in the following two components of the Fourier multiplier $\widehat{\cL}$:
\begin{align*}
    \widehat{\cL}_0= & -(1-e_2)\widehat{\cL}(1-e_2)\geq 0. \\
    \widehat{\cL}_1 =& e_2\widehat{\cL}(1-e_2).
\end{align*}
The generator $\cL$ can be written in terms of $\widehat{\cL}_0$ and $\widehat{\cL}_1$ as follows:
\begin{align}
 \cL(x)=\lambda^{-1/2}\bE_{\cM}(e_2 \widehat{\cL} e_2)x+\lambda^{-3/2} \bE_{\cM}(e_2e_1\widehat{\cL}_1^*) x+ \lambda^{-3/2} x \bE_{\cM}(\widehat{\cL}_1 e_1 e_2)-  x*\widehat{\cL}_0.   
\end{align}
The Laplacian $\cL_a$ of $\{\Phi_t\}_{t\geq 0}$ is 
\begin{align}
    \cL_a(x)= \frac{1}{2}(1*\widehat{\cL}_0) x
        + \frac{1}{2} x  (1*\widehat{\cL}_0)-x* \widehat{\cL}_0, \quad x\in \cM.
\end{align}
Define 
    \begin{align*}
\cL_w (x)
=&  i[x, \Im\bE_{\cM}(\mathfrak{F}^{-1}(\widehat{\cL}_1))], \\
\Im\bE_{\cM}(\mathfrak{F}^{-1}(\widehat{\cL}_1))
= & \frac{i}{2}\left(\bE_{\cM}(\mathfrak{F}^{-1}(\widehat{\cL}_1))^*- \bE_{\cM}(\mathfrak{F}^{-1}(\widehat{\cL}_1))\right).
    \end{align*}
Then $\mathcal{L}$ is decomposed as $\cL=\cL_a+ \cL_w.$


Suppose that $2\mathbf{y}=1*\widehat{\cL}_0$.
Then the Fourier multiplier of $\cL$ can be written as
\begin{align*}
\widehat{\cL}=   \vcenter{\hbox{\begin{tikzpicture}[scale=0.65]
    \begin{scope}[shift={(0,1.5)}]
    \draw [blue] (-0.5, 0.8)--(-0.5, 0) .. controls +(0, -0.6) and +(0,-0.6).. (0.5, 0)--(0.5, 0.8);    
\begin{scope}[shift={(0.5, 0.3)}]
\draw [fill=white] (-0.3, -0.3) rectangle (0.3, 0.3);
\node at (0, 0) {\tiny $\widetilde{\mathbf{y}}$};
\end{scope}
    \end{scope}
\draw [blue] (-0.5, -0.8)--(-0.5, 0) .. controls +(0, 0.6) and +(0,0.6).. (0.5, 0)--(0.5, -0.8);
\end{tikzpicture}}}
+ \vcenter{\hbox{\begin{tikzpicture}[scale=0.65]
    \begin{scope}[shift={(0,1.5)}]
    \draw [blue] (-0.5, 0.8)--(-0.5, 0) .. controls +(0, -0.6) and +(0,-0.6).. (0.5, 0)--(0.5, 0.8);    
    \end{scope}
\draw [blue] (-0.5, -0.8)--(-0.5, 0) .. controls +(0, 0.6) and +(0,0.6).. (0.5, 0)--(0.5, -0.8);
\begin{scope}[shift={(0.5, -0.3)}]
\draw [fill=white] (-0.3, -0.3) rectangle (0.3, 0.3);
\node at (0, 0) {\tiny $\widetilde{\mathbf{y}}^*$};
\end{scope}
\end{tikzpicture}}} - \vcenter{\hbox{
    \begin{tikzpicture}
       \draw [blue] (-0.15, -0.6)--(-0.15, 0.6) (0.15, -0.6)--(0.15, 0.6);
       \draw[fill=white] (-0.35, -0.3) rectangle (0.35, 0.3);
       \node at (0, 0) {\tiny $ \widehat{\cL}_0$};
    \end{tikzpicture}
    }},
\end{align*}
where $\widetilde{\mathbf{y}}=\mathbf{y}+i \Im\bE_{\cM}(\mathfrak{F}^{-1}(\widehat{\cL}_1))$.

\subsection{Derivations}
The derivation $\partial$ is defined as follows:
\begin{align}
    \partial x=\left[x, \mathfrak{F}^{-1}(\widehat{\cL}_0^{1/2})\right], \quad x\in \cM.
\end{align}
    The Fourier multiplier of $\partial$ is in the 3-box space of the corresponding planar algebra and depicted as follows:
    \begin{align*}
       \widehat{\partial}= \vcenter{\hbox{\scalebox{0.8}{
        \begin{tikzpicture}[scale=1.2]
           \draw [blue] (-0.2, 0.4)--(-0.2, 0.9) (0.2, 0.4)--(0.2, 0.9);
           \draw [blue] (-0.2, -0.4)--(0.7, -1.2);
           \draw [blue] (0.2, -0.4).. controls +(0, -0.3) and +(0, -0.3) .. (0.7, -0.4)--(0.7, 0.9);
           \draw [blue] (-0.2, -1.2).. controls +(0, 0.4) and +(0, 0.4).. (0.3, -1.2);
           \draw [fill=white] (-0.5, -0.4) rectangle (0.5, 0.4);
           \node at (0, 0) {\tiny $\widehat{\cL}_0^{1/2}$};
        \end{tikzpicture}
        }}}
        -
              \vcenter{\hbox{\scalebox{0.8}{
        \begin{tikzpicture}[scale=1.2]
           \draw [blue] (-0.2, -0.4)--(-0.2, -0.9) (0.2, -0.4)--(0.2, -0.9);
           \draw [blue] (-0.2, 0.4)--(0.7, 1.2);
           \draw [blue] (0.2, 0.4).. controls +(0, 0.3) and +(0, 0.3) .. (0.7, 0.4)--(0.7, -0.9);
           \draw [blue] (-0.3, 1.2).. controls +(0, -0.4) and +(0, -0.4).. (0.3, 1.2);
           \draw [fill=white] (-0.5, -0.4) rectangle (0.5, 0.4);
           \node at (0, 0) {\tiny $\overline{\widehat{\cL}_0^{1/2}}$};
        \end{tikzpicture}
        }}}
        =
        \vcenter{\hbox{\scalebox{0.8}{
        \begin{tikzpicture}[scale=1.2]
           \draw [blue] (-0.2, 0.4)--(-0.2, 0.9) (0.2, 0.9)--(0.2, -1.2);
           \draw [blue] (-0.2, -0.4).. controls +(0, -0.3) and +(0, -0.3) .. (-0.7, -0.4)--(-0.7, 0.9);
           \draw [blue] (-0.2, -1.2).. controls +(0, 0.4) and +(0, 0.4).. (-0.7, -1.2);
           \draw [fill=white] (-0.55, -0.4) rectangle (0.55, 0.4);
           \node at (0, 0) {\tiny $\mathfrak{F}^{-1}(\widehat{\cL}_0^{1/2})$};
        \end{tikzpicture}}}}
        -
           \vcenter{\hbox{\scalebox{0.8}{
        \begin{tikzpicture}[scale=1.2]
           \draw [blue] (-0.2, -0.4)--(-0.2, -0.9) (0.2, -0.9)--(0.2, 1.2);
           \draw [blue] (-0.2, 0.4).. controls +(0, 0.3) and +(0, 0.3) .. (-0.7, 0.4)--(-0.7, -0.9);
           \draw [blue] (-0.2, 1.2).. controls +(0, -0.4) and +(0, -0.4).. (-0.7, 1.2);
           \draw [fill=white] (-0.55, -0.4) rectangle (0.55, 0.4);
           \node at (0, 0) {\tiny $\mathfrak{F}^{-1}(\widehat{\cL}_0^{1/2})$};
        \end{tikzpicture}}}},
    \end{align*}
where the Fourier multiplier is obtained from the extension of $\partial$ from $\cM_1$ to $\cM_1$.
Note that $\widehat{\partial}\in \cM'\cap \cM_3$.
Moreover, we have that 
\begin{align*}
       \widehat{\partial^*}=  \lambda^{1/2}\left( 
          \vcenter{\hbox{\scalebox{0.8}{
        \begin{tikzpicture}[scale=1.2]
           \draw [blue] (-0.2, 0.4)--(-0.2, 0.9) (0.2, 0.9)--(0.2, -0.4)--(-0.7, -1.2);
           \draw [blue] (-0.2, -0.4).. controls +(0, -0.3) and +(0, -0.3) .. (-0.7, -0.4)--(-0.7, 0.9);
           \draw [blue] (0, -1.2).. controls +(0, 0.4) and +(0, 0.4).. (0.5, -1.2);
           \draw [fill=white] (-0.55, -0.4) rectangle (0.55, 0.4);
           \node at (0, 0) {\tiny $\overline{ \widehat{\cL}_0^{1/2}}$};
        \end{tikzpicture}}}}
        -\vcenter{\hbox{\scalebox{0.8}{
        \begin{tikzpicture}[scale=1.2]
           \draw [blue] (-0.2, -0.4)--(-0.2, -0.9) (0.2, -0.9)--(0.2, 0.4)--(-0.7, 1.2);
           \draw [blue] (-0.2, 0.4).. controls +(0, 0.3) and +(0, 0.3) .. (-0.7, 0.4)--(-0.7, -0.9);
           \draw [blue] (0, 1.2).. controls +(0, -0.4) and +(0, -0.4).. (0.5, 1.2);
           \draw [fill=white] (-0.55, -0.4) rectangle (0.55, 0.4);
           \node at (0, 0) {\tiny $\widehat{\cL}_0^{1/2}$};
        \end{tikzpicture}}}} \right),
        \quad 
               \widehat{\overline{\partial}}= \vcenter{\hbox{\scalebox{0.8}{
        \begin{tikzpicture}[scale=1.2]
           \draw [blue] (-0.2, 0.4)--(-0.2, 0.9) (0.2, 0.4)--(0.2, 0.9);
           \draw [blue] (-0.2, -0.4)--(0.7, -1.2);
           \draw [blue] (0.2, -0.4).. controls +(0, -0.3) and +(0, -0.3) .. (0.7, -0.4)--(0.7, 0.9);
           \draw [blue] (-0.2, -1.2).. controls +(0, 0.4) and +(0, 0.4).. (0.3, -1.2);
           \draw [fill=white] (-0.5, -0.4) rectangle (0.5, 0.4);
           \node at (0, 0) {\tiny $\overline{\widehat{\cL}_0^{1/2}}$};
        \end{tikzpicture}
        }}}
        -
              \vcenter{\hbox{\scalebox{0.8}{
        \begin{tikzpicture}[scale=1.2]
           \draw [blue] (-0.2, -0.4)--(-0.2, -0.9) (0.2, -0.4)--(0.2, -0.9);
           \draw [blue] (-0.2, 0.4)--(0.7, 1.2);
           \draw [blue] (0.2, 0.4).. controls +(0, 0.3) and +(0, 0.3) .. (0.7, 0.4)--(0.7, -0.9);
           \draw [blue] (-0.3, 1.2).. controls +(0, -0.4) and +(0, -0.4).. (0.3, 1.2);
           \draw [fill=white] (-0.5, -0.4) rectangle (0.5, 0.4);
           \node at (0, 0) {\tiny ${\widehat{\cL}_0^{1/2}}$};
        \end{tikzpicture}
        }}},
\end{align*}
where $\partial^*: \cM_1\to \cM$ is the adjoint of $\partial$ and $\widehat{\partial^*}\in \cM_1'\cap \cM_4$, and $\overline{\partial}: \cM \to \cM_1$ is defined by $\overline{\partial } x=\left[x, \fF^{-1}(\widehat{\cL}_0^{1/2})^*\right]$ for all $x\in \cM$.

Let $\displaystyle \widehat{\cL}_0=\sum_{j=1}^m  \omega_j p_j$ be the spectral decomposition and the directional derivation $\partial_j$ is 
\begin{align*}
    \partial_j x=\omega_j^{1/2} \left [x, \mathfrak{F}^{-1}(p_j)\right], \quad x\in \cM. 
\end{align*}
We have that $\displaystyle \partial=\sum_{j=1}^m  \partial_j$, $\displaystyle \partial^* =\sum_{j=1}^m \partial_j^*$.

\subsection{Gradient Form}
The gradient form of a quantum Markov semigroup $\{\Phi_t\}_{t\geq 0}$ is given by 
\begin{align*}
    \Gamma(x, y)=\frac{1}{2} (x^*\cL(y)+\cL(x)^*y-\cL(x^*y))=\frac{\lambda^{-1/2}}{2}\bE_{\cM}((\partial y)^*(\partial x)),
\end{align*}
where $x, y\in \cM$.
Moreover,
\begin{align*}
\Gamma(x, y)=\frac{1}{2}\left( y^*(1*\widehat{\cL}_0 )x -y^*(x*\widehat{\cL}_0)-(y^**\widehat{\cL}_0)x+(y^*x)* \widehat{\cL}_0\right).
\end{align*}
For convenience, we shall write $\Gamma(x)$ for $\Gamma(x,x)$.
We have that
\begin{align*}
 \frac{d}{ds} \Phi_s(\Phi_{t-s}(x)^*\Phi_{t-s}(x))=2\Phi_s\Gamma(\Phi_{t-s}(x))
\end{align*} 
for all $x\in \cM$ and $s\in [0, t]$.
Let $\widehat{\Gamma}\in \cM_1'\cap \cM_4$ be
\begin{align}\label{eq:gradfourier}
\widehat{\Gamma}=\frac{1}{2}\vcenter{\hbox{\scalebox{0.8}{
        \begin{tikzpicture}[scale=1.2]
           \draw [blue]  (0.3, 0.4)--(0.3, -0.9) (0, 0.9)--(0, -0.9) ;
            \draw [blue] (0.3, 0.3)..controls +(0, 0.45) and +(0, 0.45)..(0.8, 0.3)--(0.8, -1.6) .. controls +(0, -0.45) and +(0, -0.45).. (0.3, -1.6);
        \draw [blue] (-0.3, 0.9)--(-0.3, -0.3)..controls +(0, -0.45) and +(0, -0.45)..(-0.8, -0.3)--(-0.8, 0.9);
           \draw [fill=white] (-0.5, -0.3) rectangle (0.5, 0.3);
           \node at (0, 0) {\tiny $\widehat{\partial}$};
           \begin{scope}[shift={(0, -1.4)}]
        \draw [blue]  (0.3, 0.9)--(0.3, -0.3) (0, 0.9)--(0, -0.9) ;
           \draw [blue] (-0.3, -0.9)--(-0.3, 0.3)..controls +(0, 0.45) and +(0, 0.45)..(-0.8, 0.3)--(-0.8, -0.9);
           \draw [fill=white] (-0.5, -0.3) rectangle (0.5, 0.3);
           \node at (0, 0) {\tiny $\widehat{\partial}^*$};
           \end{scope}
        \end{tikzpicture}}}}
        =\frac{1}{2}\left(     \vcenter{\hbox{\scalebox{0.8}{
        \begin{tikzpicture}[scale=1.2]
        \draw [blue] (-0.6, 0.9)--(-0.6, -0.9);
           \draw [blue]  (-0.2, 0.9)--(-0.2, -0.9) (0.2, 0.9)--(0.2, -0.9);
           \draw [fill=white] (-0.4, -0.3) rectangle (0.4, 0.3);
           \node at (0, 0) {\tiny $ \widehat{\cL}_0$};
        \end{tikzpicture}}}}
        -   \vcenter{\hbox{\scalebox{0.8}{
        \begin{tikzpicture}[scale=1.2]
         \draw [blue] (0.3, 0.9).. controls +(0, -0.3) and +(0, -0.3).. (0.6, 0.9);
           \draw [blue]  (0.2, 0.9)--(0.2, -0.9);
           \draw [blue](-0.2, -0.9)--(-0.2, 0.3).. controls +(0, 0.3) and +(0, 0.3).. (-0.6, 0.3)--(-0.6, -0.9);
           \draw [fill=white] (-0.4, -0.3) rectangle (0.5, 0.3);
           \node at (0, 0) {\tiny $ \widehat{\cL}_0$};
        \end{tikzpicture}}}}
        -
        \vcenter{\hbox{\scalebox{0.8}{
        \begin{tikzpicture}[scale=1.2]
         \draw [blue] (0.3, -0.9).. controls +(0, 0.3) and +(0, 0.3).. (0.6, -0.9);
           \draw [blue]  (0.2, 0.9)--(0.2, -0.9);
           \draw [blue](-0.2, 0.9)--(-0.2, -0.3).. controls +(0, -0.3) and +(0, -0.3).. (-0.6, -0.3)--(-0.6, 0.9);
           \draw [fill=white] (-0.4, -0.3) rectangle (0.4, 0.3);
           \node at (0, 0) {\tiny $ \widehat{\cL}_0$};
        \end{tikzpicture}}}}
        +\vcenter{\hbox{\scalebox{0.8}{
        \begin{tikzpicture}[scale=1.2]
         \draw [blue] (0.3, 0.9).. controls +(0, -0.3) and +(0, -0.3).. (0.6, 0.9);
         \draw [blue] (0.3, -0.9).. controls +(0, 0.3) and +(0, 0.3).. (0.6, -0.9);
           \draw [blue]  (0.2, 0.9)--(0.2, -0.9) ;
           \draw [blue] (-0.2, 0.3).. controls +(0, 0.3) and +(0, 0.3).. (-0.6, 0.3)--(-0.6, -0.3).. controls+(0, -0.3) and +(0, -0.3).. (-0.2, -0.3);
           \draw [fill=white] (-0.4, -0.3) rectangle (0.4, 0.3);
           \node at (0, 0) {\tiny $ \widehat{\cL}_0$};
        \end{tikzpicture}}}} \right),
\end{align}
where we omit two vertical lines on the left hand side for simplicity.
We shall interpret the gradient form in terms of $\widehat{\Gamma}$.

\begin{proposition}\label{prop:gammafourier}
For any $x, y\in \cM$, we have that 
\begin{align*}
\Gamma(x, y)=\lambda^{-11/2} \bE_{\cM}\left( \widetilde{e}_2\widetilde{e}_1 y^* e_1 x \widehat{\Gamma} \widetilde{e}_1\widetilde{e}_2\right).
\end{align*}
\end{proposition}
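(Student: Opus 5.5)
The identity is linear in the four-term expansion of $\widehat{\Gamma}$ in \eqref{eq:gradfourier}. I would verify it term by term against the algebraic formula
\[
\Gamma(x, y)=\tfrac{1}{2}\bigl( y^*(1*\widehat{\cL}_0 )x -y^*(x*\widehat{\cL}_0)-(y^**\widehat{\cL}_0)x+(y^*x)* \widehat{\cL}_0\bigr).
\]
The basic tool is the mechanism already used in Proposition \ref{prop:extension1}: for the inclusion $\cN\subset\cM_1$ with Jones projections $\widetilde{e}_1,\widetilde{e}_2$, a bimodule map $\Psi$ on $\cM_1$ is recovered from its multiplier by $\Psi(z)\widetilde e_2=\lambda^{-3}\widetilde{e}_2\widetilde{e}_1\widehat{\Psi} z\widetilde{e}_1\widetilde{e}_2$. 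Applying $\bE_{\cM}$ and using $\bE_{\cM_3}(\widetilde e_2)=\lambda^2$ (index $\lambda^{-2}$) turns the right-hand side of the proposition into
\[
\lambda^{-11/2}\bE_{\cM}(\cdots)=\lambda^{-1/2}\,\bE_{\cM}\bigl(\Psi_\Gamma(y^*e_1x)\bigr),
\]
where $\Psi_\Gamma$ is the $\cN$-bimodule map on $\cM_1$ with multiplier $\widehat\Gamma$, regarded in $\cM_1'\cap\cM_4$ after adding the two omitted strings. I would first fix this normalization carefully, checking it on $\widehat\Gamma=\lambda^{-1/2}\cdot$identity-type elements.

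Next, I would identify each of the four pictures as the multiplier of an explicit map on elements $y^*e_1x$. Reading the diagrams, the first term is $\widehat{\cL}_0$ acting on the middle strands, with the left-most string being the through-string of $e_1$. It should give $y^*(1*\widehat\cL_0)e_1x$ up to powers of $\lambda$, hence after $\bE_\cM$ (with $\bE_\cM(e_1)=\lambda$) the term $y^*(1*\widehat\cL_0)x$. The second and third terms, which have a cap on the top or bottom right, correspond to $y^*\,(x*\widehat\cL_0)$ and $(y^**\widehat\cL_0)\,x$. In each, the cap absorbs $e_1$ and converts the multiplication into the convolution formula \eqref{eq:multiplierphi}. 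The fourth term, with caps on both sides, is the convolution applied to $y^*\bE_\cM$-collapsed, giving $(y^*x)*\widehat\cL_0$. Each of these is a planar isotopy computation: expand $\widetilde e_1,\widetilde e_2$ in terms of $e_i$, use $e_ie_{i\pm1}e_i=\lambda e_i$ and the pull-through of $x,y\in\cM$ past $e_2,e_3,\dots$, and finally apply \eqref{eq:multiplierphi}.

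As an alternative check on the first equality in \eqref{eq:gradfourier}, I would use $\Gamma(x,y)=\frac{\lambda^{-1/2}}{2}\bE_\cM((\partial y)^*\partial x)$. The composite of $\widehat\partial$ and $\widehat{\partial}^*$ stacked is the multiplier of $z\mapsto(\partial\,\cdot)^*(\partial\,\cdot)$ evaluated on $y^*e_1x$. Via the $\partial$-picture expressed through $\mathfrak F^{-1}(\widehat\cL_0^{1/2})$, multiplying out the commutators yields exactly the four terms.

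The main obstacle is bookkeeping the scalar powers of $\lambda$. I have to make sure the total is $\lambda^{-11/2}$, combining the $\lambda^{-3}$ from the $\widetilde e$ reconstruction, $\lambda^{-5/2}$ from \eqref{eq:multiplierphi}, and the loop/cap factors. I would track this with the first term, where everything is explicit, and then confirm that the remaining terms carry identical normalizations by isotopy.
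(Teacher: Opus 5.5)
Your strategy is the paper's. You expand $\widehat{\Gamma}$ into its four pictures and match them one at a time with the four terms of $2\Gamma(x,y)$. The paper makes this algebraic by writing $2\widehat{\Gamma}=\gamma_{2,+}^{-1}\bigl(\widehat{\cL}_0-\lambda^{-1}\widehat{\cL}_0e_1e_2-\lambda^{-1}e_2e_1\widehat{\cL}_0+\lambda^{-1/2}(1*\widehat{\cL}_0)e_2\bigr)$ and pushing Jones projections through. Your normalization is right: $\lambda^{-11/2}=\lambda^{-1/2}\cdot\lambda^{-5}$, so the right-hand side is $\lambda^{-1/2}\Phi_\Gamma(y^*e_1x)$. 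No auxiliary map on $\cM_1$ and no added strings are needed. $\widehat{\Gamma}\in\cM_1'\cap\cM_4$, with the two omitted strands already built in, is directly the multiplier of $\Phi_\Gamma:\cM_1\to\cM$.

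However, your predicted matching of the first and fourth pictures is backwards.

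The first picture is the shift $\gamma_{2,+}^{-1}(\widehat{\cL}_0)$ of the full $\widehat{\cL}_0$. When you push the $\widetilde{e}$'s through, the $e_1$ between $y^*$ and $x$ is absorbed, and you get $\bE_{\cM}\bigl(\widetilde{e}_2\widetilde{e}_1y^*e_1x\,\gamma_{2,+}^{-1}(\widehat{\cL}_0)\widetilde{e}_1\widetilde{e}_2\bigr)=\lambda^{11/2}(y^*x)*\widehat{\cL}_0$. This is precisely the computation the paper carries out.

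The fourth picture is $\lambda^{-1/2}\gamma_{2,+}^{-1}\bigl((1*\widehat{\cL}_0)e_2\bigr)$. It contains $\widehat{\cL}_0$ only through the closed loop on its left legs, so its contribution is determined by $1*\widehat{\cL}_0=\Phi_{\cL_0}(1)$ alone. It therefore yields $y^*(1*\widehat{\cL}_0)x$. It cannot yield $(y^*x)*\widehat{\cL}_0=\Phi_{\cL_0}(y^*x)$, which depends on the whole map $\Phi_{\cL_0}$.

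Both terms carry coefficient $+1$ in both expansions, so the proposition itself is unaffected. But the two verifications as you stated them would each fail, and you need to swap them. The two single-cap pictures do account for the two mixed terms $-y^*(x*\widehat{\cL}_0)$ and $-(y^**\widehat{\cL}_0)x$.
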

\begin{proof}
Note that 
\begin{align*}
2\widehat{\Gamma}= \gamma_{2, +}^{-1}\left(\widehat{\cL}_0-\lambda^{-1}\widehat{\cL}_0 e_1 e_2-\lambda^{-1}e_2e_1\widehat{\cL}_0+\lambda^{-1/2}(1*\widehat{\cL}_0)e_2\right).
\end{align*}
We have that 
\begin{align*}
& \bE_{\cM}( \widetilde{e}_2\widetilde{e}_1 y^* e_1 x \gamma_{2, +}^{-1}(\widehat{\cL}_0) \widetilde{e}_1\widetilde{e}_2)\\
=& \lambda^{-8}\bE_{\cM}(e_4e_3e_5e_4 e_2e_1 e_3 e_2 y^* e_1 e_2 e_3 e_4e_5\widehat{\cL}_0 e_5 e_4 e_3 e_2 e_1 x e_2 e_1 e_3e_2 e_4 e_5 e_3 e_4)\\
=& \bE_{\cM}(e_4 e_3 e_5 e_2 e_1 y^* x \widehat{\cL}_0 e_1e_2 e_3 e_4 )\\
=& \lambda^{11/2} (y^*x)*\widehat{\cL}_0.
\end{align*}
The rest can be check similarly and we see that the proposition is true.
\end{proof}

Now we can define a bimodule map $\Phi_{\Gamma}: \cM_1 \to \cM$ as $\Phi_{\Gamma}(x)=\lambda^{-5}\bE_{\cM}(\widetilde{e}_2\widetilde{e}_1 x \widehat{\Gamma} \widetilde{e}_1\widetilde{e}_2)$ which is completely positive.
Moreover $\widehat{\Gamma}=\widehat{\Phi_{\Gamma}}$, which is the Fourier multiplier of $\Phi_{\Gamma}$.
We shall call $\widehat{\Gamma}$ as the Fourier multiplier of the gradient form $\Gamma$ by Proposition \ref{prop:gammafourier}.

\begin{remark}
We have the following properties for $\widehat{\Gamma}$:
\begin{enumerate}
    \item $\widehat{\Gamma}\geq 0$ followed from the first form of $\widehat{\Gamma}$ in Equation \eqref{eq:gradfourier}.
    \item  $\vcenter{\hbox{\scalebox{0.8}{
        \begin{tikzpicture}[scale=1.2]
           \draw [blue] (-0.3, 0.3)--(-0.3, -0.6) (0, 0.3)--(0, -0.6) (0.3, 0.6)--(0.3, -0.6) ;
           \draw [blue] (-0.3, 0.3).. controls +(0, 0.25) and +(0, 0.25).. (0, 0.3);
           \draw [fill=white] (-0.5, -0.3) rectangle (0.5, 0.3);
           \node at (0, 0) {\tiny $\widehat{\Gamma}$};
        \end{tikzpicture}}}}= \vcenter{\hbox{\scalebox{0.8}{
        \begin{tikzpicture}[scale=1.2]
           \draw [blue] (-0.3, -0.3)--(-0.3, 0.6) (0, -0.3)--(0, 0.6) (0.3, 0.6)--(0.3, -0.6) ;
           \draw [blue] (-0.3, -0.3).. controls +(0, -0.25) and +(0, -0.25).. (0, -0.3);
           \draw [fill=white] (-0.5, -0.3) rectangle (0.5, 0.3);
           \node at (0, 0) {\tiny $\widehat{\Gamma}$};
        \end{tikzpicture}}}}=0$, followed from the first form of $\widehat{\Gamma}$ in Equation \eqref{eq:gradfourier}.
\item $\displaystyle \vcenter{\hbox{\scalebox{0.8}{
        \begin{tikzpicture}[scale=1.2]
           \draw [blue] (0, 0.6)--(0, -0.6) (0.3, 0.6)--(0.3, -0.6) ;
           \draw [blue] (-0.3, 0.3).. controls +(0, 0.35) and +(0, 0.35).. (-0.7, 0.3)--(-0.7, -0.3).. controls +(0, -0.35) and +(0, -0.35).. (-0.3, -0.3);
           \draw [fill=white] (-0.5, -0.3) rectangle (0.5, 0.3);
           \node at (0, 0) {\tiny $\widehat{\Gamma}$};
        \end{tikzpicture}}}}=\frac{\lambda^{-1/2}}{2} \vcenter{\hbox{\scalebox{0.8}{
        \begin{tikzpicture}[scale=1.2]
           \draw [blue]  (-0.2, 0.6)--(-0.2, -0.6) (0.2, 0.6)--(0.2, -0.6);
           \draw [fill=white] (-0.4, -0.3) rectangle (0.4, 0.3);
           \node at (0, 0) {\tiny $ \widehat{\cL}_0$};
        \end{tikzpicture}}}}+\frac{1}{2}\lambda^{-1}\tau_2(\widehat{\cL}_0) \vcenter{\hbox{\scalebox{0.8}{
        \begin{tikzpicture}[scale=1.2]
     \draw [blue] (0, -0.5).. controls +(0, 0.4) and +(0, 0.4).. (0.4, -0.5);
     \draw [blue] (0, 0.5).. controls +(0, -0.4) and +(0, -0.4).. (0.4, 0.5);       
        \end{tikzpicture}}}}$, followed from the second form of $\widehat{\Gamma}$ in Equation \eqref{eq:gradfourier}.
\item $\displaystyle \vcenter{\hbox{\scalebox{0.8}{
        \begin{tikzpicture}[scale=1.2]
           \draw [blue] (-0.3, 0.6)--(-0.3, -0.6);
           \draw [blue] (0.3, 0.3).. controls +(0, 0.25) and +(0, 0.25).. (0, 0.3);
           \draw [blue] (0.3, -0.3).. controls +(0, -0.25) and +(0, -0.25).. (0, -0.3);
           \draw [fill=white] (-0.5, -0.3) rectangle (0.5, 0.3);
           \node at (0, 0) {\tiny $\widehat{\Gamma}$};
        \end{tikzpicture}}}}=\frac{\lambda^{-1}}{2}\vcenter{\hbox{\scalebox{0.8}{
        \begin{tikzpicture}[scale=1.2]
           \draw [blue]  (0.2, 0.6)--(0.2, -0.6) ;
           \draw [blue] (-0.2, 0.3).. controls +(0, 0.3) and +(0, 0.3).. (-0.6, 0.3)--(-0.6, -0.3).. controls+(0, -0.3) and +(0, -0.3).. (-0.2, -0.3);
           \draw [fill=white] (-0.4, -0.3) rectangle (0.4, 0.3);
           \node at (0, 0) {\tiny $ \widehat{\cL}_0$};
        \end{tikzpicture}}}}$.
\end{enumerate}
\end{remark}

\begin{remark}
Suppose that 
\begin{align*}
 \widehat{\cL}_0=\frac{1}{\lambda^{-1/2}-\lambda^{1/2}}\left(\vcenter{\hbox{\scalebox{0.8}{
        \begin{tikzpicture}[scale=1.2]
           \draw [blue] (0.2, -0.6)--(0.2, 0.6) (-0.2, -0.6)--(-0.2, 0.6);              
        \end{tikzpicture}}}} -\lambda^{1/2}\vcenter{\hbox{\scalebox{0.8}{
        \begin{tikzpicture}[scale=1.2]
     \draw [blue] (0, -0.5).. controls +(0, 0.4) and +(0, 0.4).. (0.4, -0.5);
     \draw [blue] (0, 0.5).. controls +(0, -0.4) and +(0, -0.4).. (0.4, 0.5);       
        \end{tikzpicture}}}}\right).
\end{align*}   
Then $1*\widehat{\cL}_0=1$ and $\cL=(1-\lambda)^{-1}(\id - \bE_{\cN})$, which is the dephasing semigroup.
The Fourier multiplier of the associated gradient form is
\begin{align*}
    \widehat{\Gamma}=\frac{1}{2(\lambda^{-1/2}-\lambda^{1/2})}\left( \vcenter{\hbox{\scalebox{0.8}{
        \begin{tikzpicture}[scale=1.2]
           \draw [blue] (0.2, -0.6)--(0.2, 0.6) (-0.2, -0.6)--(-0.2, 0.6) (-0.6, -0.6)--(-0.6, 0.6);     
        \end{tikzpicture}}}}  
+\lambda^{-1/2}\vcenter{\hbox{\scalebox{0.8}{
        \begin{tikzpicture}[scale=1.2]
     \draw [blue] (0, -0.5).. controls +(0, 0.4) and +(0, 0.4).. (0.4, -0.5);
     \draw [blue] (0, 0.5).. controls +(0, -0.4) and +(0, -0.4).. (0.4, 0.5);       
     \begin{scope}[shift={(-0.4,0)}]
     \draw [blue] (0,-0.5)--(0, 0.5);
           \end{scope}
        \end{tikzpicture}}}}
-   \vcenter{\hbox{\scalebox{0.8}{
        \begin{tikzpicture}[scale=1.2]
        \draw[blue] (-0.4,0.5) --(0.6, -0.5);
        \begin{scope}[shift={(-0.2, 0)}]
     \draw [blue] (0, -0.5).. controls +(0, 0.4) and +(0, 0.4).. (0.4, -0.5);
     \end{scope}
     \draw [blue] (0, 0.5).. controls +(0, -0.4) and +(0, -0.4).. (0.4, 0.5);       
        \end{tikzpicture}}}} 
    - \vcenter{\hbox{\scalebox{0.8}{
        \begin{tikzpicture}[scale=1.2]
        \draw[blue] (0.6,0.5) --(-0.4, -0.5);
     \draw [blue] (0, -0.5).. controls +(0, 0.4) and +(0, 0.4).. (0.4, -0.5);
  \begin{scope}[shift={(-0.2, 0)}]
     \draw [blue] (0, 0.5).. controls +(0, -0.4) and +(0, -0.4).. (0.4, 0.5);   
          \end{scope}
        \end{tikzpicture}}}} \right).
\end{align*}
\end{remark}

\subsection{Iterated Gradient Form}

The iterated gradient form (which is  also called iterated carr\'{e} du champ) $\Gamma_2$ is defined as 
\begin{align*}
\Gamma_2(x, y) =&  \frac{1}{2} (\Gamma(x, \cL(y))+\Gamma(\cL(x), y))-\cL(\Gamma(x, y)))\\  
=& \frac{\lambda^{-1/2}}{4}\bE_{\cM}((\partial \cL(y))^*(\partial x)) + \frac{\lambda^{-1/2}}{4}\bE_{\cM}((\partial y)^*(\partial \cL (x))) - \frac{\lambda^{-1/2}}{4}\cL\bE_{\cM}((\partial y)^*(\partial x)),
\end{align*}
where $x, y\in \cM$.
For convenience, we shall write $\Gamma_2(x)$ for $\Gamma_2(x,x)$.
We have that 
\begin{align*}
\frac{d}{ds} \Phi_s\Gamma(\Phi_{t-s}(x))=2\Phi_s\Gamma_2(\Phi_{t-s}(x)), \quad x\in \cM, \quad s\in [0, t].
\end{align*}
We denote by $\widehat{\Gamma}_2$ the Fourier multiplier of $\Gamma_2$.
Then 
\begin{align*}
\Gamma_2(x, y)=\lambda^{-11/2}\bE_{\cM}(\widetilde{e}_2 \widetilde{e}_1 y^*e_1x \widehat{\Gamma}_2 \widetilde{e}_1 \widetilde{e}_2)
\end{align*} 
for $x, y\in \cM$. 
The associated bimodule map is denoted by $\Phi_{\Gamma_2}$.
By Proposition \ref{prop:gammafourier}, we have that the Fourier multiplier $\widehat{\Gamma}_2$ of $\Gamma_2$ is 
\begin{align*}
4\widehat{\Gamma}_2
=  \vcenter{\hbox{\scalebox{0.8}{
}}}=(I) +(II)-(III).
\end{align*}
By the pictorial representation of $\widehat{\Gamma}$, we have that
\begin{align*}
 (I)= \vcenter{\hbox{\scalebox{0.8}{
        %
}}}.
\end{align*}
By expanding $\widehat{\cL}$, we have that $2\widehat{\Gamma}_2$ is the sum of the following items:
\begin{align*}
\vcenter{\hbox{\scalebox{0.8}{
        %
}}}  . 
\end{align*}

\begin{remark}
We have the following properties for $\widehat{\Gamma}_2$:
\begin{enumerate}
    \item $\widehat{\Gamma}_2 =\widehat{\Gamma}_2^*$. However we do not have $\widehat{\Gamma}_2\geq 0$ in general.
    \item
    \begin{align*}
   2 \vcenter{\hbox{\scalebox{0.8}{
        %
}}} .
\end{align*}
\end{enumerate}
\end{remark}

\begin{proposition}\label{prop:iteratedgrad}
Suppose that $1*\widehat{\cL}_0=1$ and $\cL_w=0$.
We have that 
\begin{align*}
2\widehat{\Gamma}_2= & \vcenter{\hbox{\scalebox{0.8}{
        %
}}}.
\end{align*}
\end{proposition}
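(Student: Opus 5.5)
The plan is to start from the general expansion of $2\widehat{\Gamma}_2$ displayed just before the statement, obtained by writing $\widehat{\Gamma}_2$ as $(I)+(II)-(III)$ and expanding $\widehat{\cL}$. Then I will specialise term by term using the two hypotheses. The basic input is the decomposition
\begin{align*}
\widehat{\cL}=\text{(cap--cup diagram with } \widetilde{\mathbf{y}}\text{)}+\text{(cap--cup diagram with } \widetilde{\mathbf{y}}^*\text{)}-\widehat{\cL}_0, \qquad \widetilde{\mathbf{y}}=\mathbf{y}+i\Im\bE_{\cM}(\mathfrak{F}^{-1}(\widehat{\cL}_1)).
\end{align*}
The hypothesis $1*\widehat{\cL}_0=1$ gives $2\mathbf{y}=1$, so $\mathbf{y}=\tfrac12$. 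The hypothesis $\cL_w=0$ means $\Im\bE_{\cM}(\mathfrak{F}^{-1}(\widehat{\cL}_1))$ commutes with all of $\cM$. I will argue that this element can be taken to be $0$, or more precisely that it contributes nothing. It is central in $\cM$, and commuting elements drop out of every term of $\Gamma_2$, since $\Gamma$ only sees $\cL_a$.

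It follows that $\widehat{\cL}=\tfrac12\lambda^{-1/2}\cdot 2\,e_2$-type terms $-\widehat{\cL}_0$; in pictures, the identity cap--cup diagram (with coefficient $1$ after normalisation) minus $\widehat{\cL}_0$. Equivalently, $\cL(x)=x-x*\widehat{\cL}_0$.

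Next I will substitute this into the three pieces of $\Gamma_2$ and handle each one.

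\emph{Terms $(I)$ and $(II)$.} In $\Gamma(x,\cL y)+\Gamma(\cL x,y)$, the identity part of $\cL$ gives exactly $2\Gamma(x,y)$, whose multiplier is $2\widehat{\Gamma}$. The $-x*\widehat{\cL}_0$ part is obtained by composing $\widehat{\Gamma}$ with $\widehat{\cL}_0$ on the appropriate input strands. Pictorially, $\widehat{\cL}_0$ is stacked below $\widehat{\Gamma}$ on the strands carrying $x$, and, via the contragredient, on the strands carrying $y^*$. This uses Proposition \ref{prop:gammafourier} together with the composition rule for bimodule maps, i.e.\ the convolution formula \eqref{eq:multiplierphi}.

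\emph{Term $(III)$.} The term $\cL(\Gamma(x,y))$ splits as $\Gamma(x,y)-\Gamma(x,y)*\widehat{\cL}_0$. The first part cancels against the identity contributions from $(I)$ and $(II)$, with coefficient bookkeeping $2\cdot\tfrac12-1$ after the overall normalisation. The second part is $\widehat{\cL}_0$ attached on the output side of $\widehat{\Gamma}$, i.e.\ on the two strands coming from the extension $\widetilde{e}_1\widetilde{e}_2$. To read this off I will use Proposition \ref{prop:extension1}, together with the identity $\gamma_{2,+}^{-1}$ used in the proof of Proposition \ref{prop:gammafourier}, to move $\widehat{\cL}_0$ into position.

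The remaining diagrams are then simplified with the properties of $\widehat{\Gamma}$ in the preceding remark. Capping $\widehat{\Gamma}$ at the top or bottom left gives zero, which kills several terms. The partial trace formula turns closed loops into $\lambda^{-1/2}\widehat{\cL}_0$ plus a $\tau_2(\widehat{\cL}_0)$ multiple of the Jones projection, and the normalisation $1*\widehat{\cL}_0=1$ fixes that constant. Collecting the surviving diagrams should give the claimed expression for $2\widehat{\Gamma}_2$.

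The main obstacle is the careful tracking of the $\lambda$-powers and of the placement (left versus right, and contragredient $\overline{\widehat{\cL}_0}$) when transporting $\widehat{\cL}_0$ through $\gamma_{2,+}^{-1}$ and the Jones projections $\widetilde{e}_i$. I would first verify the identity on the dephasing example $\widehat{\cL}_0\propto(1-\lambda^{1/2}\,\cdot\,\text{cup-cap})$, where both sides can be computed explicitly, to pin down the constants.
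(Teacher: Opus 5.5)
Your overall route is the paper's. Its proof only notes that the hypotheses give $\widetilde{\mathbf y}=\mathbf y=\frac12$ and substitutes this into the general expansion of $2\widehat\Gamma_2$. Your handling of $\cL_w=0$ is correct, and slightly more careful than the paper's bare assertion: the element $i\Im\bE_{\cM}(\mathfrak F^{-1}(\widehat\cL_1))$ is central, so $\cL=\cL_a$ and the multiplier is unchanged.

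Your term-by-term re-derivation, however, has a concrete error in $(III)$. The iterated gradient form is $\Gamma_2(x,y)=\frac12\bigl(\Gamma(x,\cL y)+\Gamma(\cL x,y)-\cL\Gamma(x,y)\bigr)$, with the $\frac12$ on all three terms. This is forced by the second line of the definition, which has $\frac{\lambda^{-1/2}}{4}$ in front of every term, and by $\frac{d}{ds}\Phi_s\Gamma(\Phi_{t-s}x)=2\Phi_s\Gamma_2(\Phi_{t-s}x)$. With $\cL=\id-(\cdot)*\widehat\cL_0$, the identity parts therefore contribute $\Gamma+\Gamma-\Gamma=\Gamma$ to $2\Gamma_2$, not $0$. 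Your bookkeeping ``$2\cdot\frac12-1$'' applies the $\frac12$ only to the first two terms. As a result, the expression you would assemble,
\[
-\Gamma(x,y*\widehat\cL_0)-\Gamma(x*\widehat\cL_0,y)+\Gamma(x,y)*\widehat\cL_0 ,
\]
equals $2\Gamma_2(x,y)-\Gamma(x,y)$. The correct $2\widehat\Gamma_2$ must retain a copy of $\widehat\Gamma$, that is, the four diagrams of \eqref{eq:gradfourier} with coefficient $\frac12$.

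The dephasing check you proposed exposes this. Take $\cL=c(\id-\bE_\cN)$ with $c=(1-\lambda)^{-1}$ and $x$ with $\bE_\cN(x)=0$. Then $\Gamma(x,x)=\frac c2\bigl(x^*x+\bE_\cN(x^*x)\bigr)$, and the paper's value is $2\Gamma_2(x,x)=c^2\bigl(\frac12x^*x+\frac32\bE_\cN(x^*x)\bigr)$. Your cancelled expression gives $\frac{c(c-1)}{2}x^*x+\frac{3c^2-c}{2}\bE_\cN(x^*x)$, which falls short by exactly $\Gamma(x,x)$.

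To fix it, either substitute $\widetilde{\mathbf y}=\mathbf y=\frac12$ directly into the displayed general expansion, which is all the paper does, or keep the surviving $\widehat\Gamma$ term in your own derivation. The further simplifications via the capping and partial-trace properties of $\widehat\Gamma$ are not needed for the statement.
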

\begin{proof}
By the assumptions, we have that $\displaystyle \widetilde{\mathbf{y}}=\mathbf{y}=\frac{1}{2}$.
By the Fourier multiplier of $\widehat{\Gamma}_2$, we see that the proposition is true.
\end{proof}

\begin{remark}
Under the assumption that $1*\widehat{\cL}_0=1$ and $\cL_w=0$, we have that 
\begin{enumerate}
 \item  $\vcenter{\hbox{\scalebox{0.8}{
        %
}}}\right).
\end{align*} 
The Fourier multiplier of the associated iterated gradient form is
\begin{align*}
    \widehat{\Gamma}_2=\frac{\lambda^{-1/2}}{2(\lambda^{-1/2}-\lambda^{1/2})^2}\left( \frac{3}{2} \vcenter{\hbox{\scalebox{0.8}{
        %
}}} \right)\geq 0.
\end{align*}
The matrix form of $\widehat{\Gamma}_2$ is 
\begin{align*}
\frac{\lambda^{-1/2}}{2(\lambda^{-1/2}-\lambda^{1/2})^2}
 %
\end{align*}
with respect to the system of matrix units 
\begin{align*}
\{e_2,  (\lambda-\lambda^2)^{-1/2} (e_1\vee e_2-e_2)e_1e_2, (\lambda-\lambda^2)^{-1/2} e_2e_1(e_1\vee e_2-e_2),e_1\vee e_2-e_2, 1-e_1\vee e_2\}.
\end{align*}
Note that $1-e_1\vee e_2=0$ if $\displaystyle \lambda=\frac{1}{2}$.
\end{remark}

\section{Bakry-\'{E}mery Estimate}
In this section, we investigate the Bakry-\'{E}mery's $\Gamma_2$-criterion\cite{BakEme85} for bimodule quantum Markov semigroups.

\begin{definition}
Suppose that $\beta\in \bR$ and $\{\Phi_t\}_{t \geq 0}$ is a quantum Markov semigroup.
We say $\{\Phi_t\}_{t \geq 0}$ satisfies the condition $\CBE(\beta)$ if 
\begin{align*}
    \left(\Gamma(\Phi_t(x_j), \Phi_t(x_k))\right)_{j,k=1}^n \leq e^{-2\beta t}\left( \Phi_t\Gamma(x_j, x_k) \right)_{j,k=1}^n,
\end{align*}
for all $x_1, \ldots, x_n\in \cM$ and $n\in \bN$.
\end{definition}

By taking the differentiation, we see that the condition $\CBE(\beta)$ is equivalent to 
\begin{align}\label{eq:beeq1}
\left(\Gamma_2(x_j, x_k) \right)_{j,k=1}^n  \geq \beta \left(\Gamma(x_j, x_k) \right)_{j,k=1}^n.
\end{align}
Recall that $\{\Phi_t\}_{t \geq 0}$ satisfies the condition $\BE(\beta)$ if $ \Gamma(\Phi_t(x), \Phi_t(x)) \leq e^{-2\beta t} \Phi_t\Gamma(x, x)$ for all $x\in \cM$.
This is equivalent to $\Gamma_2(x, x) \geq \beta \Gamma(x, x)$ for all $x\in \cM$.

\begin{theorem}
Suppose that $\{\Phi_t\}_{t\geq 0}$ is a bimodule quantum Markov semigroup.
Then $\{\Phi_t\}_{t \geq 0}$ satisfies $\CBE(\beta)$ if and only if 
\begin{align}\label{eq:cbe2}
    \widehat{\Gamma}_2 \geq \beta \widehat{\Gamma}.
\end{align}
Pictorially, we have that
\begin{align*}
\vcenter{\hbox{\scalebox{0.8}{
        \begin{tikzpicture}[scale=1.2]
           \draw [blue]  (0.3, 0.4)--(0.3, -0.9) (0, 0.9)--(0, -0.9) ;
            \draw [blue] (0.3, 0.3)..controls +(0, 0.45) and +(0, 0.45)..(0.8, 0.3)--(0.8, -1.6) .. controls +(0, -0.45) and +(0, -0.45).. (0.3, -1.6);
           \draw [blue] (-0.3, 0.3)..controls +(0, 0.45) and +(0, 0.45)..(-0.8, 0.3);
        \draw [blue] (-0.3, -0.3)..controls +(0, -0.45) and +(0, -0.45)..(-0.8, -0.3);
           \draw [fill=white] (-0.5, -0.3) rectangle (0.5, 0.3);
           \node at (0, 0) {\tiny $\widehat{\partial}$};
           \begin{scope}[shift={(-1, 0)}]
           \draw [blue] (-0.2, 0.9)--(-0.2, -0.3)..controls +(0, -0.45) and +(0, -0.45)..(-0.6, -0.3)--(-0.6, 0.9);
           \draw [fill=white] (-0.4, -0.3) rectangle (0.4, 0.3);
           \node at (0, 0) {\tiny $ \widehat{\cL}$};               
           \end{scope}
           \begin{scope}[shift={(0, -1.4)}]
        \draw [blue]  (0.3, 0.9)--(0.3, -0.3) (0, 0.9)--(0, -0.9) ;
           \draw [blue] (-0.3, -0.9)--(-0.3, 0.3)..controls +(0, 0.45) and +(0, 0.45)..(-0.8, 0.3)--(-0.8, -0.9);
           \draw [fill=white] (-0.5, -0.3) rectangle (0.5, 0.3);
           \node at (0, 0) {\tiny $\widehat{\partial}^*$};
           \end{scope}
        \end{tikzpicture}}}}
        +
         \vcenter{\hbox{\scalebox{0.8}{
        \begin{tikzpicture}[scale=1.2]
           \draw [blue]  (0.3, 0.9)--(0.3, -0.3) (0, 0.9)--(0, -0.9) ;
        \draw [blue] (0.3, -0.3)..controls +(0, -0.45) and +(0, -0.45)..(0.8, -0.3)--(0.8, 1.6) .. controls +(0, 0.45) and +(0, 0.45).. (0.3, 1.6);
           \draw [blue] (-0.3, 0.3)..controls +(0, 0.45) and +(0, 0.45)..(-0.8, 0.3);
        \draw [blue] (-0.3, -0.3)..controls +(0, -0.45) and +(0, -0.45)..(-0.8, -0.3);
           \draw [fill=white] (-0.5, -0.3) rectangle (0.5, 0.3);
           \node at (0, 0) {\tiny $\widehat{\partial}^*$};
           \begin{scope}[shift={(-1, 0)}]
           \draw [blue] (-0.2, -0.9)--(-0.2, 0.3)..controls +(0, 0.45) and +(0, 0.45)..(-0.6, 0.3)--(-0.6, -0.9);
           \draw [fill=white] (-0.4, -0.3) rectangle (0.4, 0.3);
           \node at (0, 0) {\tiny $ \widehat{\cL}$};               
           \end{scope}
           \begin{scope}[shift={(0, 1.4)}]
        \draw [blue]  (0.3, 0.3)--(0.3, -0.9) (0, 0.9)--(0, -0.9) ;
           \draw [blue] (-0.3, 0.9)--(-0.3, -0.3)..controls +(0, -0.45) and +(0, -0.45)..(-0.8, -0.3)--(-0.8, 0.9);
           \draw [fill=white] (-0.5, -0.3) rectangle (0.5, 0.3);
           \node at (0, 0) {\tiny $\widehat{\partial}$};
           \end{scope}
        \end{tikzpicture}}}}
        - \vcenter{\hbox{\scalebox{0.8}{
        \begin{tikzpicture}[scale=1.2]
           \draw [blue]  (0.3, 0.3)--(0.3, -0.9) (0, 0.3)--(0, -0.9) ;
        \draw [blue] (-0.3, 0.9)--(-0.3, -0.3)..controls +(0, -0.45) and +(0, -0.45)..(-0.8, -0.3)--(-0.8, 0.9);
           \draw [fill=white] (-0.5, -0.3) rectangle (0.5, 0.3);
           \node at (0, 0) {\tiny $\widehat{\partial}$};
           \begin{scope}[shift={(0, -1.4)}]
        \draw [blue]  (0.3, 0.9)--(0.3, -0.3) (0, 0.9)--(0, -0.3) ;
           \draw [blue] (-0.3, -0.9)--(-0.3, 0.3)..controls +(0, 0.45) and +(0, 0.45)..(-0.8, 0.3)--(-0.8, -0.9);
           \draw [fill=white] (-0.5, -0.3) rectangle (0.5, 0.3);
           \node at (0, 0) {\tiny $\widehat{\partial}^*$};
           \end{scope}
         \begin{scope}[shift={(1.4, -0.7)}]
           \draw [blue] (-0.6, 0)--(-0.6, 1)..controls +(0, 0.4) and +(0, 0.4)..(-1.1, 1);
            \draw [blue] (-0.2, 0.3)--(-0.2, 1)..controls +(0, 0.65) and +(0, 0.65)..(-1.4, 1);
           \draw [blue] (-0.6, 0)--(-0.6, -1)..controls +(0, -0.4) and +(0, -0.4)..(-1.1, -1);
            \draw [blue] (-0.2, -0.3)--(-0.2, -1)..controls +(0, -0.65) and +(0, -0.65)..(-1.4, -1);
            \draw [blue] (0.1, -1.6)--(0.1, 1.6) ;
           \draw [fill=white] (-0.4, -0.3) rectangle (0.4, 0.3);
           \node at (0, 0) {\tiny $ \widehat{\cL}$};               
           \end{scope}
        \end{tikzpicture}}}}
        \geq 2\beta \vcenter{\hbox{\scalebox{0.8}{
        \begin{tikzpicture}[scale=1.2]
           \draw [blue]  (0.3, 0.4)--(0.3, -0.9) (0, 0.9)--(0, -0.9) ;
            \draw [blue] (0.3, 0.3)..controls +(0, 0.45) and +(0, 0.45)..(0.8, 0.3)--(0.8, -1.6) .. controls +(0, -0.45) and +(0, -0.45).. (0.3, -1.6);
        \draw [blue] (-0.3, 0.9)--(-0.3, -0.3)..controls +(0, -0.45) and +(0, -0.45)..(-0.8, -0.3)--(-0.8, 0.9);
           \draw [fill=white] (-0.5, -0.3) rectangle (0.5, 0.3);
           \node at (0, 0) {\tiny $\widehat{\partial}$};
           \begin{scope}[shift={(0, -1.4)}]
        \draw [blue]  (0.3, 0.9)--(0.3, -0.3) (0, 0.9)--(0, -0.9) ;
           \draw [blue] (-0.3, -0.9)--(-0.3, 0.3)..controls +(0, 0.45) and +(0, 0.45)..(-0.8, 0.3)--(-0.8, -0.9);
           \draw [fill=white] (-0.5, -0.3) rectangle (0.5, 0.3);
           \node at (0, 0) {\tiny $\widehat{\partial}^*$};
           \end{scope}
        \end{tikzpicture}}}}.
\end{align*}
\end{theorem}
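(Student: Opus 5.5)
Our strategy is to reduce $\CBE(\beta)$ to positivity of a single bimodule map and then to use the correspondence between complete positivity and positivity of Fourier multipliers. By the differentiated form \eqref{eq:beeq1}, $\CBE(\beta)$ is equivalent to
\begin{align*}
\left(\Gamma_2(x_j,x_k)-\beta\Gamma(x_j,x_k)\right)_{j,k=1}^n\geq 0
\end{align*}
for all $x_1,\dots,x_n\in\cM$ and all $n$. Proposition \ref{prop:gammafourier} and the analogous formula for $\Gamma_2$ express both forms through the element $X=\widehat{\Gamma}_2-\beta\widehat{\Gamma}\in\cM_1'\cap\cM_4$:
\begin{align*}
\Gamma_2(x,y)-\beta\Gamma(x,y)=\lambda^{-11/2}\bE_{\cM}\left(\widetilde{e}_2\widetilde{e}_1 y^*e_1x X\widetilde{e}_1\widetilde{e}_2\right)=\lambda^{-1/2}\,\Phi_X(y^*e_1x).
\end{align*}
Here $\Phi_X(z)=\lambda^{-5}\bE_{\cM}(\widetilde{e}_2\widetilde{e}_1 zX\widetilde{e}_1\widetilde{e}_2)$ is the bimodule map $\cM_1\to\cM$ with Fourier multiplier $X$, defined in the same way as $\Phi_\Gamma$. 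It therefore suffices to show that $X\geq 0$ if and only if the matrix $\left(\Phi_X(x_k^*e_1x_j)\right)_{j,k}$ is positive for all choices of the $x_j$. We must be careful with index order so that the matrix is the correct transpose.

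For the direction ($\Leftarrow$), assume $X\geq 0$. Then $\Phi_X$ is completely positive, by the recalled fact that a bimodule map is completely positive iff its multiplier is positive; here we apply it to the inclusion $\cN\subset\cM_1$ with Jones projections $\widetilde e_1,\widetilde e_2$, and the case $\Phi_\Gamma$ is the model. Since $e_1$ is a projection, the matrix $(x_k^*e_1x_j)_{j,k}$ equals $R^*R$ with $R$ the row $(e_1x_1,\dots,e_1x_n)$, so it is positive in $M_n(\cM_1)$. Complete positivity then makes its image positive, which gives $\CBE(\beta)$.

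The direction ($\Rightarrow$) is the main obstacle. We must show that positivity of $\Phi_X$ tested only on matrices of the special form $(x_k^*e_1x_j)$ forces $X\geq0$. The plan is to use the defining sesquilinear form \eqref{eqn:: bilinear form induced by Fourier multiplier}, adapted to $\cN\subset\cM_1$. For $\xi=\sum_j a_j\widetilde e_1 b_j\Omega$ with $a_j,b_j\in\cM_1$, it gives
\begin{align*}
\langle X\xi,\xi\rangle=c\sum_{j,k}\tau_1\left(b_k^*\Phi_X(a_k^*a_j)b_j\right),
\end{align*}
with $c>0$. Since $X$ already lies in $\cM_1'\cap\cM_4$, the element $\Phi_X(a_k^*a_j)$ depends only on the $\cM_1$-bimodule structure. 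Writing each $a_j\in\cM_1$ via the Pimsner--Popa basis as a finite sum $\sum_l u_l e_1 v_l$ with $u_l,v_l\in\cM$, and using the $\cM$-bimodularity of $\Phi_X$ in the left variable (inherited from $X\in\cM_1'$), we absorb the outer $\cM$ factors into the $b_j$. The sum then reduces to $\sum\tau(b_k'^*\Phi_X(v_k^*e_1v_j)b_j')$. This is nonnegative by the hypothesis applied to the finite family $\{v_j\}$, paired with the vector $(b_j')$.

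The delicate check is that the middle factors $e_1u_k^*u_je_1=\bE_\cN(u_k^*u_j)e_1$ can be pushed into the $v$'s using the $\cN$-bimodularity. After this step every term has the form $y^*e_1x$, so the quadratic form is nonnegative on a dense set of vectors, and hence $X\geq 0$. Finally, the pictorial inequality follows by substituting the diagrams of $\widehat\Gamma$ from \eqref{eq:gradfourier} and of $4\widehat{\Gamma}_2=(I)+(II)-(III)$, and multiplying both sides by $2$ (hence $2\beta$ on the right).
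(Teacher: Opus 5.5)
Your direction from $X=\widehat{\Gamma}_2-\beta\widehat{\Gamma}\geq 0$ to $\CBE(\beta)$ is exactly the paper's argument. The converse, from $\CBE(\beta)$ to $X\geq 0$, has a genuine gap at the step you call delicate, and it has two parts.

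First, the $\cM$-bimodularity of $\Phi_X$ that you invoke does not hold. $X\in\cM_1'$ commutes with $\cM_1$, but $\widetilde e_1$ commutes only with $\cN$, so $\Phi_X$ is merely $\cN$-bimodular. For instance, $\Phi_\Gamma(ae_1x)=\lambda^{1/2}\Gamma(x,a^*)$ while $\Phi_\Gamma(e_1x)=\lambda^{1/2}\Gamma(x,1)=0$, so left $\cM$-modularity would force $\Gamma\equiv 0$. No $\cM$-factors may be moved out of $\Phi_X$; the $v$'s have to stay inside.

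Second, after expanding $a_j=\sum_l u_{jl}e_1v_{jl}$ you get
\[
a_k^*a_j=\sum_{l,l'}v_{kl'}^*\,\bE_\cN(u_{kl'}^*u_{jl})\,e_1\,v_{jl}.
\]
The $\cN$-valued coefficient depends on both indices, so it cannot be ``pushed into the $v$'s'' term by term. Observing that each term has the form $y^*e_1x$ gives nothing. The hypothesis is positivity of a whole matrix $\bigl(\Phi_X(x_k^*e_1x_j)\bigr)_{k,j}$ built from a single family $\{x_j\}$, not nonnegativity of individual terms $\tau_1(b_k^*\Phi_X(y^*e_1x)b_j)$.

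The missing idea is positivity plus factorization of the $\cN$-valued Gram matrix. The block matrix $N=\bigl(\bE_\cN(u_{kl'}^*u_{jl})\bigr)_{(k,l'),(j,l)}$ is positive over $\cN$, because $\bE_\cN$ is completely positive. Hence $N=C^*C$ with entries $c_{p,(j,l)}\in\cN$. Since $e_1$ commutes with $\cN$, this gives $a_k^*a_j=\sum_p w_{pk}^*e_1w_{pj}$ with $w_{pj}=\sum_l c_{p,(j,l)}v_{jl}\in\cM$. Then
\[
\langle X\xi,\xi\rangle=c\sum_p\sum_{j,k}\tau_1\bigl(b_k^*\Phi_X(w_{pk}^*e_1w_{pj})b_j\bigr)\geq 0
\]
by the hypothesis applied to each family $\{w_{pj}\}_j$. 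With this step, and density of the vectors $\sum_j a_j\widetilde e_1 b_j\Omega$, your quadratic-form route goes through. It is then essentially the paper's argument: the paper passes from \eqref{eq:beeq2} to the matrices $(x_j^*e_1y_j^*y_ke_1x_k)_{j,k}=(x_j^*\bE_\cN(y_j^*y_k)e_1x_k)_{j,k}$ using this same factorization implicitly. The only difference is that the paper concludes complete positivity of $\Phi_{\Gamma_2}-\beta\Phi_\Gamma$, whereas you test positivity of $X$ directly on vectors.
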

\begin{proof}
 Suppose that $\widehat{\Gamma}_2 \geq \beta \widehat{\Gamma}$.
 We have that the associated bimodule maps satisfies $\Phi_{\Gamma_2}-\beta\Phi_{\Gamma}$ is completely positive. 
 Hence
 \begin{equation}\label{eq:beeq2}
 \begin{aligned}
& \left( ( \Phi_{\Gamma_2}-\beta\Phi_{\Gamma})\otimes \id_n\right)
\left(
 \begin{pmatrix}
 x_1^* \\ \vdots \\ x_n^*
 \end{pmatrix} e_1
 \begin{pmatrix}
 x_1 & \cdots & x_n
 \end{pmatrix}\right)
 \geq & 0.
 \end{aligned}
 \end{equation}
 This implies that Equation \eqref{eq:beeq1} is true and $\{\Phi_t\}_{t\geq 0}$ satisfies $\CBE(\beta)$.
 
Suppose that $\{\Phi_t\}_{t\geq 0}$ satisfies $\CBE(\beta)$.
We see that Inequality \eqref{eq:beeq2} is true.
This implies that 
\begin{align*}
\left((\Phi_{\Gamma_2}-\beta\Phi_{\Gamma})\otimes \id_n\right)\left(
\begin{pmatrix}
x_1^*e_1 y_1^* \\ \vdots \\ x_n^*e_1 y_n^*
\end{pmatrix}
\begin{pmatrix}
y_1e_1x_1 & \cdots & y_n e_1 x_n
\end{pmatrix}\right)
\geq 0.
\end{align*}
Hence $\Phi_{\Gamma_2}-\beta\Phi_{\Gamma}$ is completely positive and $\widehat{\Gamma}_2 - \beta \widehat{\Gamma}\geq 0$.
\end{proof}

In \cite{WirZha21b}, Wirth and Zhang pointed out that $\{\Phi_t\}_{t\geq 0}$ satisfies $\CBE(\beta)$ if $\{\Phi_t\}_{t\geq 0}$ satisfies $\BE(\beta)$ and $\cM$ is commutative.
In the following, we present a different sufficient condition.
\begin{proposition}
Suppose that the inclusion $\cN \subset \cM_1$ admits a downward basic construction and for all $x_1, \ldots, x_n\in \cM$, $n\in \bN$ with $1\leq n \leq \lfloor \lambda^{-1}\rfloor+1$,
\begin{align*}
\sum_{j,k=1}^n \Gamma_2(x_j, x_k) \geq \beta \sum_{j,k=1}^n \Gamma(x_j, x_k).
\end{align*}
Then $\{\Phi_t\}_{t\geq 0}$ satisfies $\CBE(\beta)$.
\end{proposition}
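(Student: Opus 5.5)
By the preceding theorem, it suffices to show $\widehat{\Gamma}_2-\beta\widehat{\Gamma}\geq 0$ in $\cM_1'\cap\cM_4$. Equivalently, the bimodule map $\Psi:=\Phi_{\Gamma_2}-\beta\Phi_{\Gamma}:\cM_1\to\cM$ should be completely positive. Since $\cN\subset\cM_1$ admits a downward basic construction, the remark in the preliminaries (positivity of a bimodule map is equivalent to complete positivity when a downward basic construction exists) reduces this further to showing that $\Psi$ is merely positive on $\cM_1$. So the plan is to prove $\Psi(z^*z)\geq 0$ for all $z\in\cM_1$, using only the hypothesis on sums $\sum_{j,k}\Gamma_2(x_j,x_k)-\beta\Gamma(x_j,x_k)$.

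First I would write an element $z\in\cM_1$ in a form adapted to the formula
\begin{align*}
\Gamma_2(x, y)-\beta\Gamma(x,y)=\lambda^{-11/2}\bE_{\cM}\bigl(\widetilde{e}_2 \widetilde{e}_1 y^*e_1x (\widehat{\Gamma}_2-\beta\widehat{\Gamma}) \widetilde{e}_1 \widetilde{e}_2\bigr),
\end{align*}
which gives $\Psi(y^*e_1x)=\lambda^{1/2}(\Gamma_2-\beta\Gamma)(x,y)$ up to the normalization constant. Using the Pimsner--Popa basis $\{\eta_j\}$, every $z\in\cM_1$ can be written as $z=\sum_j e_1 x_j$ with $x_j\in\cM$. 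Concretely, $z=\sum_j z\eta_j^*e_1\eta_j$, and then I use the bimodule property to absorb the left $\cM$-part: $\Psi$ is $\cN$-bimodular as a map on $\cM_1$ (it is $\cM$-bimodule on the left factor in the relevant sense), and $e_1 a e_1=\bE_\cN(a)e_1$. The number of terms needed is controlled by the index. A careful count using the orthogonal basis, with $\sum\eta_j^*\eta_j=\lambda^{-1}$, should give at most $\lfloor\lambda^{-1}\rfloor+1$ terms; this is exactly where the bound on $n$ enters.

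Next I would compute $z^*z=\sum_{j,k}x_j^*e_1x_k$, so that
\[
\Psi(z^*z)=\lambda^{1/2}\sum_{j,k}\bigl(\Gamma_2(x_k,x_j)-\beta\Gamma(x_k,x_j)\bigr)\geq 0
\]
by the hypothesis. Positivity then yields complete positivity, hence $\widehat{\Gamma}_2\geq\beta\widehat{\Gamma}$, and the preceding theorem gives $\CBE(\beta)$.

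The main obstacle is the decomposition step: showing that every positive element of $\cM_1$ of the form $z^*z$ (or a generating family of such elements whose convex cone is the positive cone) can be written with $z=\sum_{j=1}^n e_1x_j$ and $n\leq\lfloor\lambda^{-1}\rfloor+1$. In particular, this requires handling the non-integer index by adding one extra basis element. If it proves hard, I would first try to prove positivity only on elements of this form and argue that these already generate the positive cone of $\cM_1$ as a bimodule over $\cN$. Since $\Psi$ is $\cN$-bimodular, positivity on such generators conjugated by $\cN$ suffices.
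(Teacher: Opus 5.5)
The reduction chain in your proposal is the same as the paper's. You use $\widehat{\Gamma}_2\geq\beta\widehat{\Gamma}$ iff $\Psi=\Phi_{\Gamma_2}-\beta\Phi_{\Gamma}$ is completely positive, reduce this to positivity of $\Psi$ on $\cM_1$ via the downward basic construction, and use $\Psi(y^*e_1x)=\lambda^{1/2}(\Gamma_2-\beta\Gamma)(x,y)$.

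\textbf{The gap is your description of the positive cone, which is false as stated.}
\begin{itemize}
\item $\sum_j e_1x_j=e_1X$ with $X=\sum_jx_j$. So your $z^*z$ is always the single term $X^*e_1X=(e_1X)^*(e_1X)$.
\item Its support is equivalent to a subprojection of $e_1$, so its trace is at most $\tau_1(e_1)=\lambda$. For example, $1\in\cM_1$ is never of this form when $\lambda<1$, and no count of basis elements changes that.
\item You also cannot absorb the left $\cM$-part. $\Psi$ is only $\cN$-bimodular, and for $a\in\cM$, $\Psi(ay^*e_1x)=\lambda^{1/2}(\Gamma_2-\beta\Gamma)(x,ya^*)$ is not $a\Psi(y^*e_1x)$.
\item By sesquilinearity, the double sum in the hypothesis is just $(\Gamma_2-\beta\Gamma)(X,X)$. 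So the hypothesis is exactly $\BE(\beta)$, and the bound on $n$ is not what drives the argument.
\end{itemize}

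\textbf{The correct decomposition is diagonal.}
\begin{itemize}
\item Write $z=\sum_j z\eta_j^*e_1\eta_j=\sum_jc_je_1\eta_j$ with $c_j=\lambda^{-1}\bE_{\cM}(z\eta_j^*e_1)\in\cM$.
\item Then $z^*z=\sum_{j,k}\eta_j^*\bE_{\cN}(c_j^*c_k)e_1\eta_k$.
\item The matrix $G=(\bE_{\cN}(c_j^*c_k))_{j,k}$ is positive in $M_m(\cN)$ because $\bE_{\cN}$ is completely positive. Factor $G=D^*D$ with $D=(d_{lk})\in M_m(\cN)$.
\item Since $\cN$ commutes with $e_1$, this gives $z^*z=\sum_l y_l^*e_1y_l$ with $y_l=\sum_kd_{lk}\eta_k\in\cM$.
\item Hence $\Psi(z^*z)=\lambda^{1/2}\sum_l(\Gamma_2-\beta\Gamma)(y_l,y_l)\geq 0$ by the case $n=1$ of the hypothesis.
\end{itemize}

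The paper's one-line claim that positive elements of $\cM_1$ have the form $\sum x_j^*e_1x_k$ has to be read this way, as a sum of terms $x_j^*e_1x_j$. Your fallback idea of checking positivity on the generators $y^*e_1y$ is the right one. It needs exactly this Gram-matrix factorization to show that these elements generate the positive cone, and no conjugation by $\cN$ is required.
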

\begin{proof}
Note that any positive element in $\cM_1$ is the form of $\displaystyle \sum_{j,k=1}^n x_j^* e_1 x_k$ for some $x_j\in \cM$, where $n\leq \lfloor \lambda^{-1}\rfloor +1$.
By the assumption, we have that $\Phi_{\Gamma_2}-\beta\Phi_{\Gamma}$ is a positive map.
Note that a positive bimodule map is completely positive if the associated inclusion admits downward basic construction.
Hence $\Phi_{\Gamma_2}-\beta\Phi_{\Gamma}$ is a completely positive map and the desired result is true.
\end{proof}

\begin{proposition}
Suppose that $\{\Phi_t\}_{t\geq 0}$ satisfies $\CBE(\beta)$.
If $\cL_w=0$ and $1*\widehat{\cL}_0=1$, we have that
\begin{align*}
\frac{1}{2}\lambda \vcenter{\hbox{\scalebox{0.8}{
        \begin{tikzpicture}[scale=1.2]
            \draw [blue] (-1.6, 0.8)--(-1.6, -0.8);
           \draw [blue] (-0.2, 0.3)..controls +(0, 0.45) and +(0, 0.45)..(-0.8, 0.3);
           \draw [blue] (0.2, 0.3)..controls +(0, 0.75) and +(0, 0.75)..(-1.2, 0.3);
        \draw [blue] (-0.2, -0.3)..controls +(0, -0.45) and +(0, -0.45)..(-0.8, -0.3);
        \draw [blue] (0.2, -0.3)..controls +(0, -0.75) and +(0, -0.75)..(-1.2, -0.3);
           \draw [fill=white] (-0.4, -0.3) rectangle (0.4, 0.3);
           \node at (0, 0) {\tiny $\widehat{\cL}_0$};
           \begin{scope}[shift={(-1, 0)}]
           \draw [fill=white] (-0.4, -0.3) rectangle (0.4, 0.3);
           \node at (0, 0) {\tiny $ \widehat{\cL}_0$};               
           \end{scope}
        \end{tikzpicture}}}}  
        + \frac{1}{2}\lambda^{1/2}\vcenter{\hbox{\scalebox{0.8}{
        \begin{tikzpicture}[scale=1.2]
           \draw [blue] (0.2, 0.3)--(0.2, -0.9) ;
           \draw [blue] (-0.2, 0.3)..controls +(0, 0.45) and +(0, 0.45)..(-0.8, 0.3);
           \draw [blue] (0.2, 0.3)..controls +(0, 0.75) and +(0, 0.75)..(-1.2, 0.3);
        \draw [blue] (-0.2, -0.3)..controls +(0, -0.45) and +(0, -0.45)..(-0.8, -0.3);
           \draw [fill=white] (-0.4, -0.3) rectangle (0.4, 0.3);
           \node at (0, 0) {\tiny $\widehat{\cL}_0$};
           \begin{scope}[shift={(-1, 0)}]
           \draw [blue] (-0.2, 0.3)--(-0.2, -0.3)..controls +(0, -0.45) and +(0, -0.45)..(-0.6, -0.3)--(-0.6, 0.9);
           \draw [fill=white] (-0.4, -0.3) rectangle (0.4, 0.3);
           \node at (0, 0) {\tiny $ \widehat{\cL}_0$};               
           \end{scope}
        \end{tikzpicture}}}}
        +   \frac{1}{2}\lambda^{1/2} \vcenter{\hbox{\scalebox{0.8}{
        \begin{tikzpicture}[scale=1.2]
           \draw [blue] (0.2, 0.9)--(0.2, -0.3) ;
           \draw [blue] (-0.2, 0.3)..controls +(0, 0.45) and +(0, 0.45)..(-0.8, 0.3);
        \draw [blue] (-0.2, -0.3)..controls +(0, -0.45) and +(0, -0.45)..(-0.8, -0.3);
          \draw [blue] (0.2, -0.3)..controls +(0, -0.75) and +(0, -0.75)..(-1.2, -0.3);
           \draw [fill=white] (-0.4, -0.3) rectangle (0.4, 0.3);
           \node at (0, 0) {\tiny $\widehat{\cL}_0$};
           \begin{scope}[shift={(-1, 0)}]
           \draw [blue] (-0.2, -0.3)--(-0.2, 0.3)..controls +(0, 0.45) and +(0, 0.45)..(-0.6, 0.3)--(-0.6, -0.9);
           \draw [fill=white] (-0.4, -0.3) rectangle (0.4, 0.3);
           \node at (0, 0) {\tiny $ \widehat{\cL}_0$};               
           \end{scope}
        \end{tikzpicture}}}}+ \frac{1}{2} \geq \beta.
\end{align*}
Equivalently,
\begin{align*}
\frac{1}{2}\bE_{\cM}(\widehat{\cL}_0\overline{\widehat{\cL}_0}) +\frac{1}{2}\lambda^{1/2} \left(1*(\widehat{\cL}_0\overline{\widehat{\cL}_0}+ \overline{\widehat{\cL}_0}\widehat{\cL}_0)\right)+\frac{1}{2}\geq \beta.
\end{align*}
Furthermore, if $\cN \subset \mathcal{M}$ is irreducible, we have that $\displaystyle \beta \leq \frac{3}{2} \tau_2(\widehat{\cL}_0\overline{\widehat{\cL}_0})+\frac{1}{2}$.
\end{proposition}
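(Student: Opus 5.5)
The plan is to use the characterization $\widehat{\Gamma}_2 \geq \beta \widehat{\Gamma}$ from the preceding theorem. I would compress both sides by a suitable positive operator, or apply a suitable positive (partial-trace type) map, so that each side becomes a scalar multiple of a single fixed element. Positivity is preserved under compression $a\mapsto v^*av$ and under conditional expectations and partial traces, since these are positive maps on the planar algebra. So the inequality $\widehat{\Gamma}_2-\beta\widehat{\Gamma}\geq 0$ will descend to an inequality between the reduced elements.

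The natural reduction is to cap off the leftmost two strands, i.e.\ apply the partial trace on the first string pair, in the same way as property (3) of the remark on $\widehat{\Gamma}$. Under the normalization $1*\widehat{\cL}_0=1$, property (3) gives
\[
\frac{\lambda^{-1/2}}{2}\widehat{\cL}_0+\frac{\lambda^{-1}}{2}\tau_2(\widehat{\cL}_0)e_2 .
\]
Then I would further compress by the Jones projection side, capping the remaining strands against $e_2$, i.e.\ computing $e_2(\cdot)e_2$ in $\cM'\cap\cM_2$ or equivalently applying $\bE_{\cM}$ after capping. The idea is to land on a scalar multiple of $e_2$ (a scalar in $\cM$), where the $\widehat{\Gamma}$ side becomes a positive multiple of the identity, normalized to $1$ after dividing out.

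On the $\widehat{\Gamma}_2$ side, I would use Proposition~\ref{prop:iteratedgrad}, valid since $\cL_w=0$ and $1*\widehat{\cL}_0=1$, to write $2\widehat{\Gamma}_2$ as a sum of diagrams built from two copies of $\widehat{\cL}_0$ plus terms with a single $\widehat{\cL}_0$. I would then apply the same capping to each diagram and evaluate term by term using planar isotopy, loop value $\lambda^{-1/2}$, and $1*\widehat{\cL}_0=1$.

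I expect three types of terms to emerge:
\begin{itemize}
\item the fully nested term, giving $\frac12\lambda$ times the diagram with two $\widehat{\cL}_0$'s joined by two caps, which is $\bE_{\cM}(\widehat{\cL}_0\overline{\widehat{\cL}_0})$ after normalizing;
\item two mixed terms, each giving $\frac12\lambda^{1/2}\, 1*(\widehat{\cL}_0\overline{\widehat{\cL}_0})$ and its reverse;
\item the single-$\widehat{\cL}_0$ terms, which collapse via $1*\widehat{\cL}_0=1$ to the constant $\frac12$.
\end{itemize}
Dividing by the common coefficient of the $\widehat{\Gamma}$ side yields the pictorial inequality, and translating the pictures gives the stated equivalent form.

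The main obstacle is the bookkeeping: choosing the capping so that the $\widehat{\Gamma}$ side becomes exactly a scalar, and tracking the powers of $\lambda$ in each diagram. I would first test the whole computation on the dephasing example from the remark, where $\widehat{\Gamma}_2$ is explicit, to calibrate the constants.

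For the irreducible case, $\cN'\cap\cM=\bC$, so each reduced one-box quantity is a scalar equal to its trace. The term $\bE_{\cM}(\widehat{\cL}_0\overline{\widehat{\cL}_0})$ then gives $\tau_2(\widehat{\cL}_0\overline{\widehat{\cL}_0})$ up to the normalization. The two convolution terms $1*(\widehat{\cL}_0\overline{\widehat{\cL}_0})$ and $1*(\overline{\widehat{\cL}_0}\widehat{\cL}_0)$ each equal $\lambda^{-1/2}\tau_2(\cdot)$, and $\tau_2(\overline{\widehat{\cL}_0}\widehat{\cL}_0)=\tau_2(\widehat{\cL}_0\overline{\widehat{\cL}_0})$ by the trace property. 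Summing gives $\frac32\tau_2(\widehat{\cL}_0\overline{\widehat{\cL}_0})+\frac12\geq\beta$.
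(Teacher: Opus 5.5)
Your compression idea is the right kind of argument, but your reduction closes off too many strings, so it does not prove the first two assertions. Those are inequalities in the $1$-box space, not between scalars. Every diagram in the first display has one through string. In the equivalent form, $1*(\widehat{\cL}_0\overline{\widehat{\cL}_0}+\overline{\widehat{\cL}_0}\widehat{\cL}_0)$ is a general element of $\cN'\cap\cM$.

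You first take the partial trace over the leftmost string (as in property (3)) and then compress the remaining two strings by $e_2$. That caps off every boundary point, so the most you can obtain is the \emph{trace} of the stated inequality. In particular, the mixed terms cannot come out as $\frac12\lambda^{1/2}\,1*(\widehat{\cL}_0\overline{\widehat{\cL}_0})$, as your list claims; after your double capping they are closed diagrams, i.e.\ scalars. When $\cN'\cap\cM\neq\bC$, a trace bound does not imply the operator bound. So your argument covers only the irreducible case, where the $1$-box space is $\bC$, and there your last paragraph is fine.

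The repair is to drop the partial trace. Compress $\widehat{\Gamma}_2-\beta\widehat{\Gamma}\geq 0$ on both sides by the $3$-box element consisting of a through string on the left and a cup over a cap on the two right strings. This is a positive multiple of a projection, and it is exactly what the paper does. Everything then lands in the corner of $1$-box elements tensored with the cup-cap.

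On the $\widehat{\Gamma}$ side, consider the four-term form in \eqref{eq:gradfourier}. The first three diagrams contain $\widehat{\cL}_0$ with two adjacent strings capped, so they vanish because $\widehat{\cL}_0e_2=e_2\widehat{\cL}_0=0$. The last diagram gives a positive multiple of the left-capped $\widehat{\cL}_0$, i.e.\ of $1*\widehat{\cL}_0=1$. So the right-hand side becomes a multiple of the identity of the $1$-box space without taking any trace.

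On the $\widehat{\Gamma}_2$ side you get exactly the three $1$-box diagrams of the statement. The constant $\frac12$ comes from the $\mathbf{y}\widetilde{\mathbf{y}}^*+\widetilde{\mathbf{y}}\mathbf{y}$ terms with $\mathbf{y}=\widetilde{\mathbf{y}}=\frac12$. The cross terms pairing $\mathbf{y}$ with $\widehat{\cL}_0$ vanish once $\mathbf{y}$ is a scalar, again because they contain $\widehat{\cL}_0$ with its two bottom (or top) strings capped; this is not a consequence of $1*\widehat{\cL}_0=1$ alone. The irreducible bound then follows from your trace computation.
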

\begin{proof}
Multiplying $\vcenter{\hbox{\scalebox{0.8}{
        \begin{tikzpicture}[scale=1.2]
     \draw [blue] (0, -0.5).. controls +(0, 0.4) and +(0, 0.4).. (0.4, -0.5);
     \draw [blue] (0, 0.5).. controls +(0, -0.4) and +(0, -0.4).. (0.4, 0.5);       
     \begin{scope}[shift={(-0.4,0)}]
     \draw [blue] (0,-0.5)--(0, 0.5);
           \end{scope}
        \end{tikzpicture}}}} $ from the left and the right hand sides of Equation \eqref{eq:cbe2} and applying the Fourier multipliers of $\Gamma$ and $\Gamma_2$, we obtain that 
\begin{align*}
& \lambda^{-1}\vcenter{\hbox{\scalebox{0.8}{
        \begin{tikzpicture}[scale=1.2]
        \begin{scope}[shift={(-0.4,0)}]
     \draw [blue] (0,-0.5)--(0, 0.5);
        \draw [fill=white] (-0.25, -0.3) rectangle (0.25, 0.3);
           \node at (0, 0) {\tiny $ \mathbf{y}\widetilde{\mathbf{y}}^*$};
           \end{scope}
        \end{tikzpicture}}}}    
        + \lambda^{-1} \vcenter{\hbox{\scalebox{0.8}{
        \begin{tikzpicture}[scale=1.2]
       \begin{scope}[shift={(-0.4,0)}]
     \draw [blue] (0,-0.5)--(0, 0.5);
        \draw [fill=white] (-0.25, -0.3) rectangle (0.25, 0.3);
           \node at (0, 0) {\tiny $\widetilde{\mathbf{y}} \mathbf{y}$};
           \end{scope}
        \end{tikzpicture}}}} 
    + \frac{1}{2}\vcenter{\hbox{\scalebox{0.8}{
        \begin{tikzpicture}[scale=1.2]
           \draw [blue] (-0.2, 0.3)..controls +(0, 0.45) and +(0, 0.45)..(-0.8, 0.3);
           \draw [blue] (0.2, 0.3)..controls +(0, 0.75) and +(0, 0.75)..(-1.2, 0.3);
        \draw [blue] (-0.2, -0.3)..controls +(0, -0.45) and +(0, -0.45)..(-0.8, -0.3);
        \draw [blue] (0.2, -0.3)..controls +(0, -0.75) and +(0, -0.75)..(-1.2, -0.3);
           \draw [fill=white] (-0.4, -0.3) rectangle (0.4, 0.3);
           \node at (0, 0) {\tiny $\widehat{\cL}_0$};
           \begin{scope}[shift={(-1, 0)}]
           \draw [fill=white] (-0.4, -0.3) rectangle (0.4, 0.3);
           \node at (0, 0) {\tiny $ \widehat{\cL}_0$};               
           \end{scope}
        \end{tikzpicture}}}}  
        + \frac{\lambda^{-1/2}}{2}\vcenter{\hbox{\scalebox{0.8}{
        \begin{tikzpicture}[scale=1.2]
           \draw [blue] (0.2, 0.3)--(0.2, -0.9) ;
           \draw [blue] (-0.2, 0.3)..controls +(0, 0.45) and +(0, 0.45)..(-0.8, 0.3);
           \draw [blue] (0.2, 0.3)..controls +(0, 0.75) and +(0, 0.75)..(-1.2, 0.3);
        \draw [blue] (-0.2, -0.3)..controls +(0, -0.45) and +(0, -0.45)..(-0.8, -0.3);
           \draw [fill=white] (-0.4, -0.3) rectangle (0.4, 0.3);
           \node at (0, 0) {\tiny $\widehat{\cL}_0$};
           \begin{scope}[shift={(-1, 0)}]
           \draw [blue] (-0.2, 0.3)--(-0.2, -0.3)..controls +(0, -0.45) and +(0, -0.45)..(-0.6, -0.3)--(-0.6, 0.9);
           \draw [fill=white] (-0.4, -0.3) rectangle (0.4, 0.3);
           \node at (0, 0) {\tiny $ \widehat{\cL}_0$};               
           \end{scope}
        \end{tikzpicture}}}}
        +   \frac{\lambda^{-1/2}}{2} \vcenter{\hbox{\scalebox{0.8}{
        \begin{tikzpicture}[scale=1.2]
           \draw [blue] (0.2, 0.9)--(0.2, -0.3) ;
           \draw [blue] (-0.2, 0.3)..controls +(0, 0.45) and +(0, 0.45)..(-0.8, 0.3);
        \draw [blue] (-0.2, -0.3)..controls +(0, -0.45) and +(0, -0.45)..(-0.8, -0.3);
          \draw [blue] (0.2, -0.3)..controls +(0, -0.75) and +(0, -0.75)..(-1.2, -0.3);
           \draw [fill=white] (-0.4, -0.3) rectangle (0.4, 0.3);
           \node at (0, 0) {\tiny $\widehat{\cL}_0$};
           \begin{scope}[shift={(-1, 0)}]
           \draw [blue] (-0.2, -0.3)--(-0.2, 0.3)..controls +(0, 0.45) and +(0, 0.45)..(-0.6, 0.3)--(-0.6, -0.9);
           \draw [fill=white] (-0.4, -0.3) rectangle (0.4, 0.3);
           \node at (0, 0) {\tiny $ \widehat{\cL}_0$};               
           \end{scope}
        \end{tikzpicture}}}} \\
      &   -\lambda^{-1/2}\vcenter{\hbox{\scalebox{0.8}{
        \begin{tikzpicture}[scale=1.2]
           \draw [blue]  (0.2, 0.9)--(0.2, -1.2).. controls +(0, -0.2) and +(0, -0.2) .. (-0.2, -1.2);
           \draw [blue] (-0.2, -1.2)--(-0.2, 0.3).. controls +(0, 0.3) and +(0, 0.3) .. (-0.7, 0.3)--(-0.7, -1.2);
           \draw [fill=white] (-0.4, -0.3) rectangle (0.4, 0.3);
           \node at (0, 0) {\tiny $\widehat{\cL}_0$};
           \begin{scope}[shift={(0.2, -0.7)}]
               \draw [fill=white] (-0.2, -0.3) rectangle (0.2, 0.3);
           \node at (0, 0) {\tiny $ \mathbf{y}$};
           \end{scope}
        \end{tikzpicture}}}} 
        -  \lambda^{-1/2}\vcenter{\hbox{\scalebox{0.8}{
        \begin{tikzpicture}[scale=1.2]
           \draw [blue] (-0.2, 1.2).. controls +(0, 0.2) and +(0, 0.2) .. (0.2, 1.2)--(0.2, -0.9);
           \draw [blue] (-0.2, 1.2)--(-0.2, -0.3).. controls +(0, -0.3) and +(0, -0.3) .. (-0.7, -0.3)--(-0.7, 1.2);
           \draw [fill=white] (-0.4, -0.3) rectangle (0.4, 0.3);
           \node at (0, 0) {\tiny $\widehat{\cL}_0$};
           \begin{scope}[shift={(0.2, 0.7)}]
               \draw [fill=white] (-0.2, -0.3) rectangle (0.2, 0.3);
           \node at (0, 0) {\tiny $\mathbf{y}$};
           \end{scope}
        \end{tikzpicture}}}} 
\geq  \beta \lambda^{-1}\vcenter{\hbox{\scalebox{0.8}{
        \begin{tikzpicture}[scale=1.2]
           \draw [blue]  (0.2, 0.6)--(0.2, -0.6) ;
           \draw [blue] (-0.2, 0.3).. controls +(0, 0.3) and +(0, 0.3).. (-0.6, 0.3)--(-0.6, -0.3).. controls+(0, -0.3) and +(0, -0.3).. (-0.2, -0.3);
           \draw [fill=white] (-0.4, -0.3) rectangle (0.4, 0.3);
           \node at (0, 0) {\tiny $ \widehat{\cL}_0$};
        \end{tikzpicture}}}}.
\end{align*}
By the assumption, we have that $\displaystyle \widetilde{\mathbf{y}}=\mathbf{y}=\frac{1}{2}(1*\widehat{\cL}_0)=\frac{1}{2}$ and the inequality is true.
\end{proof}

\begin{remark}
By Proposition \ref{prop:dephaseem}, we see that the inequality becomes equality for dephasing semigroups.
\end{remark}

\begin{proposition}\label{prop:beeq1}
Suppose that $\cL_w=0$ and $1*\widehat{\cL}_0=1$.    
Then $\{\Phi_t\}_{t \geq 0}$ satisfies $\CBE(\beta)$ if and only if 
\begin{equation}\label{eq:cbe1}
\begin{aligned}
& \vcenter{\hbox{\scalebox{0.8}{
}}}.
\end{aligned}
\end{equation}
\end{proposition}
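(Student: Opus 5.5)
The plan is to reduce Proposition \ref{prop:beeq1} directly to the preceding theorem characterizing $\CBE(\beta)$ by the operator inequality \eqref{eq:cbe2}, $\widehat{\Gamma}_2\geq \beta\widehat{\Gamma}$. Once the two multipliers are written out explicitly under the hypotheses $\cL_w=0$ and $1*\widehat{\cL}_0=1$, the displayed inequality \eqref{eq:cbe1} should be exactly \eqref{eq:cbe2} with both sides expressed through $\widehat{\cL}_0$ alone. The proof is therefore a bookkeeping argument: substitute, simplify, and observe that the equivalence is preserved.

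The steps, in order, are as follows.

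\emph{Step 1: simplify $\widetilde{\mathbf{y}}$.} Under the hypotheses we have $2\mathbf{y}=1*\widehat{\cL}_0=1$. Since $\cL_w=0$, the imaginary part $\Im\bE_{\cM}(\mathfrak{F}^{-1}(\widehat{\cL}_1))$ vanishes; it is a scalar multiple of the identity at most, and commutators with it are zero. Hence $\widetilde{\mathbf{y}}=\mathbf{y}=\frac{1}{2}$, exactly as in the proof of Proposition \ref{prop:iteratedgrad}.

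\emph{Step 2: rewrite $\widehat{\Gamma}$.} The multiplier $\widehat{\cL}$ becomes $\frac{1}{2}$ times the two cup--cap diagrams (one carrying $e_2$ on each side) minus $\widehat{\cL}_0$. Inserting this into the four-term expression \eqref{eq:gradfourier} for $\widehat{\Gamma}$, the term involving $1*\widehat{\cL}_0$ reduces to the identity strand diagram.

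\emph{Step 3: rewrite $\widehat{\Gamma}_2$.} Here we use the explicit form of $2\widehat{\Gamma}_2$ given in Proposition \ref{prop:iteratedgrad}. The terms coming from the $\widetilde{\mathbf{y}}$-parts of $\widehat{\cL}$ in (I), (II) and (III) either cancel in pairs or combine into multiples of $\widehat{\Gamma}$, thanks to the scalar value $\frac{1}{2}$. What remains are the genuinely quadratic diagrams in $\widehat{\cL}_0$.

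\emph{Step 4: conclude.} Multiplying \eqref{eq:cbe2} by $2$ (or $4$) and moving the linear terms, which are scalar multiples of $\widehat{\Gamma}$, to the right-hand side gives \eqref{eq:cbe1}, possibly with $\beta$ shifted by a constant such as $\frac{1}{2}$. Since only positive scalar multiples and additions of identical terms to both sides are involved, the two inequalities are equivalent. Both directions then follow from the previous theorem.

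The main obstacle is Step 3. There one must track carefully the planar isotopies and the scalar factors (powers of $\lambda^{1/2}$ from the Fourier transform and from capping strands) when the cup--cap parts of $\widehat{\cL}$ are attached to $\widehat{\partial}$ and $\widehat{\partial}^*$ in the three diagrams. The key identities needed are
\begin{align*}
\widehat{\partial}\ \text{capped on the left}=0,\qquad 1*\widehat{\cL}_0=1,
\end{align*}
both taken from the remark on the properties of $\widehat{\Gamma}$. With these, the term (III), in which $\widehat{\cL}$ surrounds $\widehat{\Gamma}$, splits into a part equal to $\widehat{\Gamma}$, obtained by absorbing the cups via $\mathbf{y}=\frac{1}{2}$, and the $\widehat{\cL}_0$-conjugated part. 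I would first check these reductions against the dephasing example, where $\widehat{\Gamma}_2$ is known explicitly, to fix the constants.
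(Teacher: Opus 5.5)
Your proposal is correct and is essentially the paper's proof: invoke the theorem $\CBE(\beta)\iff\widehat{\Gamma}_2\geq\beta\widehat{\Gamma}$, substitute the explicit $2\widehat{\Gamma}_2$ from Proposition~\ref{prop:iteratedgrad} and $\widehat{\Gamma}$ from \eqref{eq:gradfourier} with $\widetilde{\mathbf{y}}=\mathbf{y}=\frac{1}{2}$, and rearrange. The identity part of $\widehat{\cL}$ contributes exactly $\widehat{\Gamma}$ to $2\widehat{\Gamma}_2$, which is the shift of $\beta$ you anticipated. One nitpick: $\cL_w=0$ only makes $\Im\bE_{\cM}(\mathfrak{F}^{-1}(\widehat{\cL}_1))$ central, not necessarily scalar. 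Its two occurrences in $\widehat{\cL}$ still cancel, so replacing $\widetilde{\mathbf{y}}$ by $\frac{1}{2}$ remains justified.
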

\begin{proof}
By Proposition \ref{prop:iteratedgrad} and Equation \eqref{eq:gradfourier}, we have that 
\begin{align*}
2\widehat{\Gamma}_2= &  \vcenter{\hbox{\scalebox{0.8}{
        %
}}} \right)=2\beta \widehat{\Gamma}.
\end{align*}
Reformulating it, we see that the proposition is true.

\end{proof}

\begin{corollary}\label{cor:conv1}
Suppose that $\cL_w=0$, $1*\widehat{\cL}_0=1$ and $\{\Phi_t\}_{t\geq 0}$ satisfies $\CBE(\beta)$.
Then 
\begin{align*} 
\widehat{\cL}_0* \widehat{\cL}_0 \geq (2\beta-2)\widehat{\cL}_0 -2\lambda^{1/2} \widehat{\cL}_0 ^2+\lambda^{-1/2}\left(2\beta-1\right) e_2 +\lambda^{1/2} \vcenter{\hbox{\scalebox{0.8}{
        %
}}}.
\end{align*}
If moreover $1*\overline{\widehat{\cL}_0}=1$, then we have that 
\begin{align*}
    |\fF(\widehat{\cL}_0)|^2 +\Re \left(\fF(\widehat{\cL}_0)\right)^2 \geq 2\beta-2 +(4-2\beta) \Re\fF(\widehat{\cL}_0).
\end{align*}
\end{corollary}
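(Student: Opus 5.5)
The plan is to start from the pictorial characterization of $\CBE(\beta)$ in Proposition \ref{prop:beeq1}, which applies since $\cL_w=0$ and $1*\widehat{\cL}_0=1$. The inequality \eqref{eq:cbe1} is an operator inequality in $\cM_1'\cap\cM_4$, that is, between $4$-box elements with two extra strings. I will compress it to a $2$-box inequality in $\cM'\cap\cM_2$ by applying a positive map; such maps preserve operator inequalities. Concretely, I would sandwich both sides between the element
\[
  \vcenter{\hbox{\scalebox{0.8}{\begin{tikzpicture}[scale=1.2]
  \draw [blue] (0, -0.5).. controls +(0, 0.4) and +(0, 0.4).. (0.4, -0.5);
  \draw [blue] (0, 0.5).. controls +(0, -0.4) and +(0, -0.4).. (0.4, 0.5);
  \begin{scope}[shift={(-0.4,0)}]\draw [blue] (0,-0.5)--(0, 0.5);\end{scope}
  \end{tikzpicture}}}}
\]
and its adjoint, as in the proof of the preceding proposition. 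Alternatively I would cap off the leftmost strand pair with a conditional expectation. The choice is made so that the $\widehat{\Gamma}$ term reduces, via item (3) of the remark on properties of $\widehat{\Gamma}$, to
\[
  \tfrac{\lambda^{-1/2}}{2}\,\widehat{\cL}_0+\tfrac12\lambda^{-1}\tau_2(\widehat{\cL}_0)\,e_2 .
\]

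The next step is to evaluate each diagram of $2\widehat{\Gamma}_2$ from Proposition \ref{prop:iteratedgrad} after this capping.
\begin{itemize}
  \item The term in which two copies of $\widehat{\cL}_0$ are stacked with strings passing through becomes the convolution $\widehat{\cL}_0*\widehat{\cL}_0$.
  \item The terms in which the two copies are composed vertically give $\widehat{\cL}_0^2$ with coefficient $\lambda^{1/2}$.
  \item Terms containing $1*\widehat{\cL}_0$ collapse to scalars times $\widehat{\cL}_0$ or $e_2$, using $\mathbf{y}=\frac12$.
  \item The remaining genuinely $3$-box diagram, with one $\widehat{\cL}_0$ rotated, stays as the last pictorial term with coefficient $\lambda^{1/2}$.
\end{itemize}
Moving every term except $\widehat{\cL}_0*\widehat{\cL}_0$ to the right-hand side and collecting the coefficients of $\widehat{\cL}_0$ and $e_2$ should give $2\beta-2$ and $\lambda^{-1/2}(2\beta-1)$, which is the first inequality. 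The main obstacle is this bookkeeping: tracking the powers of $\lambda$ from loop values and from the normalization of $\fF$, and identifying which capped diagram equals which algebraic expression. I would check the coefficients against the dephasing example, where $\widehat{\Gamma}_2$ was computed explicitly and equality is expected.

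For the second statement, I would apply the Fourier transform $\fF$ to the first inequality. Since $\fF(x*y)=\fF(x)\fF(y)$ up to order, $\widehat{\cL}_0*\widehat{\cL}_0$ becomes $\fF(\widehat{\cL}_0)^2$ and products become convolutions. However, $\fF$ is not positivity preserving, so the better route is to pair the inequality against suitable positive elements, or to apply the $\fF$-conjugated positive map. With $1*\overline{\widehat{\cL}_0}=1$, the terms $\widehat{\cL}_0^2$ and the rotated diagram transform into expressions involving $\fF(\widehat{\cL}_0)^*\fF(\widehat{\cL}_0)$ and $\Re\fF(\widehat{\cL}_0)$. This is because the contragredient $\overline{\widehat{\cL}_0}$ corresponds to $\fF(\widehat{\cL}_0)^*$, and the normalization ensures that $e_2$ maps to the identity with the correct scalar. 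Symmetrizing the two rotated contributions produces $\Re(\fF(\widehat{\cL}_0))^2$ together with $|\fF(\widehat{\cL}_0)|^2$. Rearranging then gives
\[
  |\fF(\widehat{\cL}_0)|^2+\Re\bigl(\fF(\widehat{\cL}_0)\bigr)^2 \;\geq\; 2\beta-2+(4-2\beta)\,\Re\fF(\widehat{\cL}_0).
\]
In this second part I expect the justification of the positivity transfer to be the delicate point, and I would handle it by reading the inequality as complete positivity of the associated bimodule map, as in the proof of the $\CBE$ characterization theorem.
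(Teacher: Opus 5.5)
For the first inequality your plan matches the paper's proof. The paper applies the conditional expectation $\bE_{\cM'}$ to \eqref{eq:cbe1}, which is the left capping you describe, and item (3) of the remark on $\widehat{\Gamma}$ is what evaluates the $\beta$-side. Discard the ``sandwich'' alternative, though. Conjugating by the element from the preceding proposition caps the two right strands at top and bottom, so it lands in $1$-boxes and only reproduces that proposition's bound.

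The second inequality is where there is a genuine gap: transporting the first inequality through $\fF$ cannot work, for two reasons.
First, $\fF$ is not order preserving; for a group inclusion it sends minimal projections of the $2$-box space to scalar multiples of group unitaries.
Second, even formally the terms do not match. $\fF(\widehat{\cL}_0^2)$ becomes a convolution on the Fourier side, while the target contains only $\fF(\widehat{\cL}_0)^*\fF(\widehat{\cL}_0)$, $\Re(\fF(\widehat{\cL}_0))^2$, $\Re\fF(\widehat{\cL}_0)$ and scalars.
Once you have capped on the left, the information needed for the second statement is already lost. Appealing to complete positivity does not recover it unless you specify which positive map to apply.

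The missing step is to return to the $3$-box inequality \eqref{eq:cbe1}, equivalently $\widehat{\Gamma}_2-\beta\widehat{\Gamma}\geq 0$. Apply to it the other conditional expectation, $\bE_{\cM_1}$, which closes the strand on the opposite side; this map is again positive. The compressed inequality then lives in the Fourier-dual $2$-box space. There the capped diagrams read as products of $\fF(\widehat{\cL}_0)$ and $\fF(\widehat{\cL}_0)^*$, together with linear terms and scalars, rather than convolutions. This is where $|\fF(\widehat{\cL}_0)|^2$ and $\Re(\fF(\widehat{\cL}_0))^2$ come from.

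The hypothesis $1*\overline{\widehat{\cL}_0}=1$ is what evaluates the diagrams in which the new cap closes around a single copy of $\widehat{\cL}_0$ from that side. It plays the same role that $1*\widehat{\cL}_0=1$ (i.e.\ $\mathbf{y}=\tfrac12$) played in the first part. Rearranging then gives the stated inequality directly, with no positivity transfer through $\fF$.
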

\begin{proof}
By taking the conditional expectation $\bE_{\cM'}$ to Inequality \eqref{eq:cbe1}, we have that
\begin{align*}
\lambda^{-1/2}\vcenter{\hbox{\scalebox{0.8}{
        %
}}} .
\end{align*}
This proves the first inequality.

If $1*\overline{\widehat{\cL}_0}=1$, we take the conditional expectation $\bE_{\cM_1}$ to Inequality \eqref{eq:cbe1} and obtain that 
\begin{align*}
& \vcenter{\hbox{\scalebox{0.8}{
        %
}}}.
\end{align*}
This completes the proof of the proposition.
\end{proof}

 \begin{remark}
 We have the following observations for Corollary \ref{cor:conv1}:
 \begin{enumerate}[(1)]
     \item If the inclusion $\cN \subset \cM$ is irreducible, then 
     \begin{align*}
     \widehat{\cL}_0* \widehat{\cL}_0 \geq (2\beta-2)\widehat{\cL}_0 -2\lambda^{1/2} \widehat{\cL}_0 ^2+\lambda^{-1/2}\left(2\beta-1+2\tau_2(\widehat{\cL}_0\overline{\widehat{\cL}_0})\right) e_2.
     \end{align*}
     \item Suppose that $p$ is a minimal projection in $\cM'\cap \cM_2$ such that $p\leq \cR(\widehat{\cL}_0)$.
     Then 
     \begin{align*}
     \beta \leq 1+\frac{\lambda^{1/2}\tau_2(p \widehat{\cL}_0)}{\tau_2(p)} +\frac{1}{2}\frac{\tau_2(p(\widehat{\cL}_0* \widehat{\cL}_0))}{\tau_2(p \widehat{\cL}_0)}.
     \end{align*}
 \end{enumerate} 
 \end{remark}

\begin{proposition}\label{prop:dephaseem}
Suppose that $\displaystyle \widehat{\cL}_0=\frac{1}{\lambda^{-1/2}-\lambda^{1/2}}\left(\vcenter{\hbox{\scalebox{0.8}{
        \begin{tikzpicture}[scale=1.2]
           \draw [blue] (0.2, -0.6)--(0.2, 0.6) (-0.2, -0.6)--(-0.2, 0.6);              
        \end{tikzpicture}}}} -\lambda^{1/2}\vcenter{\hbox{\scalebox{0.8}{
        \begin{tikzpicture}[scale=1.2]
     \draw [blue] (0, -0.5).. controls +(0, 0.4) and +(0, 0.4).. (0.4, -0.5);
     \draw [blue] (0, 0.5).. controls +(0, -0.4) and +(0, -0.4).. (0.4, 0.5);       
        \end{tikzpicture}}}}\right)$.
Then
\begin{align*}
    \widehat{\Gamma}_2 \geq \left(\frac{1}{2}+\lambda\right)\frac{1}{(1-\lambda)} \widehat{\Gamma},
\end{align*}
where the coefficient is optimal.
\end{proposition}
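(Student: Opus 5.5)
The plan is to reduce $\Gamma_2$ to an explicit expression in $\Gamma$ at the level of maps, move that identity to Fourier multipliers, and then compare $\widehat{\Gamma}_2$ with $\widehat{\Gamma}$ block by block in the Temperley--Lieb subalgebra where both live. Write $c=(1-\lambda)^{-1}$. By the remark on the dephasing semigroup, this $\widehat{\cL}_0$ gives $\cL=c(\id-\bE_{\cN})$ with $\cL_w=0$ and $1*\widehat{\cL}_0=1$. The criterion $\CBE(\beta)\iff\widehat{\Gamma}_2\ge\beta\widehat{\Gamma}$ from the theorem above is therefore available.

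First I derive $\Gamma_2$ directly. Since $\cL$ is $\cN$-bimodular and vanishes on $\cN$, for $n\in\cN$ we get $\Gamma(x,n)=\tfrac12\big(x^*\cL(n)+\cL(x)^*n-\cL(x^*)n\big)=0$. Hence $\Gamma(x,\cL y)=c\,\Gamma(x,y-\bE_{\cN}y)=c\,\Gamma(x,y)$. This yields
\begin{align*}
\Gamma_2(x,y)=c\,\Gamma(x,y)-\tfrac{c}{2}\big(\Gamma(x,y)-\bE_{\cN}\Gamma(x,y)\big)=\tfrac{c}{2}\big(\Gamma(x,y)+\bE_{\cN}\Gamma(x,y)\big).
\end{align*}
So the claim reduces to a completely positive comparison of the form $\bE_{\cN}\circ\Phi_{\Gamma}\ge 2\lambda\,\Phi_{\Gamma}$. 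This holds at the level of the maps $\Phi_{\Gamma_2},\Phi_{\Gamma}:\cM_1\to\cM$, and the target constant is $\tfrac c2(1+2\lambda)=(\tfrac12+\lambda)c$. The Pimsner--Popa inequality alone only gives the weaker constant $\tfrac c2(1+\lambda)$, so a finer computation is needed.

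Next I pass to Fourier multipliers. Composing with $\bE_{\cN}$ corresponds, in Proposition~\ref{prop:gammafourier}, to capping off the appropriate strands of $\widehat{\Gamma}$ and inserting a Jones projection. Hence $\widehat{\Gamma}_2=\tfrac c2\big(\widehat{\Gamma}+\widehat{\bE_{\cN}\Phi_\Gamma}\big)$. Both terms are then explicit Temperley--Lieb diagrams: $\widehat{\Gamma}$ is given by the four-term formula in the remark on the dephasing semigroup. They lie in the algebra generated by $e_1,e_2$, which is $\bC\oplus M_2(\bC)\oplus\bC$ (the last summand vanishes when $\lambda=\tfrac12$). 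This algebra carries the matrix units $\{e_2,\ (\lambda-\lambda^2)^{-1/2}(e_1\vee e_2-e_2)e_1e_2,\ \dots,\ 1-e_1\vee e_2\}$ listed earlier. I will write both $\widehat{\Gamma}$ and $\widehat{\Gamma}_2$ as matrices in this basis; the paper already records the matrix of $\widehat{\Gamma}_2$. Then $\widehat{\Gamma}_2\ge\beta\widehat{\Gamma}$ becomes a finite family of scalar and $2\times2$ matrix inequalities.

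The main obstacle is the $M_2$ block. There I must find the largest $\beta$ with $A_2-\beta A\ge0$, where $A_2$ and $A$ are the corresponding blocks of $\widehat{\Gamma}_2$ and $\widehat{\Gamma}$. Because $\widehat{\Gamma}$ is annihilated by the caps, I expect $A$ to have rank one on that block, say $A=v v^*$. In that case the condition reduces to $\beta\le 1/\langle A_2^{-1}v,v\rangle$, or to checking the kernel of $A_2$. On the scalar blocks the ratio is read off directly. Taking the minimum over the blocks should give $(\tfrac12+\lambda)c$. Optimality then follows by exhibiting the block, and the vector in it, where equality holds: $\widehat{\Gamma}_2-\beta\widehat{\Gamma}$ has a negative eigenvalue there for every larger $\beta$. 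Hence $\CBE(\beta)$ fails for every $\beta>(\tfrac12+\lambda)c$. As a consistency check, the resulting constant should also match equality in the earlier necessary condition $\tfrac12\bE_{\cM}(\widehat{\cL}_0\overline{\widehat{\cL}_0})+\dots+\tfrac12\ge\beta$, as the subsequent remark asserts.
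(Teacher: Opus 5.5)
Your argument is correct, and its last step is the paper's: write $\widehat{\Gamma}_2$ and $\widehat{\Gamma}$ in the Temperley--Lieb matrix units and compare block by block.

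The difference is how you reach $\widehat{\Gamma}_2$. The paper specializes the general diagrammatic formula of Proposition~\ref{prop:beeq1}, which holds for any $\widehat{\cL}_0$ with $\cL_w=0$ and $1*\widehat{\cL}_0=1$ and is reused in Corollary~\ref{cor:conv1}. You instead use the elementary identity $\Gamma_2=\frac{c}{2}(\Gamma+\bE_{\cN}\circ\Gamma)$, which follows from $\Gamma(x,n)=0$ for $n\in\cN$.

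Your route also explains why the constant comes out as it does:
\begin{itemize}
\item Since $\Gamma_2(x,n)=\Gamma_2(n,x)=0$ as well, $\widehat{\Gamma}_2$ is annihilated by the same cap $e_1$ as $\widehat{\Gamma}$. The paper's own matrices confirm this.
\item So both lie in the commutative algebra spanned by $e_1\vee e_2-e_1$ and $1-e_1\vee e_2$. The comparison is then just two scalar ratios, $\frac{1/2+\lambda}{1-\lambda}$ on the first projection and $\frac{3/2}{1-\lambda}$ on the second.
\item Equivalently, $\widehat{\bE_{\cN}\Phi_{\Gamma}}$ is a positive multiple of $1-e_1$ and coincides with $2\lambda\widehat{\Gamma}$ on the $M_2$ block.
\item Hence $\widehat{\Gamma}_2-\frac{1/2+\lambda}{1-\lambda}\widehat{\Gamma}$ is a positive multiple of $1-e_1\vee e_2$. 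Optimality is immediate, because $\widehat{\Gamma}$ does not vanish on $e_1\vee e_2-e_1$.
\end{itemize}

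Two small corrections:
\begin{itemize}
\item The algebra generated by the two Jones projections is $M_2(\bC)\oplus\bC$, spanned by the five matrix units you list, not $\bC\oplus M_2(\bC)\oplus\bC$.
\item On the $M_2$ block your $A_2$ is singular, being a scalar multiple of $A$. So the relevant case is your kernel/proportionality branch, not $\beta\le 1/\langle A_2^{-1}v,v\rangle$.
\end{itemize}
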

\begin{proof}

By Proposition \ref{prop:beeq1}, we see that the associated semigroup satisfies $\CBE(\beta)$ if and only if 
\begin{align*}
& \frac{\lambda^{-1/2}}{2(\lambda^{-1/2}-\lambda^{1/2})^2}\left( \frac{3}{2} \vcenter{\hbox{\scalebox{0.8}{
        \begin{tikzpicture}[scale=1.2]
           \draw [blue] (0.2, -0.6)--(0.2, 0.6) (-0.2, -0.6)--(-0.2, 0.6) (-0.6, -0.6)--(-0.6, 0.6);     
        \end{tikzpicture}}}}  
+\frac{\lambda^{-1/2}}{2}\vcenter{\hbox{\scalebox{0.8}{
        \begin{tikzpicture}[scale=1.2]
     \draw [blue] (0, -0.5).. controls +(0, 0.4) and +(0, 0.4).. (0.4, -0.5);
     \draw [blue] (0, 0.5).. controls +(0, -0.4) and +(0, -0.4).. (0.4, 0.5);       
     \begin{scope}[shift={(-0.4,0)}]
     \draw [blue] (0,-0.5)--(0, 0.5);
           \end{scope}
        \end{tikzpicture}}}}
- \frac{1}{2}   \vcenter{\hbox{\scalebox{0.8}{
        \begin{tikzpicture}[scale=1.2]
        \draw[blue] (-0.4,0.5) --(0.6, -0.5);
        \begin{scope}[shift={(-0.2, 0)}]
     \draw [blue] (0, -0.5).. controls +(0, 0.4) and +(0, 0.4).. (0.4, -0.5);
     \end{scope}
     \draw [blue] (0, 0.5).. controls +(0, -0.4) and +(0, -0.4).. (0.4, 0.5);       
        \end{tikzpicture}}}} 
    -\frac{1}{2}   \vcenter{\hbox{\scalebox{0.8}{
        \begin{tikzpicture}[scale=1.2]
        \draw[blue] (0.6,0.5) --(-0.4, -0.5);
     \draw [blue] (0, -0.5).. controls +(0, 0.4) and +(0, 0.4).. (0.4, -0.5);
  \begin{scope}[shift={(-0.2, 0)}]
     \draw [blue] (0, 0.5).. controls +(0, -0.4) and +(0, -0.4).. (0.4, 0.5);   
          \end{scope}
        \end{tikzpicture}}}}-\lambda^{1/2} \vcenter{\hbox{\scalebox{0.8}{
        \begin{tikzpicture}[scale=1.2]
     \draw [blue] (0, -0.5).. controls +(0, 0.4) and +(0, 0.4).. (0.4, -0.5);
     \draw [blue] (0, 0.5).. controls +(0, -0.4) and +(0, -0.4).. (0.4, 0.5);       
     \begin{scope}[shift={(0.8,0)}]
     \draw [blue] (0,-0.5)--(0, 0.5);
           \end{scope}
        \end{tikzpicture}}}} \right) \\
\geq &  \frac{\beta}{2(\lambda^{-1/2}-\lambda^{1/2})}\left( \vcenter{\hbox{\scalebox{0.8}{
        \begin{tikzpicture}[scale=1.2]
           \draw [blue] (0.2, -0.6)--(0.2, 0.6) (-0.2, -0.6)--(-0.2, 0.6) (-0.6, -0.6)--(-0.6, 0.6);     
        \end{tikzpicture}}}}  
+\lambda^{-1/2}\vcenter{\hbox{\scalebox{0.8}{
        \begin{tikzpicture}[scale=1.2]
     \draw [blue] (0, -0.5).. controls +(0, 0.4) and +(0, 0.4).. (0.4, -0.5);
     \draw [blue] (0, 0.5).. controls +(0, -0.4) and +(0, -0.4).. (0.4, 0.5);       
     \begin{scope}[shift={(-0.4,0)}]
     \draw [blue] (0,-0.5)--(0, 0.5);
           \end{scope}
        \end{tikzpicture}}}}
-   \vcenter{\hbox{\scalebox{0.8}{
        \begin{tikzpicture}[scale=1.2]
        \draw[blue] (-0.4,0.5) --(0.6, -0.5);
        \begin{scope}[shift={(-0.2, 0)}]
     \draw [blue] (0, -0.5).. controls +(0, 0.4) and +(0, 0.4).. (0.4, -0.5);
     \end{scope}
     \draw [blue] (0, 0.5).. controls +(0, -0.4) and +(0, -0.4).. (0.4, 0.5);       
        \end{tikzpicture}}}} 
    - \vcenter{\hbox{\scalebox{0.8}{
        \begin{tikzpicture}[scale=1.2]
        \draw[blue] (0.6,0.5) --(-0.4, -0.5);
     \draw [blue] (0, -0.5).. controls +(0, 0.4) and +(0, 0.4).. (0.4, -0.5);
  \begin{scope}[shift={(-0.2, 0)}]
     \draw [blue] (0, 0.5).. controls +(0, -0.4) and +(0, -0.4).. (0.4, 0.5);   
          \end{scope}
        \end{tikzpicture}}}} \right).
\end{align*}
This is equivalent to the following inequality:
\begin{align*}
& \begin{pmatrix}
\frac{1}{2}\lambda^{-1}+\frac{1}{2}-\lambda & -\frac{1}{2}\sqrt{\lambda^{-1}-1}-\sqrt{\lambda(1-\lambda)} & 0\\
-\frac{1}{2}\sqrt{\lambda^{-1}-1}-\sqrt{\lambda(1-\lambda)} & \frac{1}{2}+\lambda & 0 \\
0 & 0 & \frac{3}{2}
\end{pmatrix}\\
\geq&  \beta(1-\lambda)
\begin{pmatrix}
\lambda^{-1}-1 & -\sqrt{\lambda^{-1}-1} & 0 \\
-\sqrt{\lambda^{-1}-1} & 1 & 0 \\
0 & 0 & 1
\end{pmatrix}.
\end{align*}
The inequality is true if and only if 
\begin{align*}
\frac{1}{2}\lambda^{-1}+\frac{1}{2}-\lambda \geq & \beta(1-\lambda)(\lambda^{-1}-1),\\
\frac{1}{2}+\lambda\geq &  \beta(1-\lambda),\\
\frac{3}{2} \geq &  \beta(1-\lambda),\\
\left(\frac{1}{2}\lambda^{-1}+\frac{1}{2}-\lambda -\beta(1-\lambda)^2\lambda^{-1}\right)\left(\frac{1}{2}+\lambda- \beta(1-\lambda)\right)&\\
\geq  \left((\frac{1}{2}-\beta(1-\lambda))\sqrt{\lambda^{-1}-1}+\sqrt{\lambda(1-\lambda)}\right)^2. &
\end{align*}
Now we see that the optimal value for $\beta$ is $\displaystyle \left(\frac{1}{2}+\lambda\right)\frac{1}{(1-\lambda)}$.
In this case, we have that $\displaystyle \beta=\frac{3}{2}\tau_2(\widehat{\cL}_0\overline{\widehat{\cL}_0})+\frac{1}{2}$.
\end{proof}

\begin{remark}
Suppose that $\cN \subset \cM$ is an irreducible inclusion with index $<4$.
We have that 
\begin{align*}
    \widehat{\Gamma}_2 \geq \frac{2\cos^2 \frac{\pi}{m}+1}{4\cos^2 \frac{\pi}{m}-1} \widehat{\Gamma}=\frac{\cos\frac{2\pi}{m}+2}{2\cos\frac{2\pi}{m}+1}\widehat{\Gamma},
\end{align*}
where $m=4,5, 6, \ldots.$
\end{remark}

\begin{remark}
By using the Fourier multiplier of the iterated gradient form and the gradient form, we obtain that for any $x\in \cM$ with $\bE_{\cN}(x)=0$, 
\begin{align*}
    \Gamma(x, x) =\frac{\lambda^{-1/2}}{2(\lambda^{-1/2}-\lambda^{1/2})} \left(\bE_{\cN} (x^*x) + x^*x\right)
\end{align*}
and 
\begin{align*}
     \Gamma_2(x, x) =\frac{\lambda^{-1}}{2(\lambda^{-1/2}-\lambda^{1/2})^2} \left(\frac{3}{2}\bE_{\cN} (x^*x) + \frac{1}{2}x^*x\right).
\end{align*}
Let $\lambda_{pp}$ be the associated Pimsner-Popa constant.
Then we see that 
\begin{align*}
   \Gamma_2(x, x) \geq \left(\frac{1}{2} + \frac{\lambda_{pp}}{1+\lambda_{pp}}\right)\frac{1}{1-\lambda} \Gamma(x, x),
\end{align*}
for all $x\in \cM$.
When the Pimsner-Popa constant equals to $\lambda$ (such as the irreducible inclusions), we have that 
\begin{align*}
   \Gamma_2(x, x) \geq \left(\frac{1}{2} + \frac{\lambda}{1+\lambda}\right)\frac{1}{1-\lambda} \Gamma(x, x).
\end{align*}
Note that Proposition \ref{prop:dephaseem} provides a better constant for this case.
\end{remark}

\begin{example}
Suppose that 
\begin{align*}
    \widehat{\cL}_0=\frac{1}{2}\mu^{-1}
    \vcenter{\hbox{\begin{tikzpicture}[scale=0.65]
    \begin{scope}[shift={(0,1.5)}]
    \draw [blue] (-0.5, 0.8)--(-0.5, 0) .. controls +(0, -0.6) and +(0,-0.6).. (0.5, 0)--(0.5, 0.8);    
\begin{scope}[shift={(0.5, 0.3)}]
\draw [fill=white] (-0.3, -0.3) rectangle (0.3, 0.3);
\node at (0, 0) {\tiny $v$};
\end{scope}
    \end{scope}
\draw [blue] (-0.5, -0.8)--(-0.5, 0) .. controls +(0, 0.6) and +(0,0.6).. (0.5, 0)--(0.5, -0.8);
\begin{scope}[shift={(0.5, -0.3)}]
\draw [fill=white] (-0.3, -0.3) rectangle (0.3, 0.3);
\node at (0, 0) {\tiny $v^*$};
\end{scope}
\end{tikzpicture}}}
+\frac{1}{2}\mu\vcenter{\hbox{\begin{tikzpicture}[scale=0.65]
    \begin{scope}[shift={(0,1.5)}]
    \draw [blue] (-0.5, 0.8)--(-0.5, 0) .. controls +(0, -0.6) and +(0,-0.6).. (0.5, 0)--(0.5, 0.8);    
\begin{scope}[shift={(0.5, 0.3)}]
\draw [fill=white] (-0.3, -0.3) rectangle (0.3, 0.3);
\node at (0, 0) {\tiny $v^*$};
\end{scope}
    \end{scope}
\draw [blue] (-0.5, -0.8)--(-0.5, 0) .. controls +(0, 0.6) and +(0,0.6).. (0.5, 0)--(0.5, -0.8);
\begin{scope}[shift={(0.5, -0.3)}]
\draw [fill=white] (-0.3, -0.3) rectangle (0.3, 0.3);
\node at (0, 0) {\tiny $v$};
\end{scope}
\end{tikzpicture}}}
+\frac{1}{2}\gamma \vcenter{\hbox{\begin{tikzpicture}[scale=0.65]
    \begin{scope}[shift={(0,1.5)}]
    \draw [blue] (-0.5, 0.8)--(-0.5, 0) .. controls +(0, -0.6) and +(0,-0.6).. (0.5, 0)--(0.5, 0.8);    
\begin{scope}[shift={(0.5, 0.3)}]
\draw [fill=white] (-0.3, -0.3) rectangle (0.3, 0.3);
\node at (0, 0) {\tiny $v$};
\end{scope}
    \end{scope}
\draw [blue] (-0.5, -0.8)--(-0.5, 0) .. controls +(0, 0.6) and +(0,0.6).. (0.5, 0)--(0.5, -0.8);
\begin{scope}[shift={(0.5, -0.3)}]
\draw [fill=white] (-0.3, -0.3) rectangle (0.3, 0.3);
\node at (0, 0) {\tiny $v$};
\end{scope}
\end{tikzpicture}}}
+
\frac{1}{2}\gamma \vcenter{\hbox{\begin{tikzpicture}[scale=0.65]
    \begin{scope}[shift={(0,1.5)}]
    \draw [blue] (-0.5, 0.8)--(-0.5, 0) .. controls +(0, -0.6) and +(0,-0.6).. (0.5, 0)--(0.5, 0.8);    
\begin{scope}[shift={(0.5, 0.3)}]
\draw [fill=white] (-0.3, -0.3) rectangle (0.3, 0.3);
\node at (0, 0) {\tiny $v^*$};
\end{scope}
    \end{scope}
\draw [blue] (-0.5, -0.8)--(-0.5, 0) .. controls +(0, 0.6) and +(0,0.6).. (0.5, 0)--(0.5, -0.8);
\begin{scope}[shift={(0.5, -0.3)}]
\draw [fill=white] (-0.3, -0.3) rectangle (0.3, 0.3);
\node at (0, 0) {\tiny $v^*$};
\end{scope}
\end{tikzpicture}}}.
\end{align*}
We have that 
\begin{align*}
4\widehat{\Gamma}
=& 4\vcenter{\hbox{\begin{tikzpicture}[scale=0.65]
\begin{scope}[shift={(-1,0.8)}]
\draw [blue] (0, -1.5)--(0, 1.5);
\draw [fill=white] (-0.3, -0.3) rectangle (0.3, 0.3);
\node at (0, 0) {\tiny $\mathbf{y}$};
\end{scope}
    \begin{scope}[shift={(0,1.5)}]
    \draw [blue] (-0.5, 0.8)--(-0.5, 0) .. controls +(0, -0.6) and +(0,-0.6).. (0.5, 0)--(0.5, 0.8);    
    \end{scope}
\draw [blue] (-0.5, -0.8)--(-0.5, 0) .. controls +(0, 0.6) and +(0,0.6).. (0.5, 0)--(0.5, -0.8);
\end{tikzpicture}}}+
\mu^{-1}\vcenter{\hbox{\begin{tikzpicture}[scale=0.65]
\begin{scope}[shift={(-1,0.8)}]
\draw [blue] (0, -1.5)--(0, 1.5);
\end{scope}
    \begin{scope}[shift={(0,1.5)}]
    \draw [blue] (-0.5, 0.8)--(-0.5, 0) .. controls +(0, -0.6) and +(0,-0.6).. (0.5, 0)--(0.5, 0.8);    
\begin{scope}[shift={(0.5, 0.3)}]
\draw [fill=white] (-0.3, -0.3) rectangle (0.3, 0.3);
\node at (0, 0) {\tiny $v$};
\end{scope}
    \end{scope}
\draw [blue] (-0.5, -0.8)--(-0.5, 0) .. controls +(0, 0.6) and +(0,0.6).. (0.5, 0)--(0.5, -0.8);
\begin{scope}[shift={(0.5, -0.3)}]
\draw [fill=white] (-0.3, -0.3) rectangle (0.3, 0.3);
\node at (0, 0) {\tiny $v^*$};
\end{scope}
\end{tikzpicture}}}
+\mu\vcenter{\hbox{\begin{tikzpicture}[scale=0.65]
\begin{scope}[shift={(-1,0.8)}]
\draw [blue] (0, -1.5)--(0, 1.5);
\end{scope}
    \begin{scope}[shift={(0,1.5)}]
    \draw [blue] (-0.5, 0.8)--(-0.5, 0) .. controls +(0, -0.6) and +(0,-0.6).. (0.5, 0)--(0.5, 0.8);    
\begin{scope}[shift={(0.5, 0.3)}]
\draw [fill=white] (-0.3, -0.3) rectangle (0.3, 0.3);
\node at (0, 0) {\tiny $v^*$};
\end{scope}
    \end{scope}
\draw [blue] (-0.5, -0.8)--(-0.5, 0) .. controls +(0, 0.6) and +(0,0.6).. (0.5, 0)--(0.5, -0.8);
\begin{scope}[shift={(0.5, -0.3)}]
\draw [fill=white] (-0.3, -0.3) rectangle (0.3, 0.3);
\node at (0, 0) {\tiny $v$};
\end{scope}
\end{tikzpicture}}}
+
\gamma\vcenter{\hbox{\begin{tikzpicture}[scale=0.65]
\begin{scope}[shift={(-1,0.8)}]
\draw [blue] (0, -1.5)--(0, 1.5);
\end{scope}
    \begin{scope}[shift={(0,1.5)}]
    \draw [blue] (-0.5, 0.8)--(-0.5, 0) .. controls +(0, -0.6) and +(0,-0.6).. (0.5, 0)--(0.5, 0.8);    
\begin{scope}[shift={(0.5, 0.3)}]
\draw [fill=white] (-0.3, -0.3) rectangle (0.3, 0.3);
\node at (0, 0) {\tiny $v$};
\end{scope}
    \end{scope}
\draw [blue] (-0.5, -0.8)--(-0.5, 0) .. controls +(0, 0.6) and +(0,0.6).. (0.5, 0)--(0.5, -0.8);
\begin{scope}[shift={(0.5, -0.3)}]
\draw [fill=white] (-0.3, -0.3) rectangle (0.3, 0.3);
\node at (0, 0) {\tiny $v$};
\end{scope}
\end{tikzpicture}}}
+ \gamma\vcenter{\hbox{\begin{tikzpicture}[scale=0.65]
\begin{scope}[shift={(-1,0.8)}]
\draw [blue] (0, -1.5)--(0, 1.5);
\end{scope}
    \begin{scope}[shift={(0,1.5)}]
    \draw [blue] (-0.5, 0.8)--(-0.5, 0) .. controls +(0, -0.6) and +(0,-0.6).. (0.5, 0)--(0.5, 0.8);    
\begin{scope}[shift={(0.5, 0.3)}]
\draw [fill=white] (-0.3, -0.3) rectangle (0.3, 0.3);
\node at (0, 0) {\tiny $v^*$};
\end{scope}
    \end{scope}
\draw [blue] (-0.5, -0.8)--(-0.5, 0) .. controls +(0, 0.6) and +(0,0.6).. (0.5, 0)--(0.5, -0.8);
\begin{scope}[shift={(0.5, -0.3)}]
\draw [fill=white] (-0.3, -0.3) rectangle (0.3, 0.3);
\node at (0, 0) {\tiny $v^*$};
\end{scope}
\end{tikzpicture}}}\\
& -\mu^{-1}
\vcenter{\hbox{\begin{tikzpicture}[scale=0.65]
\begin{scope}[shift={(-1,0.8)}]
\draw [blue] (0, -1.5)--(0, 1.5);
\draw [fill=white] (-0.3, -0.3) rectangle (0.3, 0.3);
\node at (0, 0) {\tiny $v^*$};
\end{scope}
    \begin{scope}[shift={(0,1.5)}]
    \draw [blue] (-0.5, 0.8)--(-0.5, 0) .. controls +(0, -0.6) and +(0,-0.6).. (0.5, 0)--(0.5, 0.8);    
\begin{scope}[shift={(0.5, 0.3)}]
\draw [fill=white] (-0.3, -0.3) rectangle (0.3, 0.3);
\node at (0, 0) {\tiny $v$};
\end{scope}
    \end{scope}
\draw [blue] (-0.5, -0.8)--(-0.5, 0) .. controls +(0, 0.6) and +(0,0.6).. (0.5, 0)--(0.5, -0.8);
\end{tikzpicture}}}
-\mu
\vcenter{\hbox{\begin{tikzpicture}[scale=0.65]
\begin{scope}[shift={(-1,0.8)}]
\draw [blue] (0, -1.5)--(0, 1.5);
\draw [fill=white] (-0.3, -0.3) rectangle (0.3, 0.3);
\node at (0, 0) {\tiny $v$};
\end{scope}
    \begin{scope}[shift={(0,1.5)}]
    \draw [blue] (-0.5, 0.8)--(-0.5, 0) .. controls +(0, -0.6) and +(0,-0.6).. (0.5, 0)--(0.5, 0.8);    
\begin{scope}[shift={(0.5, 0.3)}]
\draw [fill=white] (-0.3, -0.3) rectangle (0.3, 0.3);
\node at (0, 0) {\tiny $v^*$};
\end{scope}
    \end{scope}
\draw [blue] (-0.5, -0.8)--(-0.5, 0) .. controls +(0, 0.6) and +(0,0.6).. (0.5, 0)--(0.5, -0.8);
\end{tikzpicture}}}
-\gamma
\vcenter{\hbox{\begin{tikzpicture}[scale=0.65]
\begin{scope}[shift={(-1,0.8)}]
\draw [blue] (0, -1.5)--(0, 1.5);
\draw [fill=white] (-0.3, -0.3) rectangle (0.3, 0.3);
\node at (0, 0) {\tiny $v$};
\end{scope}
    \begin{scope}[shift={(0,1.5)}]
    \draw [blue] (-0.5, 0.8)--(-0.5, 0) .. controls +(0, -0.6) and +(0,-0.6).. (0.5, 0)--(0.5, 0.8);    
\begin{scope}[shift={(0.5, 0.3)}]
\draw [fill=white] (-0.3, -0.3) rectangle (0.3, 0.3);
\node at (0, 0) {\tiny $v$};
\end{scope}
    \end{scope}
\draw [blue] (-0.5, -0.8)--(-0.5, 0) .. controls +(0, 0.6) and +(0,0.6).. (0.5, 0)--(0.5, -0.8);
\end{tikzpicture}}}
-\gamma
\vcenter{\hbox{\begin{tikzpicture}[scale=0.65]
\begin{scope}[shift={(-1,0.8)}]
\draw [blue] (0, -1.5)--(0, 1.5);
\draw [fill=white] (-0.3, -0.3) rectangle (0.3, 0.3);
\node at (0, 0) {\tiny $v^*$};
\end{scope}
    \begin{scope}[shift={(0,1.5)}]
    \draw [blue] (-0.5, 0.8)--(-0.5, 0) .. controls +(0, -0.6) and +(0,-0.6).. (0.5, 0)--(0.5, 0.8);    
\begin{scope}[shift={(0.5, 0.3)}]
\draw [fill=white] (-0.3, -0.3) rectangle (0.3, 0.3);
\node at (0, 0) {\tiny $v^*$};
\end{scope}
    \end{scope}
\draw [blue] (-0.5, -0.8)--(-0.5, 0) .. controls +(0, 0.6) and +(0,0.6).. (0.5, 0)--(0.5, -0.8);
\end{tikzpicture}}} \\
& -\mu^{-1}
\vcenter{\hbox{\begin{tikzpicture}[scale=0.65]
\begin{scope}[shift={(-1,0.8)}]
\draw [blue] (0, -1.5)--(0, 1.5);
\draw [fill=white] (-0.3, -0.3) rectangle (0.3, 0.3);
\node at (0, 0) {\tiny $v$};
\end{scope}
    \begin{scope}[shift={(0,1.5)}]
    \draw [blue] (-0.5, 0.8)--(-0.5, 0) .. controls +(0, -0.6) and +(0,-0.6).. (0.5, 0)--(0.5, 0.8);    
    \end{scope}
\draw [blue] (-0.5, -0.8)--(-0.5, 0) .. controls +(0, 0.6) and +(0,0.6).. (0.5, 0)--(0.5, -0.8);
\begin{scope}[shift={(0.5, -0.3)}]
\draw [fill=white] (-0.3, -0.3) rectangle (0.3, 0.3);
\node at (0, 0) {\tiny $v^*$};
\end{scope}
\end{tikzpicture}}}
-\mu
\vcenter{\hbox{\begin{tikzpicture}[scale=0.65]
\begin{scope}[shift={(-1,0.8)}]
\draw [blue] (0, -1.5)--(0, 1.5);
\draw [fill=white] (-0.3, -0.3) rectangle (0.3, 0.3);
\node at (0, 0) {\tiny $v^*$};
\end{scope}
    \begin{scope}[shift={(0,1.5)}]
    \draw [blue] (-0.5, 0.8)--(-0.5, 0) .. controls +(0, -0.6) and +(0,-0.6).. (0.5, 0)--(0.5, 0.8);    
    \end{scope}
\draw [blue] (-0.5, -0.8)--(-0.5, 0) .. controls +(0, 0.6) and +(0,0.6).. (0.5, 0)--(0.5, -0.8);
\begin{scope}[shift={(0.5, -0.3)}]
\draw [fill=white] (-0.3, -0.3) rectangle (0.3, 0.3);
\node at (0, 0) {\tiny $v$};
\end{scope}
\end{tikzpicture}}}
-\gamma
\vcenter{\hbox{\begin{tikzpicture}[scale=0.65]
\begin{scope}[shift={(-1,0.8)}]
\draw [blue] (0, -1.5)--(0, 1.5);
\draw [fill=white] (-0.3, -0.3) rectangle (0.3, 0.3);
\node at (0, 0) {\tiny $v$};
\end{scope}
    \begin{scope}[shift={(0,1.5)}]
    \draw [blue] (-0.5, 0.8)--(-0.5, 0) .. controls +(0, -0.6) and +(0,-0.6).. (0.5, 0)--(0.5, 0.8);    
    \end{scope}
\draw [blue] (-0.5, -0.8)--(-0.5, 0) .. controls +(0, 0.6) and +(0,0.6).. (0.5, 0)--(0.5, -0.8);
\begin{scope}[shift={(0.5, -0.3)}]
\draw [fill=white] (-0.3, -0.3) rectangle (0.3, 0.3);
\node at (0, 0) {\tiny $v$};
\end{scope}
\end{tikzpicture}}}
-\gamma
\vcenter{\hbox{\begin{tikzpicture}[scale=0.65]
\begin{scope}[shift={(-1,0.8)}]
\draw [blue] (0, -1.5)--(0, 1.5);
\draw [fill=white] (-0.3, -0.3) rectangle (0.3, 0.3);
\node at (0, 0) {\tiny $v^*$};
\end{scope}
    \begin{scope}[shift={(0,1.5)}]
    \draw [blue] (-0.5, 0.8)--(-0.5, 0) .. controls +(0, -0.6) and +(0,-0.6).. (0.5, 0)--(0.5, 0.8);    
    \end{scope}
\draw [blue] (-0.5, -0.8)--(-0.5, 0) .. controls +(0, 0.6) and +(0,0.6).. (0.5, 0)--(0.5, -0.8);
\begin{scope}[shift={(0.5, -0.3)}]
\draw [fill=white] (-0.3, -0.3) rectangle (0.3, 0.3);
\node at (0, 0) {\tiny $v^*$};
\end{scope}
\end{tikzpicture}}}.
\end{align*}
The corresponding matrix form is the following:
\begin{align*}
4\widehat{\Gamma} &=\lambda^{-1/2}
\begin{pmatrix}
\mu^{-1} & 0 & \gamma & 0 & 0 & -\sqrt{2}\gamma & 0 & 0 \\
0 & \mu^{-1} & 0 & \gamma & -\sqrt{2}\mu^{-1} & 0 & 0 & 0 \\
\gamma & 0 & \mu & 0 & 0 & -\sqrt{2}\mu & 0 & 0 \\
0 & \gamma & 0 & \mu & -\sqrt{2}\gamma & 0 & 0 & 0 \\
0 & -\sqrt{2}\mu^{-1} & 0 & -\sqrt{2}\gamma & 2\mu^{-1} & 0 & 0 & 0 \\
-\sqrt{2}\gamma & 0 & -\sqrt{2}\mu & 0 & 0 & 2\mu & 0 & 0 \\
0 & 0 & 0 & 0 & 0 & 0 & 0 & 0 \\
0 & 0 & 0 & 0 & 0 & 0 & 0 & 0
\end{pmatrix},
\end{align*}
where the rank-one projections of the system of matrix units are
\begin{align*}
& \frac{\lambda^{1/2}}{2} \leftLineHalfLoops{v^*v}{v}{v^*},
\frac{\lambda^{1/2}}{2} \leftLineHalfLoops{vv^*}{v}{v^*},
\frac{\lambda^{1/2}}{2} \leftLineHalfLoops{v^*v}{v^*}{v},
 \frac{\lambda^{1/2}}{2} \leftLineHalfLoops{vv^*}{v^*}{v}, \\
& \frac{\lambda^{1/2}}{2} \leftLineOnlyBox{v^*v},
  \frac{\lambda^{1/2}}{2}\leftLineOnlyBox{vv^*},
  \frac{\lambda^{1/2}}{2} \leftLineHalfLoops{v^*v}{iPQ}{iPQ},
   \frac{\lambda^{1/2}}{2} \leftLineHalfLoops{vv^*}{iPQ}{iPQ}.
\end{align*}
The matrix form of $\widehat{\Gamma}_2$ is the following:
\begin{equation*}
\resizebox{\textwidth}{!}{%
$4\widehat{\Gamma}_2 = \frac{1}{2\sqrt{\lambda}}
\begin{pmatrix}
2\gamma^2 + \mu^{-2} + 1 & 0 & 2\gamma(\mu + \mu^{-1}) & 0 & 0 & -2\sqrt{2}\gamma\mu & 0 & 0 \\
0 & \mu^{-2} + 3 & 0 & 2\gamma(\mu + \mu^{-1}) & -\sqrt{2}(1+\mu^{-2}) & 0 & 0 & 0 \\
2\gamma(\mu + \mu^{-1}) & 0 & \mu^2 + 3 & 0 & 0 & -\sqrt{2}(1+\mu^2) & 0 & 0 \\
0 & 2\gamma(\mu + \mu^{-1}) & 0 & 2\gamma^2 + \mu^2 + 1 & -2\sqrt{2}\gamma\mu^{-1} & 0 & 0 & 0 \\
0 & -\sqrt{2}(1+\mu^{-2}) & 0 & -2\sqrt{2}\gamma\mu^{-1} & 2\mu^{-2} + 3(1+\gamma^2) & 0 & -(1-\gamma^2) & 0 \\
-2\sqrt{2}\gamma\mu & 0 & -\sqrt{2}(1+\mu^2) & 0 & 0 & 2\mu^2 + 3(1+\gamma^2) & 0 & 1-\gamma^2 \\
0 & 0 & 0 & 0 & -(1-\gamma^2) & 0 & 1-\gamma^2 & 0 \\
0 & 0 & 0 & 0 & 0 & 1-\gamma^2 & 0 & 1-\gamma^2
\end{pmatrix}$%
}.
\end{equation*}
Let $\beta$ be the optimal value satisfies $\widehat{\Gamma}_2 \geq \beta \widehat{\Gamma}$.
By direct computation, we have that $\beta$ satisfies the following constraint:
\begin{align*}
  \det
    \begin{pmatrix}
    \mu^{-2} + 3 - 2\beta\mu^{-1} & 2\gamma(\mu + \mu^{-1}) - 2\beta\gamma & -\sqrt{2}(1+\mu^{-2} - 2\beta\mu^{-1}) \\
    2\gamma(\mu + \mu^{-1}) - 2\beta\gamma & 2\gamma^2 + \mu^2 + 1 - 2\beta\mu & -2\sqrt{2}\gamma(\mu^{-1} - \beta) \\
    -\sqrt{2}(1+\mu^{-2} - 2\beta\mu^{-1}) & -2\sqrt{2}\gamma(\mu^{-1} - \beta) & 2\mu^{-2} + 2 + 4\gamma^2 - 4\beta\mu^{-1}
    \end{pmatrix}=0,
\end{align*}
where $\det$ is the determinant of the matrix.
When $\gamma=0$, we have that $\displaystyle \beta=\frac{\mu+\mu^{-1}}{2}$.
When $\displaystyle \gamma^2 < \frac{1}{2}$, we have that $\beta>0$.
When $\displaystyle \gamma^2> \frac{1}{2}$, we have that $\beta<0$ for $\mu\to 0$ or $\mu\to \infty$.
Hence $\displaystyle \gamma^2=\frac{1}{2}$ is a critical point for the phase transition.
Moreover, for $\mu<1$ and $\displaystyle \gamma^2=\frac{1}{2}$, we have that 
\begin{align*}
\beta= \frac{1}{2} \left( \mu^{-1} + \frac{4}{3}\mu - \sqrt{\mu^{-2} - 2 + \frac{10}{9}\mu^2} \, \right).
 \end{align*}
 
 The following is the graph showing the region of the positivity of $\beta$ which is $\beta_{opt}$ in the picture.
 
\begin{tikzpicture}[
    declare function={
        A(\x) = 1 + \x - 4*(\x^2);
        B(\x) = 2*(1 + 2*\x - (\x^2));
        C(\x) = (1 + \x)*(1 - 2*\x);
        mumax(\x) = sqrt((B(\x) + sqrt(B(\x)^2 - 4*A(\x)*C(\x))) / (-2*C(\x)));
        mumin(\x) = 1 / mumax(\x);
    }
]
\begin{axis}[
    width=15cm,
    height=10cm,
    ymode=log,
    xmin=0, xmax=1.05,
    ymin=1e-3, ymax=1e3,
    xlabel={\textbf{Coherent Scattering / Impurity Strength} ($\gamma^2$)},
    ylabel={\tiny \textbf{Bias Limit} ($\mu$)},
    title={\tiny \textbf{Diagram of the Optimal Bakry-\'Emery Curvature}},
    title style={font=\Large\bfseries, yshift=2ex},
    grid=both,
    major grid style={line width=.2pt,draw=gray!50},
    minor grid style={line width=.1pt,draw=gray!20},
    legend pos=north east,
    legend cell align={left},
    log basis y=10
]

\fill[blue!10] (axis cs:0, 1e-3) rectangle (axis cs:0.5, 1e3);

\draw[dashed, red, very thick] (axis cs:0.5, 1e-3) -- (axis cs:0.5, 1e3) 
    node[pos=0.65, sloped, above, text=red!80!black] {\tiny \textbf{  ($\gamma^2 = 1/2$)}};

\draw[dotted, black, very thick] (axis cs:0, 1) -- (axis cs:1.05, 1)
    node[pos=0.24, above, text=black!80] {\tiny \textit{Symmetry Point ($\mu=1$)}};

\addplot[name path=upper, domain=0.501:1, samples=200, blue, very thick] {mumax(x)};
\addplot[name path=lower, domain=0.501:1, samples=200, blue, very thick] {mumin(x)};

\addplot[blue!10] fill between[of=upper and lower];


\node[anchor=center, align=center] at (axis cs:0.25, 20) 
    {\textbf{}\\ \vspace{1mm}\\ $\beta_{\text{opt}} > 0$ \textit{unconditionally}};
    
\node[anchor=center, align=center, text=blue!80!black] at (axis cs:0.75, 1) 
    {\small \textbf{Safe Window}\\ \textbf{$\beta_{\text{opt}} > 0$}};
    
\node[anchor=center, align=center, text=gray!80!black] at (axis cs:0.83, 10) 
    {\textbf{Symmetry Broken}\\ (Negative Curvature Collapse)};
    
\node[anchor=center, align=center, text=gray!80!black] at (axis cs:0.83, 0.01) 
    {\textbf{Symmetry Broken}\\ (Negative Curvature Collapse)};


\addplot[blue, very thick] coordinates {(-1,-1)}; 
\addlegendentry{Exact Boundaries ($\mu_{\text{max}}$, $\mu_{\text{min}}$)}
\addlegendimage{empty legend}
\addlegendentry{\colorbox{blue!10}{\phantom{XX}} Safe Domain ($\beta_{\text{opt}} > 0$)}
\end{axis}
\end{tikzpicture}
\end{example}

\section{Intertwining Property for Bimodule GNS Symmetry}

Suppose that $\{\Phi_t\}_{t\geq 0}$ is a bimodule quantum Markov semigroup.
We say $\{\Phi_t\}_{t\geq 0}$ is bimodule GNS symmetric if  $\Phi_t$ is equilibrium for all $t\geq 0$ and there exists a strictly positive operator $\widehat{\Delta}\in \cM'\cap \cM_2$ such that $\widehat{\Delta}e_2 =e_2$, and $\overline{\widehat{\cL}}=\overline{\widehat{\Delta}}\widehat{\cL}$.
In this case, we have that $\cL_w=0$.
Recall that $\displaystyle \widehat{\cL}_0=\sum_{j=1}^m \omega_j p_j$ is the decomposition of $\widehat{\cL}_0$ in the subalgebra of $\cM'\cap \cM_2$ generated by $\cR(\widehat{\cL}_0)\widehat{\Delta}$ and $\widehat{\cL}_0$.
The bimodule modular operator $\displaystyle \widehat{\Delta}=\sum_{j=1}^m \mu_j p_j+1-\cR(\widehat{\cL}_0)$ and there is an involution $*$ on $\{1, \ldots, m\}$ such that $\mu_{j^*}=\mu_j^{-1}$ and $p_{j^*}=\overline{p_j}$.
We have that $\displaystyle \overline{\widehat{\cL}_0}=\overline{\widehat{\Delta}} \widehat{\cL}=\sum_{j=1}^m \mu_j^{-1} \omega_j p_j$.

Recall that $\partial x=\left[x, \fF^{-1}(\widehat{\cL}_0^{1/2})\right]$, $\overline{\partial} x=\left[x, \fF^{-1}(\overline{\widehat{\cL}_0}^{1/2})\right]$ and $\partial_j x=\omega_j^{1/2}\left[x, \fF^{-1}(p_j)\right]$ for any $x\in \cM$.

\begin{lemma}
Suppose that $\{\Phi_t\}_{t \geq 0}$ is bimodule GNS symmetric with respect to $\widehat{\Delta}\in \cM'\cap \cM_2$.
Then for any $x\in \cM$,
\begin{align*}
\cL(x)+\cL^*(x)+\frac{1}{2}\left\{x, 1*(\overline{\widehat{\cL}_0}-\widehat{\cL}_0)\right\} =\frac{\lambda^{-1/2}}{2}\sum_{j=1}^m (1+\mu_j^{-1})  \partial_j^*\partial_j x.
\end{align*}
If $1*\widehat{\cL}_0=1*\overline{\widehat{\cL}_0}$, then 
\begin{align*}
\widehat{\cL}=\frac{\lambda^{-1/2}}{2}\left(1+\overline{\widehat{\Delta}}\right)^{-1}\sum_{j=1}^m (1+\mu_j^{-1}) \widehat{ \partial_j^*\partial_j},
\end{align*} 
where $\widehat{ \partial_j^*\partial_j}$ is the Fourier multiplier of $\partial_j^*\partial_j: \cM \to \cM$.
\end{lemma}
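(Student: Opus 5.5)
The plan is to compute each $\partial_j^*\partial_j$ explicitly as a bimodule map on $\cM$, express the result through the convolution $x*\,\cdot$ and the elements $1*p_j$, and then sum over $j$ with weights $1+\mu_j^{-1}$. The involution $j\mapsto j^*$ should turn the sum into expressions in $\widehat{\cL}_0$ and $\overline{\widehat{\cL}_0}$, which we then compare with $\cL+\cL^*$.

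\emph{Step 1: the adjoint of $\partial_j$.} Put $a_j=\fF^{-1}(p_j)\in\cN'\cap\cM_1$, so that $\partial_j x=\omega_j^{1/2}(xa_j-a_jx)$. Use the normalization fixed by the gradient form, $\Gamma(x,y)=\frac{\lambda^{-1/2}}{2}\bE_{\cM}((\partial y)^*\partial x)$, and the trace $\tau_1$. The identity $\tau_1((xa_j-a_jx)^*y)=\tau_1(x^*(ya_j^*-a_j^*y))$ gives $\partial_j^*y=c\,\omega_j^{1/2}\bE_{\cM}(ya_j^*-a_j^*y)$ for an explicit power $c$ of $\lambda$. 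It follows that
\begin{align*}
\partial_j^*\partial_j x=c\,\omega_j\left(x\bE_{\cM}(a_ja_j^*)+\bE_{\cM}(a_j^*a_j)x-\bE_{\cM}(a_jxa_j^*)-\bE_{\cM}(a_j^*xa_j)\right).
\end{align*}

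\emph{Step 2: translate into Fourier language.} From the multiplier formula \eqref{eq:multiplierphi} and the definition of $\fF$, identify $\bE_{\cM}(a_j^*xa_j)$ with a multiple of $x*p_j$. Since $a_j^*=\fF^{-1}(\overline{p_j})$ up to the standard reflection, identify $\bE_{\cM}(a_jxa_j^*)$ with the same multiple of $x*\overline{p_j}=x*p_{j^*}$. Setting $x=1$ identifies $\bE_{\cM}(a_j^*a_j)$ and $\bE_{\cM}(a_ja_j^*)$ with $1*p_j$ and $1*p_{j^*}$. Pinning down the scalar $c$ and matching which term carries $p_j$ and which carries $p_{j^*}$ is the main bookkeeping obstacle. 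I would fix both by checking against the formula $\Gamma(x,y)=\frac12(y^*(1*\widehat{\cL}_0)x-\dots)$, which corresponds to $\sum_j\partial_j$ alone.

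\emph{Step 3: summation.} GNS symmetry gives $\sum_j\omega_jp_{j^*}=\overline{\widehat{\cL}_0}=\sum_j\mu_j^{-1}\omega_jp_j$, so $\omega_{j^*}=\mu_{j^*}\omega_j$. Reindexing the $p_{j^*}$-terms by the involution, the weighted sum $\frac{\lambda^{-1/2}}{2}\sum_j(1+\mu_j^{-1})\partial_j^*\partial_jx$ becomes
\begin{align*}
\tfrac12\{x,1*\widehat{\cL}_0\}+\tfrac12\{x,1*\overline{\widehat{\cL}_0}\}-x*\widehat{\cL}_0-x*\overline{\widehat{\cL}_0}.
\end{align*}
Now $\cL_w=0$, so $\cL=\cL_a$ gives $\cL(x)=\frac12\{x,1*\widehat{\cL}_0\}-x*\widehat{\cL}_0$. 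Taking the adjoint with respect to $\tau$, the multiplier of $\cL^*$ is $\overline{\widehat{\cL}}$. Hence $\cL^*(x)=\frac12\{x,1*\widehat{\cL}_0\}-x*\overline{\widehat{\cL}_0}$. The discrepancy between the sum above and $\cL+\cL^*$ is exactly $\frac12\{x,1*(\overline{\widehat{\cL}_0}-\widehat{\cL}_0)\}$, which proves the first identity.

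\emph{Step 4: the second identity.} If $1*\widehat{\cL}_0=1*\overline{\widehat{\cL}_0}$, the anticommutator term vanishes. Taking Fourier multipliers of both sides, the left side becomes $\widehat{\cL}+\overline{\widehat{\cL}}=(1+\overline{\widehat{\Delta}})\widehat{\cL}$ by GNS symmetry. Since $\widehat{\Delta}$ is strictly positive, $1+\overline{\widehat{\Delta}}$ is invertible, and multiplying by its inverse gives the stated formula for $\widehat{\cL}$.
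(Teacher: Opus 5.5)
Your proof is correct. It differs from the paper's mainly in how the computation is organized.

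\textbf{The paper's route.} The paper never computes $\partial_j^*\partial_j$ on its own. Instead it:
\begin{itemize}
\item quotes the identity $\cL_a+\cL_{\overline{a}}=\frac{\lambda^{-1/2}}{2}(\partial^*\partial+\overline{\partial}^*\overline{\partial})$ for the global derivations;
\item notes that $\cL_{\overline{a}}(x)=\frac12\{x,1*\overline{\widehat{\cL}_0}\}-x*\overline{\widehat{\cL}_0}=\cL^*(x)+\frac12\{x,1*(\overline{\widehat{\cL}_0}-\widehat{\cL}_0)\}$, which is your formula for $\cL^*$ rearranged;
\item regroups using $\overline{\partial}=\sum_j\mu_j^{-1/2}\partial_j$ and $\partial^*\partial=\sum_j\partial_j^*\partial_j$.
\end{itemize}
The last step silently uses that the cross terms $\partial_j^*\partial_k$, $j\neq k$, vanish because the $p_j$ are mutually orthogonal. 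The second identity is then obtained exactly as in your Step 4.

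\textbf{Comparison.} Your Steps 1--3 in effect reprove the quoted identity one direction at a time. So your argument is self-contained and never touches cross terms. The price is the Fourier bookkeeping, which the paper delegates to earlier work.

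\textbf{The bookkeeping is lighter than you expect.} Set $c_j=(1+\mu_j^{-1})\omega_j$. From $\omega_{j^*}=\mu_j^{-1}\omega_j$ and $\mu_{j^*}=\mu_j^{-1}$ you get $c_{j^*}=c_j$. Equivalently, $\sum_j c_jp_j=\widehat{\cL}_0+\overline{\widehat{\cL}_0}$ is self-dual, so $\sum_j c_j\overline{p_j}=\sum_j c_jp_j$. Hence, in the weighted sum, it does not matter which of your four terms carries $p_j$ and which carries $\overline{p_j}$. Only the overall scalar needs fixing. With adjoints taken for $\tau$ and $\tau_1$ (as in the paper), $\bE_{\cM}$ is $\tau_1$-preserving, so $c=1$. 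The matching identification is $\lambda^{-1/2}\bE_{\cM}(a_j^*xa_j)=x*p_j$, which is consistent with the $\Gamma$ formula as you propose.
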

\begin{proof}
Recall that 
\begin{align*}
\cL_a+\cL_{\overline{a}}=\frac{\lambda^{-1/2}}{2}(\partial^*\partial+\overline{\partial}^*\overline{\partial}),
\end{align*}
where
\begin{align*}
 \cL_{\overline{a}}(x)=\frac{1}{2}(1*\overline{\widehat{\cL}_0})x +\frac{1}{2}x (1*\overline{\widehat{\cL}_0})-x*\overline{\widehat{\cL}_0}=\cL^*(x)+ \frac{1}{2} \left\{x, 1*(\overline{\widehat{\cL}_0}-\widehat{\cL}_0)\right\} ,
 \end{align*}
 for all $x\in \cM$.
Note that $\displaystyle \overline{\partial}=\sum_{j=1}^m\mu_j^{-1/2}\partial_j $.
We have that 
\begin{align*}
\partial^*\partial+\overline{\partial}^*\overline{\partial}=\sum_{j=1}^m\partial_j^* \partial_j +\overline{\partial}_j^*\overline{\partial}_j=\sum_{j=1}^m (1+\mu_j^{-1})  \partial_j^* \partial_j.
\end{align*}
Combining the two equalities above, we see that the first equality holds.

Suppose that  $1*\widehat{\cL}_0=1*\overline{\widehat{\cL}_0}$.
Then $\widehat{\cL}_{\overline{a}}=\overline{\widehat{\cL}}=\overline{\widehat{\Delta}}\widehat{\cL}$.
We have that 
\begin{align*}
\widehat{\cL}+\overline{\widehat{\Delta}}\widehat{\cL}=\frac{\lambda^{-1/2}}{2}\sum_{j=1}^m (1+\mu_j^{-1}) \widehat{ \partial_j^*\partial_j}.
\end{align*}
This completes the computation.
\end{proof}

Recall that the gradient $\nabla: \cM \to \cM_1^{\oplus( m)}$ is defined as $\displaystyle \nabla x=\lambda^{1/4} (\partial_j x )_{j=1}^m$ for all $x\in \cM$, where $\cM_1^{\oplus( m)}$ is the direct sum of $m$-copies of $\cM_1$.
The divergence $\Div: \cM_1^{\oplus (m)}\to \cM$ is defined as
\begin{align*}
\Div (x_j)_{j=1}^m =\lambda^{1/4} \sum_{j=1}^m \partial_j^* x_j,
\end{align*}
where $x_1, \ldots, x_m\in \cM_1$.

\begin{definition}
Suppose that $\{\Phi_t\}_{t\geq 0}$ is a bimodule GNS symmetric quantum Markov semigroup.
We say $\{\Phi_t\}_{t\geq 0}$ satisfies the intertwining property if there exists a unital completely positive map $\widetilde{\Phi_t}: \cM_1 \to \cM_1$ such that $\widetilde{\Phi_t}^*|_{\cM}=\Phi_t^*$ and 
\begin{align}\label{eq:intertwining}
    \Phi_t^*\Div= \Div \overset{\longrightarrow}{\widetilde{\Phi_t}^*}e^{-\vec{\beta} t} , \quad t\geq 0,
\end{align}
where $ \overset{\longrightarrow}{\widetilde{\Phi_t}^*}=(\widetilde{\Phi_t}^*, \ldots, \widetilde{\Phi_t}^*)$ is the $m$-copies of $\widetilde{\Phi_t}^*$ and $\vec{\beta}=(\beta_1, \ldots, \beta_m)$.
Let $\displaystyle \beta=\min_{1\leq j \leq m} \beta_j$.
We shall denote $\overset{\longrightarrow}{\widetilde{\Phi_t}^*}$ by $\widetilde{\Phi_t}^*$ for simplicity if there is no confusion.
\end{definition}

In the following, we assume that $\left\{\widetilde{\Phi_t}\right\}_{t\geq 0}$ is a bimodule quantum Markov semigroup.
Let $\cJ$ be its generator, i.e. the Lindbladian.
By the fact that $\widetilde{\Phi_t}^*|_{\cM}=\Phi_t^*$ and taking differentiation at $t=0$, we have that $\cJ^*|_{\cM}=\cL^*$.
By Proposition \ref{prop:extension1}, we have that 
\begin{align*}
    \vcenter{\hbox{
    \begin{tikzpicture}
       \draw [blue] (-0.45, -0.3).. controls+(0, -0.3) and +(0, -0.3).. (-0.75, -0.3)--(-0.75, 0.3) .. controls +(0, 0.3) and +(0, 0.3)..(-0.45, 0.3) (-0.15, -0.6)--(-0.15, 0.6) (0.15, -0.6)--(0.15, 0.6) (0.45, -0.6)--(0.45, 0.6);
       \draw[fill=white] (-0.6, -0.3) rectangle (0.6, 0.3);
       \node at (0, 0) {\tiny $\overline{\widehat{\cJ}}$};
    \end{tikzpicture}
    }}
    =    \vcenter{\hbox{
    \begin{tikzpicture}
       \draw [blue]   (-0.15, -0.6)--(-0.15, 0.6) (0.15, -0.6)--(0.15, 0.6) (0.45, -0.6)--(0.45, 0.6);
       \draw[fill=white] (-0.35, -0.3) rectangle (0.35, 0.3);
       \node at (0, 0) {\tiny $\overline{\widehat{\cL}}$};
    \end{tikzpicture}
    }},
\end{align*}
and equivalently,
\begin{align*}
\vcenter{\hbox{
    \begin{tikzpicture}
       \draw [blue] (0.45, -0.3).. controls+(0, -0.3) and +(0, -0.3).. (0.75, -0.3)--(0.75, 0.3) .. controls +(0, 0.3) and +(0, 0.3)..(0.45, 0.3) (-0.15, -0.6)--(-0.15, 0.6) (0.15, -0.6)--(0.15, 0.6) (-0.45, -0.6)--(-0.45, 0.6);
       \draw[fill=white] (-0.6, -0.3) rectangle (0.6, 0.3);
       \node at (0, 0) {\tiny $\widehat{\cJ}$};
    \end{tikzpicture}
    }}
    =  \vcenter{\hbox{
    \begin{tikzpicture}
       \draw [blue]   (-0.15, -0.6)--(-0.15, 0.6) (0.15, -0.6)--(0.15, 0.6) (-0.45, -0.6)--(-0.45, 0.6);
       \draw[fill=white] (-0.35, -0.3) rectangle (0.35, 0.3);
       \node at (0, 0) {\tiny $ \widehat{\cL}$};
    \end{tikzpicture}
    }}.
\end{align*}
This indicates that 
\begin{align*}
\cL \bE_{\cM} =\bE_{\cM} \cJ, \quad \Phi_t \bE_{\cM} =\bE_{\cM} \widetilde{\Phi_t}.
\end{align*}

\begin{remark}
Suppose $\{\Phi_t\}_{ t \geq 0}$ is a GNS symmetric quantum Markov semigroup  with the intertwining property and the extension $\{\widetilde{\Phi_t}\}_{t\geq 0}$ is a quantum Markov semigroup.
We can not conclude that the extension is bimodule GNS symmetric.    
\end{remark}

Let $\widehat{\cJ}_0=-(1-\widetilde{e}_2)\widehat{\cJ}(1-\widetilde{e}_2)$ and recall that 
$\widehat{\cL}=   \vcenter{\hbox{\begin{tikzpicture}[scale=0.65]
    \begin{scope}[shift={(0,1.5)}]
    \draw [blue] (-0.5, 0.8)--(-0.5, 0) .. controls +(0, -0.6) and +(0,-0.6).. (0.5, 0)--(0.5, 0.8);    
\begin{scope}[shift={(0.5, 0.3)}]
\draw [fill=white] (-0.3, -0.3) rectangle (0.3, 0.3);
\node at (0, 0) {\tiny $\mathbf{y}$};
\end{scope}
    \end{scope}
\draw [blue] (-0.5, -0.8)--(-0.5, 0) .. controls +(0, 0.6) and +(0,0.6).. (0.5, 0)--(0.5, -0.8);
\end{tikzpicture}}}
+ \vcenter{\hbox{\begin{tikzpicture}[scale=0.65]
    \begin{scope}[shift={(0,1.5)}]
    \draw [blue] (-0.5, 0.8)--(-0.5, 0) .. controls +(0, -0.6) and +(0,-0.6).. (0.5, 0)--(0.5, 0.8);    
    \end{scope}
\draw [blue] (-0.5, -0.8)--(-0.5, 0) .. controls +(0, 0.6) and +(0,0.6).. (0.5, 0)--(0.5, -0.8);
\begin{scope}[shift={(0.5, -0.3)}]
\draw [fill=white] (-0.3, -0.3) rectangle (0.3, 0.3);
\node at (0, 0) {\tiny $\mathbf{y}$};
\end{scope}
\end{tikzpicture}}} - \vcenter{\hbox{
    \begin{tikzpicture}
       \draw [blue] (-0.15, -0.6)--(-0.15, 0.6) (0.15, -0.6)--(0.15, 0.6);
       \draw[fill=white] (-0.35, -0.3) rectangle (0.35, 0.3);
       \node at (0, 0) {\tiny $ \widehat{\cL}_0$};
    \end{tikzpicture}
    }}$, where $\displaystyle \mathbf{y}=\frac{1}{2} (1*\widehat{\cL}_0)$.

\begin{proposition}\label{prop:semigroupext}
Suppose that $0\leq x\in \cM_1'\cap \cM_5$ with $\vcenter{\hbox{
    \begin{tikzpicture}
       \draw [blue] (0.45, -0.3).. controls+(0, -0.3) and +(0, -0.3).. (0.75, -0.3)--(0.75, 0.3) .. controls +(0, 0.3) and +(0, 0.3)..(0.45, 0.3) (-0.15, -0.6)--(-0.15, 0.6) (0.15, -0.6)--(0.15, 0.6) (-0.45, -0.6)--(-0.45, 0.6);
       \draw[fill=white] (-0.6, -0.3) rectangle (0.6, 0.3);
       \node at (0, 0) {\tiny $x$};
    \end{tikzpicture}
    }}
    =  \vcenter{\hbox{
    \begin{tikzpicture}
       \draw [blue]   (-0.15, -0.6)--(-0.15, 0.6) (0.15, -0.6)--(0.15, 0.6) (-0.45, -0.6)--(-0.45, 0.6);
       \draw[fill=white] (-0.35, -0.3) rectangle (0.35, 0.3);
       \node at (0, 0) {\tiny $ \widehat{\cL}_0$};
    \end{tikzpicture}
    }}$ and     $    \vcenter{\hbox{
    \begin{tikzpicture}
       \draw [blue] (-0.45, -0.3).. controls+(0, -0.3) and +(0, -0.3).. (-0.75, -0.3)--(-0.75, 0.3) .. controls +(0, 0.3) and +(0, 0.3)..(-0.45, 0.3) (0.15, -0.6)--(0.15, 0.6) (0.45, -0.6)--(0.45, 0.6);
       \draw[blue] (-0.15, -0.3).. controls+(0, -0.5) and +(0, -0.5).. (-1.05, -0.3)--(-1.05, 0.3) .. controls +(0, 0.5) and +(0, 0.5)..(-0.15, 0.3);
       \draw[fill=white] (-0.6, -0.3) rectangle (0.6, 0.3);
       \node at (0, 0) {\tiny $x$};
    \end{tikzpicture}
    }}
    =    \vcenter{\hbox{
    \begin{tikzpicture}
       \draw [blue]   (-0.15, -0.3).. controls +(0, -0.3) and +(0, -0.3).. (-0.5, -0.3)--(-0.5, 0.3).. controls +(0, 0.3) and +(0, 0.3).. (-0.15, 0.3) (0.15, -0.6)--(0.15, 0.6) (0.45, -0.6)--(0.45, 0.6);
       \draw[fill=white] (-0.35, -0.3) rectangle (0.35, 0.3);
       \node at (0, 0) {\tiny $\widehat{\cL}_0$};
    \end{tikzpicture}
    }}=2\mathbf{y}$.
Then 
\begin{align*}
\widehat{\cJ}=
\vcenter{\hbox{\begin{tikzpicture}[scale=0.65]
    \begin{scope}[shift={(0,1.5)}]
    \draw [blue] (-0.5, 0.8)--(-0.5, 0) .. controls +(0, -0.6) and +(0,-0.6).. (0.5, 0)--(0.5, 0.8);   \draw [blue] (-1, 0.8)--(-1, 0) .. controls +(0, -0.8) and +(0,-0.8).. (1, 0)--(1, 0.8);  
\begin{scope}[shift={(0.8, 0.3)}]
\draw [fill=white] (-0.5, -0.3) rectangle (-0.1, 0.3);
\node at (-0.3, 0) {\tiny $\mathbf{y}$};
\end{scope}
    \end{scope}
\draw [blue] (-0.5, -0.8)--(-0.5, 0) .. controls +(0, 0.6) and +(0,0.6).. (0.5, 0)--(0.5, -0.8);
\draw [blue] (-1, -0.8)--(-1, 0) .. controls +(0, 0.8) and +(0,0.8).. (1, 0)--(1, -0.8); 
\end{tikzpicture}}}
+\vcenter{\hbox{\begin{tikzpicture}[scale=0.65]
    \begin{scope}[shift={(0,1.5)}]
    \draw [blue] (-0.5, 0.8)--(-0.5, 0) .. controls +(0, -0.6) and +(0,-0.6).. (0.5, 0)--(0.5, 0.8);   \draw [blue] (-1, 0.8)--(-1, 0) .. controls +(0, -0.8) and +(0,-0.8).. (1, 0)--(1, 0.8);  
    \end{scope}
\draw [blue] (-0.5, -0.8)--(-0.5, 0) .. controls +(0, 0.6) and +(0,0.6).. (0.5, 0)--(0.5, -0.8);
\draw [blue] (-1, -0.8)--(-1, 0) .. controls +(0, 0.8) and +(0,0.8).. (1, 0)--(1, -0.8); 
\begin{scope}[shift={(0.8, -0.3)}]
\draw [fill=white] (-0.5, -0.3) rectangle (-0.1, 0.3);
\node at (-0.3, 0) {\tiny $\mathbf{y}$};
\end{scope}
\end{tikzpicture}}} -\vcenter{\hbox{
    \begin{tikzpicture}
       \draw [blue] (0.45, -0.6)--(0.45, 0.6) (-0.15, -0.6)--(-0.15, 0.6) (0.15, -0.6)--(0.15, 0.6) (-0.45, -0.6)--(-0.45, 0.6);
       \draw[fill=white] (-0.6, -0.3) rectangle (0.6, 0.3);
       \node at (0, 0) {\tiny $x$};
    \end{tikzpicture}
    }}
\end{align*}
is an extension of $\widehat{\cL}$.
\end{proposition}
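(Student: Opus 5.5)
The criterion is Proposition \ref{prop:extension1}, used in the form stated just before this proposition. A bimodule map on $\cM_1$ restricts to $\cL$ on $\cM$ exactly when capping the rightmost pair of strings of its Fourier multiplier (the right-capped picture) gives back $\widehat{\cL}$ with a through string added on the left. So the plan is to apply this capping to each of the three terms of the proposed $\widehat{\cJ}$, using planar isotopy and the loop value $\delta=\lambda^{-1/2}$. The results will then be compared with the three terms of $\widehat{\cL}=\widehat{\mathbf{y}\text{-cup}}+\widehat{\mathbf{y}^*\text{-cup}}-\widehat{\cL}_0$, where $\mathbf{y}=\frac12(1*\widehat{\cL}_0)$ is self-adjoint since $\widehat{\cL}_0\ge 0$.

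\textbf{Step 1: the term $-x$.} This is immediate from the first hypothesis: capping the rightmost strings of $x$ gives $\widehat{\cL}_0$ with a string on its left. This reproduces $-\widehat{\cL}_0$ in $\widehat{\cL}$, padded by one left through string, as the criterion requires.

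\textbf{Step 2: the two nested cup--cap terms with $\mathbf{y}$.} Each has an inner cup/cap pair carrying $\mathbf{y}$ and an outer nested pair. Capping the two rightmost boundary points joins the outermost right strand to the adjacent one. By isotopy this straightens into the picture with an inner cup/cap carrying $\mathbf{y}$ plus a single through string on the left. That picture is exactly the standard lifting of the corresponding term $\vcenter{\hbox{\tiny$\mathbf{y}$-cup}}$ of $\widehat{\cL}$. I will check carefully, as in the proof of Proposition \ref{prop:extension1}, that no closed loop appears, so that no factor of $\delta$ is produced; the normalizations of the standard lifting in the preceding Remark suggest none should.

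\textbf{Step 3: assemble.} Summing Steps 1 and 2 gives $\widehat{\cL}$, so Proposition \ref{prop:extension1} yields $\cJ|_{\cM}=\cL$. The second hypothesis, that capping $x$ on the left gives $2\mathbf{y}$, is not needed for restriction. I expect it is what makes $\widehat{\cJ}$ have the same normal form as $\widehat{\cL}$: it gives $\widehat{\cJ}_0=x\ge 0$ and $1*\widehat{\cJ}_0$ equal to the lifted $2\mathbf{y}$, so $\{e^{-t\cJ}\}$ is a genuine Markov semigroup with $\cJ(1)=0$. I would verify this by computing $1*x$ pictorially and checking that $\widehat{\cJ}\,\widetilde{e}_2$ matches.

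\textbf{Main obstacle.} The only delicate point is the bookkeeping of scalar factors and string positions in Step 2. I must confirm which side is capped and whether the nested outer strings close into a loop, which would contribute $\lambda^{-1/2}$. If a factor does appear, I would compare with the $\lambda$-powers in \eqref{eq:extension2}.
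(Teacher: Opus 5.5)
Your proposal is correct and follows the paper's proof. The paper checks, ``by taking the conditional expectation'' (i.e.\ capping the rightmost strand), that $\widehat{\cJ}$ reduces to $\widehat{\cL}$ with a through string on the left; this is your Steps 1--2, and the nested $\mathbf{y}$-terms do straighten with no closed loop and no power of $\lambda$. It then gets the Markov semigroup property from $x\geq 0$ and the second hypothesis by citing Theorem 5.15 of \cite{WuZha25}, which is the Lindblad-form check you propose to do by hand.

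One correction to the attribution in Step 3. Proposition~\ref{prop:extension1} caps the \emph{leftmost} strand. The right-capped identity you verify is, as explained just before the proposition, the equivalent form of $\cJ^*|_{\cM}=\cL^*$ (i.e.\ $\cL\bE_{\cM}=\bE_{\cM}\cJ$). That is the sense of ``extension'' used in the intertwining property, not literally $\cJ|_{\cM}=\cL$; the paper's own final sentence is equally loose on this point.
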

\begin{proof}
By taking the conditional expectation, we see that $\vcenter{\hbox{
    \begin{tikzpicture}
       \draw [blue] (0.45, -0.3).. controls+(0, -0.3) and +(0, -0.3).. (0.75, -0.3)--(0.75, 0.3) .. controls +(0, 0.3) and +(0, 0.3)..(0.45, 0.3) (-0.15, -0.6)--(-0.15, 0.6) (0.15, -0.6)--(0.15, 0.6) (-0.45, -0.6)--(-0.45, 0.6);
       \draw[fill=white] (-0.6, -0.3) rectangle (0.6, 0.3);
       \node at (0, 0) {\tiny $\widehat{\cJ}$};
    \end{tikzpicture}
    }}
    =  \vcenter{\hbox{
    \begin{tikzpicture}
       \draw [blue]   (-0.15, -0.6)--(-0.15, 0.6) (0.15, -0.6)--(0.15, 0.6) (-0.45, -0.6)--(-0.45, 0.6);
       \draw[fill=white] (-0.35, -0.3) rectangle (0.35, 0.3);
       \node at (0, 0) {\tiny $ \widehat{\cL}$};
    \end{tikzpicture}
    }}.$
    By Theorem 5.15 in \cite{WuZha25}, we see that $\widehat{\cJ}$ induces a bimodule quantum Markov semigroup $\{\widetilde{\Phi_t}\}_{t\geq 0}$ on $\cM_1$.
    By Proposition \ref{prop:extension1}, we see that $\widetilde{\Phi_t}$ extends $\Phi_t$ for all $t\geq 0$.
\end{proof}

The derivation $\widetilde{\partial}: \cM_1 \to \cM_3$ is 
\begin{align*}
\widetilde{\partial} x =\left[x, \fF^{-1}(\widehat{\cJ}_0^{1/2}) \right], \quad x\in \cM_1,  
\end{align*}
where the Fourier transform $\mathfrak{F}$ is on the 2-box spaces for the inclusion $\cN \subset \cM_1$ and $\mathfrak{F}^{-1}(\widehat{\cJ}_0^{1/2})\in \cN'\cap \cM_3$.
We see that 
\begin{align*}
 ( \widetilde{\partial} x ) \widetilde{e}_2  =\lambda^{-1} \widehat{\cJ}_0^{1/2} [x, \widetilde{e}_1 ]\widetilde{e}_2. 
\end{align*}
The associated gradient form $\widetilde{\Gamma}$ is 
\begin{align*}
    \widetilde{\Gamma}(x, y)=&\frac{1}{2} (x^*\cJ(y)+\cJ(x)^*y-\cJ(x^*y)) \\
    =& \frac{\lambda^{-1}}{2}\bE_{\cM_1}\left(( \widetilde{\partial} y)^*( \widetilde{\partial} x)\right), \quad x, y\in \cM_1.
\end{align*}

\begin{lemma}
Suppose that $\{\Phi_t\}_{t\geq 0}$ is a bimodule GNS symmetric quantum Markov semigroup and $\{\widetilde{\Phi_t}\}_{t\geq 0}$ is a bimodule quantum Markov semigroup such that $\widetilde{\Phi_t}^*$ extends $\Phi_t^*$.
Then the intertwining property \eqref{eq:intertwining} is equivalent to 
\begin{align}\label{eq:intertwining0}
    \partial_j \cL - \cJ \partial_j =\beta_j \partial_j
\end{align}
for all $j=1, \ldots, m$.
\end{lemma}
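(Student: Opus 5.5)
The plan is to pass from the semigroup identity \eqref{eq:intertwining} to a generator identity by differentiation, then take adjoints and split into components.

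\textbf{Step 1 (unpack the intertwining property).} Since $\Div(x_j)_{j=1}^m=\lambda^{1/4}\sum_{j}\partial_j^*x_j$, identity \eqref{eq:intertwining} says
\begin{align*}
\sum_{j=1}^m \Phi_t^*\partial_j^* x_j=\sum_{j=1}^m e^{-\beta_j t}\partial_j^*\widetilde{\Phi_t}^*x_j
\end{align*}
for all $x_1,\dots,x_m\in\cM_1$. Taking $x_k=0$ for $k\neq j$ shows this is equivalent to the family $\Phi_t^*\partial_j^*=e^{-\beta_j t}\partial_j^*\widetilde{\Phi_t}^*$ for each $j$. Taking Hilbert space adjoints with respect to $\tau$ and $\tau_1$, this becomes
\begin{align*}
\partial_j\Phi_t=e^{-\beta_j t}\widetilde{\Phi_t}\partial_j, \qquad j=1,\dots,m,\ t\geq 0.
\end{align*}
Here we need $\widetilde{\Phi_t}$ and $\partial_j:\cM\to\cM_1$ to have well-defined adjoints in the $L^2$ sense. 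This holds because everything is finite dimensional at the level of Fourier multipliers, and because $\tau_1|_{\cM}=\tau$.

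\textbf{Step 2 (differentiate and integrate back).} Both sides of the last identity are smooth in $t$, since $\Phi_t=e^{-t\cL}$, $\widetilde{\Phi_t}=e^{-t\cJ}$, and all maps are bimodule maps determined by finite-dimensional multipliers. Differentiating at $t=0$ gives $-\partial_j\cL=-\beta_j\partial_j-\cJ\partial_j$, which is \eqref{eq:intertwining0}.

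Conversely, assume \eqref{eq:intertwining0}. Set $F(s)=e^{-\beta_j (t-s)}\widetilde{\Phi}_{t-s}\partial_j\Phi_s$ for $s\in[0,t]$. Its derivative is
\begin{align*}
F'(s)=e^{-\beta_j (t-s)}\widetilde{\Phi}_{t-s}\left(\beta_j\partial_j+\cJ\partial_j-\partial_j\cL\right)\Phi_s=0.
\end{align*}
Hence $F(t)=F(0)$, that is, $\partial_j\Phi_t=e^{-\beta_j t}\widetilde{\Phi_t}\partial_j$. Reversing Step 1 then recovers \eqref{eq:intertwining}.

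\textbf{Main obstacle.} The delicate point is the adjoint bookkeeping in Step 1. The paper writes the intertwining with $\Phi_t^*$ and $\widetilde{\Phi_t}^*$, while \eqref{eq:intertwining0} is stated with $\cL$ and $\cJ$. I must check that the adjoint of $\widetilde{\Phi_t}^*$ restricted appropriately is $\widetilde{\Phi_t}$, so no modular twist enters. In the GNS symmetric setting one might worry that $\Phi_t^*$ should be a KMS/GNS adjoint involving $\widehat{\Delta}$. I would handle this by consistently using the tracial $L^2$ adjoints, for which $(\partial_j^*)^*=\partial_j$ and $(\Phi_t^*)^*=\Phi_t$ hold trivially.

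The hypothesis that $\widetilde{\Phi_t}^*$ extends $\Phi_t^*$ (equivalently $\Phi_t\bE_{\cM}=\bE_{\cM}\widetilde{\Phi_t}$, as noted above) is not needed for the equivalence itself. It only guarantees that the setting matches the Definition.
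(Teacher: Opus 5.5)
Your proof is correct and follows the paper's argument: you differentiate at $t=0$ for one direction, and for the converse your interpolation $F(s)=e^{-\beta_j(t-s)}\widetilde{\Phi}_{t-s}\partial_j\Phi_s$ is just the standard proof of the ODE uniqueness (with bounded generator $\cJ$) that the paper invokes to get $\widetilde{\Phi_t}\partial_j=e^{\beta_j t}\partial_j\Phi_t$. Your componentwise reduction and explicit passage to tracial adjoints fill in the step the paper compresses into ``this implies the intertwining property''; note that smoothness in $t$ really rests on the boundedness of $\cL$ and $\cJ$, not on finite-dimensionality of the multiplier spaces, which need not hold for general finite inclusions.
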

\begin{proof}
Suppose that $x\in \cM$ and $X(t)=\partial_j \Phi_t(x)$.
Then $X(0)=\partial_j x$ and 
\begin{align}\label{eq:intertwining2}
    \frac{d}{dt} X(t) =-\partial_j \cL \Phi_t(x)=-\cJ \partial_j \Phi_t(x) -\beta_j \partial_j \Phi_t(x)=(-\cJ-\beta_j )X(t).
\end{align}
Hence $e^{\beta_j t} X(t)$ is the unique solution of $\displaystyle \frac{d}{dt}Y(t)=-\cJ Y(t)$ with $Y(0)=\partial_j x$ followed from the boundedness of $\cJ$.
Note that $\widetilde{\Phi_t}\partial_j x$ is the solution of the equation.
We have that $\widetilde{\Phi_t}\partial_j x=e^{\beta_j t}\partial_j \Phi_t(x)$.
This implies the intertwining property.

Suppose that the intertwining property holds.
Then by differentiating Equation \eqref{eq:intertwining} at $t=0$, we see that Equation \eqref{eq:intertwining2} holds.
\end{proof}

\begin{proposition}
Suppose that $\{\Phi_t\}_{t\geq 0}$ is a bimodule GNS symmetric quantum Markov semigroup and $\{\widetilde{\Phi_t}\}_{t\geq 0}$ is a bimodule quantum Markov semigroup such that $\widetilde{\Phi_t}^*$ extends $\Phi_t^*$.
Then the intertwining property \eqref{eq:intertwining} is equivalent to 
\begin{align}\label{eq:intertwining3}
 \vcenter{\hbox{

    }} 
\end{align}
for all $k=1, \ldots, m$.
\end{proposition}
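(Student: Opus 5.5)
The plan is to start from the preceding lemma. It shows that, under the standing hypotheses, the intertwining property \eqref{eq:intertwining} is equivalent to the operator identities $\partial_k\cL-\cJ\partial_k=\beta_k\partial_k$ for $k=1,\ldots,m$. So it suffices to show that each such identity, viewed as an equality of bimodule maps $\cM\to\cM_1$, is equivalent to the pictorial identity \eqref{eq:intertwining3} between elements of $\cM'\cap\cM_3$. The natural route is to pass to Fourier multipliers. All three maps $\partial_k\cL$, $\cJ\partial_k$ and $\partial_k$ are $\cN$-bimodule maps from $\cM$ into $\cM_1$. Exactly as for $\widehat{\partial}$, each is determined by a unique multiplier in the 3-box space $\cM'\cap\cM_3$, through a formula of the type $\Phi(x)=c\,\bE_{\cM_1}(\cdots\widehat{\Phi}\,x\cdots)$ involving Jones projections. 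The identity \eqref{eq:intertwining0} therefore holds if and only if the corresponding multipliers agree.

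First, I would compute $\widehat{\partial_k}$. Replacing $\widehat{\cL}_0^{1/2}$ by $\omega_k^{1/2}p_k$ in the picture for $\widehat{\partial}$ gives it as the difference of the two diagrams with $p_k$ placed in the two positions. Second, I would compute the multiplier of the composition $\partial_k\cL$. Composing bimodule maps corresponds to coproduct-type stacking of their diagrams (the analogue of $\widehat{\Phi\Psi}=\widehat{\Psi}*\widehat{\Phi}$ in one higher box space). So $\widehat{\partial_k\cL}$ is obtained by attaching $\widehat{\cL}$, drawn on the two strings of $\cM'\cap\cM_2$ feeding the input of $\widehat{\partial_k}$, in the same way the pictures in the $\CBE$ theorem attach $\widehat{\cL}$ to $\widehat{\partial}$. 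Third, $\widehat{\cJ\partial_k}$ is obtained by attaching $\widehat{\cJ}\in\cN'\cap\cM_3$ (a 2-box of $\cN\subset\cM_1$, i.e.\ a 4-string box) to the output strings of $\widehat{\partial_k}$. This step uses Proposition \ref{prop:extension1} and the description of $\cJ$ as an extension of $\cL$, so that $\cJ$ acts on $\cM_1$. Equating the three pieces with the coefficient $\beta_k$ gives \eqref{eq:intertwining3}.

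For the converse, I would use injectivity of the multiplier map. If \eqref{eq:intertwining3} holds, then the bimodule map $\partial_k\cL-\cJ\partial_k-\beta_k\partial_k$ has zero multiplier, so it vanishes. Indeed, evaluating against $x_1e_1y_1\Omega_1$ as in \eqref{eqn:: bilinear form induced by Fourier multiplier}, and using the Pimsner--Popa basis, recovers the map on all of $\cM$. The lemma then returns the intertwining property.

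I expect the main obstacle to be the second and third steps: fixing the exact normalizing powers of $\lambda$ and the correct placement of strings when composing $\cJ$ on $\cM_1$ with $\partial_k\colon\cM\to\cM_1$. Here $\widehat{\cJ}$ lives over $\cN\subset\cM_1$, whose Jones projections are $\widetilde e_1,\widetilde e_2$. I would handle this as in the proof of Proposition \ref{prop:gammafourier}. I would expand $\widetilde e_1=\lambda^{-1}e_2e_1e_3e_2$ and $\widetilde e_2=\lambda^{-1}e_4e_3e_5e_4$, write $\cJ(\partial_k x)$ as $\lambda^{-5/2}$-type conditional expectations, and then absorb Jones projections step by step until only a single multiplier sandwiched by $e$'s remains. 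This mirrors the chain of identities in the proof of Proposition \ref{prop:extension1}.
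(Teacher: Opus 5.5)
Your proposal is correct and follows essentially the same route as the paper. You reduce via the preceding lemma to $\partial_k\cL-\cJ\partial_k=\beta_k\partial_k$, write that identity pictorially through Fourier multipliers, and substitute $\widehat{\partial}_k=\omega_k^{1/2}(\cdots)$ in terms of $p_k$. The paper does this in one line; your tracking of the $\lambda$-normalizations, the cancellation of the nonzero factor $\omega_k^{1/2}$, and the injectivity argument for the converse spell out what the paper leaves implicit.
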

\begin{proof}
The pictorial representation of the intertwining property (Equation \eqref{eq:intertwining0}) is 
\begin{align*}
    \vcenter{\hbox{\scalebox{0.8}{
        %
}}}.
\end{align*}
By the fact that
\begin{align*}
           \widehat{\partial}_k=\omega_k^{1/2}\vcenter{\hbox{
    %
    }} ,
\end{align*}
we see that the proposition is true.
\end{proof}

\begin{remark}
The intertwining property implies that 
\begin{align*}
       -\vcenter{\hbox{
    %
    }},
\end{align*}
by adding caps to the left at the bottom and turn the right top most string down to Equation \eqref{eq:intertwining3}. 
\end{remark}

\begin{corollary}
Suppose that $\{\Phi_t\}_{t\geq 0}$ is a bimodule GNS symmetric quantum Markov semigroup and $\{\widetilde{\Phi_t}\}_{t\geq 0}$ is a bimodule quantum Markov semigroup such that $\widetilde{\Phi_t}^*$ extends $\Phi_t^*$.
If $1*\widehat{\cJ}_0=1*\widehat{\cL}_0=1$, then the intertwining property \eqref{eq:intertwining} is equivalent to 
\begin{align}\label{eq:intertwining4}
 -\vcenter{\hbox{
%
    }}$ and applying Proposition \ref{prop:semigroupext}, we see that the corollary is true.
\end{proof}

\begin{remark}
The intertwining property implies that 
\begin{align*}
       \vcenter{\hbox{
    %
    }},
\end{align*}
by adding caps to the left at the bottom and turn the right top most string down in Equation \eqref{eq:intertwining4}. 
\end{remark}

\begin{remark}
Suppose that $\vcenter{\hbox{
    %
    }}$.
We have that 
\begin{align*}
\vcenter{\hbox{
    %
    }}$.
We have that 
\begin{align*}
 -\vcenter{\hbox{
%
}}}. 
\end{align*}
We see that the associated semigroup satisfies the intertwining property with $\displaystyle \beta=\frac{\lambda^{1/2}}{\lambda^{-1/2}-\lambda^{1/2}} = \frac{\lambda}{1-\lambda} $, i.e. $ -\vcenter{\hbox{
%
    }}  \right)$.
\end{remark}

\begin{remark}\label{rem:dephaseinter2}
Suppose that $\widehat{\cL}_0$ is defined as previous remark and the inclusion $\cN \subset \cM$ admitting a braiding as follows:
\begin{align*}
 \vcenter{\hbox{\scalebox{0.8}{
        %
    }}\right).
\end{align*}
We have that 
\begin{align*}
2(\lambda^{-1/2}-\lambda^{1/2})\vcenter{\hbox{
    %
    }}.
\end{align*}
By expanding the braiding, we obtain that 
\begin{align*}
& 2(\lambda^{-1/2}-\lambda^{1/2}) \left(\vcenter{\hbox{
    %
}}}\right).
\end{align*}
We see that the associated semigroup satisfies the intertwining property with $\displaystyle \beta = \frac{4-\lambda^{-1}}{2(1-\lambda)} $.

If $\displaystyle \lambda=\frac{1}{2}$, we see that $\beta=2$ which coincides with the optimal coefficient in Proposition \ref{prop:dephaseem}.
This implies the coefficient here is optimal by Proposition \ref{prop:beinter}.

Note that when $m \leq 8$, we have that $ \displaystyle \frac{4-\lambda^{-1}}{2(1-\lambda)} \geq \frac{\lambda}{1-\lambda}$, i.e. the coefficient arising from the braiding is better.
\end{remark}

In the following, we denote by $\widehat{\widetilde{\Gamma}}_{\partial}$ the Fourier multiplier of $\lambda^{-1/2}\bE_{\cM}((\widetilde{\partial}\partial (x))^*(\widetilde{\partial}\partial x) )$, which is depicted as
\begin{align*}
     \widehat{\widetilde{\Gamma}}_{\partial}= \vcenter{\hbox{\scalebox{0.8}{
        \begin{tikzpicture}[scale=1.2]
           \draw [blue] (-0.6, 0.6)--(-0.6, -0.6) (-0.4, 0.6)--(-0.4, -0.6) (0, 0.4)--(0, -0.4) (0.2, 0.4)--(0.2, -0.4) (0.4, 1.5)--(0.4, -1.5);
           \draw [blue] (0.6, -0.3).. controls +(0, -0.3) and +(0, -0.3) .. (0.9, -0.3)--(0.9, 0.3).. controls +(0, 0.3) and +(0, 0.3) ..(0.6, 0.3);
           \draw [fill=white] (-0.7, -0.3) rectangle (0.7, 0.3);
           \node at (0, 0) {\tiny $\widehat{\widetilde{\Gamma}}$};
           \begin{scope}[shift={(-0.6, 0.8)}]
        \draw[blue] (-0.2, 0.7)--(-0.2, -0.3) .. controls +(0, -0.3) and +(0, -0.3) .. (-0.6, -0.3)--(-0.6, 0.7);
           \draw[blue] (0.2, 0.3) .. controls +(0, 0.3) and +(0, 0.3) .. (0.6, 0.3)--(0.6, -0.5);
           \draw[blue] (0, 0.3) .. controls +(0, 0.5) and +(0, 0.5) .. (0.8, 0.3)--(0.8, -0.5);
            \draw [fill=white] (-0.4, -0.3) rectangle (0.4, 0.3);
           \node at (0, 0) {\tiny $\widehat{\partial}$};    
           \end{scope}
        \begin{scope}[shift={(-0.6, -0.8)}]
        \draw[blue] (-0.2, -0.7)--(-0.2, 0.3) .. controls +(0, 0.3) and +(0, 0.3) .. (-0.6, 0.3)--(-0.6, -0.7);
           \draw[blue] (0.2, -0.3) .. controls +(0, -0.3) and +(0, -0.3) .. (0.6, -0.3)--(0.6, 0.5);
           \draw[blue] (0, -0.3) .. controls +(0, -0.5) and +(0, -0.5) .. (0.8, -0.3)--(0.8, 0.5);
            \draw [fill=white] (-0.4, -0.3) rectangle (0.4, 0.3);
           \node at (0, 0) {\tiny $\widehat{\partial}^*$};    
           \end{scope}
        \end{tikzpicture}
        }}}
    =   
    -\vcenter{\hbox{\scalebox{0.8}{
        \begin{tikzpicture}[scale=1.2]
           \draw [blue] (-0.6, 0.4)--(-0.6, -1.4) (-0.3, 0.8)--(-0.3, -1.4) (-0.1, 1.4)--(-0.1, -0.8) (0.2, 1.4)--(0.2, -0.4) (0.4, 1.4)--(0.4, -1.4);
           \draw [blue] (0.6, -0.3).. controls +(0, -0.3) and +(0, -0.3) .. (0.9, -0.3)--(0.9, 0.3).. controls +(0, 0.3) and +(0, 0.3) ..(0.6, 0.3);
           \draw [fill=white] (-0.7, -0.3) rectangle (0.7, 0.3);
           \node at (0, 0) {\tiny $\widehat{\widetilde{\Gamma}}$};
           \begin{scope}[shift={(-0.2, 0.8)}]
           \draw[blue] (-0.1, 0.2) .. controls +(0, 0.2) and +(0, 0.2) .. (-0.4, 0.2)--(-0.4, -0.5);
            \draw [fill=white] (-0.25, -0.25) rectangle (0.25, 0.25);
           \node at (0, 0) {\tiny $\widehat{\cL}_0^{1/2}$};    
           \end{scope}
            \begin{scope}[shift={(-0.2, -0.8)}]
           \draw[blue] (0.1, -0.2) .. controls +(0, -0.2) and +(0, -0.2) .. (0.4,- 0.2)--(0.4, 0.5);
             \draw [fill=white] (-0.25, -0.25) rectangle (0.25, 0.25);
           \node at (0, 0) {\tiny $\widehat{\cL}_0^{1/2}$};    
           \end{scope}
        \end{tikzpicture}
        }}}
        +\vcenter{\hbox{\scalebox{0.8}{
        \begin{tikzpicture}[scale=1.2]
           \draw [blue] (-0.6, 1.4)--(-0.6, -1.4) (-0.3, 1.4)--(-0.3, -1.4) (-0.1, 0.8)--(-0.1, -0.8) (0.2, 0.4)--(0.2, -0.4) (0.4, 1.4)--(0.4, -1.4);
           \draw [blue] (0.6, -0.3).. controls +(0, -0.3) and +(0, -0.3) .. (0.9, -0.3)--(0.9, 0.3).. controls +(0, 0.3) and +(0, 0.3) ..(0.6, 0.3);
           \draw [fill=white] (-0.7, -0.3) rectangle (0.7, 0.3);
           \node at (0, 0) {\tiny $\widehat{\widetilde{\Gamma}}$};
           \begin{scope}[shift={(-0.2, 0.8)}]
           \draw[blue] (0.1, 0.2) .. controls +(0, 0.2) and +(0, 0.2) .. (0.4, 0.2)--(0.4, -0.5);
              \draw [fill=white] (-0.25, -0.25) rectangle (0.25, 0.25);
           \node at (0, 0) {\tiny $\widehat{\cL}_0^{1/2}$};    
           \end{scope}
            \begin{scope}[shift={(-0.2, -0.8)}]
           \draw[blue] (0.1, -0.2) .. controls +(0, -0.2) and +(0, -0.2) .. (0.4,- 0.2)--(0.4, 0.5);
               \draw [fill=white] (-0.25, -0.25) rectangle (0.25, 0.25);
           \node at (0, 0) {\tiny $\widehat{\cL}_0^{1/2}$};    
           \end{scope}
        \end{tikzpicture}
        }}}
        +  \vcenter{\hbox{\scalebox{0.8}{
        \begin{tikzpicture}[scale=1.2]
           \draw [blue] (-0.6, 0.4)--(-0.6, -0.4) (-0.3, 0.8)--(-0.3, -0.8) (-0.1, 1.4)--(-0.1, -1.4) (0.2, 1.4)--(0.2, -1.4) (0.4, 1.4)--(0.4, -1.4);
           \draw [blue] (0.6, -0.3).. controls +(0, -0.3) and +(0, -0.3) .. (0.9, -0.3)--(0.9, 0.3).. controls +(0, 0.3) and +(0, 0.3) ..(0.6, 0.3);
           \draw [fill=white] (-0.7, -0.3) rectangle (0.7, 0.3);
           \node at (0, 0) {\tiny $\widehat{\widetilde{\Gamma}}$};
           \begin{scope}[shift={(-0.2, 0.8)}]
           \draw[blue] (-0.1, 0.2) .. controls +(0, 0.2) and +(0, 0.2) .. (-0.4, 0.2)--(-0.4, -0.5);
             \draw [fill=white] (-0.25, -0.25) rectangle (0.25, 0.25);
           \node at (0, 0) {\tiny $\widehat{\cL}_0^{1/2}$};    
           \end{scope}
            \begin{scope}[shift={(-0.2, -0.8)}]
           \draw[blue] (-0.1, -0.2) .. controls +(0, -0.2) and +(0, -0.2) .. (-0.4,- 0.2)--(-0.4, 0.5);
              \draw [fill=white] (-0.25, -0.25) rectangle (0.25, 0.25);
           \node at (0, 0) {\tiny $\widehat{\cL}_0^{1/2}$};    
           \end{scope}
        \end{tikzpicture}
        }}}
        -\vcenter{\hbox{\scalebox{0.8}{
        \begin{tikzpicture}[scale=1.2]
           \draw [blue] (-0.6, 1.4)--(-0.6, -0.4) (-0.3, 1.4)--(-0.3, -0.8) (-0.1, 0.8)--(-0.1, -1.4) (0.2, 0.4)--(0.2, -1.4) (0.4, 1.4)--(0.4, -1.4);
           \draw [blue] (0.6, -0.3).. controls +(0, -0.3) and +(0, -0.3) .. (0.9, -0.3)--(0.9, 0.3).. controls +(0, 0.3) and +(0, 0.3) ..(0.6, 0.3);
           \draw [fill=white] (-0.7, -0.3) rectangle (0.7, 0.3);
           \node at (0, 0) {\tiny $\widehat{\widetilde{\Gamma}}$};
           \begin{scope}[shift={(-0.2, 0.8)}]
           \draw[blue] (0.1, 0.2) .. controls +(0, 0.2) and +(0, 0.2) .. (0.4, 0.2)--(0.4, -0.5);
            \draw [fill=white] (-0.25, -0.25) rectangle (0.25, 0.25);
           \node at (0, 0) {\tiny $\widehat{\cL}_0^{1/2}$};    
           \end{scope}
            \begin{scope}[shift={(-0.2, -0.8)}]
           \draw[blue] (-0.1, -0.2) .. controls +(0, -0.2) and +(0, -0.2) .. (-0.4,- 0.2)--(-0.4, 0.5);
            \draw [fill=white] (-0.25, -0.25) rectangle (0.25, 0.25);
           \node at (0, 0) {\tiny $\widehat{\cL}_0^{1/2}$};    
           \end{scope}
        \end{tikzpicture}
        }}}.
\end{align*}

\begin{proposition}\label{prop:beinter}
Suppose that $\{\Phi_t\}_{t\geq 0}$ satisfies the intertwining property \eqref{eq:intertwining3}.  
Then for any $x\in \cM$, we have that 
\begin{align*}
   \Gamma_2(x) \geq  \beta \Gamma(x)+\frac{1}{2} \lambda^{-1/2}\sum_{j=1}^m \bE_{\cM}\widetilde{\Gamma}(\partial_j x).
\end{align*}
Moreover, 
\begin{align*}
  \Gamma_2(x)
  \geq & \beta \Gamma(x)+ \frac{ \lambda^{-3/2}}{4m} \bE_{\cM}\left((\widetilde{\partial}\partial (x))^*(\widetilde{\partial}\partial x) \right) 
 =  \beta \Gamma(x)+ \frac{ \lambda^{-1/2}}{2m} \bE_{\cM}\left(\widetilde{\Gamma}(\partial x)\right).
\end{align*}
In terms of the Fourier multipliers, we have that 
\begin{align*}
\widehat{\Gamma}_2 \geq \beta \widehat{\Gamma} +\frac{\lambda^{-1}}{2m} \widehat{\widetilde{\Gamma}}_{\partial}.
\end{align*}
\end{proposition}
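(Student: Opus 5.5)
We follow the Wirth–Zhang strategy, adapted to the bimodule setting. The ingredients are the representation $\Gamma(x)=\frac{\lambda^{-1/2}}{2}\bE_{\cM}((\partial x)^*(\partial x))$, the splitting $\partial=\sum_j\partial_j$, and the differentiated intertwining relation \eqref{eq:intertwining0}, $\partial_j\cL=\cJ\partial_j+\beta_j\partial_j$, which is equivalent to \eqref{eq:intertwining3} by the preceding lemma and proposition.

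First we expand $\Gamma_2(x)=\frac{1}{2}(\Gamma(x,\cL x)+\Gamma(\cL x,x))-\cL\Gamma(x)$ using $\Gamma(x,y)=\frac{\lambda^{-1/2}}{2}\sum_{j}\bE_{\cM}((\partial_j y)^*(\partial_j x))$. The cross terms $\bE_{\cM}((\partial_j y)^*\partial_k x)$ with $j\neq k$ vanish, since the $p_j$ are orthogonal. We then substitute $\partial_j\cL x=\cJ\partial_jx+\beta_j\partial_jx$. The $\beta_j$ terms give $\sum_j\beta_j\frac{\lambda^{-1/2}}{2}\bE_{\cM}|\partial_jx|^2\ge\beta\Gamma(x)$, because each summand is positive. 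The remaining terms are
\[
\frac{\lambda^{-1/2}}{4}\sum_j\bE_{\cM}\big((\partial_jx)^*\cJ(\partial_jx)+(\cJ\partial_jx)^*\partial_jx\big)-\cL\Gamma(x).
\]
Here we use $\cL\bE_{\cM}=\bE_{\cM}\cJ$, which was established above from $\widetilde{\Phi_t}^*|_{\cM}=\Phi_t^*$ and Proposition~\ref{prop:extension1}. This lets us rewrite $\cL\Gamma(x)=\frac{\lambda^{-1/2}}{2}\sum_j\bE_{\cM}\cJ((\partial_jx)^*\partial_jx)$. The remainder then becomes $\frac{\lambda^{-1/2}}{2}\sum_j\bE_{\cM}\widetilde{\Gamma}(\partial_jx)$, because $2\widetilde\Gamma(y)=y^*\cJ y+(\cJ y)^*y-\cJ(y^*y)$. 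This proves the first inequality. The main obstacle is the bookkeeping of the $\lambda$-normalisations between $\bE_{\cM}$ on $\cM_1$ and on $\cM_3$, and checking that $\cL\bE_{\cM}=\bE_{\cM}\cJ$ applies to the elements $(\partial_jx)^*\partial_jx\in\cM_1$. These elements do lie in $\cM_1$, since $\partial_j:\cM\to\cM_1$.

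For the second inequality we use convexity of the complete positive form $\widetilde\Gamma$. Since $\widetilde\Gamma(y)=\frac{\lambda^{-1}}{2}\bE_{\cM_1}(|\widetilde\partial y|^2)$ and $|\sum_{j=1}^m a_j|^2\le m\sum_{j=1}^m|a_j|^2$ (operator Cauchy–Schwarz), we get $\widetilde\Gamma(\partial x)=\widetilde\Gamma(\sum_j\partial_jx)\le m\sum_j\widetilde\Gamma(\partial_jx)$. Hence $\sum_j\bE_{\cM}\widetilde\Gamma(\partial_jx)\ge\frac1m\bE_{\cM}\widetilde\Gamma(\partial x)$. Substituting the formula for $\widetilde\Gamma$ gives the stated equality, with constant $\frac{\lambda^{-3/2}}{4m}\bE_{\cM}((\widetilde\partial\partial x)^*(\widetilde\partial\partial x))$.

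Finally, for the Fourier multiplier version we polarize and work at the matrix level. The same computation applies to $(x_1,\dots,x_n)$ and yields the matrix inequality
\[
(\Gamma_2(x_j,x_k))\ge\beta(\Gamma(x_j,x_k))+\frac{\lambda^{-1/2}}{2m}\big(\bE_{\cM}\widetilde\Gamma(\partial x_j,\partial x_k)\big).
\]
The Cauchy–Schwarz step also holds for matrices, since the map $(a_j)\mapsto\sum_j a_j$ has norm $\sqrt m$ on column vectors. Arguing exactly as in the proof of the $\CBE$ characterization theorem, the difference of the associated bimodule maps $\Cm\Phi_{\Gamma_2}-\beta\Phi_\Gamma-\frac{\lambda^{-1}}{2m}\Phi_{\widetilde\Gamma_\partial}$ is then completely positive on the elements $\sum x_j^*e_1x_k$. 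Therefore its Fourier multiplier $\widehat\Gamma_2-\beta\widehat\Gamma-\frac{\lambda^{-1}}{2m}\widehat{\widetilde\Gamma}_\partial$ is positive. The normalisation $\lambda^{-1}$ versus $\lambda^{-1/2}$ comes from the definition of $\widehat{\widetilde\Gamma}_\partial$ and Proposition~\ref{prop:gammafourier}.
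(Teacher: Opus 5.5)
Your argument follows the paper's proof. The paper also expands $2\Gamma_2(x)$ and splits $\partial=\sum_j\partial_j$, with the cross terms vanishing by orthogonality of the $p_j$. It then substitutes $\partial_j\cL=\cJ\partial_j+\beta_j\partial_j$ and bounds the $\beta_j$-terms below by $2\beta\Gamma(x)$. Using $\cL\bE_{\cM}=\bE_{\cM}\cJ$, it identifies the rest as $\lambda^{-1/2}\sum_j\bE_{\cM}\widetilde{\Gamma}(\partial_j x)$, and it finishes with $|\sum_j a_j|^2\le m\sum_j|a_j|^2$. For the Fourier multiplier inequality the paper only cites ``the pictorial computation''. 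Your route is sound and more explicit: you polarize to a matrix-level inequality and then run the complete-positivity argument from the $\CBE$ characterization theorem.

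There are two bookkeeping points to correct.

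First, the definition is $\Gamma_2(x)=\frac12\bigl(\Gamma(x,\cL x)+\Gamma(\cL x,x)-\cL\Gamma(x)\bigr)$; in the paper's expansion all three terms carry $\lambda^{-1/2}/4$. So the term subtracted in your remainder is $\frac12\cL\Gamma(x)=\frac{\lambda^{-1/2}}{4}\sum_j\bE_{\cM}\cJ\bigl((\partial_jx)^*\partial_jx\bigr)$, not $\cL\Gamma(x)$. Only with this factor does the remainder equal $\frac{\lambda^{-1/2}}{2}\sum_j\bE_{\cM}\widetilde\Gamma(\partial_jx)$, as you claim.

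Second, you assert rather than verify the constant in the multiplier inequality. Take $\widehat{\widetilde{\Gamma}}_{\partial}$ literally as the Fourier multiplier of $\lambda^{-1/2}\bE_{\cM}\bigl((\widetilde\partial\partial x)^*(\widetilde\partial\partial x)\bigr)$. Then the extra term in your matrix inequality is $\frac{\lambda^{-1/2}}{2m}\bE_{\cM}\widetilde\Gamma(\partial x_j,\partial x_k)=\frac{\lambda^{-1}}{4m}\cdot\lambda^{-1/2}\bE_{\cM}(\cdots)$. This gives the coefficient $\frac{\lambda^{-1}}{4m}$, not $\frac{\lambda^{-1}}{2m}$. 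To obtain the stated constant you would need to check the pictorial normalization of $\widehat{\widetilde{\Gamma}}_{\partial}$ explicitly; the paper is equally terse on this point.
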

\begin{proof}
By the intertwining property (i.e. Equation \eqref{eq:intertwining3}), we have that 
\begin{align*}
2\Gamma_2(x)=& \Gamma(x, \cL(x))+\Gamma(\cL(x), x)-\cL(\Gamma(x, x)) \\
=&  \lambda^{-1/2}\Re \sum_{j=1}^m \bE_{\cM}((\partial_j\cL (x))^*(\partial_j x) )
-\cL(\Gamma(x, x))\\
=& \lambda^{-1/2} \Re  \sum_{j=1}^m\bE_{\cM}((\cJ \partial_j (x))^*(\partial_j x) )
+ \lambda^{-1/2} \Re \sum_{j=1}^m  \beta_j \bE_{\cM}((\partial_j x)^*(\partial_j x) )\\
& -\frac{\lambda^{-1/2}}{2} \sum_{j=1}^m \cL\bE_{\cM}((\partial_j x)^*(\partial_j x))\\
\geq & 2\beta \Gamma(x)+ \lambda^{-1/2}  \sum_{j=1}^m \bE_{\cM}\widetilde{\Gamma}(\partial_j x) \\
=& 2\beta \Gamma(x)+\frac{1}{2}\lambda^{-3/2}  \sum_{j=1}^m \bE_{\cM}\left((\widetilde{\partial}\partial_j (x))^*(\widetilde{\partial}\partial_j x) \right)\\
\geq & 2\beta \Gamma(x)+\lambda^{-3/2}  \frac{1}{2m}\bE_{\cM}((\widetilde{\partial}\partial (x))^*(\widetilde{\partial}\partial x) ).
\end{align*}
By applying the pictorial computation, we see the third inequality is true.
\end{proof}

\begin{proposition}\label{prop:matrixderivation}
Suppose that the inclusion is $\bC\subset M_n(\bC)$ and $\{\Phi_t\}_{t\geq 0}$ is bimodule GNS symmetric for the inclusion whose the lifting is standard.
Then 
\begin{align}\label{eq:be2}
  \Gamma_2(x) \geq \beta \Gamma(x) +\frac{1}{\sum_{j=1}^m (1+\mu_j)^2}  \left| \cL(x)+\cL^*(x)+\frac{1}{2}\left\{x, 1*(\overline{\widehat{\cL}_0}-\widehat{\cL}_0)\right\}\right|^2,
\end{align}
where $m$ is the dimension of the algebra generated by $\cR(\widehat{\cL}_0)\widehat{\Delta}$ and $\widehat{\cL}_0$.
If $1*\overline{\widehat{\cL}_0}=1*\widehat{\cL}_0$, then 
\begin{align*}
  \Gamma_2(x) \geq \beta \Gamma(x) +\frac{1}{\sum_{j=1}^m (1+\mu_j)^2}  \left| \cL(x)+\cL^*(x)\right|^2.
\end{align*}
If the semigroup is tracially symmetric, i.e. $\widehat{\Delta}=1$ or $\widehat{\cL}=\overline{\widehat{\cL}}$, we have that
\begin{align*}
  \Gamma_2(x) \geq \beta \Gamma(x) +\frac{1}{m}  \left| \cL(x)\right|^2.
\end{align*}

\end{proposition}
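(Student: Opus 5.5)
Start from Proposition \ref{prop:beinter}, which already gives
\begin{align*}
\Gamma_2(x) \geq \beta \Gamma(x)+\frac{1}{2}\lambda^{-1/2}\sum_{j=1}^m \bE_{\cM}\widetilde{\Gamma}(\partial_j x).
\end{align*}
The task is then to bound the remainder from below by the squared modulus of the combination appearing in the Lemma for bimodule GNS symmetric semigroups:
\begin{align*}
\cL(x)+\cL^*(x)+\frac{1}{2}\left\{x, 1*(\overline{\widehat{\cL}_0}-\widehat{\cL}_0)\right\} =\frac{\lambda^{-1/2}}{2}\sum_{j=1}^m (1+\mu_j^{-1})\,\partial_j^*\partial_j x .
\end{align*}
For $\bC\subset M_n(\bC)$ with standard lifting, $\cM_1=M_n(\bC)\otimes M_n(\bC)$ (or $B(\bC^n\otimes\bC^n)$). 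The standard lifting makes $\cJ$ act as $\cL$ on the second tensor leg and trivially on the first. Consequently $\widetilde{\Gamma}(y)$, for $y\in\cM_1$, is the gradient form of $\cL$ applied leg-wise.

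The key step is a Kadison--Schwarz type inequality for the divergence. I would show that for each $j$,
\begin{align*}
\lambda^{-1/2}\bE_{\cM}\widetilde{\Gamma}(\partial_j x)\geq c_j\,|\partial_j^*\partial_j x|^2 ,
\end{align*}
with an explicit constant $c_j$. To do this, write $\partial_j^*\partial_j x$ as a contraction $\bE_{\cM}$ of a commutator of $\partial_j x$ with $\fF^{-1}(p_j)^*$, that is, as $\partial_j^*$ applied to $\partial_jx$. Then use the operator Cauchy--Schwarz inequality $|\bE_{\cM}(a^*b)|^2\le \|\bE_{\cM}(a^*a)\|\,\bE_{\cM}(b^*b)$. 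In the matrix setting, the relevant derivative of $\partial_jx$ in the second leg reproduces exactly the term $\widetilde{\partial}\partial_j x$. The first-leg factor $\bE_{\cM}(a^*a)$ should be a scalar, using $1*\widehat{\cL}_0$ and the normalization of $p_j$ with $\omega_j$. This is the point where the standard lifting and the full matrix algebra are used: $\widetilde\partial$ on the second leg is then again $\partial$, and $\bE_\cM$ is the partial trace.

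Next, combine the $j$'s. Write the right-hand side as
\begin{align*}
\frac{\lambda^{-1/2}}{2}\sum_j (1+\mu_j^{-1})\,\partial_j^*\partial_jx=\sum_j a_j z_j ,
\end{align*}
and use the matrix convexity inequality
\begin{align*}
\Bigl|\sum_j a_j z_j\Bigr|^2\le \Bigl(\sum_j a_j^2/c_j\Bigr)\sum_j c_j|z_j|^2 .
\end{align*}
The weights must be arranged so that $\sum_j a_j^2/c_j$ becomes $\sum_j(1+\mu_j)^2$, after using the involution $j\mapsto j^*$ with $\mu_{j^*}=\mu_j^{-1}$ to swap $\mu_j^{-1}$ for $\mu_j$ in the sum. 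This yields \eqref{eq:be2}.

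The two special cases follow directly. If $1*\overline{\widehat{\cL}_0}=1*\widehat{\cL}_0$, the anticommutator term vanishes. In the tracial case $\mu_j=1$, so $\cL^*=\cL$; then $|2\cL(x)|^2/(4m)=|\cL(x)|^2/m$.

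The main obstacle is getting the constant $c_j$ exactly. Matching the powers of $\lambda$, the $\omega_j$, and the factor $\frac12$ in front of the sum so that the final denominator is precisely $\sum_j(1+\mu_j)^2$ will require an explicit computation with matrix units $p_j$ in $M_n\otimes M_n$. I would check it first on the tracial case, where the target constant is $1/m$.
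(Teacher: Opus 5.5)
Your outline is the paper's argument. You start from Proposition~\ref{prop:beinter}, bound each remainder term $\bE_{\cM}\widetilde{\Gamma}(\partial_j x)$ from below by $|\partial_j^*\partial_j x|^2$ using the matrix model with standard lifting, combine the $j$'s by Cauchy--Schwarz, express $\cL(x)+\cL^*(x)+\frac12\{x,1*(\overline{\widehat{\cL}_0}-\widehat{\cL}_0)\}$ through $\sum_j(1+\mu_j^{-1})\partial_j^*\partial_j x$ by the GNS lemma, and use the involution $j\mapsto j^*$.

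The only difference is the per-direction step. The paper reduces to minimal $p_j$ and computes that $\widetilde{\partial}_{j^*}\partial_j x$ is $\partial_j^*\partial_j x$ tensored with fixed auxiliary vectors. It then discards the $\bE_{\cM}$-orthogonal components $\widetilde{\partial}_k\partial_j x$, $k\neq j^*$. Your Kadison--Schwarz inequality does both at once, with $a$ the fixed auxiliary tensor dual to the $j^*$ direction. Three details to fix there:
\begin{itemize}
\item The ``commutator of $\partial_j x$ with $\fF^{-1}(p_j)^*$'' must be the lifted one in $\cM_3$, i.e.\ $\widetilde{\partial}_{j^*}$.
\item $\bE_{\cM}(a^*a)$ is then just a product of $L^2$-norms; $1*\widehat{\cL}_0$ plays no role.
\item You still need minimal, mutually orthogonal $p_k$, so that $a$ annihilates $\widetilde{\partial}_k\partial_j x$ for $k\neq j^*$.
\end{itemize}

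Your weighted Cauchy--Schwarz is the right tool, because $c_j$ is not independent of $j$. The composite $\widetilde{\partial}_{j^*}\partial_j$ carries the factor $(\omega_{j^*}\omega_j)^{1/2}$, while $\partial_j^*\partial_j$ carries $\omega_j$. GNS symmetry ($\overline{\widehat{\cL}_0}=\sum_j\mu_j^{-1}\omega_j p_j$ with $\overline{p_j}=p_{j^*}$) gives $\omega_{j^*}=\mu_j^{-1}\omega_j$, so $c_j$ is proportional to $\mu_j^{-1}$. The paper writes this step with the $j$-independent constant $\lambda^{1/2}$ and then uses unweighted Cauchy--Schwarz; that is exact only in the tracial case.

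Consequently $\sum_j a_j^2/c_j$ is a multiple of $\sum_j(1+\mu_j^{-1})^2\mu_j=\sum_j(\mu_j^{1/2}+\mu_j^{-1/2})^2$, not of $\sum_j(1+\mu_j)^2$. No choice of weights will turn it into the latter. Close instead with
\[ \sum_j(1+\mu_j)^2-\sum_j(\mu_j^{1/2}+\mu_j^{-1/2})^2=\frac12\sum_j(\mu_j-\mu_j^{-1})^2\geq 0 , \]
which uses $\sum_j\mu_j=\sum_j\mu_j^{-1}$ and $\sum_j\mu_j^2=\sum_j\mu_j^{-2}$ from the involution. This gives the stated inequality, in fact with the smaller denominator $\sum_j(\mu_j^{1/2}+\mu_j^{-1/2})^2$. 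Your planned tracial check (denominator $4m$, constant $1/m$) is consistent with this.
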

\begin{proof}
Firstly, we assume that the projection $p_j$ in the decomposition of $\widehat{\cL}_0$ are minimal projection in $\cM'\cap \cM_2$ and then 
\begin{align*}
\widehat{\cL}_0=\sum_{j=1}^m \mu_j^{1/2} \vcenter{\hbox{\begin{tikzpicture}[scale=0.65]
    \begin{scope}[shift={(0,1.5)}]
    \draw [blue] (-0.5, 0.8)--(-0.5, 0) .. controls +(0, -0.6) and +(0,-0.6).. (0.5, 0)--(0.5, 0.8);    
\begin{scope}[shift={(0.5, 0.3)}]
\draw [fill=white] (-0.3, -0.3) rectangle (0.3, 0.3);
\node at (0, 0) {\tiny $v_j$};
\end{scope}
    \end{scope}
\draw [blue] (-0.5, -0.8)--(-0.5, 0) .. controls +(0, 0.6) and +(0,0.6).. (0.5, 0)--(0.5, -0.8);
\begin{scope}[shift={(0.5, -0.3)}]
\draw [fill=white] (-0.3, -0.3) rectangle (0.3, 0.3);
\node at (0, 0) {\tiny $v_j^*$};
\end{scope}
\end{tikzpicture}}},\quad
\widehat{\cJ}_0= \sum_{j=1}^m \mu_j^{1/2} \vcenter{\hbox{\begin{tikzpicture}[scale=0.65]
    \begin{scope}[shift={(0,1.5)}]
    \draw [blue] (-0.5, 0.8)--(-0.5, 0) .. controls +(0, -0.6) and +(0,-0.6).. (0.5, 0)--(0.5, 0.8);  
    \draw [blue] (-1, 0.8)--(-1, 0) .. controls +(0, -0.8) and +(0,-0.8).. (1, 0)--(1, 0.8); 
\begin{scope}[shift={(0.5, 0.3)}]
\draw [fill=white] (-0.3, -0.3) rectangle (0.3, 0.3);
\node at (0, 0) {\tiny $v_j$};
\end{scope}
    \end{scope}
\draw [blue] (-0.5, -0.8)--(-0.5, 0) .. controls +(0, 0.6) and +(0,0.6).. (0.5, 0)--(0.5, -0.8);
\draw [blue] (-1, -0.8)--(-1, 0) .. controls +(0, 0.8) and +(0,0.8).. (1, 0)--(1, -0.8); 
\begin{scope}[shift={(0.5, -0.3)}]
\draw [fill=white] (-0.3, -0.3) rectangle (0.3, 0.3);
\node at (0, 0) {\tiny $v_j^*$};
\end{scope}
\end{tikzpicture}}}.
\end{align*}
subject to $\tau(v_j^*v_k)=0$ for $j\neq k$, $\tau(v_jv_k)=0$ for all $j, k=1, \ldots, m$ and there is an involution $*$ on $\{1, 2, \ldots, m \}$ such that $v_{j^*}=v_j^*$ and $\mu_{j^*}=\mu_j^{-1}$.
Note that the Fourier multiplier of the directional derivations and lifted directional derivations are 
\begin{align*}
\widehat{\partial}_j=\frac{ \mu_j^{1/4}\lambda^{1/4}}{\|v_j\|_2}\left(\vcenter{\hbox{\begin{tikzpicture}[scale=0.65]
     \draw [blue] (1.4, 2.3)--(1.4, -0.8);
    \begin{scope}[shift={(0,1.5)}]
    \draw [blue] (-0.5, 0.8)--(-0.5, 0) .. controls +(0, -0.6) and +(0,-0.6).. (0.5, 0)--(0.5, 0.8); 
\begin{scope}[shift={(0.5, 0.3)}]
\draw [fill=white] (-0.3, -0.3) rectangle (0.3, 0.3);
\node at (0, 0) {\tiny $v_j$};
\end{scope}
    \end{scope}
\draw [blue] (-0.5, -0.8)--(-0.5, 0) .. controls +(0, 0.6) and +(0,0.6).. (0.5, 0)--(0.5, -0.8);
\begin{scope}[shift={(1.4, 0.8)}]
\draw [fill=white] (-0.3, -0.3) rectangle (0.3, 0.3);
\node at (0, 0) {\tiny $\overline{v_j^*}$};  
\end{scope}
\end{tikzpicture}}}  -  \vcenter{\hbox{\begin{tikzpicture}[scale=0.65]
     \draw [blue] (1.4, 2.3)--(1.4, -0.8);
   \begin{scope}[shift={(0,1.5)}]
  \draw [blue] (-0.5, 0.8)--(-0.5, 0) .. controls +(0, -0.6) and +(0,-0.6).. (0.5, 0)--(0.5, 0.8); 
    \end{scope}
\draw [blue] (-0.5, -0.8)--(-0.5, 0) .. controls +(0, 0.6) and +(0,0.6).. (0.5, 0)--(0.5, -0.8);
\begin{scope}[shift={(0.5, -0.3)}]
\draw [fill=white] (-0.3, -0.3) rectangle (0.3, 0.3);
\node at (0, 0) {\tiny $ v_j$};
\end{scope}
\begin{scope}[shift={(1.4, 0.8)}]
\draw [fill=white] (-0.3, -0.3) rectangle (0.3, 0.3);
\node at (0, 0) {\tiny $\overline{v_j^*}$};  
\end{scope}
\end{tikzpicture}}} \right), \quad 
\widehat{\widetilde{\partial}}_j
=\frac{ \mu_j^{1/4}\lambda^{1/2}}{\|v_j\|_2} \left(\vcenter{\hbox{\begin{tikzpicture}[scale=0.65]
     \draw [blue] (1.3, 2.3)--(1.3, -0.8);
     \draw [blue] (1.8, 2.3)--(1.8, -0.8);
    \begin{scope}[shift={(0,1.5)}]
    \draw [blue] (-0.5, 0.8)--(-0.5, 0) .. controls +(0, -0.6) and +(0,-0.6).. (0.5, 0)--(0.5, 0.8); 
     \draw [blue] (-1, 0.8)--(-1, 0) .. controls +(0, -0.8) and +(0,-0.8).. (1, 0)--(1, 0.8);
\begin{scope}[shift={(0.5, 0.3)}]
\draw [fill=white] (-0.3, -0.3) rectangle (0.3, 0.3);
\node at (0, 0) {\tiny $v_j$};
\end{scope}
    \end{scope}
\draw [blue] (-0.5, -0.8)--(-0.5, 0) .. controls +(0, 0.6) and +(0,0.6).. (0.5, 0)--(0.5, -0.8);
\draw [blue] (-1, -0.8)--(-1, 0) .. controls +(0, 0.8) and +(0,0.8).. (1, 0)--(1, -0.8);
\begin{scope}[shift={(1.8, 0.8)}]
\draw [fill=white] (-0.3, -0.3) rectangle (0.3, 0.3);
\node at (0, 0) {\tiny $\overline{v_j^*}$};  
\end{scope}
\end{tikzpicture}}} 
 -  \vcenter{\hbox{\begin{tikzpicture}[scale=0.65]
      \draw [blue] (1.3, 2.3)--(1.3, -0.8);
     \draw [blue] (1.8, 2.3)--(1.8, -0.8);
   \begin{scope}[shift={(0,1.5)}]
  \draw [blue] (-0.5, 0.8)--(-0.5, 0) .. controls +(0, -0.6) and +(0,-0.6).. (0.5, 0)--(0.5, 0.8); 
    \draw [blue] (-1, 0.8)--(-1, 0) .. controls +(0, -0.8) and +(0,-0.8).. (1, 0)--(1, 0.8); 
    \end{scope}
\draw [blue] (-0.5, -0.8)--(-0.5, 0) .. controls +(0, 0.6) and +(0,0.6).. (0.5, 0)--(0.5, -0.8);
\draw [blue] (-1, -0.8)--(-1, 0) .. controls +(0, 0.8) and +(0,0.8).. (1, 0)--(1, -0.8);
\begin{scope}[shift={(0.5, -0.3)}]
\draw [fill=white] (-0.3, -0.3) rectangle (0.3, 0.3);
\node at (0, 0) {\tiny $ v_j$};
\end{scope}
\begin{scope}[shift={(1.8, 0.8)}]
\draw [fill=white] (-0.3, -0.3) rectangle (0.3, 0.3);
\node at (0, 0) {\tiny $\overline{v_j^*}$};  
\end{scope}
\end{tikzpicture}}} \right).
\end{align*}
Hence, for each $j=1, \ldots, m$, the Fourier multiplier of $\widetilde{\partial}_{j^*}\partial_j$ is depicted as
\begin{align*}
\widehat{\widetilde{\partial}_{j^*}\partial_j}
=\lambda^{1/4}\|v_j\|_2^{-2} 
\vcenter{\hbox{\begin{tikzpicture}[scale=0.65]
\draw [blue] (-1.7, 1)--(-1.7, -1);
\draw [blue] (-1.3, 1)--(-1.3, -1);
\draw [blue] (-0.3, 1)--(-0.3, -1);
\draw [blue] (0.3, 1)--(0.3, -1);
 \draw [blue] (0.9, 1)--(0.9, -1);
     \begin{scope}[shift={(-1.5, 0)}]
\draw [fill=white] (-0.6, -0.4) rectangle (0.6, 0.4);
\node at (0, 0) {\tiny $\widehat{\partial_j^*\partial_j}$};  
\end{scope}
    \begin{scope}[shift={(-0.3, 0)}]
\draw [fill=white] (-0.4, -0.4) rectangle (0.4, 0.4);
\node at (0, 0) {\tiny $\overline{v_j^*}$};  
\end{scope}
\begin{scope}[shift={(0.9, 0)}]
\draw [fill=white] (-0.4, -0.4) rectangle (0.4, 0.4);
\node at (0, 0) {\tiny $\overline{v_j}$};  
\end{scope}
\end{tikzpicture}}}.
\end{align*}

Hence
\begin{equation}\label{eq:phigamma0}
\begin{aligned}
 & \sum_{j=1}^m \bE_{\cM}((\widetilde{\partial}\partial_j (x))^*(\widetilde{\partial}\partial_j x) )
 \geq   \sum_{j=1}^m  \bE_{\cM}\left((\widetilde{\partial}_{j^*}\partial_j (x))^*(\widetilde{\partial}_{j^*}\partial_j x) \right)\\
 =&\lambda^{1/2}  \sum_{j=1}^m (\partial_{j}^*\partial_j x)^*(\partial_{j}^*\partial_j x)\\
 \geq &\frac{\lambda^{1/2}}{\sum_{j=1}^m (1+\mu_j^{-1})^2} \left(\sum_{j=1}^m (1+\mu_j^{-1}) \partial_{j}^*\partial_j x\right)^*\left( \sum_{j=1}^m (1+\mu_j^{-1})\partial_{j}^*\partial_j x\right)\\
 =&\frac{4\lambda^{3/2}}{\sum_{j=1}^m (1+\mu_j^{-1})^2}  \left| \cL(x)+\cL^*(x)+\frac{1}{2}\{x, 1*(\overline{\widehat{\cL}_0}-\widehat{\cL}_0)\}\right|^2.
\end{aligned}
\end{equation}
This implies that Equation \eqref{eq:be2} holds.
When $p_j$ is not minimal in $\cM'\cap \cM_2$, we can apply a similar computation to obtain the desired results.
\end{proof}

\begin{remark}
In \cite{WirZha21b}, Wirth and Zhang introduced the Bakry-\'{E}mery estimate $\BE(K, N)$ for tracially symmetric quantum Markov semigroups.
Note that Equation \eqref{eq:be2} does not rely on the bimodule structure.
It could be used to define  the Bakry-\'{E}mery estimate $\BE(\beta, m)$ for (bimodule) GNS symmetric quantum Markov semigroups.

\end{remark}

\begin{theorem}\label{thm:bematrix}
Suppose that $\bC\subset M_n(\bC)$ is the inclusion and $\{\Phi_t\}_{t\geq 0}$ is a bimodule GNS symmetric Markov semigroup.
Then for any $x\in M_n(\bC)$, we have that 
\begin{equation}\label{eq:be4}
\begin{aligned}
& \Gamma(\Phi_t(x)) \leq e^{-2\beta t} \Phi_t(\Gamma(x)) \\
& - \frac{1-e^{-2\beta t}}{\displaystyle \beta \sum_{j=1}^m (1+\mu_j)^2}  \left| \cL(\Phi_t(x))+\int_0^t \Phi_s\cL^*(\Phi_{t-s}(x))+\frac{1}{2}\Phi_s\{\Phi_{t-s}(x), 1*(\overline{\widehat{\cL}_0}-\widehat{\cL}_0) \}d\nu_t(s)\right|^2
\end{aligned}
\end{equation}
is equivalent to Equation \eqref{eq:be2}, where $\displaystyle d\nu_t(s)=\frac{2\beta e^{-2\beta s}}{1-e^{-2\beta t}}ds$ and $\displaystyle \int_0^t d\nu_t(s)=1$.
\end{theorem}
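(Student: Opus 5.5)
The plan is the standard semigroup interpolation argument used to pass from a differential Bakry--\'Emery inequality to its integrated form, as in Wirth--Zhang for $\BE(K,N)$. For brevity write
\[
c=\Big(\sum_{j=1}^m(1+\mu_j)^2\Big)^{-1},\qquad
A(y)=\cL(y)+\cL^*(y)+\tfrac12\{y,1*(\overline{\widehat{\cL}_0}-\widehat{\cL}_0)\},
\]
so that \eqref{eq:be2} reads $\Gamma_2(y)\ge\beta\Gamma(y)+c|A(y)|^2$.

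\textbf{From \eqref{eq:be2} to \eqref{eq:be4}.} Fix $x$ and $t>0$, and consider
\[
F(s)=e^{-2\beta s}\Phi_s\big(\Gamma(\Phi_{t-s}(x))\big),\qquad s\in[0,t].
\]
By the identity $\frac{d}{ds}\Phi_s\Gamma(\Phi_{t-s}(x))=2\Phi_s\Gamma_2(\Phi_{t-s}(x))$ from the subsection on the iterated gradient form,
\[
F'(s)=2e^{-2\beta s}\Phi_s\big(\Gamma_2(\Phi_{t-s}x)-\beta\Gamma(\Phi_{t-s}x)\big)\ge 2c\,e^{-2\beta s}\Phi_s\big(|A(\Phi_{t-s}x)|^2\big),
\]
using \eqref{eq:be2} and positivity of $\Phi_s$. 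Integrating over $[0,t]$ gives
\[
e^{-2\beta t}\Phi_t\Gamma(x)-\Gamma(\Phi_t x)\ge c\,\frac{1-e^{-2\beta t}}{\beta}\int_0^t\Phi_s\big(|A(\Phi_{t-s}x)|^2\big)\,d\nu_t(s).
\]

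Since $\nu_t$ is a probability measure and each $\Phi_s$ is unital completely positive, I combine Kadison--Schwarz, $\Phi_s(|y|^2)\ge|\Phi_s(y)|^2$, with operator convexity of $y\mapsto|y|^2$ (Jensen for probability measures). This gives
\[
\int\Phi_s(|A_s|^2)\,d\nu_t\ge\Big|\int\Phi_s(A_s)\,d\nu_t\Big|^2 .
\]
Finally, $\Phi_s\cL\Phi_{t-s}=\cL\Phi_t$ is independent of $s$, so the $\cL$-part integrates to $\cL(\Phi_t x)$ because $\int d\nu_t=1$. The remaining terms are exactly those inside the modulus in \eqref{eq:be4}, which yields \eqref{eq:be4}.

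\textbf{From \eqref{eq:be4} to \eqref{eq:be2}.} At $t=0$ both sides of \eqref{eq:be4} equal $\Gamma(x)$, since the correction term vanishes. I therefore differentiate at $t=0^+$. The left side has derivative $-2\Gamma(x,\cL x)$ in symmetrized form. The main part of the right side has derivative $-2\beta\Gamma(x)-\cL\Gamma(x)$. For the correction term, $(1-e^{-2\beta t})/\beta\sim 2t$, and $\nu_t$ concentrates at $s=0$, so the integral tends to $A(x)$ and the correction has derivative $-2c|A(x)|^2$. Using $2\Gamma_2(x)=2\Re\Gamma(x,\cL x)-\cL\Gamma(x)$, this rearranges to \eqref{eq:be2}.

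I expect the main obstacle to be justifying the Jensen/Kadison--Schwarz step for the operator-valued integral, which needs care with noncommutativity. I would prove it by writing $\int\Phi_s(\cdot)\,d\nu_t$ as a single unital completely positive map from $L^\infty([0,t],\nu_t)\otimes M_n(\bC)$ into $M_n(\bC)$ and applying Kadison--Schwarz to that map. The degenerate case $\beta=0$ would be handled by interpreting $(1-e^{-2\beta t})/\beta$ as $2t$ and $\nu_t$ as normalized Lebesgue measure on $[0,t]$.
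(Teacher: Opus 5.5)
Your proposal is correct and follows the paper's proof essentially step for step. For the direction from the differential bound to the integrated one, the paper also differentiates $s\mapsto e^{-2\beta s}\Phi_s\Gamma(\Phi_{t-s}(x))$, applies Kadison--Schwarz pointwise and then Jensen for the probability measure $\nu_t$; for the converse, it differentiates at $t=0$ the nonnegative difference, which vanishes there. Your constant bookkeeping is actually cleaner than the paper's: its displayed lower bound for $\varphi'(s)$ carries a spurious factor $1/\beta$ that is silently dropped in the integrated inequality.
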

\begin{proof}
We shall follow the lines in \cite{WirZha21b}.
Let 
\begin{align*}
& \phi(t)=  e^{-2\beta t} \Phi_t(\Gamma(x)) -\Gamma(\Phi_t(x)) \\
& - \frac{1-e^{-2\beta t}}{\displaystyle \beta \sum_{j=1}^m (1+\mu_j)^2}  \left| \cL(\Phi_t(x))+\int_0^t \Phi_s\cL^*(\Phi_{t-s}(x))+\frac{1}{2}\Phi_s\{\Phi_{t-s}(x), 1*(\overline{\widehat{\cL}_0}-\widehat{\cL}_0) \}d\nu_t(s)\right|^2.
\end{align*}
We have that $\phi(t)\geq 0$ and $\phi(0)=0$.
This implies that $\phi'(0)\geq 0$.
Note that 
\begin{small}
\begin{align*}
\lim_{t\to 0} \int_0^t \Phi_s\cL^*(\Phi_{t-s}(x))+\frac{1}{2}\Phi_s\{\Phi_{t-s}(x), 1*(\overline{\widehat{\cL}_0}-\widehat{\cL}_0) \} -\cL^*(x) - \frac{1}{2}\{x, 1*(\overline{\widehat{\cL}_0}-\widehat{\cL}_0) \} d\nu_t(s) =0.
\end{align*}
\end{small}
Then Equation \eqref{eq:be2} is true.

Let $\varphi(s)=e^{-2\beta s}\Phi_s\Gamma(\Phi_{t-s}(x))$ for $s\in [0,t]$.
We have that 
\begin{align*}
\varphi'(s)=& 2e^{-2\beta s}\Phi_s(\Gamma_2(\Phi_{t-s}(x))-\beta \Gamma(\Phi_{t-s}(x))) \\
\geq & \frac{2e^{-2\beta s}}{\displaystyle \beta \sum_{j=1}^m (1+\mu_j)^2} \Phi_s \left| \cL(\Phi_{t-s}(x))+ \cL^*(\Phi_{t-s}(x))+\frac{1}{2} \{\Phi_{t-s}(x), 1*(\overline{\widehat{\cL}_0}-\widehat{\cL}_0) \}\right|^2\\
\geq & \frac{2e^{-2\beta s}}{\displaystyle \beta \sum_{j=1}^m (1+\mu_j)^2}  \left| \cL(\Phi_{t}(x))+ \Phi_s\cL^*(\Phi_{t-s}(x))+\frac{1}{2}\Phi_s\{\Phi_{t-s}(x), 1*(\overline{\widehat{\cL}_0}-\widehat{\cL}_0) \}\right|^2.
\end{align*}
Now we see that 
\begin{align*}
& \varphi(t)-\varphi(0)=\int_0^t \varphi'(s) ds \\
=&  \frac{1-e^{-2\beta t}}{\displaystyle\beta \sum_{j=1}^m (1+\mu_j)^2}  \int_0^t\left| \cL(\Phi_{t}(x))+ \Phi_s\cL^*(\Phi_{t-s}(x))+\frac{1}{2}\Phi_s\{\Phi_{t-s}(x), 1*(\overline{\widehat{\cL}_0}-\widehat{\cL}_0) \}\right|^2 d\nu_t(s)\\
\geq &  \frac{1-e^{-2\beta t}}{\displaystyle \beta \sum_{j=1}^m (1+\mu_j)^2}  \left| \cL(\Phi_t(x))+\int_0^t \Phi_s\cL^*(\Phi_{t-s}(x))+\frac{1}{2}\Phi_s\{\Phi_{t-s}(x), 1*(\overline{\widehat{\cL}_0}-\widehat{\cL}_0) \}d\nu_t(s)\right|^2.
\end{align*}
This completes the proof of the proposition.
\end{proof}

\begin{remark}
If $1*\overline{\widehat{\cL}_0}=1*\widehat{\cL}_0$, then Equation \eqref{eq:be4} is reduced to 
\begin{equation}\label{eq:be6}
\begin{aligned}
 \Gamma(\Phi_t(x)) \leq e^{-2\beta t} \Phi_t(\Gamma(x)) -  \frac{1-e^{-2\beta t}}{\displaystyle \beta \sum_{j=1}^m (1+\mu_j)^2}  \left| \cL(\Phi_t(x))+\int_0^t \Phi_s\cL^*(\Phi_{t-s}(x)) d\nu_t(s)\right|^2.
\end{aligned}
\end{equation}
This implies that 
\begin{align*}
 \left\| \cL(\Phi_t(x))+\int_0^t \Phi_s\cL^*(\Phi_{t-s}(x)) d\nu_t(s)\right\|\leq \sqrt{\frac{\beta  \sum_{j=1}^m (1+\mu_j)^2}{e^{2\beta t}-1}},
\end{align*}
for all $x=x^*\in \cM$ with $\Gamma(x)\leq 1$.
\end{remark}

\begin{remark}
Suppose that the bimodule GNS symmetric quantum Markov semigroup $\{\Phi_t\}_{t\geq 0}$ satisfies the intertwining property without assuming that $\{\widetilde{\Phi_t}\}_{t\geq 0}$ is a semigroup.
We still have that 
\begin{equation}\label{eq:gfh}
\begin{aligned}
\Gamma(\Phi_t(x))=&\frac{\lambda^{-1/2}}{2} \sum_{j=1}^m \bE_{\cM}\left((\partial_j\Phi_t(x))^*(\partial_j\Phi_t(x))\right) \\
=& \frac{\lambda^{-1/2}}{2} \sum_{j=1}^m  e^{-2\beta_j t} \bE_{\cM}\left((\widetilde{\Phi_t}(\partial_jx))^*(\widetilde{\Phi_t}(\partial_j x))\right) \\
\geq &\frac{\lambda^{-1/2}}{2} e^{-2\beta t}\sum_{j=1}^m \bE_{\cM}(\widetilde{\Phi_t}((\partial_jx))^*(\partial_j x)) =e^{-2\beta t}\Phi_t(\Gamma(x)).
\end{aligned}
\end{equation}
\end{remark}

In the following, we shall recall the weight transformation $\K_{D, \mu}: \cM_1\to \cM_1$ for a positive element $D\in \cM$ and $\mu>0$ as follows:
\begin{align*}
\K_{D, \mu} x=\int_0^1 (\mu^{-1}D)^s x (\mu D)^{1-s} ds, \quad x\in \cM_1.
\end{align*}
For any $\bfX=(x_j)_{j=1}^m$ and $\bfY=(y_j)_{j=1}^m$ in $\cM_1^{\oplus (m)}$, the inner product is defined as
\begin{align*}
\langle \bfX, \bfY \rangle_{D, \widehat{\Delta}}
=\sum_{j=1}^m \langle \K_{D, \mu_j} x_j, y_j\rangle.
\end{align*}
Suppose that $\{\Phi_t\}_{t\geq 0}$ satisfies the intertwining property.
Then
\begin{align*}
 \|\nabla \Phi_t(x)\|_{D, \widehat{\Delta}}^2
= \langle \K_D \nabla \Phi_t(x), \nabla \Phi_t(x)\rangle
= e^{-2\beta t}\left\langle \K_D \widetilde{ \Phi_t}(\nabla x), \widetilde{ \Phi_t}(\nabla x)\right\rangle\leq e^{-2\beta t}\|\nabla x\|^2_{\Phi^*_t(D), \widehat{\Delta}},
\end{align*}
where the last inequality follows from the Lieb's concavity theorem.

\begin{definition}
Suppose that $\{\Phi_t\}_{t\geq 0}$ is a bimodule GNS symmetric quantum Markov semigroup.
The entropic lower Ricci curvature bound $\ric_{\Phi}$ of $\{\Phi_t\}_{t \geq 0}$ is defined by the following gradient estimate
\begin{align*}
 \ric_\Phi=\sup\left\{ \beta\in\bR:   \|\nabla\Phi_t(x)\|_{D, \widehat{\Delta}}\leq e^{-\beta t} \|\nabla x\|_{\Phi_t^*(D), \widehat{\Delta}}\right\}.
\end{align*}
\end{definition}

Note that for (bimodule) GNS symmetric semigroups with an entropic lower Ricci curvature bound, the semigroup $\{\Phi_t\}_{t\geq 0}$ has modified logarithmic Sobolev inequality.
\begin{align}\label{eq:gnslogs}
H(\Phi_t^*(D)\| D_{\Delta}) -H(\bE_{\cN}(D) \overline{\widehat{\bE}_{\Phi}}\| D_{\Delta}) \leq e^{-2\beta t} (H(D\| D_{\Delta}) -H(\bE_{\cN}(D) \overline{\widehat{\bE}_{\Phi}}\| D_{\Delta})),
\end{align}
where $\displaystyle \widehat{\bE}_{\Phi}=\lim_{t\to\infty}\widehat{\Phi}_t$ and $D_{\Delta}$ is the hidden density obtained from the gradient flow equation.

\begin{theorem}
Suppose that $\{\Phi_t\}_{t\geq 0}$ is a bimodule GNS symmetric quantum Markov semigroup satisfying the intertwining property.
Suppose that the lifting Markov semigroup $\{\widetilde{\Phi_t}\}_{t\geq 0}$ on $\cM_1$ satisfies that there exists a unital completely positive map $\widetilde{\widetilde{\Phi_t}}$ on $\cM_2$ for all $t\geq 0$ such that $\widetilde{\widetilde{\Phi_t^*}}$ extends $\widetilde{\Phi_t^*}$ and
\begin{align*}
\widetilde{\partial} \widetilde{\Phi_t}=e^{-\vec{\beta} t}\widetilde{\widetilde{\Phi_t}}\widetilde{\partial}.
\end{align*}
Then 
\begin{align*}
\|\nabla \Phi_t x\|_{D, \widehat{\Delta}}^2 
\leq  e^{-2\beta t} \|\nabla x\|_{\Phi^*_t(D), \widehat{\Delta}}^2-  \lambda \frac{e^{-2\beta t}- e^{-4\beta t}}{\beta  }  \|\nabla x\|_{\Phi_{\widetilde{\Gamma}}^*(\Phi_t^*(D)), \widehat{\Delta}}^2,
\end{align*}
where $\Phi_{\widetilde{\Gamma}}: \cM_3\to \cM_1$ is the bimodule map associated to $\widetilde{\Gamma}$.
\end{theorem}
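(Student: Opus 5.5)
We will follow the Wirth--Zhang interpolation argument, with the weighted gradient norm in place of the pointwise gradient form. Fix $x\in\cM$, $D\geq 0$ and $t>0$. By the intertwining property, $\nabla\Phi_t(x)=e^{-\vec\beta t}\widetilde{\Phi_t}(\nabla x)$, so
\begin{align*}
\|\nabla\Phi_t x\|^2_{D,\widehat{\Delta}}=\sum_{j=1}^m e^{-2\beta_j t}\langle \K_{D,\mu_j}\widetilde{\Phi_t}(\partial_j x),\widetilde{\Phi_t}(\partial_j x)\rangle\lambda^{1/2}.
\end{align*}
The task is therefore to sharpen the Lieb-concavity step $\langle \K_{D,\mu}\widetilde{\Phi_t}(y),\widetilde{\Phi_t}(y)\rangle\leq\langle \K_{\Phi_t^*(D),\mu}y,y\rangle$, with $y=\partial_j x$, by a defect term of second order. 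Since the lifted semigroup itself satisfies an intertwining relation, we can repeat one level up the Bakry--\'Emery argument that produced Proposition \ref{prop:beinter}.

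\textbf{Step 1: interpolation.} For $s\in[0,t]$, set
\begin{align*}
\psi(s)=e^{-2\beta s}\,\|\widetilde{\Phi_{t-s}}(\nabla x)\|^2_{\Phi_s^*(D),\widehat{\Delta}}.
\end{align*}
Then $\psi(0)$ bounds $\|\nabla\Phi_t x\|^2_{D,\widehat{\Delta}}$ up to the factor $e^{2\beta t}$, and $\psi(t)=e^{-2\beta t}\|\nabla x\|^2_{\Phi_t^*(D),\widehat{\Delta}}$. Here we use $\Phi_s^*=\widetilde{\Phi_s}^*|_{\cM}$, so that $\Phi_s^*(D)$ may be moved through $\widetilde{\Phi_s}^*$.

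\textbf{Step 2: differentiate $\psi$.} Differentiating produces the usual ``carr\'e du champ'' of $\cJ$ weighted by $\K$. This is the noncommutative analogue of $\frac{d}{ds}\Phi_s\Gamma(\Phi_{t-s})=2\Phi_s\Gamma_2$ in the entropic setting, which in Carlen--Maas form reads
\begin{align*}
\psi'(s)\geq 2e^{-2\beta s}\cdot(\text{the }\widetilde{\Gamma}\text{-type term}).
\end{align*}
Concretely, the derivative of $\langle \K_{\Phi_s^*D,\mu}\,\widetilde{\Phi_{t-s}}y,\widetilde{\Phi_{t-s}}y\rangle$ splits into two contributions. The first is a term $-\langle \K'(\cL^*D)\cdot,\cdot\rangle$ coming from differentiating the weight. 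The second is $2\Re\langle \K\cJ\widetilde{\Phi_{t-s}}y,\widetilde{\Phi_{t-s}}y\rangle$, coming from differentiating the argument. Their combination equals a positive quantity expressed through $\widetilde{\partial}\widetilde{\Phi_{t-s}}(y)$ and the bimodule map $\Phi_{\widetilde{\Gamma}}$, namely
\begin{align*}
\lambda\,\|\widetilde{\partial}\widetilde{\Phi_{t-s}}\nabla x\|^2_{\Phi_{\widetilde{\Gamma}}^*\Phi_s^*D}.
\end{align*}
This mirrors the proof of Proposition \ref{prop:beinter}, where $\Gamma_2-\beta\Gamma$ produced $\bE_{\cM}\widetilde{\Gamma}(\partial_jx)$.

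\textbf{Step 3: second intertwining.} Apply the hypothesis $\widetilde{\partial}\widetilde{\Phi_{t-s}}=e^{-\vec\beta(t-s)}\widetilde{\widetilde{\Phi_{t-s}}}\widetilde{\partial}$, followed by Lieb concavity for $\widetilde{\widetilde{\Phi}}$. This expresses the defect in terms of data at the initial point, at the cost of a factor $e^{-2\beta(t-s)}$. Integrating over $s\in[0,t]$ then yields
\begin{align*}
\int_0^t e^{-2\beta s}e^{-2\beta(t-s)}\cdots\,ds,
\end{align*}
which, after the normalization $e^{-2\beta t}$ from Step 1 and bounding the weighted norms by $\|\nabla x\|^2_{\Phi_{\widetilde{\Gamma}}^*(\Phi_t^*(D)),\widehat{\Delta}}$, gives the coefficient $\lambda(e^{-2\beta t}-e^{-4\beta t})/\beta$ up to the factor $2$ convention.

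\textbf{Main obstacle.} The hard part is Step 2. We must show that the derivative of the $\K$-weighted norm dominates the $\widetilde{\Gamma}$-defect with the weight $\Phi_{\widetilde{\Gamma}}^*(\cdot)$. This requires a chain-rule or operator-convexity inequality for $\K_{D,\mu}$ under the derivation $\widetilde{\partial}$, that is, a noncommutative version of $\Gamma(f,\log f)$-type identities. I would attempt it first for minimal projections $p_j$, using the matrix-unit pictures as in Proposition \ref{prop:matrixderivation}, and then pass to the general case by the same reduction.
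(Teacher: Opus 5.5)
Your opening reduction matches the paper: after the first intertwining, the task is to sharpen $\langle\K_{D,\mu_j}\widetilde{\Phi_t}(y),\widetilde{\Phi_t}(y)\rangle\leq\langle\K_{\Phi_t^*(D),\mu_j}y,y\rangle$ for $y=\partial_jx$ by a defect term. But that sharpening, your Step~2, is the whole content of the theorem, and you give no argument for it.

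Differentiating the weighted norm will not produce it in the form you state. The derivative of $s\mapsto\langle\K_{\Phi_s^*(D),\mu}\widetilde{\Phi_{t-s}}(y),\widetilde{\Phi_{t-s}}(y)\rangle$ contains the Fr\'echet derivative of the logarithmic-mean weight $D\mapsto\K_{D,\mu}$ in the direction $-\cL^*(\Phi_s^*(D))$. That is a divided-difference expression in the spectrum of $\Phi_s^*(D)$. No chain rule combines it with $2\Re\langle\K\cJ\,\cdot,\cdot\rangle$ into a $\K$-weighted $\widetilde{\Gamma}$-term. In general one only knows the sum is nonnegative, which is just the infinitesimal form of the inequality you want to improve.

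Your target expression $\lambda\|\widetilde{\partial}\widetilde{\Phi_{t-s}}\nabla x\|^2_{\Phi_{\widetilde{\Gamma}}^*\Phi_s^*D}$ also counts $\widetilde{\partial}$ twice. $\Phi_{\widetilde{\Gamma}}^*$ already encodes it, through $\langle\bE_{\cM_1}((\widetilde{\partial}y)^*(\widetilde{\partial}y)),W\rangle=2\langle y^*\widetilde{e}_1y,\Phi_{\widetilde{\Gamma}}^*(W)\rangle$. This identity is what lets the final defect be a weighted norm of $\nabla x$ itself.

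The paper never differentiates a $\K$-weighted quantity; it works where $D$ enters linearly. It starts from $\widetilde{\Phi_t}(y^*y)-\widetilde{\Phi_t}(y)^*\widetilde{\Phi_t}(y)=2\int_0^t\widetilde{\Phi_s}\bigl(\widetilde{\Gamma}(\widetilde{\Phi_{t-s}}(y))\bigr)\,ds$, pairs it with $D$, and uses the identity above to get the operator inequality
\begin{align*}
\Phi_t^*(D)\geq\fF^{-1}(\widehat{\widetilde{\Phi_t}})\,D\,\fF^{-1}(\widehat{\widetilde{\Phi_t}})^*+\frac{1-e^{-2\beta t}}{\beta}\,\lambda\,\Phi_{\widetilde{\Gamma}}^*(\Phi_t^*(D)).
\end{align*}
Only then does it pass to the weights $\int_0^1\mu_j^{1-2s}A^sJ_1A^{1-s}J_1\,ds$, using Lieb concavity and operator monotonicity of $A\mapsto A^s$. \emph{Linearize in $D$ first, then apply Lieb} is the idea your proposal is missing.

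Your Step~3 also runs in the wrong direction. Since the defect is subtracted, you need a \emph{lower} bound on the intermediate-time term. Lieb concavity for $\widetilde{\widetilde{\Phi_t}}$, like Kadison--Schwarz, only gives upper bounds $\|\widetilde{\widetilde{\Phi_t}}(z)\|^2_W\leq\|z\|^2_{\widetilde{\widetilde{\Phi_t}}^*(W)}$. The second intertwining gives lower bounds only by pushing forward to the terminal time, $\widetilde{\Phi_s}\bigl(\widetilde{\Gamma}(\widetilde{\Phi_{t-s}}(y))\bigr)\geq e^{2\beta s}\widetilde{\Gamma}(\widetilde{\Phi_t}(y))$. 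That is where the time dependence of the stated coefficient comes from: $e^{-2\beta t}\cdot e^{-2\beta t}\cdot 2\int_0^te^{2\beta s}\,ds=(e^{-2\beta t}-e^{-4\beta t})/\beta$. In your integral the factor $e^{-2\beta s}e^{-2\beta(t-s)}=e^{-2\beta t}$ does not depend on $s$, so bounding the rest by endpoint data gives $te^{-2\beta t}$ instead. The factor $e^{-2\beta s}$ in $\psi$ also double-counts the rate already extracted by the first intertwining.

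The stated defect has weight $\Phi_{\widetilde{\Gamma}}^*(\Phi_t^*(D))$. By the identity above, this corresponds to $\widetilde{\Phi_t}\bE_{\cM_1}((\widetilde{\partial}y)^*(\widetilde{\partial}y))$ paired with $D$. The terminal-time quantity is $\widetilde{\Gamma}(\widetilde{\Phi_t}(y))=e^{-2\beta t}\frac{\lambda^{-1}}{2}\bE_{\cM_1}\bigl(\widetilde{\widetilde{\Phi_t}}(\widetilde{\partial}y)^*\widetilde{\widetilde{\Phi_t}}(\widetilde{\partial}y)\bigr)$ when the $\beta_j$ coincide. Getting from it to the stated defect requires $\bE_{\cM_1}\bigl(\widetilde{\widetilde{\Phi_t}}(\widetilde{\partial}y)^*\widetilde{\widetilde{\Phi_t}}(\widetilde{\partial}y)\bigr)\geq\bE_{\cM_1}\widetilde{\widetilde{\Phi_t}}\bigl((\widetilde{\partial}y)^*(\widetilde{\partial}y)\bigr)$. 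That is the reverse of Kadison--Schwarz, and it fails for general unital completely positive maps.

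The paper's written proof takes exactly this step: the inequality passing from $\bE_{\cM_1}(\widetilde{\widetilde{\Phi_t}}(\widetilde{\partial}x))^*(\widetilde{\widetilde{\Phi_t}}(\widetilde{\partial}x))$ to $\bE_{\cM_1}\widetilde{\widetilde{\Phi_t}}((\widetilde{\partial}x)^*(\widetilde{\partial}x))$. So you cannot close this point by appeal to the paper. You need either an extra hypothesis (e.g.\ $\widetilde{\partial}y$ in the multiplicative domain of $\widetilde{\widetilde{\Phi_t}}$), or a defect stated through $\widetilde{\Gamma}(\widetilde{\Phi_t}(\cdot))$, i.e.\ through $\nabla\Phi_tx$ rather than $\nabla x$.
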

\begin{proof}
For any $x\in \cM_1$, by Equation \eqref{eq:gfh}, we have that 
\begin{align*}
\widetilde{\Phi_t }(x^*x) -\widetilde{\Phi_t }(x)^* \widetilde{\Phi_t }(x)
=& 2 \int_0^t  \widetilde{\Phi_s}(\widetilde{\Gamma}(\widetilde{\Phi_{t-s}}(x)))ds\\
\geq & 2 \int_0^t e^{2\beta s}\widetilde{\Gamma}(\widetilde{\Phi_t}(x)) ds \\
=& \frac{e^{2\beta t}-1}{\beta}\widetilde{\Gamma}(\widetilde{\Phi_t}(x))\\
=& \frac{e^{2\beta t}-1}{\beta}\frac{\lambda^{-1}}{2}\bE_{\cM_1}(\widetilde{\partial}\widetilde{\Phi_t}(x))^* (\widetilde{\partial}\widetilde{\Phi_t}(x)) \\
=& \frac{1- e^{-2\beta t}}{\beta}\frac{\lambda^{-1}}{2}\bE_{\cM_1}(\widetilde{\widetilde{\Phi_t}}(\widetilde{\partial}x))^* (\widetilde{\widetilde{\Phi_t}}(\widetilde{\partial}x)) \\
\geq & \frac{1- e^{-2\beta t}}{\beta}\frac{\lambda^{-1}}{2}\bE_{\cM_1}\widetilde{\widetilde{\Phi_t}}((\widetilde{\partial}x)^* (\widetilde{\partial}x)) \\
= & \frac{1- e^{-2\beta t}}{\beta}\frac{\lambda^{-1}}{2} \widetilde{\Phi_t}\bE_{\cM_1}((\widetilde{\partial}x)^* (\widetilde{\partial}x)).
\end{align*}

For any positive element $D\in \cM$ with $\tau(D)=1$ and $x\in \cM_1$, we have that 
\begin{equation}\label{eq:phigamma1}
\begin{aligned}
 \left \langle \widetilde{\Phi_t}\bE_{\cM_1}((\widetilde{\partial}x)^* (\widetilde{\partial}x)), D \right\rangle 
 =&  \left\langle \bE_{\cM_1} (\widetilde{\partial}x)^* (\widetilde{\partial}x), \Phi_t^*(D) \right\rangle\\
 =& 2 \left\langle \Phi_{\widetilde{\Gamma}}(x^*\widetilde{e}_1 x),  \Phi_t^*(D) \right\rangle\\
  =& 2 \left\langle x^*\widetilde{e}_1 x,  \Phi_{\widetilde{\Gamma}}^*(\Phi_t^*(D)) \right\rangle\\
  =& 2\lambda^2 \langle J_1\Phi_{\widetilde{\Gamma}}^*(\Phi_t^*(D))J_1 x\Omega_1, x \Omega_1 \rangle.
\end{aligned}
\end{equation}
This implies that 
\begin{align*}
 \langle x \Phi_t^*(D), x\rangle=&   \langle \widetilde{\Phi}_t(x^*x), D\rangle \\
  \geq & \langle \widetilde{\Phi_t}(x), \widetilde{\Phi_t}(x) D \rangle +\frac{1- e^{-2\beta t}}{\beta}\lambda  \langle J_1\Phi_{\widetilde{\Gamma}}^*(\Phi_t^*(D))J_1 x \Omega_1, x \Omega_1 \rangle,
\end{align*}
i.e.
\begin{align*}
J_1\Phi_t^*(D) J_1
 \geq \fF^{-1}(\widehat{ \widetilde{\Phi_t}}) J_1 DJ_1 \fF^{-1}(\widehat{\widetilde{\Phi_t}})^* +\frac{1- e^{-2\beta t}}{\beta}\lambda  J_1\Phi_{\widetilde{\Gamma}}^*(\Phi_t^*(D))J_1.
\end{align*}
By taking the conjugation $J_1$, we obtain that 
\begin{align*}
\Phi_t^*(D) 
 \geq & J_1\fF^{-1}(\widehat{ \widetilde{\Phi_t}}) J_1 D J_1 \fF^{-1}(\widehat{\widetilde{\Phi_t}})^*J_1 +\frac{1- e^{-2\beta t}}{\beta}\lambda  \Phi_{\widetilde{\Gamma}}^*(\Phi_t^*(D))\\
=& \fF^{-1}(\widehat{ \widetilde{\Phi_t}})  D  \fF^{-1}(\widehat{\widetilde{\Phi_t}})^* +\frac{1- e^{-2\beta t}}{\beta}\lambda  \Phi_{\widetilde{\Gamma}}^*(\Phi_t^*(D)).
\end{align*}
By Lieb's concavity theorem and operator monotonicity of $t^{s}$, where $s\in (0,1)$, we have that 
\begin{align*}
 &\int_0^1 \mu_j^{1-2s} \Phi_t^*(D) ^s J_1\Phi_t^*(D)^{1-s} J_1 ds \\
 \geq&  \int_0^1 \mu_j^{1-2s} \fF^{-1}(\widehat{ \widetilde{\Phi_t}}) D^s J_1 D^{1-s} J_1  \fF^{-1}(\widehat{ \widetilde{\Phi_t}})^*  \\
& +\mu_j^{1-2s}\frac{1- e^{-2\beta t}}{\beta}\lambda \Phi_{\widetilde{\Gamma}}^*(\Phi_t^*(D))^{s} \left(J_1\Phi_{\widetilde{\Gamma}}^*(\Phi_t^*(D))J_1\right)^{1-s} ds.
\end{align*}
Hence for any $x\in \cM$, we have that 
\begin{align*}
& \|\nabla \Phi_t (x)\|_{D, \widehat{\Delta}}^2 
= \left \langle \K_D \nabla \Phi_t(x), \nabla \Phi_t(x)\right\rangle \\
=& e^{-2\beta t} \left\langle  \K_D \widetilde{\Phi_t} \nabla x, \widetilde{\Phi_t} \nabla x \right\rangle \\
=& \lambda^{1/2}e^{-2\beta t} \sum_{j=1}^m \int_0^1\left \langle  (\mu_j^{-1} D)^sJ_1 (\mu_j D)^{1-s}J_1\widetilde{\Phi_t}(  \partial_j x) \Omega_1, \widetilde{\Phi_t}(\partial_j x)\Omega_1 \right \rangle ds, \\
\leq & \lambda^{1/2} e^{-2\beta t} \sum_{j=1}^m \int_0^1\left \langle  (\mu_j^{-1} \Phi_t^*(D))^sJ_1 (\mu_j \Phi_t^*(D))^{1-s}J_1  \partial_j x\Omega_1,  \partial_j x \Omega_1  \right\rangle ds \\
& -\lambda^{3/2} \frac{e^{-2\beta t}- e^{-4\beta t}}{\beta}\sum_{j=1}^m  \int_0^1 \left\langle  (\mu_j^{-1} \Phi_{\widetilde{\Gamma}}^*(\Phi_t^*(D)))^s J_1 (\mu_j \Phi_{\widetilde{\Gamma}}^*(\Phi_t^*(D)))^{1-s}J_1   \partial_j x \Omega_1,  \partial_j x\Omega_1 \right \rangle ds\\
\leq & e^{-2\beta t} \langle  \K_{\Phi_t^*(D)}  \nabla  x,   \nabla x \rangle- \lambda \frac{e^{-2\beta t}- e^{-4\beta t}}{\beta }  \left\langle \K_{\Phi_{\widetilde{\Gamma}}^*(\Phi_t^*(D))}\nabla x, \nabla x \right\rangle \\
=& e^{-2\beta t} \|\nabla x\|_{\Phi_t^*(D), \widehat{\Delta}}^2- \lambda \frac{e^{-2\beta t}- e^{-4\beta t}}{\beta  }  \|\nabla x\|_{\Phi_{\widetilde{\Gamma}}^*(\Phi_t^*(D)), \widehat{\Delta}}^2.
\end{align*}
This completes the proof of the theorem.
\end{proof}

\begin{theorem}\label{thm:ricbd}
Suppose that $\bC\subset M_n(\bC)$ is the inclusion and $\{\Phi_t\}_{t\geq 0}$ is a bimodule GNS symmetric Markov semigroup satisfying the intertwining property.
Then for any $x\in M_n(\bC)$, we have that 
\begin{align*}
 \|\nabla \Phi_t x\|_{D, \widehat{\Delta}}^2 \leq   e^{-2\beta t} \|\nabla x\|_{\Phi_t^*(D), \widehat{\Delta}}^2- (\min_{1\leq j \leq m} \mu_j)\frac{(e^{-2\beta t}- e^{-4\beta t})}{\beta \sum_{j=1}^m (1+\mu_j)^2}  \langle P_Dx, x\rangle,
\end{align*}
where $\displaystyle \xi_D=\cL(\Phi_t^*(D))+\cL^*(\Phi_t^*(D))+\frac{1}{2}\left\{\Phi_t^*(D), 1*(\overline{\widehat{\cL}_0}-\widehat{\cL}_0)\right\}$ and $P_D=\langle \cdot, \xi_D\rangle \xi_D$.
\end{theorem}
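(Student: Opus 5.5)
The plan is to repeat the proof of the preceding theorem, which gave the weighted gradient estimate with the correction term $\|\nabla x\|_{\Phi_{\widetilde{\Gamma}}^*(\Phi_t^*(D)), \widehat{\Delta}}^2$. Here the extra hypothesis on a second lifting $\widetilde{\widetilde{\Phi_t}}$ is replaced by the matrix-algebra structure $\bC\subset M_n(\bC)$, as in Proposition \ref{prop:matrixderivation} and Theorem \ref{thm:bematrix}.

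First, I will use the intertwining property in the form of the remark yielding Equation \eqref{eq:gfh}, so that $\partial_j\Phi_t(x)=e^{-\beta_j t}\widetilde{\Phi_t}(\partial_j x)$. This gives
\begin{align*}
\|\nabla \Phi_t x\|_{D,\widehat{\Delta}}^2\le e^{-2\beta t}\langle \K_D\widetilde{\Phi_t}\nabla x,\widetilde{\Phi_t}\nabla x\rangle.
\end{align*}

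Second, I will bound the operator $\Phi_t^*(D)$ from below by
\begin{align*}
\fF^{-1}(\widehat{\widetilde{\Phi_t}})\, D\, \fF^{-1}(\widehat{\widetilde{\Phi_t}})^*
\end{align*}
plus a positive remainder. To produce the remainder, I will integrate $\varphi(s)=e^{-2\beta s}\Phi_s\Gamma(\Phi_{t-s}(\cdot))$ exactly as in the proof of Theorem \ref{thm:bematrix}. At the key step I will use the dimension term of Proposition \ref{prop:matrixderivation}, namely
\begin{align*}
\Gamma_2\ge \beta\Gamma+\frac{1}{\sum_j(1+\mu_j)^2}|\cdots|^2,
\end{align*}
instead of $\widetilde{\Gamma}$. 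Dualizing against $D$, as in Equation \eqref{eq:phigamma1}, turns the squared term $|\cL(\cdot)+\cL^*(\cdot)+\tfrac12\{\cdot,1*(\overline{\widehat{\cL}_0}-\widehat{\cL}_0)\}|^2$ into a rank-one positive contribution built from $\xi_D$. I expect this to use the self-adjointness of $\cL+\cL^*+\tfrac12\{\cdot,1*(\overline{\widehat{\cL}_0}-\widehat{\cL}_0)\}$ with respect to $\tau$, which holds by the lemma expressing it as $\frac{\lambda^{-1/2}}{2}\sum_j(1+\mu_j^{-1})\partial_j^*\partial_j$. The outcome should be an operator inequality of the form
\begin{align*}
\Phi_t^*(D)\ \ge\ \fF^{-1}(\widehat{\widetilde{\Phi_t}})D\fF^{-1}(\widehat{\widetilde{\Phi_t}})^*+\frac{1-e^{-2\beta t}}{\beta\sum_j(1+\mu_j)^2}\,R_D,
\end{align*}
where $R_D\ge 0$ satisfies $\langle R_D\cdot,\cdot\rangle\ge \langle P_D\cdot,\cdot\rangle$ after pairing with $\nabla x$. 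The time factor comes from integrating $e^{-2\beta s}$ via $d\nu_t$, together with Jensen/Kadison–Schwarz for the square, exactly as in Theorem \ref{thm:bematrix}.

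Third, I will apply Lieb's concavity theorem and the operator monotonicity of $t^s$ to pass the inequality into each kernel $\int_0^1(\mu_j^{-1}\cdot)^sJ_1(\mu_j\cdot)^{1-s}J_1\,ds$, following the previous theorem. The weights $\mu_j^{1-2s}$ are then bounded below by $\min_j\mu_j$; this is where the factor $\min_{1\le j\le m}\mu_j$ arises. Summing over $j$ and using $\nabla x=\lambda^{1/4}(\partial_jx)_j$ should collapse the remainder to $\langle P_Dx,x\rangle$, with the extra $e^{-2\beta t}$ producing $e^{-2\beta t}-e^{-4\beta t}$.

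The main obstacle is the second step. The dimension term is quadratic in the operator $\Phi_{t-s}(x)$, not linear in $D$, so converting it into a lower bound for the weight $\Phi_t^*(D)$ is unclear. I would first try to work at the level of the density, using GNS symmetry to move $\cL$ and $\cL^*$ onto $\Phi_t^*(D)$ so that $\xi_D$ appears. I would then apply Cauchy–Schwarz to obtain the rank-one projection $P_D$, accepting the loss of constants that the factor $\min_j\mu_j$ absorbs.
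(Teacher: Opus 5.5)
Your first and third steps match the paper, but Step 2 is a genuine gap rather than a technical obstacle.

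\textbf{Why Step 2 fails.} Integrating $\varphi(s)=e^{-2\beta s}\Phi_s\Gamma(\Phi_{t-s}(x))$ against the dimension term of Proposition \ref{prop:matrixderivation} only sharpens the comparison between two gradient forms, $\Gamma(\Phi_t(x))$ and $e^{-2\beta t}\Phi_t\Gamma(x)$; that is exactly Theorem \ref{thm:bematrix}.

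The operator inequality you need before Lieb's theorem is $\Phi_t^*(D)\ge \fF^{-1}(\widehat{\Phi_t})D\fF^{-1}(\widehat{\Phi_t})^*+(\text{remainder})$. After pairing with $D$, this is a lower bound on the Kadison--Schwarz defect $\Phi_t(z^*z)-\Phi_t(z)^*\Phi_t(z)$ as a quadratic form in $z$. A gradient-gap inequality does not give that.

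There is also a mismatch in derivative count. The remainder is eventually evaluated at the components $z=\partial_j^{(0)}x$ of $\nabla x$. If it already contained the second-order operator $\cL+\cL^*+\tfrac12\{\cdot,1*(\overline{\widehat{\cL}_0}-\widehat{\cL}_0)\}$, you would get a third-order expression in $x$, not $\langle P_Dx,x\rangle=|\langle x,\xi_D\rangle|^2$. The paper uses neither Proposition \ref{prop:matrixderivation} nor Theorem \ref{thm:bematrix} in this proof.

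\textbf{What the paper does instead.} The missing ingredient is first order. Start from the variance identity
\[
\Phi_t(z^*z)-\Phi_t(z)^*\Phi_t(z)=2\int_0^t\Phi_s\Gamma(\Phi_{t-s}(z))\,ds\ge\frac{e^{2\beta t}-1}{\beta}\Gamma(\Phi_t(z)),
\]
which needs only the gradient bound coming from the intertwining property.

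For $\bC\subset M_n(\bC)$ with the standard lifting, $\partial_jz=2^{1/2}\lambda^{1/4}(\partial_j^{(0)}z)\otimes\overline{v_j^*}$ and $\widetilde{\Phi_t}$ acts as $\Phi_t$ on the first factor. So $\Gamma(\Phi_t(z))$ becomes a weighted sum of $(\Phi_t\partial_j^{(0)}z)^*(\Phi_t\partial_j^{(0)}z)\|v_j\|_2^2$.

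Pair this with $D\ge 0$, $\tau(D)=1$, and use $\tau(a^*aD)\ge|\tau(aD)|^2$ together with duality and GNS symmetry. This yields $\sum_j\mu_j|\langle z,\partial_{j^*}^{(0)}\Phi_t^*(D)\rangle|^2$, i.e.\ rank-one operators $P_j$ each carrying only one derivative of $\Phi_t^*(D)$. The resulting operator inequality (first in $J$-conjugated form) is
\[
\Phi_t^*(D)\ge\fF^{-1}(\widehat{\Phi_t})D\fF^{-1}(\widehat{\Phi_t})^*+\frac{1-e^{-2\beta t}}{\beta}\Big(\min_j\mu_j\Big)\sum_jP_j .
\]

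After Lieb's theorem, evaluate at $z=\partial_j^{(0)}x$ and keep only the $P_{j^*}$ terms. These give $|\langle\partial_{j^*}^{(0)}\partial_j^{(0)}x,\Phi_t^*(D)\rangle|^2\|v_j\|_2^4$. The second derivative comes from composing the derivative inside $P_{j^*}$ with the one in $\nabla x$.

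Finally, apply Cauchy--Schwarz over $j$ with weights $1+\mu_j^{-1}$, using $\sum_j(1+\mu_j^{-1})^2=\sum_j(1+\mu_j)^2$ via the involution. Then use the $\tau$-self-adjointness of $\cL+\cL^*+\tfrac12\{\cdot,1*(\overline{\widehat{\cL}_0}-\widehat{\cL}_0)\}$ to obtain $\frac{1}{\sum_j(1+\mu_j)^2}\langle P_Dx,x\rangle$. The dimension factor comes from this last Cauchy--Schwarz, not from the $\Gamma_2$ dimension term.

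Your fallback idea of moving the operator onto $\Phi_t^*(D)$ and applying Cauchy--Schwarz is the right final step. It needs the variance lower bound above as input.
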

\begin{proof}
Firstly, we assume that the projection $p_j$ in the decomposition $\widehat{\cL}_0$ are minimal projection  in $\cM'\cap \cM_2$ and
\begin{align*}
\widehat{\cL}_0=\sum_{j=1}^m \mu_j^{1/2} \vcenter{\hbox{\begin{tikzpicture}[scale=0.65]
    \begin{scope}[shift={(0,1.5)}]
    \draw [blue] (-0.5, 0.8)--(-0.5, 0) .. controls +(0, -0.6) and +(0,-0.6).. (0.5, 0)--(0.5, 0.8);    
\begin{scope}[shift={(0.5, 0.3)}]
\draw [fill=white] (-0.3, -0.3) rectangle (0.3, 0.3);
\node at (0, 0) {\tiny $v_j$};
\end{scope}
    \end{scope}
\draw [blue] (-0.5, -0.8)--(-0.5, 0) .. controls +(0, 0.6) and +(0,0.6).. (0.5, 0)--(0.5, -0.8);
\begin{scope}[shift={(0.5, -0.3)}]
\draw [fill=white] (-0.3, -0.3) rectangle (0.3, 0.3);
\node at (0, 0) {\tiny $v_j^*$};
\end{scope}
\end{tikzpicture}}},\quad
\widehat{\cJ}_0= \sum_{j=1}^m \mu_j^{1/2} \vcenter{\hbox{\begin{tikzpicture}[scale=0.65]
    \begin{scope}[shift={(0,1.5)}]
    \draw [blue] (-0.5, 0.8)--(-0.5, 0) .. controls +(0, -0.6) and +(0,-0.6).. (0.5, 0)--(0.5, 0.8);  
    \draw [blue] (-1, 0.8)--(-1, 0) .. controls +(0, -0.8) and +(0,-0.8).. (1, 0)--(1, 0.8); 
\begin{scope}[shift={(0.5, 0.3)}]
\draw [fill=white] (-0.3, -0.3) rectangle (0.3, 0.3);
\node at (0, 0) {\tiny $v_j$};
\end{scope}
    \end{scope}
\draw [blue] (-0.5, -0.8)--(-0.5, 0) .. controls +(0, 0.6) and +(0,0.6).. (0.5, 0)--(0.5, -0.8);
\draw [blue] (-1, -0.8)--(-1, 0) .. controls +(0, 0.8) and +(0,0.8).. (1, 0)--(1, -0.8); 
\begin{scope}[shift={(0.5, -0.3)}]
\draw [fill=white] (-0.3, -0.3) rectangle (0.3, 0.3);
\node at (0, 0) {\tiny $v_j^*$};
\end{scope}
\end{tikzpicture}}},
\end{align*}
subject to $\tau(v_j^*v_k)=0$ for $j\neq k$, $\tau(v_jv_k)=0$ for all $j, k=1, \ldots, m$ and there is an involution $*$ on $\{1, 2, \ldots, m \}$ such that $v_{j^*}=v_j^*$ and $\mu_{j^*}=\mu_j^{-1}$.

For any $x\in \cM$, we have that 
\begin{align*}
\Phi_t(x^*x)-\Phi_t(x)^*\Phi_t(x)
=& 2\int_0^t \Phi_s(\Gamma(\Phi_{t-s} (x) ))ds
\geq 2\int_0^t e^{2\beta s} \Gamma (\Phi_t(x) )ds \\
=& \frac{e^{2\beta t}-1}{\beta}\Gamma (\Phi_t(x) )\\
=& \frac{e^{2\beta t}-1}{\beta}\frac{\lambda^{-1/2}}{2}\bE_{\cM}((\partial\Phi_t(x))^* (\partial \Phi_t(x))) \\
=& \frac{e^{2\beta t}-1}{\beta}\frac{\lambda^{-1/2}}{2}\sum_{j=1}^m \bE_{\cM}((\partial_j\Phi_t(x))^* (\partial_j\Phi_t(x))) \\
=& \frac{1-e^{-2\beta t}}{\beta}\frac{\lambda^{-1/2}}{2}\sum_{j=1}^m \bE_{\cM}((\widetilde{\Phi_t}\partial_jx)^* (\widetilde{\Phi_t}\partial_j x)) \\
=& \frac{1-e^{-2\beta t}}{\beta}\sum_{j=1}^m (\Phi_t \partial_j^{(0)}x)^* (\Phi_t\partial_j^{(0)} x) \|v_j\|_2^2,
\end{align*}
where $\partial_j x=2^{1/2}\lambda^{1/4}(\partial_j^{(0)}x )\otimes \overline{v_j^*}$ for $j=1, \ldots, m$.

For any $D\in \cM$, we have that 
\begin{align*}
\sum_{j=1}^m \langle (\Phi_t \partial_j^{(0)}x)^* (\Phi_t\partial_j^{(0)} x), D\rangle
\geq  \sum_{j=1}^m \left| \left\langle \Phi_t (\partial_j^{(0)} x), D\right\rangle\right|^2=\sum_{j=1}^m \mu_j \left|\left\langle x, \partial_{j^*}^{(0)} \Phi_t^*(D)\right\rangle\right|^2.
\end{align*}


Let $\displaystyle P_j x\Omega =\|v_j\|_2^2\mu_j \left\langle x, \partial_{j^*}^{(0)} \Phi_t^*(D)\right\rangle \partial_{j^*}^{(0)} \Phi_t^*(D)\Omega$ for $j=1, \ldots, m$ and $x\in \cM$.
We have that 
\begin{align*}
J\Phi_t^*(D) J \geq \fF^{-1}(\widehat{\Phi}) JD J \fF^{-1}(\widehat{\Phi})^* +\frac{1-e^{-2\beta t}}{\beta}(\min_{1\leq j \leq m} \mu_j) \sum_{j=1}^m P_j.
\end{align*}
By taking the adjoint, we obtain that 
\begin{align*}
\Phi_t^*(D)  \geq \fF^{-1}(\widehat{\Phi}) D  \fF^{-1}(\widehat{\Phi})^* + \frac{1-e^{-2\beta t}}{\beta}(\min_{1\leq j \leq m} \mu_j) \sum_{j=1}^m  P_j.
\end{align*}
By Lieb's concavity theorem, we have that  
\begin{align*}
\int_0^1 \mu_j^{1-2s}\Phi_t^*(D)^s J \Phi_t^*(D)^{1-s} J ds 
\geq &  \int_0^1\mu_j^{1-2s} \fF^{-1}(\widehat{\Phi}) D^s J D^{1-s} J \fF^{-1}(\widehat{\Phi})^* ds \\
& +\frac{1-e^{-2\beta t}}{\beta} ( \min_{1\leq k\leq m} \mu_k) \sum_{k=1}^m P_k.
\end{align*}
Hence 
\begin{align*}
\|\nabla\Phi_t(x)\|_{D, \widehat{\Delta}}^2 
=& e^{-2\beta t} \sum_{j=1}^m \langle \K_D \Phi_t \partial_j^{(0)} x, \Phi_t\partial_j^{(0)}x \rangle \|v_j\|_2^2\\
\leq & e^{-2\beta t} \sum_{j=1}^m  \int_0^1\mu_j^{1-2s} \left\langle \fF^{-1}(\widehat{\Phi}) D^s J D^{1-s} J \fF^{-1}(\widehat{\Phi})^* \partial_j^{(0)} x \Omega, \partial_j^{(0)} x\Omega \right\rangle ds  \|v_j\|_2^2\\
& - \frac{(e^{-2\beta t}-e^{-4\beta t})}{\beta} ( \min_{1\leq j \leq m} \mu_j) \sum_{j,k=1}^m  \left \langle P_k  \partial_j^{(0)} x \Omega, \partial_j^{(0)} x\Omega \right\rangle  \|v_j\|_2^2.
\end{align*}
Now we have that 
\begin{align*}
& \sum_{j,k=1}^m   \langle P_k  \partial_j^{(0)} x \Omega, \partial_j^{(0)} x\Omega\rangle  \|v_j\|_2^2  \\
\geq & \sum_{j=1}^m   \langle P_{j^*}  \partial_j^{(0)} x \Omega, \partial_j^{(0)} x\Omega\rangle   \|v_j\|_2^2\\
= &    \sum_{j=1}^m   \left| \langle  \partial_{j^*}^{(0)}\partial_j^{(0)} x \Omega, \Phi_t^*(D)\Omega\rangle \right|^2  \|v_j\|_2^4\\
\geq & \frac{1}{\sum_{j=1}^m (1+\mu_j)^2}  \left| \left\langle \sum_{j=1}^m (1+\mu_j^{-1}) \partial_{j^*}^{(0)}\partial_j^{(0)} x \Omega, \Phi_t^*(D)\Omega \right\rangle \right|^2  \|v_j\|_2^4\\
=& \frac{1}{\sum_{j=1}^m (1+\mu_j)^2} |\langle  \cL(x)+\cL^*(x)+\frac{1}{2}\{x, 1*(\overline{\widehat{\cL}_0}-\widehat{\cL}_0)\}, \Phi_t^*(D)\rangle|^2 \\
=& \frac{1}{\sum_{j=1}^m (1+\mu_j)^2} \langle P_D x, x\rangle.
\end{align*}
This completes the proof of the theorem.
\end{proof}

\section{Intertwining Property for Bimodule KMS Symmetry}

In this section, we shall investigate the intertwining property for bimodule KMS symmetric quantum Makov semigroups.

\begin{definition}
Suppose that $\{\Phi_t\}_{t\geq 0}$ is a bimodule KMS symmetric if $\Phi_t$ is equilibrium for all $t\geq 0$ and there exists strictly positive operator $\widehat{\Delta}\in \cM'\cap \cM_2$ such that $\widehat{\Delta}e_2=e_2$ and $\overline{\widehat{\cL}}=\overline{\widehat{\Delta}}\widehat{\cL}\overline{\widehat{\Delta}}$, $\cR(\widehat{\cL})\overline{\widehat{\Delta}}=\cR(\widehat{\cL})\widehat{\Delta}^{-1}$.
\end{definition}

We assume that $\cR(\overline{\widehat{\cL}_0})=\cR(\widehat{\cL}_0)$ throughout this section.
Let $\widehat{\cL}_\Delta=\overline{\widehat{\Delta}^{1/2}}\widehat{\cL}_0\overline{\widehat{\Delta}^{1/2}}.$
We have that $\overline{\widehat{\cL}_\Delta}=\widehat{\cL}_{\Delta}$.

Let $\displaystyle \cA=\bigoplus_{\ell=1}^m M^{(\ell)}$ be the von Neumann subalgebra generated by $\widehat{\cL}_\Delta$ and $\widehat{\Delta}\cR(\widehat{\cL}_0)$, where $M^{(\ell)}$ is the $d_\ell\times d_\ell$ matrix algebra.
Let $\{E_{j,k}^{(\ell)}\}_{j,k=1}^{d_\ell}$ and $\{F_{j,k}^{(\ell)}\}_{j,k=1}^{d_\ell}$ be systems of matrix units for $M^{(\ell)}$ described in \cite{JWW25}.
Let $\displaystyle U^{(\ell)}=\sum_{j,k=1}^{d_\ell} u_{jk}^{(\ell)} F_{j,k}^{(\ell)}$ be the unitary element in $M^{(\ell)}$ such that $U^{(\ell)} E_{j,k}^{(\ell)}U^{(\ell)*}=F_{j,k}^{(\ell)}$.
We have that $\widehat{\cL}_\Delta$ is diagonal with respect to $\{E_{j,k}^{(\ell)}\}_{j,k, \ell}$ and $\widehat{\Delta}$ is diagonal with respect to the system $\{F_{j,k}^{(\ell)}\}_{j,k, \ell}$.
Let 
\begin{align*}
\widehat{\cL}_{\Delta}=& \sum_{\ell=1}^m \sum_{j=1}^{d_\ell} \omega_j^{(\ell)} E_{j,j}^{(\ell)},
\end{align*}
where $\omega_j^{(\ell)}>0$.
Let $B_\ell=\diag(\omega_1^{(\ell)}, \ldots, \omega_{d_\ell}^{(\ell)})$ and $\displaystyle \widehat{\Delta}=\sum_{\ell=1}^m \widehat{\Delta}^{(\ell)}$.
We have that $\overline{\widehat{\Delta}^{(\ell)}}=\widehat{\Delta}^{(\ell^*)-1}$.
Let $F_\ell=\diag(\mu_1^{(\ell)}, \ldots, \mu_{d_\ell}^{(\ell)})$

Recall that
\begin{align*}
    \cM_1^{\oplus} =\left\{(x_{k}^{(\ell)})_{k, \ell=1}^{d_\ell, m}: x_{k}^{(\ell)}\in \cM_1,  x_{k}^{(\ell)} e_2=F_{k,k}^{(\ell)} x_{k}^{(\ell)} e_2 \right\} \subset \bigoplus_{\ell=1}^m \cM_1^{\oplus d_\ell},
\end{align*}
and 
\begin{align*}
\oM=\left\{(x_{k, r}^{(\ell)})_{k, r, \ell} : x_{k,r}^{(\ell)} \in \cM_1, x_{k,r}^{(\ell)}e_2
= F_{k,r}^{(\ell)} y_{k,r}^{(\ell)} e_2, \text{ for some } y_{k,r}^{(\ell)} \in \cM_1  \right\}.
\end{align*}
The directional derivation $\partial_k^{(\ell)}: \cM\to \cM_1$ is defined by
\begin{align*}
(\partial_k^{(\ell)} x )e_2=\lambda^{-1/2} F_{k,k}^{(\ell)}[x, e_1] e_2.
\end{align*}
The gradient $\nabla_0: \cM \to \cM_1^{\oplus}$ is $\nabla_0 x =(\partial_k^{(\ell)} x)_{k, \ell}$ for all $x\in \cM$, where $\nabla_0 x$ is read as column vector.
Hence $(\nabla_0 x)^*$ is read as row vector.
The divergence $\Div_0=\nabla_0^*: \cM_1^{\oplus} \to \cM$.

We define $\displaystyle \mathbb{L}=\bigoplus_{\ell=1}^m  F_\ell U_\ell^* B_\ell  U_\ell F_\ell$ and $\displaystyle \overline{\mathbb{L}}=\bigoplus_{\ell=1}^m  F_\ell^{-1} U_\ell^* B_\ell  U_\ell F_\ell^{-1}$  .
Let $\mathcal{E}_{\mathbb{L}}: \cM_1^{\oplus}\to \cM_1^{\oplus}$ defined by
\begin{align*}
\mathcal{E}_{\mathbb{L}}(X) =\lambda^{-1} (\bE_{\cM_1} \otimes I) (\mathbb{L}X e_2),
\end{align*}
where $X\in \cM_1^{\oplus}$.

\begin{lemma}\label{lem:dualsum}
For any $x, y\in \cM$, we have that 
\begin{align*}
\Gamma(x, y) = \frac{\lambda^{-1/2}}{2}\bE_{\cM} \left( (\nabla_0 y)^*\mathcal{E}_{\mathbb{L}} (\nabla_0 x)\right),
\end{align*}
and 
\begin{align*}
\partial^*\partial x =   \Div_0 \mathcal{E}_\mathbb{L} \nabla_0 x,\quad
 \overline{\partial}^*\overline{\partial} x =  \Div_0 \mathcal{E}_{\overline{\mathbb{L}}} \nabla_0x.
\end{align*}
\end{lemma}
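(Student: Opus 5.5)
The strategy is to reduce everything to the single derivation $x\mapsto [x,e_1]$. First I rewrite $\partial$ through the directional derivations $\partial_k^{(\ell)}$, and then read off both the gradient form and $\partial^*\partial$.

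The starting observation is that for any $a\in\cM'\cap\cM_2$, the derivation $x\mapsto[x,\fF^{-1}(a)]$ satisfies
\begin{align*}
([x,\fF^{-1}(a)])e_2=\lambda^{-1/2}a[x,e_1]e_2 ,
\end{align*}
up to the normalisation fixed by $\fF$. This is the same identity used for $\widetilde{\partial}$ in Section 4, and it follows from $\fF^{-1}(a)e_2=\lambda^{-1/2}\cdots$ together with the fact that $x$ commutes with $a$.

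Apply this with $a=\widehat{\cL}_0^{1/2}$. Write
\begin{align*}
\widehat{\cL}_0=\overline{\widehat{\Delta}^{-1/2}}\,\widehat{\cL}_\Delta\,\overline{\widehat{\Delta}^{-1/2}} .
\end{align*}
Using $\overline{\widehat{\Delta}}=\widehat{\Delta}^{-1}$ on $\cR(\widehat{\cL})$ and the matrix-unit description in $\cA$, $\widehat{\cL}_0$ becomes, block by block, $\bigoplus_\ell F_\ell U_\ell^*B_\ell U_\ell F_\ell$ in the basis $\{F^{(\ell)}_{j,k}\}$; this is exactly $\mathbb{L}$. Consequently
\begin{align*}
(\partial x)^*(\partial x)e_2 = \lambda^{-1}[x,e_1]^*\,\widehat{\cL}_0\,[x,e_1]e_2 = \lambda^{-1}\sum_{\ell,j,k}\mathbb{L}^{(\ell)}_{jk}\,[x,e_1]^*F^{(\ell)}_{jj}\cdots F^{(\ell)}_{kk}[x,e_1]e_2 .
\end{align*}
Now express $F^{(\ell)}_{j,k}=F^{(\ell)}_{j,j}F^{(\ell)}_{j,k}F^{(\ell)}_{k,k}$ and recognise $\lambda^{-1/2}F^{(\ell)}_{k,k}[x,e_1]e_2$ as $(\partial_k^{(\ell)}x)e_2$. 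The cross terms then assemble into $(\nabla_0 y)^*\,\mathbb{L}\,(\nabla_0 x)\,e_2$, with the matrix units absorbed into the entries.

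Next, take $\bE_{\cM_1}$ after multiplying by $\lambda^{-1}$ to pass from the $e_2$-compressed identity to an identity in $\cM_1$. This is exactly what $\mathcal{E}_{\mathbb{L}}$ does, so $\bE_{\cM}((\partial y)^*\partial x)$ becomes
\begin{align*}
\bE_{\cM}\bigl((\nabla_0 y)^*\mathcal{E}_{\mathbb{L}}(\nabla_0 x)\bigr).
\end{align*}
Here I use the bimodule property of $\bE_{\cM_1}$ over $\cM_1$ and the constraint $x_k^{(\ell)}e_2=F^{(\ell)}_{kk}x_k^{(\ell)}e_2$ that defines $\cM_1^{\oplus}$. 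Substituting into $\Gamma(x,y)=\tfrac{\lambda^{-1/2}}2\bE_{\cM}((\partial y)^*\partial x)$ gives the first claim.

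The second claim follows by duality. For all $x,y$,
\begin{align*}
\langle \partial^*\partial x,y\rangle=\tau\bigl(\bE_{\cM}((\partial y)^*\partial x)\bigr) ,
\end{align*}
and the first claim rewrites this as $\langle\mathcal{E}_{\mathbb{L}}\nabla_0x,\nabla_0y\rangle=\langle\Div_0\mathcal{E}_{\mathbb{L}}\nabla_0 x,y\rangle$, since $\Div_0=\nabla_0^*$; the factors of $\lambda$ must be matched with the convention for $\partial^*$. For $\overline{\partial}$, $\widehat{\cL}_0$ is replaced by $\overline{\widehat{\cL}_0}=\overline{\widehat{\Delta}^{1/2}}\widehat{\cL}_\Delta\overline{\widehat{\Delta}^{1/2}}$, whose block form is $F_\ell^{-1}U_\ell^*B_\ell U_\ell F_\ell^{-1}=\overline{\mathbb{L}}$. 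The same computation then yields $\overline{\partial}^*\overline{\partial}=\Div_0\mathcal{E}_{\overline{\mathbb{L}}}\nabla_0$.

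The main obstacle is the bookkeeping in the middle step. I must verify that $\widehat{\cL}_0$, written in the $F$-matrix units, really has coefficient matrix $F_\ell U_\ell^*B_\ell U_\ell F_\ell$ (the placement of $F$ versus $F^{-1}$ depends on $\overline{\widehat{\Delta}}=\widehat{\Delta}^{-1}$ on the range), and that off-diagonal units $F^{(\ell)}_{j,k}$ are correctly handled by the space $\oM$ and by $\mathcal{E}_{\mathbb{L}}$. The plan is to check this on a single block $M^{(\ell)}$ first, following the conventions of \cite{JWW25}.
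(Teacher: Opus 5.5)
Your proposal is correct and follows essentially the paper's route. You compress by $e_2$, use $(\partial x)e_2=\lambda^{-1/2}\widehat{\cL}_0^{1/2}[x,e_1]e_2$ to place $\widehat{\cL}_0$ (resp.\ $\overline{\widehat{\cL}_0}$) between $[y,e_1]^*$ and $[x,e_1]$, read it in the $F$-matrix units as $\mathbb{L}$ (resp.\ $\overline{\mathbb{L}}$) via $F^{(\ell)}_{j,k}=F^{(\ell)}_{j,j}F^{(\ell)}_{j,k}F^{(\ell)}_{k,k}$ and the defining constraint of $\cM_1^{\oplus}$, and get the divergence identities by duality.

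One correction is needed. Your one-sided display $(\partial x)^*(\partial x)e_2=\lambda^{-1}[x,e_1]^*\widehat{\cL}_0[x,e_1]e_2$ is false as an operator identity; it already fails for $\bC\subset M_2(\bC)$ with $\widehat{\cL}_0$ a rank-one projection. The reason is that the adjoint relation only gives $e_2(\partial y)^*=\lambda^{-1/2}e_2[y,e_1]^*\widehat{\cL}_0^{1/2}$. Compress on both sides as the paper does, $\bE_{\cM}((\partial y)^*\partial x)=\lambda^{-1}\bE_{\cM}(e_2(\partial y)^*(\partial x)e_2)$. Nothing downstream changes, since $\bE_{\cM}(e_2ze_2)=\bE_{\cM}(ze_2)$ for $z\in\cM_2$.
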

\begin{proof}
For any $x, y\in \cM$, we have that 
\begin{align*}
\Gamma(x, y) 
=& \frac{\lambda^{-1/2}}{2} \bE_{\cM} ((\partial y)^* \partial x)
=\frac{\lambda^{-3/2}}{2} \bE_{\cM} (e_2(\partial y)^*( \partial x)e_2) \\
=& \frac{\lambda^{-5/2}}{2} \bE_{\cM} (e_2([y, e_1])^*\widehat{\cL}_0( [x, e_1])e_2) \\
=& \frac{\lambda^{-1/2}}{2} \bE_{\cM} \left( (\nabla_0 y)^*\mathcal{E}_{\mathbb{L}} (\nabla_0 x)\right).
\end{align*}

Moreover, we have that 
\begin{align*}
\langle  \partial^*\partial x, y\rangle
=& \langle  \partial x, \partial y\rangle
=\lambda^{-1} \langle ( \partial x)e_2, (\partial y)e_2\rangle \\
=&\lambda^{-2} \langle\widehat{\cL}_0 [x, e_1] e_2, [y, e_1] e_2\rangle \\
=&  \left\langle \mathcal{E}_{\mathbb{L}} \nabla_0 x, \nabla_0 y\right\rangle \\
=&   \left\langle \Div_0 \mathcal{E}_{\mathbb{L}} \nabla_0 x, y\right\rangle.
\end{align*}
Similarly, we have that $\overline{\partial}^*\overline{\partial} x =  \Div_0\mathcal{E}_{ \overline{\mathbb{L} }} \nabla_0 x$.
\end{proof}

\begin{lemma}\label{lem:duallapsum}
Suppose that $\{\Phi_t\}_{t \geq 0}$ is bimodule KMS symmetric with respect to $\widehat{\Delta}\in \cM'\cap \cM_2$.
Then for any $x\in \cM$,
\begin{align*}
\cL(x)+\cL^*(x)+\frac{1}{2}\left\{x, 1*(\overline{\widehat{\cL}_0}-\widehat{\cL}_0)\right\} =\frac{\lambda^{-1/2}}{2}\Div_0 \mathcal{E}_{\mathbb{L}+\overline{\mathbb{L} }} \nabla_0 x .
\end{align*}
If $1*\widehat{\cL}_0=1*\overline{\widehat{\cL}_0}$, then 
\begin{align*}
\cL(x)+\cL^*(x) =\frac{\lambda^{-1/2}}{2}\Div_0 \mathcal{E}_{\mathbb{L}+\overline{\mathbb{L} }} \nabla_0 x.
\end{align*}
\end{lemma}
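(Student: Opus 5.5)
We would mirror the proof of the analogous GNS lemma in Section 4, replacing the directional sum $\sum_j(1+\mu_j^{-1})\partial_j^*\partial_j$ by the operator $\Div_0\mathcal{E}_{\mathbb{L}+\overline{\mathbb{L}}}\nabla_0$ of Lemma \ref{lem:dualsum}. The argument has two ingredients. The first is the general decomposition
\begin{align*}
\cL_a+\cL_{\overline{a}}=\frac{\lambda^{-1/2}}{2}\left(\partial^*\partial+\overline{\partial}^*\overline{\partial}\right),
\end{align*}
which holds for any bimodule semigroup. Here $\cL_{\overline{a}}(x)=\frac12(1*\overline{\widehat{\cL}_0})x+\frac12 x(1*\overline{\widehat{\cL}_0})-x*\overline{\widehat{\cL}_0}$. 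The second is the identification of $\cL_a+\cL_{\overline{a}}$ with $\cL+\cL^*+\frac12\{x,1*(\overline{\widehat{\cL}_0}-\widehat{\cL}_0)\}$ under KMS symmetry.

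\textbf{Step 1.} We show $\cL_w=0$ for a bimodule KMS symmetric semigroup, so that $\cL=\cL_a$. The condition $\overline{\widehat{\cL}}=\overline{\widehat{\Delta}}\widehat{\cL}\overline{\widehat{\Delta}}$ together with $\widehat{\Delta}e_2=e_2$ gives $\overline{\widehat{\Delta}}e_1=e_1$. Compressing by $e_2$ on one side and by $1-e_2$ on the other compares the off-diagonal parts $\widehat{\cL}_1$ and $\overline{\widehat{\cL}_1}$. This yields self-adjointness of $\bE_{\cM}(\fF^{-1}(\widehat{\cL}_1))$, and hence $\Im\bE_{\cM}(\fF^{-1}(\widehat{\cL}_1))=0$. (The paper's earlier work \cite{JWW25} records this as well; if needed we invoke it.)

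\textbf{Step 2.} We compute the adjoint of $\cL$ with respect to $\tau$ via Fourier multipliers. The adjoint of $x\mapsto x*\widehat{\cL}_0$ is $x\mapsto x*\overline{\widehat{\cL}_0}$, since taking adjoints of bimodule maps corresponds to the contragredient (rotation by $\pi$) of the multiplier. The left and right multiplication terms $\frac12\{x,1*\widehat{\cL}_0\}$ are self-adjoint. Hence
\begin{align*}
\cL^*(x)=\frac12\{x,1*\widehat{\cL}_0\}-x*\overline{\widehat{\cL}_0}=\cL_{\overline{a}}(x)-\frac12\{x,1*(\overline{\widehat{\cL}_0}-\widehat{\cL}_0)\}.
\end{align*}
Combined with Step 1, this gives
\begin{align*}
\cL+\cL^*+\frac12\{\cdot,1*(\overline{\widehat{\cL}_0}-\widehat{\cL}_0)\}=\cL_a+\cL_{\overline{a}}.
\end{align*}

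\textbf{Step 3.} We apply Lemma \ref{lem:dualsum}, which gives $\partial^*\partial=\Div_0\mathcal{E}_{\mathbb{L}}\nabla_0$ and $\overline{\partial}^*\overline{\partial}=\Div_0\mathcal{E}_{\overline{\mathbb{L}}}\nabla_0$. By linearity of $\mathbb{L}\mapsto\mathcal{E}_{\mathbb{L}}$, their sum is $\Div_0\mathcal{E}_{\mathbb{L}+\overline{\mathbb{L}}}\nabla_0$, and the first identity follows. The second identity is the special case in which $1*\widehat{\cL}_0=1*\overline{\widehat{\cL}_0}$, where the anticommutator term vanishes.

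\textbf{Main obstacle.} The delicate point is Step 1 and the decomposition $\cL_a+\cL_{\overline{a}}=\frac{\lambda^{-1/2}}{2}(\partial^*\partial+\overline{\partial}^*\overline{\partial})$ as used in the KMS setting. For $\overline{\partial}$ we must check that $\frac{\lambda^{-1/2}}{2}\overline{\partial}^*\overline{\partial}$ is exactly the Laplacian attached to $\overline{\widehat{\cL}_0}$. This is the computation $\lambda^{-1/2}\partial^*\partial x=\{x,\frac12(1*\widehat{\cL}_0)\}\cdot$ plus the convolution term, done with $\widehat{\cL}_0$ replaced by $\overline{\widehat{\cL}_0}=(\overline{\widehat{\cL}_0}^{1/2})^2$. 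The standing assumption $\cR(\overline{\widehat{\cL}_0})=\cR(\widehat{\cL}_0)$ ensures that $\overline{\mathbb{L}}$ is supported on the same $\cM_1^{\oplus}$ components, so that the same $\nabla_0$ serves for both terms. We would verify this pictorially by expanding $\widehat{\partial}^*\widehat{\partial}$ into the four diagrams of Equation \eqref{eq:gradfourier}.
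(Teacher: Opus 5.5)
Steps 2 and 3 are the paper's argument: the identity $\cL_a+\cL_{\overline{a}}=\frac{\lambda^{-1/2}}{2}(\partial^*\partial+\overline{\partial}^*\overline{\partial})$ followed by Lemma \ref{lem:dualsum}. Step 1, however, is wrong: bimodule KMS symmetry does not force $\cL_w=0$. Carry out the compression you describe, using $\widehat{\cL}^*=\widehat{\cL}$, $\overline{e_2}=e_2$ and $e_2\overline{\widehat{\Delta}}=\overline{\widehat{\Delta}}e_2=e_2$. The $e_2(\cdot)(1-e_2)$ corner of $\overline{\widehat{\cL}}=\overline{\widehat{\Delta}}\widehat{\cL}\overline{\widehat{\Delta}}$ then gives
\[
\overline{\widehat{\cL}_1^{*}}=\widehat{\cL}_1\overline{\widehat{\Delta}},
\]
not the GNS relation $\overline{\widehat{\cL}_1^{*}}=\widehat{\cL}_1$. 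Since $\overline{\widehat{\Delta}}$ acts nontrivially on the $(1-e_2)$ side, this only relates $\widetilde{\mathbf{y}}$ to a modular twist of $\widetilde{\mathbf{y}}^*$. Write $\widetilde{\mathbf{y}}=\mathbf{y}+ih$, where $\mathbf{y}=\frac12(1*\widehat{\cL}_0)$ is fixed by $\widehat{\cL}_0$. Then a nonzero $h$ is forced as soon as $\mathbf{y}$ is not itself invariant under that twist. This is the familiar fact that KMS-symmetric generators on $M_n(\bC)$ generally carry a Hamiltonian part. The paper is consistent with this: in the Fermion KMS example it must check separately, via $\tau(\mathbf{y}v_j)=0$, that $\mathbf{y}$ is compatible with $\overline{\widehat{\Delta}}$ in order to have KMS symmetry with $\cL_w=0$.

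What you actually need is weaker and always true, and it is exactly what the paper uses: $\cL_w$ is skew-adjoint. Since $h=\Im\bE_{\cM}(\fF^{-1}(\widehat{\cL}_1))$ is self-adjoint, traciality gives $\tau\bigl(y^*\,i[x,h]\bigr)=\tau\bigl((-i[y,h])^*x\bigr)$, so $\cL_w^*=-\cL_w$. Hence $\cL+\cL^*=\cL_a+\cL_a^*$ with no hypothesis on $\cL_w$, and your Step 2 formula for $\cL_a^*$ together with Step 3 finishes the proof. KMS symmetry plays no real role here.

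Separately, your ``main obstacle'' misdescribes the identity, which cannot hold termwise. The reason is that $\partial^*\partial$ is self-adjoint, while $\cL_a$ is not unless $\widehat{\cL}_0=\overline{\widehat{\cL}_0}$. Taking $\tau$ of the paper's formula for $\Gamma(x,y)$ gives $\lambda^{-1/2}\partial^*\partial x=(1*\widehat{\cL}_0)x+x(1*\overline{\widehat{\cL}_0})-x*\widehat{\cL}_0-x*\overline{\widehat{\cL}_0}$, and the same with $\widehat{\cL}_0\leftrightarrow\overline{\widehat{\cL}_0}$ for $\overline{\partial}$. So $\lambda^{-1/2}\partial^*\partial$ and $\lambda^{-1/2}\overline{\partial}^*\overline{\partial}$ equal $\cL_a+\cL_{\overline{a}}\pm\frac12[1*(\widehat{\cL}_0-\overline{\widehat{\cL}_0}),\,\cdot\,]$, and only their sum matches.
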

\begin{proof}
Recall that $\cL_w=-\cL_w^*$ and
\begin{align*}
\cL_a+\cL_{\overline{a}}=\frac{\lambda^{-1/2}}{2}(\partial^*\partial+\overline{\partial}^*\overline{\partial}).
\end{align*}
By Lemma \ref{lem:dualsum}, we see that the lemma is true.
\end{proof}

Recall that $\Div= 2^{-1/2} \lambda^{-1/4}\Div_0\widehat{\iota}^*, \quad \nabla=2^{-1/2} \lambda^{-1/4}\widehat{\iota}\nabla_0,$ where $\widehat{\iota}: \cM_1^{\oplus} \to \oM$ is defined by $\widehat{\iota}(x_k^{(\ell)})_{k, \ell} =\bigoplus_{\ell=1}^m \begin{pmatrix}
x_{1}^{(\ell)}  & \cdots &  x_{1}^{(\ell)}  \\
\vdots  & & \vdots \\
x_{d_\ell}^{(\ell)} & \cdots & x_{d_{\ell}}^{(\ell)}
\end{pmatrix}$.
\begin{definition}
Suppose that $\{\Phi_t\}_{t\geq 0}$ is a bimodule KMS symmetric.
We say that $\{\Phi_t\}_{t\geq 0}$ satisfies the intertwining property if there exist a positive semidefinite matrix $\displaystyle T=\bigoplus_{\ell=1}^m T^{(\ell)}$ with $T^{(\ell)}=(T_{jk}^{(\ell)} F_{j,k}^{(\ell)})_{j,k=1}^{d_\ell}$ and a bimodule quantum channel $\widetilde{\Phi_t}: \cM_1\to \cM_1$ such that $\widetilde{\Phi_t}^*|_{\cM}=\Phi_t^*$ and 
\begin{align}\label{eq:kmsintertwining}
\Phi_t ^* \Div =\Div \bigoplus_{\ell=1}^m   (\widetilde{\Phi_t}^*\otimes I) e^{-T^{(\ell)} t}.
\end{align}
\end{definition}

In the following, we assume that $\widetilde{\Phi_t}$ is a bimodule quantum Markov semigroup.
Let $\cJ$ be its generator.
By the fact that $\widetilde{\Phi_t}^*|_{\cM}=\Phi_t^*$, we have that $\cJ^*|_{\cM}=\cL^*$.

\begin{lemma}
Suppose that $\{\Phi_t\}_{t\geq 0}$ is a bimodule KMS symmetric quantum Markov semigroup and $\{\widetilde{\Phi_t}\}_{t\geq 0}$ is a bimodule quantum Markov semigroup such that $\widetilde{\Phi_t}^*$ extends $\Phi_t^*$ and $\{\widetilde{\Phi_t}^* e^{-T t}\}_{t\geq 0}$ is a semigroup.
Then the intertwining property \eqref{eq:kmsintertwining} is equivalent to 
\begin{align}\label{eq:kmsintertwining0}
    \nabla_0 \cL - \cJ \nabla_0=\mathcal{E}_T\nabla_0.
\end{align}
\end{lemma}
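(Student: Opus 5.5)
The proof follows the GNS case (the lemma equating \eqref{eq:intertwining} with \eqref{eq:intertwining0}): pass between the semigroup identity and its generator by differentiating and by uniqueness for a linear ODE. All algebras are finite, and $\cJ$, $\cL$, $T$ are bounded, so every evolution equation below has a unique solution.

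First dualize \eqref{eq:kmsintertwining}. Since $\Div = 2^{-1/2}\lambda^{-1/4}\Div_0\widehat{\iota}^*$ and $\Div_0=\nabla_0^*$, taking adjoints turns the identity into
\begin{align*}
\widehat{\iota}\,\nabla_0\Phi_t=\Big(\bigoplus_{\ell=1}^m e^{-T^{(\ell)}t}\,(\widetilde{\Phi_t}\otimes I)\Big)\widehat{\iota}\,\nabla_0 .
\end{align*}
Here the adjoint of $e^{-T^{(\ell)}t}$ acting after $\widetilde{\Phi_t}^*$ is the corresponding $e^{-T^{(\ell)}t}$ acting before $\widetilde{\Phi_t}$, with $T$ self-adjoint since it is positive semidefinite. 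Next, I would check that the matrix action of $T^{(\ell)}=(T^{(\ell)}_{jk}F^{(\ell)}_{j,k})$ on the image of $\widehat{\iota}$ corresponds, on $\cM_1^{\oplus}$, to the operator $\mathcal{E}_T(X)=\lambda^{-1}(\bE_{\cM_1}\otimes I)(TXe_2)$. The constraint $x_k^{(\ell)}e_2=F^{(\ell)}_{k,k}x_k^{(\ell)}e_2$ and the relation $F_{j,k}F_{k,k}=F_{j,k}$ make $\mathcal{E}_T$ preserve $\cM_1^{\oplus}$, and $\widehat{\iota}\,\mathcal{E}_T=(T\cdot)\widehat{\iota}$. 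This is exactly the mechanism of Lemma \ref{lem:dualsum}, where $\mathcal{E}_{\mathbb{L}}$ encodes multiplication by $\mathbb{L}$. With this, the intertwining property reads
\begin{align*}
\nabla_0\Phi_t=\widetilde{\Phi_t}\,e^{-t\mathcal{E}_T}\nabla_0 ,
\end{align*}
where $\widetilde{\Phi_t}$ acts componentwise. The hypothesis that $\{\widetilde{\Phi_t}^*e^{-Tt}\}_{t\ge0}$ is a semigroup, equivalently that $\widetilde{\Phi_t}$ commutes with $\mathcal{E}_T$, makes $S_t:=\widetilde{\Phi_t}e^{-t\mathcal{E}_T}$ a semigroup with generator $-(\cJ+\mathcal{E}_T)$.

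With this formulation both directions are short. For $(\Rightarrow)$, differentiate $\nabla_0\Phi_t=S_t\nabla_0$ at $t=0$. This gives $-\nabla_0\cL=-(\cJ+\mathcal{E}_T)\nabla_0$, which is \eqref{eq:kmsintertwining0}. For $(\Leftarrow)$, fix $x\in\cM$ and set $X(t)=\nabla_0\Phi_t(x)$. By \eqref{eq:kmsintertwining0},
\begin{align*}
X'(t)=-\nabla_0\cL\Phi_t(x)=-(\cJ+\mathcal{E}_T)X(t),\qquad X(0)=\nabla_0x .
\end{align*}
The function $S_t\nabla_0x$ solves the same equation with the same initial value, so uniqueness (boundedness of $\cJ+\mathcal{E}_T$) gives $X(t)=S_t\nabla_0x$. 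Dualizing back recovers \eqref{eq:kmsintertwining}.

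The main obstacle is the bookkeeping in the identification step: showing that $\widehat{\iota}$ intertwines $\mathcal{E}_T$ with left multiplication by $T$, and that $\widetilde{\Phi_t}\otimes I$ commutes with $\widehat{\iota}$ and preserves $\cM_1^{\oplus}$. To handle this, I would first check that $\widetilde{\Phi_t}$, being an $\cN$-bimodule map on $\cM_1$, commutes with right multiplication by $e_2$ through the extension relation $\Phi_t\bE_{\cM}=\bE_{\cM}\widetilde{\Phi_t}$. I would then check that the entries $F_{j,k}^{(\ell)}\in\cM'\cap\cM_2$ act compatibly with the defining constraint $x\,e_2=F\,y\,e_2$ of $\oM$. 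Once these module properties are in place, the rest is the ODE argument above.
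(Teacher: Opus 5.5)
Your argument is the paper's: differentiate at $t=0$ for one direction. For the other, observe that $\nabla_0\Phi_t(x)$ and $\widetilde{\Phi_t}e^{-t\mathcal{E}_T}\nabla_0x$ solve the same equation $Y'=-(\cJ+\mathcal{E}_T)Y$ with the same initial value, so they coincide because the generator is bounded. You are more explicit than the paper about where the semigroup hypothesis enters. It is exactly the commutation of $\cJ$ with $\mathcal{E}_T$ that makes the second function a solution, and the paper uses this silently when it writes $\widetilde{\Phi_t}(\partial_k^{(\ell)}x)_{k}=e^{T^{(\ell)}t}(\partial_k^{(\ell)}\Phi_t(x))_{k}$.

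One item on your bookkeeping list should be dropped rather than proved: the hypotheses do not imply that $\widetilde{\Phi_t}$ preserves $\cM_1^{\oplus}$. Membership in the $(k,\ell)$ slot means $x=\mathcal{E}_{F_{k,k}^{(\ell)}}(x)$, where $\mathcal{E}_{F}(x)=\lambda^{-1}\bE_{\cM_1}(Fxe_2)$ is $\cM$-bimodular. By contrast, $\widetilde{\Phi_t}$ is only $\cN$-bimodular; compare the separately imposed condition $\cJ\mathcal{E}_{\mathbb{L}}=\mathcal{E}_{\mathbb{L}}\cJ$ in Proposition \ref{prop:kmsbeinter}. Your proposed route through $\Phi_t\bE_{\cM}=\bE_{\cM}\widetilde{\Phi_t}$ also fails: it only controls the compression $e_2\widetilde{\Phi_t}(x)e_2=\Phi_t(\bE_{\cM}(x))e_2$, not $\widetilde{\Phi_t}(x)e_2$.

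Nothing is lost, because the ODE argument never needs this invariance. Work in the ambient space $\bigoplus_{\ell}\cM_1^{\oplus d_\ell}$, where $\cJ$ acts componentwise and $\mathcal{E}_T$ is defined. Read $\Div_0$ there as the adjoint of $\nabla_0$ regarded as a map into that space, that is, $\Div_0$ composed with the orthogonal projection onto $\cM_1^{\oplus}$. Dualizing the ambient identity $\nabla_0\Phi_t=\widetilde{\Phi_t}e^{-t\mathcal{E}_T}\nabla_0$ then gives \eqref{eq:kmsintertwining} in this sense.
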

\begin{proof}
Suppose that $x\in \cM$ and $X^{(\ell)}(t)= (\partial_k^{(\ell)} \Phi_t(x))_{k=1}^{d_\ell}$.
Then $X^{(\ell)}(0)= (\partial_k^{(\ell)} x)_{k=1}^{d_\ell}$ and 
\begin{align*}
\frac{d}{dt} X^{(\ell)}(t) 
 =& - (\partial_k^{(\ell)} \cL\Phi_t(x))_{k=1}^{d_\ell} \\
=& - (\cJ \partial_k^{(\ell)} \Phi_t(x))_{k=1}^{d_\ell} -\mathcal{E}_{ T^{(\ell)} }(\partial_k^{(\ell)} \Phi_t(x))_{k=1}^{d_\ell} \\
=& (- \cJ -\mathcal{E}_T) X^{(\ell)}(t).
\end{align*}
Hence $ X^{(\ell)}(t)$ is the unique solution for the equation $\displaystyle \frac{d}{dt} Y(t) = (-\cJ -\mathcal{E}_T) Y(t)$ with $Y(0)=(\partial_k^{(\ell)}x)_{k=1}^{d_\ell}$ followed from the boundedness of $\cJ$.
Note that $\widetilde{\Phi_t} (\partial_k^{(\ell)} x)_{k=1}^{d_\ell} = e^{T^{(\ell)} t} \left(\partial_k^{(\ell)}\Phi_t(x)\right)_{k=1}^{d_\ell}$.
This implies the intertwining property.

Suppose that the intertwining property is true.
Then by differentiating Equation \eqref{eq:kmsintertwining}, we see Equation \eqref{eq:kmsintertwining0} is true.
\end{proof}

\begin{remark}
Expanding Equation \eqref{eq:kmsintertwining0}, we have that 
\begin{align*}
\partial_k^{(\ell)} \cL -\cJ \partial_k^{(\ell)} =\lambda^{-3/2} \sum_{j=1}^{d_\ell} T_{k,j}^{(\ell)} \bE_{\cM_1} \left(F_{k,j}^{(\ell)}[ \cdot, e_1] e_2\right)
\end{align*}
for all $k=1, \ldots, d_\ell$ and $\ell=1, \ldots, m$.
The corresponding pictorial representation is the following:
\begin{align}\label{eq:kmsintertwining3}
 \vcenter{\hbox{
\begin{tikzpicture}
\draw [blue] (-0.15, -0.8)--(-0.15, 0.8);
 \draw [blue] (0.15, -0.8)--(0.15, 0.3) .. controls +(0, 0.35) and +(0, 0.35).. (0.6, 0.3);
 \draw [fill=white] (-0.3, -0.3) rectangle (0.3, 0.3);
 \node at (0, 0) {\tiny $\widehat{\cL}$};
 \begin{scope}[shift={(0.75, 0)}]
  \draw [fill=white] (-0.3, -0.3) rectangle (0.3, 0.3);
 \node at (0, 0) {\tiny $F_{k,k}^{(\ell)}$};
 \draw [blue] (0.15, 0.3)--(0.15, 0.8) (-0.15, -0.3)--(-0.15, -0.8);
  \draw [blue] (0.15, -0.3) .. controls +(0, -0.35) and +(0, -0.35).. (0.6, -0.3)--(0.6, 0.8);
 \end{scope}
\end{tikzpicture}
 }}
 -
\vcenter{\hbox{
    \begin{tikzpicture}
    \draw [blue] (-0.15, -0.8)--(-0.15, 0.8);
 \draw [blue] (0.15, 0.8)--(0.15, -0.3) .. controls +(0, -0.35) and +(0, -0.35).. (0.6, -0.3);
 \draw [fill=white] (-0.3, -0.3) rectangle (0.3, 0.3);
 \node at (0, 0) {\tiny $\widehat{\cL}$};
 \begin{scope}[shift={(0.75, 0)}]
  \draw [fill=white] (-0.3, -0.3) rectangle (0.3, 0.3);
 \node at (0, 0) {\tiny $\overline{F_{k,k}^{(\ell)}}$};
 \draw [blue] (0.15, -0.3)--(0.15, -0.8) (-0.15, 0.3)--(-0.15, 0.8);
  \draw [blue] (0.15, 0.3) .. controls +(0, 0.35) and +(0, 0.35).. (0.6, 0.3)--(0.6, -0.8);
 \end{scope}
    \end{tikzpicture}
    }}
    -
       \vcenter{\hbox{
    \begin{tikzpicture}
       \draw [blue] (-0.75, -0.3)--(-0.75, 0.6) (-0.45, -0.3)--(-0.45, 0.6) (-0.15, -0.8)--(-0.15, 0.6) (0.15, -0.8)--(0.15, 1.6) (0.45, -0.8)--(0.45, 1.6);
       \draw[fill=white] (-0.6, -0.3) rectangle (0.6, 0.3);
       \node at (0, 0) {\tiny $\widehat{\cJ}$};
       \begin{scope}[shift={(-0.6, 0.8)}]
          \draw[fill=white] (-0.3, -0.3) rectangle (0.3, 0.3); 
           \node at (0, 0) {\tiny $F_{k,k}^{(\ell)}$};
        \draw[blue] (0.15, 0.3).. controls+(0, 0.3) and +(0, 0.3).. (0.45, 0.3)--(0.45, -0.3);
        \draw[blue] (-0.15, 0.3)--(-0.15, 0.8);
        \draw [blue] (-0.15, -1.1) .. controls +(0, -0.25) and +(0, -0.25).. (0.15, -1.1);
       \end{scope}
    \end{tikzpicture}
    }}
    +
  \vcenter{\hbox{
    \begin{tikzpicture}
       \draw [blue]   (-0.15, 0.8)--(-0.15, 0.3) (0.15, -0.8)--(0.15, 0.8) (0.45, -0.8)--(0.45, 0.8);
        \draw[fill=white] (-0.6, -0.3) rectangle (0.6, 0.3);
       \node at (0, 0) {\tiny $\widehat{\cJ}$};
       \begin{scope}[shift={(-1.2, 0)}]
       \draw [blue] (0.15, 0.3) .. controls +(0, 0.6) and +(0, 0.6) .. (-0.75, 0.3)--(-0.75, -0.8);
          \draw[fill=white] (-0.3, -0.3) rectangle (0.3, 0.3); 
           \node at (0, 0) {\tiny $F_{k,k}^{(\ell)}$};
        \draw[blue] (0.15, -0.3).. controls+(0, -0.25) and +(0, -0.25).. (0.45, -0.3)--(0.45, 0.3).. controls +(0, 0.3) and +(0, 0.3) .. (0.75, 0.3);
        \draw[blue] (-0.15, 0.3).. controls+(0, 0.3) and +(0, 0.3).. (-0.45, 0.3)--(-0.45, -0.3).. controls +(0, -0.6) and +(0, -0.6) .. (1.05, -0.3);
        \draw [blue] (-0.15, -0.3).. controls +(0, -0.4) and +(0, -0.4) .. (0.75, -0.3);
       \end{scope}
    \end{tikzpicture}
    }}
 =
\sum_{j=1}^m T_{k,j}^{(\ell)} \left( \vcenter{\hbox{
    \begin{tikzpicture}
 \draw [blue] (0.15, 0.8)--(0.15, -0.3) .. controls +(0, -0.35) and +(0, -0.35).. (0.6, -0.3)--(0.6, 0.8);
  \draw [blue] (-0.15, 0.8)--(-0.15, -0.8);
 \draw [fill=white] (-0.3, -0.3) rectangle (0.3, 0.3);
 \node at (0, 0) {\tiny $F_{k,j}^{(\ell)}$};
  \draw [blue] (-0.7, -0.8) .. controls +(0, 0.25) and +(0, 0.25).. (-0.4, -0.8);
    \end{tikzpicture}
    }}  
-
\vcenter{\hbox{
    \begin{tikzpicture}
 \draw [blue] (0.15, -0.8)--(0.15, 0.3) .. controls +(0, 0.35) and +(0, 0.35).. (0.6, 0.3)--(0.6, -0.8);
  \draw [blue] (-0.15, 0.8)--(-0.15, -0.8);
 \draw [fill=white] (-0.3, -0.3) rectangle (0.3, 0.3);
 \node at (0, 0) {\tiny $\overline{F_{k,j}^{(\ell)}}$};
  \draw [blue] (-0.7, 0.8) .. controls +(0, -0.25) and +(0, -0.25).. (-0.4, 0.8);
    \end{tikzpicture}
    }} \right)
\end{align}
for all $k=1, \ldots, d_\ell$ and $\ell=1, \ldots, m$.
\end{remark}

Suppose that $\vec{x}=(x_1, \ldots, x_m)^{\mathsf{T}}$, where $x_1, \ldots, x_m \in \cM_1$.
We denote by $\displaystyle \widetilde{\Gamma}(\vec{x}) =\sum_{j=1}^m \widetilde{\Gamma}(x_j)$.

\begin{proposition}\label{prop:kmsbeinter}
Suppose that $\{\Phi_t\}_{t\geq 0}$ is a bimodule KMS symmetric semigroup satisfying the intertwining property \eqref{eq:kmsintertwining0}, $\cJ\mathcal{E}_{\mathbb{L}}= \mathcal{E}_{\mathbb{L}} \cJ$, and
\begin{align*}
\mathbb{L}^{1/2} T \mathbb{L}^{-1/2} + \mathbb{L}^{-1/2} T \mathbb{L}^{1/2} \geq 2 \beta .
\end{align*}
We have that 
\begin{align}\label{eq:gammakms}
   \Gamma_2(x) \geq  \beta \Gamma(x)+\frac{1}{2}\lambda^{1/2}   \bE_{\cM}\widetilde{\Gamma}(\mathcal{E}_{\mathbb{L}^{1/2}}(\nabla_0 x)).
\end{align}
\end{proposition}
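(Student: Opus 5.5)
The plan is to mimic the proof of Proposition \ref{prop:beinter}, replacing the scalar directional derivations $\partial_j$ by the vector gradient $\nabla_0$ and the weights $\omega_j$ by the matrix $\mathbb{L}$. By Lemma \ref{lem:dualsum}, the gradient form is
\[
\Gamma(x,y)=\frac{\lambda^{-1/2}}{2}\bE_{\cM}\big((\nabla_0 y)^*\mathcal{E}_{\mathbb{L}}(\nabla_0 x)\big).
\]
Hence
\[
2\Gamma_2(x)=\lambda^{-1/2}\Re\,\bE_{\cM}\big((\nabla_0\cL x)^*\mathcal{E}_{\mathbb{L}}\nabla_0 x\big)-\frac{\lambda^{-1/2}}{2}\cL\,\bE_{\cM}\big((\nabla_0x)^*\mathcal{E}_{\mathbb{L}}\nabla_0x\big).
\]
I would substitute the intertwining relation \eqref{eq:kmsintertwining0}, $\nabla_0\cL=\cJ\nabla_0+\mathcal{E}_T\nabla_0$, into the first term. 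This splits $2\Gamma_2(x)$ into a $\cJ$-part and a $T$-part, and I treat the two parts separately.

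\emph{The $T$-part.} It equals $\lambda^{-1/2}\Re\,\bE_{\cM}\big((\mathcal{E}_T\nabla_0x)^*\mathcal{E}_{\mathbb{L}}\nabla_0x\big)$. Write $X=\mathcal{E}_{\mathbb{L}^{1/2}}\nabla_0x$, using that $\mathcal{E}_{A}\mathcal{E}_{B}=\mathcal{E}_{AB}$ on $\cM_1^{\oplus}$, where the matrix units $F^{(\ell)}_{j,k}$ act compatibly with $e_2$. The $T$-part then becomes $\frac{\lambda^{-1/2}}{2}\bE_{\cM}\big(X^*\mathcal{E}_{\mathbb{L}^{1/2}T\mathbb{L}^{-1/2}+\mathbb{L}^{-1/2}T\mathbb{L}^{1/2}}X\big)$, with $T$ and $\mathbb{L}$ being self-adjoint block matrices. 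The hypothesis $\mathbb{L}^{1/2}T\mathbb{L}^{-1/2}+\mathbb{L}^{-1/2}T\mathbb{L}^{1/2}\ge 2\beta$ then bounds this from below by $\beta\lambda^{-1/2}\bE_{\cM}(X^*X)=2\beta\Gamma(x)$, where the last equality again uses Lemma \ref{lem:dualsum}. The point to verify is positivity of $X\mapsto\bE_{\cM}(X^*\mathcal{E}_A X)$ for $A\ge0$ in the block algebra. This follows by writing $\mathcal{E}_A$ as $\lambda^{-1}(\bE_{\cM_1}\otimes I)(A\,\cdot\,e_2)$ and using that $X e_2=F X e_2$ componentwise.

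\emph{The $\cJ$-part.} It equals $\lambda^{-1/2}\Re\,\bE_{\cM}\big((\cJ\nabla_0x)^*\mathcal{E}_{\mathbb{L}}\nabla_0x\big)-\frac{\lambda^{-1/2}}{2}\cL\,\bE_{\cM}\big((\nabla_0x)^*\mathcal{E}_{\mathbb{L}}\nabla_0x\big)$. Here I use the commutation $\cJ\mathcal{E}_{\mathbb{L}}=\mathcal{E}_{\mathbb{L}}\cJ$, which also gives commutation of $\cJ$ with $\mathcal{E}_{\mathbb{L}^{1/2}}$ by functional calculus, since $\mathcal{E}_{\mathbb{L}^{1/2}}$ is a limit of polynomials in $\mathcal{E}_{\mathbb{L}}$. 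This rewrites the first term as $\lambda^{-1/2}\Re\,\bE_{\cM}\big((\cJ X)^*X\big)$, with the $\cJ$ acting componentwise. For the second term I use $\cL\bE_{\cM}=\bE_{\cM}\cJ$, derived earlier from $\cJ^*|_{\cM}=\cL^*$ and Proposition \ref{prop:extension1}, together with $\bE_{\cM}(X^*X)=\bE_{\cM}\big((\nabla_0x)^*\mathcal{E}_{\mathbb{L}}\nabla_0x\big)$. The $\cJ$-part then becomes
\[
\lambda^{-1/2}\bE_{\cM}\Big(\Re\,(\cJ X)^*X-\tfrac12\cJ(X^*X)\Big)=\lambda^{-1/2}\sum\bE_{\cM}\widetilde{\Gamma}(X_k^{(\ell)}),
\]
by the definition of the gradient form $\widetilde{\Gamma}$ of $\cJ$, applied componentwise to each entry of $X$. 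Dividing by $2$ and keeping track of the powers of $\lambda$ coming from the normalizations of $\mathcal{E}$ and $\nabla_0$ yields \eqref{eq:gammakms}.

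\emph{Main obstacle.} The delicate step is justifying $\bE_{\cM}(X^*X)=\bE_{\cM}\big((\nabla_0 x)^*\mathcal{E}_{\mathbb{L}}\nabla_0 x\big)$ and the symmetric splitting $\mathcal{E}_{\mathbb{L}}=\mathcal{E}_{\mathbb{L}^{1/2}}^*\mathcal{E}_{\mathbb{L}^{1/2}}$ on $\cM_1^{\oplus}$. This means checking that $\mathcal{E}_{\mathbb{L}^{1/2}}$ maps $\cM_1^{\oplus}$ into itself and is self-adjoint for the pairing $\bE_{\cM}(Y^*X)$. I would verify it pictorially from the defining condition $x^{(\ell)}_k e_2=F^{(\ell)}_{k,k}x^{(\ell)}_ke_2$ and from $\mathbb{L}$ being block-diagonal in the $F$-system. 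The normalizing constant in front of $\widetilde{\Gamma}$ also comes out of this computation and must be checked against the stated $\frac12\lambda^{1/2}$.
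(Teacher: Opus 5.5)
Your proposal is correct and follows the paper's proof essentially step for step. You substitute $\nabla_0\cL=\cJ\nabla_0+\mathcal{E}_T\nabla_0$ into $2\Re\,\Gamma(x,\cL x)$, symmetrize the $T$-part to $\mathcal{E}_{T\mathbb{L}+\mathbb{L}T}$ and bound it by $2\beta\Gamma(x)$ via $X=\mathcal{E}_{\mathbb{L}^{1/2}}\nabla_0 x$, and turn the $\cJ$-part into $\widetilde{\Gamma}(X)$ using $\cL\bE_{\cM}=\bE_{\cM}\cJ$ and the commutation hypothesis. Your functional-calculus remark and the adjointness check fill in steps the paper leaves implicit; the adjointness follows from $\bE_{\cM}(we_2)=\bE_{\cM}(e_2w)$, since $e_2$ commutes with $\cM$.

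On the constant you flagged: your bookkeeping gives $\frac12\lambda^{-1/2}\bE_{\cM}\widetilde{\Gamma}(X)$. This already dominates the stated $\frac12\lambda^{1/2}\bE_{\cM}\widetilde{\Gamma}(X)$ because $\lambda\le 1$ and $\widetilde{\Gamma}\ge 0$, so nothing further needs reconciling.
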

\begin{proof}
By the intertwining property \eqref{eq:kmsintertwining0}, we have that 
\begin{align*}
2\Gamma_2(x)=& \Gamma(x, \cL(x))+\Gamma(\cL(x), x)-\cL(\Gamma(x, x)) \\
=&  \lambda^{-1/2}\Re  \bE_{\cM}((\nabla_0\cL (x))^*\mathcal{E}_{\mathbb{L}}(\nabla_0 x) )
-\cL(\Gamma(x, x))\\
=& \lambda^{-1/2} \Re  \bE_{\cM}((\cJ \nabla_0 (x))^*\mathcal{E}_{\mathbb{L}}(\nabla_0 x) )
+ \lambda^{-1/2}  \Re \bE_{\cM}((\nabla_0 x)^* \mathcal{E}_{T\mathbb{L}}(\nabla_0 x) )\\
& -\frac{\lambda^{-1/2}}{2}   \cL\bE_{\cM}((\nabla_0 x)^*\mathcal{E}_{\mathbb{L}}(\nabla_0 x))\\
=& \lambda^{-1/2} \Re  \bE_{\cM}((\cJ \nabla_0 (x))^*\mathcal{E}_{\mathbb{L}}(\nabla_0 x) )
+ -\frac{\lambda^{-1/2}}{2}   \bE_{\cM}\cJ((\nabla_0 x)^*\mathcal{E}_{\mathbb{L}}(\nabla_0 x))\\
& \frac{1}{2} \lambda^{-1/2}  \bE_{\cM}((\nabla_0 x)^* \mathcal{E}_{T\mathbb{L} +\mathbb{L} T}(\nabla_0 x) ) \\
\geq & 2\beta \Gamma(x) 
 +\frac{1}{2}\lambda^{-1/2}  \Re  \bE_{\cM}(( \mathcal{E}_{\mathbb{L}^{1/2}} \cJ \nabla_0 (x))^*\mathcal{E}_{\mathbb{L}^{1/2}}(\nabla_0 x) )\\
 & -\frac{\lambda^{-1/2}}{2}   \bE_{\cM}\cJ(( \mathcal{E}_{\mathbb{L}^{1/2}} \nabla_0 x)^*\mathcal{E}_{\mathbb{L}^{1/2}}(\nabla_0 x)) \\
=& 2\beta \Gamma(x)+ \lambda^{1/2}   \bE_{\cM}\widetilde{\Gamma}(\mathcal{E}_{\mathbb{L}^{1/2}}(\nabla_0 x)). 
\end{align*}
This completes the proof of the proposition.
\end{proof}

\begin{remark}
Suppose that $\bC \subset M_n(\bC)$ is the inclusion and 
\begin{align*}
\widehat{\cL}_0=\sum_{j=1}^m \vcenter{\hbox{\begin{tikzpicture}[scale=0.65]
    \begin{scope}[shift={(0,1.5)}]
    \draw [blue] (-0.5, 0.8)--(-0.5, 0) .. controls +(0, -0.6) and +(0,-0.6).. (0.5, 0)--(0.5, 0.8);    
\begin{scope}[shift={(0.5, 0.3)}]
\draw [fill=white] (-0.3, -0.3) rectangle (0.3, 0.3);
\node at (0, 0) {\tiny $v_j$};
\end{scope}
    \end{scope}
\draw [blue] (-0.5, -0.8)--(-0.5, 0) .. controls +(0, 0.6) and +(0,0.6).. (0.5, 0)--(0.5, -0.8);
\begin{scope}[shift={(0.5, -0.3)}]
\draw [fill=white] (-0.3, -0.3) rectangle (0.3, 0.3);
\node at (0, 0) {\tiny $v_j^*$};
\end{scope}
\end{tikzpicture}}},\quad
\widehat{\cJ}_0= \sum_{j=1}^m \vcenter{\hbox{\begin{tikzpicture}[scale=0.65]
    \begin{scope}[shift={(0,1.5)}]
    \draw [blue] (-0.5, 0.8)--(-0.5, 0) .. controls +(0, -0.6) and +(0,-0.6).. (0.5, 0)--(0.5, 0.8);  
    \draw [blue] (-1, 0.8)--(-1, 0) .. controls +(0, -0.8) and +(0,-0.8).. (1, 0)--(1, 0.8); 
\begin{scope}[shift={(0.5, 0.3)}]
\draw [fill=white] (-0.3, -0.3) rectangle (0.3, 0.3);
\node at (0, 0) {\tiny $v_j$};
\end{scope}
    \end{scope}
\draw [blue] (-0.5, -0.8)--(-0.5, 0) .. controls +(0, 0.6) and +(0,0.6).. (0.5, 0)--(0.5, -0.8);
\draw [blue] (-1, -0.8)--(-1, 0) .. controls +(0, 0.8) and +(0,0.8).. (1, 0)--(1, -0.8); 
\begin{scope}[shift={(0.5, -0.3)}]
\draw [fill=white] (-0.3, -0.3) rectangle (0.3, 0.3);
\node at (0, 0) {\tiny $v_j^*$};
\end{scope}
\end{tikzpicture}}}.
\end{align*}
Suppose that $y\in \cM'\cap \cM_2$ and $\mathcal{E}_y: \cM_1\to \cM_1$ is $\mathcal{E}_y(x)=\lambda^{-1} \bE_{\cM_1}(y x e_2)$ for any $x\in \cM_1$.
The Fourier multiplier of $\mathcal{E}_y$ is 
$ \vcenter{\hbox{\scalebox{0.8}{
        \begin{tikzpicture}[scale=1.2]
           \draw [blue] (-0.3, 0.9).. controls +(0, -0.4) and +(0, -0.4)..(-0.7, 0.9) ;
           \draw [blue] (0.3, -0.9).. controls +(0, 0.4) and +(0, 0.4)..(0.7, -0.9) ;
           \draw [blue] (-0.2, 0.9)--(-0.2, -0.3)..controls +(0, -0.45) and +(0, -0.45)..(-0.8, -0.3)--(-0.8, 0.9);
           \draw [blue] (0.2, -0.9)--(0.2, 0.3)..controls +(0, 0.45) and +(0, 0.45)..(0.8, 0.3)--(0.8, -0.9);
           \draw [fill=white] (-0.4, -0.3) rectangle (0.4, 0.3);
           \node at (0, 0) {\tiny $y$};
        \end{tikzpicture}}}}$.
Now it is easy to read that $\mathcal{E}_y \cJ =\cJ \mathcal{E}_y$ via the convolution in the planar algebra.
Hence the condition $\cJ\mathcal{E}_{\mathbb{L}}= \mathcal{E}_{\mathbb{L}} \cJ$ in Proposition \ref{prop:kmsbeinter} is ture in general for the inclusion $\bC\subset M_n(\bC)$.
\end{remark}

\begin{theorem}\label{prop:kmsmatrixderivation}
Suppose that the inclusion is $\bC\subset M_n(\bC)$ and $\{\Phi_t\}_{t\geq 0}$ is a bimodule KMS symmetric quantum Markov semigroups admitting standard lifting with intertwining property \eqref{eq:kmsintertwining0} such that
\begin{align*}
\mathbb{L}^{1/2} T \mathbb{L}^{-1/2} + \mathbb{L}^{-1/2} T \mathbb{L}^{1/2} \geq 2 \beta .
\end{align*}
Then 
\begin{align}\label{eq:be2}
  \Gamma_2(x) \geq \beta \Gamma(x) +\frac{1}{\sum_{\ell=1}^m\sum_{j,k=1}^{d_\ell} \left|c_{j,k}^{(\ell)}\right|^2}  \left| \cL(x)+\cL^*(x)+\frac{1}{2}\left\{x, 1*(\overline{\widehat{\cL}_0}-\widehat{\cL}_0)\right\}\right|^2,
\end{align}
where $\displaystyle 1+\mathbb{L}^{-1/2}\overline{\mathbb{L}} \mathbb{L}^{-1/2} =\bigoplus_{\ell=1}^{m} \left(c_{j,k}^{(\ell)} F_{j,k}^{(\ell)}\right)_{j,k=1}^{d_\ell}$ and $m$ is the number of the direct summand of $\cA$.
If $1*\overline{\widehat{\cL}_0}=1*\widehat{\cL}_0$, then 
\begin{align*}
  \Gamma_2(x) \geq \beta \Gamma(x) +\frac{1}{\sum_{\ell=1}^m\sum_{j,k=1}^{d_\ell} \left|c_{j,k}^{(\ell)}\right|^2}  \left| \cL(x)+\cL^*(x)\right|^2.
\end{align*}
\end{theorem}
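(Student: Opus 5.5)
The plan is to follow the proof of Proposition \ref{prop:matrixderivation}, with the GNS directional derivations replaced by the KMS gradient $\nabla_0$ and the scalars $\mu_j$ replaced by the matrix $\mathbb{L}$. By the Remark preceding the theorem, for $\bC\subset M_n(\bC)$ the commutation $\cJ\mathcal{E}_{\mathbb{L}}=\mathcal{E}_{\mathbb{L}}\cJ$ holds automatically. Together with the hypothesis $\mathbb{L}^{1/2}T\mathbb{L}^{-1/2}+\mathbb{L}^{-1/2}T\mathbb{L}^{1/2}\geq 2\beta$, Proposition \ref{prop:kmsbeinter} then gives
\begin{align*}
\Gamma_2(x)\geq \beta\Gamma(x)+\frac{1}{2}\lambda^{1/2}\bE_{\cM}\widetilde{\Gamma}(\mathcal{E}_{\mathbb{L}^{1/2}}(\nabla_0 x)).
\end{align*}
So it suffices to bound the last term from below by the constant times $\left|\cL(x)+\cL^*(x)+\frac{1}{2}\{x,1*(\overline{\widehat{\cL}_0}-\widehat{\cL}_0)\}\right|^2$.

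First I would write $\widetilde{\Gamma}(y)=\frac{\lambda^{-1}}{2}\bE_{\cM_1}((\widetilde{\partial}y)^*(\widetilde{\partial}y))$. I would use the standard lifting, so that $\widehat{\cJ}_0$ is the standard lift of $\widehat{\cL}_0$ and $\widetilde{\partial}$ splits into lifted directional derivations, as in the proof of Proposition \ref{prop:matrixderivation}. Then I would drop all but the ``paired'' terms, keeping only those in which the lifted derivation composed with a component of $\nabla_0$ contracts, via the pictorial identity for $\widehat{\widetilde{\partial}_{j^*}\partial_j}$, to a copy of $\partial^*\partial$-type maps. Concretely, for each summand $M^{(\ell)}$ I expect
\begin{align*}
\bE_{\cM}\widetilde{\Gamma}(\mathcal{E}_{\mathbb{L}^{1/2}}\nabla_0 x)\geq c\sum_{\ell}\sum_{j,k}\left|\left(\Div_0\text{-component of }\mathcal{E}_{F_{j,k}^{(\ell)}}\nabla_0 x\right)\right|^2 .
\end{align*}
Here the right-hand side is a sum of squares $|y_{jk}^{(\ell)}|^2$, where $y_{jk}^{(\ell)}$ is the element of $\cM$ obtained by pairing the $j$-th and $k$-th components through the matrix unit $F^{(\ell)}_{j,k}$. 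This is the analogue of the equality $\bE_{\cM}((\widetilde{\partial}_{j^*}\partial_j x)^*(\widetilde{\partial}_{j^*}\partial_j x))=\lambda^{1/2}|\partial_j^*\partial_j x|^2$ in \eqref{eq:phigamma0}.

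Next I would express the target through Lemma \ref{lem:duallapsum}:
\begin{align*}
\cL(x)+\cL^*(x)+\tfrac12\{x,1*(\overline{\widehat{\cL}_0}-\widehat{\cL}_0)\}=\tfrac{\lambda^{-1/2}}{2}\Div_0\mathcal{E}_{\mathbb{L}+\overline{\mathbb{L}}}\nabla_0x .
\end{align*}
Then I would factor $\mathbb{L}+\overline{\mathbb{L}}=\mathbb{L}^{1/2}(1+\mathbb{L}^{-1/2}\overline{\mathbb{L}}\mathbb{L}^{-1/2})\mathbb{L}^{1/2}$. The middle factor has entries $c^{(\ell)}_{j,k}$ in the matrix units $F^{(\ell)}_{j,k}$, and the outer factors $\mathbb{L}^{1/2}$ are absorbed into $\mathcal{E}_{\mathbb{L}^{1/2}}\nabla_0x$ on each side. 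This writes the target as $\sum_{\ell,j,k}c^{(\ell)}_{j,k}y^{(\ell)}_{jk}$. The operator Cauchy--Schwarz inequality $\left|\sum a_iy_i\right|^2\leq(\sum|a_i|^2)\sum y_i^*y_i$ then yields the factor $1/\sum|c^{(\ell)}_{j,k}|^2$, exactly as in the last steps of \eqref{eq:phigamma0}. The second statement follows immediately, since the anticommutator term vanishes when $1*\overline{\widehat{\cL}_0}=1*\widehat{\cL}_0$.

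The main obstacle is the middle step. I need to verify that, under the standard lifting, discarding the off-diagonal terms of $\widetilde{\Gamma}$ leaves exactly the squares of the paired quantities $y_{jk}^{(\ell)}$, with normalization constants that cancel the $\lambda$-powers. The bookkeeping must also be consistent with the non-diagonal unitary $U_\ell$ relating the systems $E$ and $F$. I would first do the computation in the case where every $d_\ell=1$ (reducing to the GNS-type computation) to fix the constants, and then extend entrywise using the matrix-unit relations $F_{j,k}F_{k',r}=\delta_{kk'}F_{j,r}$.
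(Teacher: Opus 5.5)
Your plan follows the paper's proof: Proposition \ref{prop:kmsbeinter}, then the standard-lifting factorization \eqref{eq:kms01}, $\widetilde{\partial}_{t,s}^{(\ell)}\partial_{k,j}x=\lambda^{-1/4}(\partial_{(\ell),s}\partial_{(\ell),j}x)\otimes\overline{v_t^{(\ell)*}}\otimes 1\otimes\overline{v_k^{(\ell)*}}$, keeping only the terms with $\ell'=\ell^*$, rewriting $\Div_0\mathcal{E}_{\mathbb{L}+\overline{\mathbb{L}}}\nabla_0x$ in terms of the $c^{(\ell)}_{j,k}$ via Lemma \ref{lem:duallapsum}, and operator Cauchy--Schwarz. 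One correction to your middle step: the retained squares must be of the doubly twisted elements $\partial^{(0)}_{(\ell^*),t}\partial^{(0)}_{(\ell),k}x$, where $\partial^{(0)}_{(\ell),k}=\sum_{j'}r^{(\ell)}_{k,j'}\partial_{(\ell),j'}$ and $r^{(\ell)}_{j,k}$ are the entries of $\mathbb{L}^{1/2}$, not of the plain $F^{(\ell)}_{j,k}$-components. The inner twist comes from $\mathcal{E}_{\mathbb{L}^{1/2}}\nabla_0x$, and the outer twist comes from splitting the weight $\mathbb{L}=\mathbb{L}^{1/2}\mathbb{L}^{1/2}$ in $\widetilde{\Gamma}$ (Lemma \ref{lem:dualsum} applied to the lifted semigroup), not from $\mathcal{E}_{\mathbb{L}^{1/2}}\nabla_0x$ on both sides.
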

\begin{proof}
We assume that $F_{j,k}^{(\ell)}$ are rank-one firstly.
In this case, we have that $F_{j,k}^{(\ell)}= \vcenter{\hbox{\begin{tikzpicture}[scale=0.65]
    \begin{scope}[shift={(0,1.5)}]
    \draw [blue] (-0.5, 0.8)--(-0.5, 0) .. controls +(0, -0.6) and +(0,-0.6).. (0.5, 0)--(0.5, 0.8);    
\begin{scope}[shift={(0.5, 0.3)}]
\draw [fill=white] (-0.35, -0.35) rectangle (0.35, 0.35);
\node at (0, 0) {\tiny $v_k^{(\ell)}$};
\end{scope}
    \end{scope}
\draw [blue] (-0.5, -0.8)--(-0.5, 0) .. controls +(0, 0.6) and +(0,0.6).. (0.5, 0)--(0.5, -0.8);
\begin{scope}[shift={(0.5, -0.3)}]
\draw [fill=white] (-0.35, -0.35) rectangle (0.35, 0.35);
\node at (0, 0) {\tiny $v_j^{(\ell)*}$};
\end{scope}
\end{tikzpicture}}}$, where $\displaystyle \tau\left(v_j^{(\ell)*}v_k^{(\ell)}\right)=\delta_{j,k}\lambda^{1/2}$.
Let $\partial_{j,k}^{(\ell)}: \cM \to \cM_1$ defined by $\partial_{j,k}^{(\ell)} x =[x, \fF^{-1}(F_{j,k}^{(\ell)})]$.
Let $\partial_{(\ell), k}: \cM \to \cM$ defined by $\partial_{(\ell), k} x=[x, v_{k}^{(\ell)}]$ for all $x\in \cM$.
We have that 
\begin{align}\label{eq:kms01}
\widetilde{\partial}_{t,s}^{(\ell)} \partial_{k,j} x=\lambda^{-1/4}( \partial_{(\ell), s} \partial_{(\ell), j} x ) \otimes \overline{v_t^{(\ell)*}} \otimes 1 \otimes  \overline{v_k^{(\ell)*}}.
\end{align}

Suppose that $\displaystyle \mathbb{L}^{1/2} =\bigoplus_{\ell=1}^m \left(r_{j,k}^{(\ell)} F_{j,k}^{(\ell)}\right)_{j,k=1}^{d_\ell}$ and $\displaystyle \mathbb{L}=\bigoplus_{\ell=1}^m \left(w_{j,k}^{(\ell)} F_{j,k}^{(\ell)}\right)_{j,k=1}^{d_\ell}$.
This implies that 
\begin{align*}
\mathcal{E}_{\mathbb{L}^{1/2}} (\nabla_0 x)
=\left( \sum_{j=1}^{d_\ell} r_{k,j}^{(\ell)} \partial_{k,j}^{(\ell)} x \right)_{k, \ell} = : (x_{k}^{(\ell)})_{k,\ell} .
\end{align*}
By substituting it into the last item in Equation \eqref{eq:kmsintertwining0}, we see that 
\begin{align*}
\bE_{\cM}\widetilde{\Gamma}(\mathcal{E}_{\mathbb{L}^{1/2}}(\nabla_0 x))
=& \frac{\lambda^{-1}}{2}\sum_{\ell=1}^m \sum_{k=1}^{d_\ell}\bE_{\cM} \left(\left(\widetilde{\partial} x_{k}^{(\ell)}\right)^* \left(\widetilde{\partial} x_{k}^{(\ell)}\right) \right)\\
=&\frac{\lambda^{-1}}{2} \sum_{\ell, \ell'=1}^m \sum_{t, k=1}^{d_\ell}\bE_{\cM} \left(\left(\widetilde{\partial}_{t,t}^{(\ell')} x_{k}^{(\ell)}\right)^* \left( \sum_{s=1}^{d_\ell} w_{t,s}^{(\ell')}\widetilde{\partial}_{t,s}^{(\ell')} x_{k}^{(\ell)}\right) \right),
\end{align*}
where the second equality follows from Lemma \ref{lem:dualsum}.

By Equation \eqref{eq:kms01}, we obtain that 
\begin{align*}
\sum_{s=1}^{d_\ell} w_{t,s}^{(\ell')}\widetilde{\partial}_{t,s}^{(\ell')} x_{k}^{(\ell)}
=& \lambda^{-1/4}\sum_{j,s=1}^{d_\ell} r_{k,j}^{(\ell)} w_{t,s}^{(\ell')}\widetilde{\partial}_{t,s}^{(\ell')} \partial_{k,j}^{(\ell)} x \\
=& \lambda^{-1/4} \sum_{j,s=1}^{d_\ell} r_{k,j}^{(\ell)} w_{t,s}^{(\ell')}( \partial_{(\ell'), s} \partial_{(\ell), j} x) \otimes \overline{v_{t}^{(\ell')*}}\otimes 1 \otimes  \overline{v_{k}^{(\ell)*}}, \\
\widetilde{\partial}_{t,t}^{(\ell')} x_{k}^{(\ell)}
=& \lambda^{-1/4} \sum_{j=1}^{d_\ell} r_{k,j}^{(\ell)} ( \partial_{(\ell'), t} \partial_{(\ell), j} x) \otimes \overline{v_{t}^{(\ell')*}}\otimes 1 \otimes  \overline{v_{k}^{(\ell)*}} .
\end{align*}
Hence
\begin{align*}
\bE_{\cM}\widetilde{\Gamma}(\mathcal{E}_{\mathbb{L}^{1/2}}(\nabla_0 x))
=&\frac{\lambda^{-1/2}}{2}\sum_{\ell, \ell'=1}^m  \sum_{j',j, k, t, s=1}^{d_\ell}\overline{r_{k,j}^{(\ell)} }( \partial_{(\ell'), t} \partial_{(\ell), j} x)^*r_{k,j'}^{(\ell)} w_{t,s}^{(\ell')}( \partial_{(\ell'), s} \partial_{(\ell), j'} x). 
\end{align*}

Let $\displaystyle \partial_{(\ell), k}^{(0)} =\sum_{j'=1}^{d_\ell} r_{k, j'}^{(\ell)} \partial_{(\ell), j'}.$
Then 
\begin{align*}
\bE_{\cM}\widetilde{\Gamma}(\mathcal{E}_{\mathbb{L}^{1/2}}(\nabla_0 x))
=& \frac{\lambda^{-1/2}}{2} \sum_{\ell, \ell'=1}^m  \sum_{k, t=1}^{d_\ell}  ( \partial_{(\ell'), t}^{(0)} \partial_{(\ell), k}^{(0)} x)^*( \partial_{(\ell'), t}^{(0)} \partial_{(\ell), k}^{(0)} x) \\
\geq & \frac{\lambda^{-1/2}}{2} \sum_{\ell=1}^m  \sum_{k, t=1}^{d_\ell}  ( \partial_{(\ell^*), t}^{(0)} \partial_{(\ell), k}^{(0)} x)^*( \partial_{(\ell^*), t}^{(0)} \partial_{(\ell), k}^{(0)} x). 
\end{align*}

Let $\displaystyle \mathbb{L}+\overline{\mathbb{L}} =\bigoplus_{\ell=1}^m \left(z_{j,k}^{(\ell)} F_{j,k}^{(\ell)} \right)_{j,k=1}^{d_\ell}$.
On the other hand, we have that 
\begin{align*}
\Div_0 \mathcal{E}_{\mathbb{L}+\overline{\mathbb{L}}}\nabla_0 x
=& \sum_{\ell=1}^m \sum_{j,k=1}^{d_\ell }z_{j,k}^{(\ell)} \partial_{j,j}^{(\ell)*} \partial_{j,k}^{(\ell)}x \\
=& \lambda^{1/2} \sum_{\ell^* \neq \ell, \ell=1}^m \sum_{j,k=1}^{d_\ell }z_{j,k}^{(\ell)} \partial_{(\ell^*),j} \partial_{(\ell), k}x 
+  \lambda^{1/2} \sum_{\ell^* = \ell, \ell=1}^m \sum_{j,k=1}^{d_\ell }z_{j,k}^{(\ell)} \partial_{(\ell),j^*} \partial_{(\ell), k}x  \\
=& \lambda^{1/2}  \sum_{\ell^*\neq \ell, \ell=1}^m \sum_{j,k=1}^{d_\ell } c_{j,k}^{(\ell)} \partial_{(\ell^*),j}^{(0)} \partial_{(\ell), k}^{(0)}x 
+  \lambda^{1/2}  \sum_{\ell^*=\ell, \ell=1}^m \sum_{j,k=1}^{d_\ell } c_{j,k}^{(\ell)} \partial_{(\ell),j^*}^{(0)} \partial_{(\ell), k}^{(0)}x. 
\end{align*}
Hence
\begin{align*}
\frac{\lambda^{1/2}}{2}\bE_{\cM}\widetilde{\Gamma}(\mathcal{E}_{\mathbb{L}^{1/2}}(\nabla_0 x))
\geq & \frac{1}{4} \sum_{\ell=1}^m  \sum_{k, t=1}^{d_\ell}  ( \partial_{(\ell^*), t}^{(0)} \partial_{(\ell), k}^{(0)} x)^*( \partial_{(\ell^*), t}^{(0)} \partial_{(\ell), k}^{(0)} x) \\
\geq & \frac{\lambda^{-1}}{4}\frac{1}{\sum_{j,k, \ell} |c_{j,k}^{(\ell)}|^2} \left(\Div_0 \mathcal{E}_{\mathbb{L}+\overline{\mathbb{L}}} \nabla_0 x\right)^* (\Div_0 \mathcal{E}_{\mathbb{L}+\overline{\mathbb{L}}} \nabla_0 x)\\
= & \frac{1}{\sum_{j,k, \ell} |c_{j,k}^{(\ell)}|^2} \left| \cL(x)+\cL^*(x)+\frac{1}{2}\{x, 1*(\overline{\widehat{\cL}_0}-\widehat{\cL}_0)\}\right|^2,
\end{align*}
where the last equality follows from Lemma \ref{lem:duallapsum}.
When $F_{j,k}^{(\ell)}$ is not rank-one, a similar computation applies and the theorem is still true.
\end{proof}

\section{Irreducible Inclusion}

\subsection{$\mathbb{Z}_3$-symmetry}
In this section, we shall obtain the modified log-Sobolev inequality for bimodule semigroups with $\bZ_3$-symmetry.
The inclusion is $\cM^G\subset \cM$, where $G=\bZ_3$ is a finite group outerly actiing on a II$_1$ factor $\cM$.
Recall that 
\begin{align*}
\vcenter{\hbox{\scalebox{0.8}{
        \begin{tikzpicture}[scale=1.2]
           \draw [blue]  (0.2, 0.9)--(0.2, -1.2);
           \draw [blue] (-0.2, 0.9)--(-0.2, -0.3).. controls +(0, -0.3) and +(0, -0.3) .. (-0.7, -0.3)--(-0.7, 0.9);
           \draw [fill=white] (-0.4, -0.3) rectangle (0.4, 0.3);
           \node at (0, 0) {\tiny $p_g$};
           \begin{scope}[shift={(0, -1.2)}]
           \draw [blue]  (0.2, -0.9)--(0.2, 0.3);
              \draw [blue] (-0.2, -0.9)--(-0.2, 0.3).. controls +(0, 0.3) and +(0, 0.3) .. (-0.7, 0.3)--(-0.7, -0.9);
               \draw [fill=white] (-0.4, -0.3) rectangle (0.4, 0.3);
           \node at (0, 0) {\tiny $p_g$};
           \end{scope}
        \end{tikzpicture}}}}  
             = \lambda^{1/2}\vcenter{\hbox{\scalebox{0.8}{
        \begin{tikzpicture}[scale=1.2]
        \draw [blue] (-0.7, 0.9)--(-0.7, -0.9);
           \draw [blue]  (-0.2, 0.9)--(-0.2, -0.9) (0.2, 0.9)--(0.2, -0.9);
           \draw [fill=white] (-0.4, -0.3) rectangle (0.4, 0.3);
           \node at (0, 0) {\tiny $p_g$};
        \end{tikzpicture}}}}, \quad 
\vcenter{\hbox{\scalebox{0.8}{
        \begin{tikzpicture}[scale=1.2]
           \draw [blue] (0.2, 0.9)--(0.2, -0.9) ;
           \draw [blue] (-0.2, 0.3)..controls +(0, 0.45) and +(0, 0.45)..(-0.8, 0.3);
        \draw [blue] (-0.2, -0.3)..controls +(0, -0.45) and +(0, -0.45)..(-0.8, -0.3);
           \draw [fill=white] (-0.4, -0.3) rectangle (0.4, 0.3);
           \node at (0, 0) {\tiny $p_g$};
           \begin{scope}[shift={(-1, 0)}]
           \draw [blue] (-0.2, -0.9)--(-0.2, 0.9);
           \draw [fill=white] (-0.4, -0.3) rectangle (0.4, 0.3);
           \node at (0, 0) {\tiny $p_h$};               
           \end{scope}
        \end{tikzpicture}}}}
=\lambda^{1/2}\vcenter{\hbox{\scalebox{0.8}{
        \begin{tikzpicture}[scale=1.2]
           \draw [blue]  (-0.2, 0.9)--(-0.2, -0.9) (0.2, 0.9)--(0.2, -0.9);
           \draw [fill=white] (-0.4, -0.3) rectangle (0.4, 0.3);
           \node at (0, 0) {\tiny $p_{gh}$};
        \end{tikzpicture}}}}, \quad 
 \vcenter{\hbox{\scalebox{0.8}{
        \begin{tikzpicture}[scale=1.2]
           \draw [blue] (0.2, 0.9)--(0.2, -0.9) ;
           \draw [blue] (-0.2, -0.9)--(-0.2, 0.3)..controls +(0, 0.45) and +(0, 0.45)..(-0.8, 0.3)--(-0.8, -0.9);
           \draw [fill=white] (-0.4, -0.3) rectangle (0.4, 0.3);
           \node at (0, 0) {\tiny $p_g$};
           \begin{scope}[shift={(-1, 0)}]
           \draw [blue] (-0.2, 0.9)--(-0.2, -0.3)..controls +(0, -0.45) and +(0, -0.45)..(-0.6, -0.3)--(-0.6, 0.9);
           \draw [fill=white] (-0.4, -0.3) rectangle (0.4, 0.3);
           \node at (0, 0) {\tiny $ p_h$};               
           \end{scope}
        \end{tikzpicture}}}}
 =\vcenter{\hbox{\scalebox{0.8}{
        \begin{tikzpicture}[scale=1.2]
           \draw [blue]  (0.2, 0.9)--(0.2, -1.2);
           \draw [blue] (-0.2, 0.9)--(-0.2, -0.3).. controls +(0, -0.3) and +(0, -0.3) .. (-0.7, -0.3)--(-0.7, 0.9);
           \draw [fill=white] (-0.4, -0.3) rectangle (0.4, 0.3);
           \node at (0, 0) {\tiny $p_{hg}$};
           \begin{scope}[shift={(0, -1.2)}]
           \draw [blue]  (0.2, -0.9)--(0.2, 0.3);
              \draw [blue] (-0.2, -0.9)--(-0.2, 0.3).. controls +(0, 0.3) and +(0, 0.3) .. (-0.7, 0.3)--(-0.7, -0.9);
               \draw [fill=white] (-0.4, -0.3) rectangle (0.4, 0.3);
           \node at (0, 0) {\tiny $p_g$};
           \end{scope}
        \end{tikzpicture}}}}, 
\end{align*}
where $p_g$ is a minimal projection in $\cM'\cap \cM_2$ associated to the group element $g$ and $\displaystyle \lambda=\frac{1}{3}$.
Let 
\begin{align}\label{eq:z3lind}
    \widehat{\cL}_0 = \frac{\lambda^{-1/2}}{\mu+\mu^{-1}}\left(\mu \vcenter{\hbox{\scalebox{0.8}{
        \begin{tikzpicture}[scale=1.2]
           \draw [blue]  (-0.2, 0.9)--(-0.2, -0.9) (0.2, 0.9)--(0.2, -0.9);
           \draw [fill=white] (-0.4, -0.3) rectangle (0.4, 0.3);
           \node at (0, 0) {\tiny $p_{g}$};
        \end{tikzpicture}}}} 
        + \mu^{-1}\vcenter{\hbox{\scalebox{0.8}{
        \begin{tikzpicture}[scale=1.2]
           \draw [blue]  (-0.2, 0.9)--(-0.2, -0.9) (0.2, 0.9)--(0.2, -0.9);
           \draw [fill=white] (-0.4, -0.3) rectangle (0.4, 0.3);
           \node at (0, 0) {\tiny $p_{g^{-1}}$};
        \end{tikzpicture}}}}\right)
\end{align}
where $0< \mu\leq 1$.
We have that $1*\widehat{\cL}_0=1$ and the associated semigroup $\{\Phi_t\}_{t\geq 0}$ is relatively ergodic and bimodule GNS symmetric with respect to the bimodule modular operator $\widehat{\Delta}= e_2 +\mu^{2} p_g +\mu^{-2} p_{g^{-1}}$.
In fact, any bimodule GNS symmetric quantum Markov semigroup arises in this way.
Let 
\begin{small}
\begin{align*}
    \widehat{\cJ}_0=& 
    \frac{\lambda^{-1}\mu}{\mu+\mu^{-1}}\left( (1-\kappa)\vcenter{\hbox{\scalebox{0.8}{
        \begin{tikzpicture}[scale=1.2]
           \draw [blue]  (-0.2, 0.7)--(-0.2, -0.3).. controls +(0, -0.3) and +(0, -0.3).. (-0.6, -0.3)--(-0.6, 0.7) (0.2, 0.7)--(0.2, -0.3).. controls +(0, -0.3) and +(0, -0.3).. (0.6, -0.3)--(0.6, 0.7);
           \draw [fill=white] (-0.4, -0.3) rectangle (0.4, 0.3);
           \node at (0, 0) {\tiny $p_{g}$};
        \begin{scope}[shift={(0, -1.3)}]
             \draw [blue]  (-0.2, -0.7)--(-0.2, 0.3).. controls +(0, 0.3) and +(0, 0.3).. (-0.6, 0.3)--(-0.6, -0.7) (0.2, -0.7)--(0.2, 0.3).. controls +(0, 0.3) and +(0, 0.3).. (0.6, 0.3)--(0.6, -0.7);
           \draw [fill=white] (-0.4, -0.3) rectangle (0.4, 0.3);
           \node at (0, 0) {\tiny $p_{g}$};          
        \end{scope}
        \end{tikzpicture}}}} 
        + \lambda\kappa \vcenter{\hbox{\scalebox{0.8}{
        \begin{tikzpicture}[scale=1.2]
        \draw [blue] (-0.6, 0.9)--(-0.6, -0.9) (0.6, 0.9)--(0.6, -0.9);
           \draw [blue]  (-0.2, 0.9)--(-0.2, -0.9) (0.2, 0.9)--(0.2, -0.9);
           \draw [fill=white] (-0.4, -0.3) rectangle (0.4, 0.3);
           \node at (0, 0) {\tiny $p_g$};
        \end{tikzpicture}}}} \right)
         +\frac{\lambda^{-1}\mu^{-1}}{\mu+\mu^{-1}}  \left( (1-\mu^2\kappa)\vcenter{\hbox{\scalebox{0.8}{
        \begin{tikzpicture}[scale=1.2]
           \draw [blue]  (-0.2, 0.7)--(-0.2, -0.3).. controls +(0, -0.3) and +(0, -0.3).. (-0.6, -0.3)--(-0.6, 0.7) (0.2, 0.7)--(0.2, -0.3).. controls +(0, -0.3) and +(0, -0.3).. (0.6, -0.3)--(0.6, 0.7);
           \draw [fill=white] (-0.4, -0.3) rectangle (0.4, 0.3);
           \node at (0, 0) {\tiny $p_{g^{-1}}$};
        \begin{scope}[shift={(0, -1.3)}]
             \draw [blue]  (-0.2, -0.7)--(-0.2, 0.3).. controls +(0, 0.3) and +(0, 0.3).. (-0.6, 0.3)--(-0.6, -0.7) (0.2, -0.7)--(0.2, 0.3).. controls +(0, 0.3) and +(0, 0.3).. (0.6, 0.3)--(0.6, -0.7);
           \draw [fill=white] (-0.4, -0.3) rectangle (0.4, 0.3);
           \node at (0, 0) {\tiny $p_{g^{-1}}$};          
        \end{scope}
        \end{tikzpicture}}}} 
        +\lambda\mu^2\kappa \vcenter{\hbox{\scalebox{0.8}{
        \begin{tikzpicture}[scale=1.2]
        \draw [blue] (-0.6, 0.9)--(-0.6, -0.9) (0.6, 0.9)--(0.6, -0.9);
           \draw [blue]  (-0.2, 0.9)--(-0.2, -0.9) (0.2, 0.9)--(0.2, -0.9);
           \draw [fill=white] (-0.4, -0.3) rectangle (0.4, 0.3);
           \node at (0, 0) {\tiny $p_{g^{-1}}$};
        \end{tikzpicture}}}} \right),
\end{align*}
\end{small}where $0\leq \kappa\leq 1$.
By Proposition \ref{prop:semigroupext}, we have that $\cJ |_{\cM}=\cL$.
By evaluating the items in the intertwining property, we have that 
\begin{align*}
    \vcenter{\hbox{
\begin{tikzpicture}
\draw [blue] (-0.15, -0.8)--(-0.15, 0.8);
 \draw [blue] (0.15, -0.8)--(0.15, 0.3) .. controls +(0, 0.35) and +(0, 0.35).. (0.6, 0.3);
 \draw [fill=white] (-0.3, -0.3) rectangle (0.3, 0.3);
 \node at (0, 0) {\tiny $\widehat{\cL}_0$};
 \begin{scope}[shift={(0.75, 0)}]
  \draw [fill=white] (-0.3, -0.3) rectangle (0.3, 0.3);
 \node at (0, 0) {\tiny $p_g$};
 \draw [blue] (0.15, 0.3)--(0.15, 0.8) (-0.15, -0.3)--(-0.15, -0.8);
  \draw [blue] (0.15, -0.3) .. controls +(0, -0.35) and +(0, -0.35).. (0.6, -0.3)--(0.6, 0.8);
 \end{scope}
\end{tikzpicture}
 }}
 = \frac{\lambda^{-1/2}\mu}{\mu+\mu^{-1}} \vcenter{\hbox{\scalebox{0.8}{
        \begin{tikzpicture}[scale=1.2]
           \draw [blue]  (-0.2, 0.9)--(-0.2, -1.2);
           \draw [blue] (0.2, 0.9)--(0.2, -0.3).. controls +(0, -0.3) and +(0, -0.3) .. (0.7, -0.3)--(0.7, 0.9);
           \draw [fill=white] (-0.4, -0.3) rectangle (0.4, 0.3);
           \node at (0, 0) {\tiny $p_{g^2}$};
           \begin{scope}[shift={(0, -1.2)}]
           \draw [blue]  (-0.2, -0.9)--(-0.2, 0.3);
              \draw [blue] (0.2, -0.9)--(0.2, 0.3).. controls +(0, 0.3) and +(0, 0.3) .. (0.7, 0.3)--(0.7, -0.9);
               \draw [fill=white] (-0.4, -0.3) rectangle (0.4, 0.3);
           \node at (0, 0) {\tiny $p_g$};
           \end{scope}
        \end{tikzpicture}}}}  
        + \frac{\mu^{-1}}{\mu+\mu^{-1}} \vcenter{\hbox{
    \begin{tikzpicture}
 \draw [blue] (0.15, -0.8)--(0.15, 0.3) .. controls +(0, 0.35) and +(0, 0.35).. (0.6, 0.3)--(0.6, -0.8);
  \draw [blue] (-0.15, 0.8)--(-0.15, -0.8);
 \draw [fill=white] (-0.3, -0.3) rectangle (0.3, 0.3);
 \node at (0, 0) {\tiny $p_{g^{-1}}$};
  \draw [blue] (-0.7, 0.8) .. controls +(0, -0.25) and +(0, -0.25).. (-0.4, 0.8);
    \end{tikzpicture}
    }}, 
\end{align*}
\begin{align*}
    \vcenter{\hbox{
    \begin{tikzpicture}
    \draw [blue] (-0.15, -0.8)--(-0.15, 0.8);
 \draw [blue] (0.15, 0.8)--(0.15, -0.3) .. controls +(0, -0.35) and +(0, -0.35).. (0.6, -0.3);
 \draw [fill=white] (-0.3, -0.3) rectangle (0.3, 0.3);
 \node at (0, 0) {\tiny $\widehat{\cL}_0$};
 \begin{scope}[shift={(0.75, 0)}]
  \draw [fill=white] (-0.3, -0.3) rectangle (0.3, 0.3);
 \node at (0, 0) {\tiny $p_{g^{-1}}$};
 \draw [blue] (0.15, -0.3)--(0.15, -0.8) (-0.15, 0.3)--(-0.15, 0.8);
  \draw [blue] (0.15, 0.3) .. controls +(0, 0.35) and +(0, 0.35).. (0.6, 0.3)--(0.6, -0.8);
 \end{scope}
    \end{tikzpicture}
    }}
=\frac{\lambda^{-1/2}\mu^{-1}}{\mu+\mu^{-1}} \vcenter{\hbox{\scalebox{0.8}{
        \begin{tikzpicture}[scale=1.2]
           \draw [blue]  (-0.2, 0.9)--(-0.2, -1.2);
           \draw [blue] (0.2, 0.9)--(0.2, -0.3).. controls +(0, -0.3) and +(0, -0.3) .. (0.7, -0.3)--(0.7, 0.9);
           \draw [fill=white] (-0.4, -0.3) rectangle (0.4, 0.3);
           \node at (0, 0) {\tiny $p_{g^{-1}}$};
           \begin{scope}[shift={(0, -1.2)}]
           \draw [blue]  (-0.2, -0.9)--(-0.2, 0.3);
              \draw [blue] (0.2, -0.9)--(0.2, 0.3).. controls +(0, 0.3) and +(0, 0.3) .. (0.7, 0.3)--(0.7, -0.9);
               \draw [fill=white] (-0.4, -0.3) rectangle (0.4, 0.3);
           \node at (0, 0) {\tiny $p_{g^{-2}}$};
           \end{scope}
        \end{tikzpicture}}}}  
+ \frac{\mu}{\mu+\mu^{-1}}\vcenter{\hbox{
    \begin{tikzpicture}
 \draw [blue] (0.15, 0.8)--(0.15, -0.3) .. controls +(0, -0.35) and +(0, -0.35).. (0.6, -0.3)--(0.6, 0.8);
  \draw [blue] (-0.15, 0.8)--(-0.15, -0.8);
 \draw [fill=white] (-0.3, -0.3) rectangle (0.3, 0.3);
 \node at (0, 0) {\tiny $p_g$};
  \draw [blue] (-0.7, -0.8) .. controls +(0, 0.25) and +(0, 0.25).. (-0.4, -0.8);
    \end{tikzpicture}
    }} , 
\end{align*}
\begin{align*}
  \vcenter{\hbox{
    \begin{tikzpicture}
       \draw [blue] (-0.75, -0.3)--(-0.75, 0.6) (-0.45, -0.3)--(-0.45, 0.6) (-0.15, -0.8)--(-0.15, 0.6) (0.15, -0.8)--(0.15, 1.6) (0.45, -0.8)--(0.45, 1.6);
       \draw[fill=white] (-0.6, -0.3) rectangle (0.6, 0.3);
       \node at (0, 0) {\tiny $\widehat{\cJ}_0$};
       \begin{scope}[shift={(-0.6, 0.8)}]
          \draw[fill=white] (-0.3, -0.3) rectangle (0.3, 0.3); 
           \node at (0, 0) {\tiny $p_g$};
        \draw[blue] (0.15, 0.3).. controls+(0, 0.3) and +(0, 0.3).. (0.45, 0.3)--(0.45, -0.3);
        \draw[blue] (-0.15, 0.3)--(-0.15, 0.8);
        \draw [blue] (-0.15, -1.1) .. controls +(0, -0.25) and +(0, -0.25).. (0.15, -1.1);
       \end{scope}
    \end{tikzpicture}
    }}
    =
    \frac{\lambda^{-1/2}\mu (1-\kappa)}{\mu+\mu^{-1}} \vcenter{\hbox{\scalebox{0.8}{
        \begin{tikzpicture}[scale=1.2]
           \draw [blue]  (-0.2, 0.9)--(-0.2, -1.2);
           \draw [blue] (0.2, 0.9)--(0.2, -0.3).. controls +(0, -0.3) and +(0, -0.3) .. (0.7, -0.3)--(0.7, 0.9);
           \draw [fill=white] (-0.4, -0.3) rectangle (0.4, 0.3);
           \node at (0, 0) {\tiny $p_{g^2}$};
           \begin{scope}[shift={(0, -1.2)}]
           \draw [blue]  (-0.2, -0.9)--(-0.2, 0.3);
              \draw [blue] (0.2, -0.9)--(0.2, 0.3).. controls +(0, 0.3) and +(0, 0.3) .. (0.7, 0.3)--(0.7, -0.9);
               \draw [fill=white] (-0.4, -0.3) rectangle (0.4, 0.3);
           \node at (0, 0) {\tiny $p_g$};
           \end{scope}
        \end{tikzpicture}}}}  
        + \frac{\mu^{-1}(1-\mu^2\kappa)}{\mu+\mu^{-1}} \vcenter{\hbox{
    \begin{tikzpicture}
 \draw [blue] (0.15, -0.8)--(0.15, 0.3) .. controls +(0, 0.35) and +(0, 0.35).. (0.6, 0.3)--(0.6, -0.8);
  \draw [blue] (-0.15, 0.8)--(-0.15, -0.8);
 \draw [fill=white] (-0.3, -0.3) rectangle (0.3, 0.3);
 \node at (0, 0) {\tiny $p_{g^{-1}}$};
  \draw [blue] (-0.7, 0.8) .. controls +(0, -0.25) and +(0, -0.25).. (-0.4, 0.8);
    \end{tikzpicture}
    }} ,
\end{align*}
and
\begin{align*}
    \vcenter{\hbox{
    \begin{tikzpicture}
       \draw [blue]   (-0.15, 0.8)--(-0.15, 0.3) (0.15, -0.8)--(0.15, 0.8) (0.45, -0.8)--(0.45, 0.8);
        \draw[fill=white] (-0.6, -0.3) rectangle (0.6, 0.3);
       \node at (0, 0) {\tiny $\widehat{\cJ}_0$};
       \begin{scope}[shift={(-1.2, 0)}]
       \draw [blue] (0.15, 0.3) .. controls +(0, 0.6) and +(0, 0.6) .. (-0.75, 0.3)--(-0.75, -0.8);
          \draw[fill=white] (-0.3, -0.3) rectangle (0.3, 0.3); 
           \node at (0, 0) {\tiny $p_{g}$};
        \draw[blue] (0.15, -0.3).. controls+(0, -0.25) and +(0, -0.25).. (0.45, -0.3)--(0.45, 0.3).. controls +(0, 0.3) and +(0, 0.3) .. (0.75, 0.3);
        \draw[blue] (-0.15, 0.3).. controls+(0, 0.3) and +(0, 0.3).. (-0.45, 0.3)--(-0.45, -0.3).. controls +(0, -0.6) and +(0, -0.6) .. (1.05, -0.3);
        \draw [blue] (-0.15, -0.3).. controls +(0, -0.4) and +(0, -0.4) .. (0.75, -0.3);
       \end{scope}
    \end{tikzpicture}
    }}
    =\frac{\lambda^{-1/2}\mu^{-1} (1-\mu^2\kappa)}{\mu+\mu^{-1}} \vcenter{\hbox{\scalebox{0.8}{
        \begin{tikzpicture}[scale=1.2]
           \draw [blue]  (-0.2, 0.9)--(-0.2, -1.2);
           \draw [blue] (0.2, 0.9)--(0.2, -0.3).. controls +(0, -0.3) and +(0, -0.3) .. (0.7, -0.3)--(0.7, 0.9);
           \draw [fill=white] (-0.4, -0.3) rectangle (0.4, 0.3);
           \node at (0, 0) {\tiny $p_{g^{-1}}$};
           \begin{scope}[shift={(0, -1.2)}]
           \draw [blue]  (-0.2, -0.9)--(-0.2, 0.3);
              \draw [blue] (0.2, -0.9)--(0.2, 0.3).. controls +(0, 0.3) and +(0, 0.3) .. (0.7, 0.3)--(0.7, -0.9);
               \draw [fill=white] (-0.4, -0.3) rectangle (0.4, 0.3);
           \node at (0, 0) {\tiny $p_{g^{-2}}$};
           \end{scope}
        \end{tikzpicture}}}}  
+ \frac{\mu (1-\kappa)}{\mu+\mu^{-1}}\vcenter{\hbox{
    \begin{tikzpicture}
 \draw [blue] (0.15, 0.8)--(0.15, -0.3) .. controls +(0, -0.35) and +(0, -0.35).. (0.6, -0.3)--(0.6, 0.8);
  \draw [blue] (-0.15, 0.8)--(-0.15, -0.8);
 \draw [fill=white] (-0.3, -0.3) rectangle (0.3, 0.3);
 \node at (0, 0) {\tiny $p_g$};
  \draw [blue] (-0.7, -0.8) .. controls +(0, 0.25) and +(0, 0.25).. (-0.4, -0.8);
    \end{tikzpicture}
    }}  .
\end{align*}
Hence the difference is the following
\begin{align*}
   -   \frac{\lambda^{-1/2}\mu \kappa}{\mu+\mu^{-1}} \vcenter{\hbox{\scalebox{0.8}{
        \begin{tikzpicture}[scale=1.2]
           \draw [blue]  (-0.2, 0.9)--(-0.2, -1.2);
           \draw [blue] (0.2, 0.9)--(0.2, -0.3).. controls +(0, -0.3) and +(0, -0.3) .. (0.7, -0.3)--(0.7, 0.9);
           \draw [fill=white] (-0.4, -0.3) rectangle (0.4, 0.3);
           \node at (0, 0) {\tiny $p_{g^2}$};
           \begin{scope}[shift={(0, -1.2)}]
           \draw [blue]  (-0.2, -0.9)--(-0.2, 0.3);
              \draw [blue] (0.2, -0.9)--(0.2, 0.3).. controls +(0, 0.3) and +(0, 0.3) .. (0.7, 0.3)--(0.7, -0.9);
               \draw [fill=white] (-0.4, -0.3) rectangle (0.4, 0.3);
           \node at (0, 0) {\tiny $p_g$};
           \end{scope}
        \end{tikzpicture}}}}    
    + \frac{\lambda^{-1/2}\mu \kappa}{\mu+\mu^{-1}} \vcenter{\hbox{\scalebox{0.8}{
        \begin{tikzpicture}[scale=1.2]
           \draw [blue]  (-0.2, 0.9)--(-0.2, -1.2);
           \draw [blue] (0.2, 0.9)--(0.2, -0.3).. controls +(0, -0.3) and +(0, -0.3) .. (0.7, -0.3)--(0.7, 0.9);
           \draw [fill=white] (-0.4, -0.3) rectangle (0.4, 0.3);
           \node at (0, 0) {\tiny $p_{g^{-1}}$};
           \begin{scope}[shift={(0, -1.2)}]
           \draw [blue]  (-0.2, -0.9)--(-0.2, 0.3);
              \draw [blue] (0.2, -0.9)--(0.2, 0.3).. controls +(0, 0.3) and +(0, 0.3) .. (0.7, 0.3)--(0.7, -0.9);
               \draw [fill=white] (-0.4, -0.3) rectangle (0.4, 0.3);
           \node at (0, 0) {\tiny $p_{g^{-2}}$};
           \end{scope}
        \end{tikzpicture}}}} 
       - \frac{\mu\kappa}{\mu+\mu^{-1}} \vcenter{\hbox{
    \begin{tikzpicture}
 \draw [blue] (0.15, -0.8)--(0.15, 0.3) .. controls +(0, 0.35) and +(0, 0.35).. (0.6, 0.3)--(0.6, -0.8);
  \draw [blue] (-0.15, 0.8)--(-0.15, -0.8);
 \draw [fill=white] (-0.3, -0.3) rectangle (0.3, 0.3);
 \node at (0, 0) {\tiny $p_{g^{-1}}$};
  \draw [blue] (-0.7, 0.8) .. controls +(0, -0.25) and +(0, -0.25).. (-0.4, 0.8);
    \end{tikzpicture}
    }} 
    +\frac{\mu \kappa}{\mu+\mu^{-1}}\vcenter{\hbox{
    \begin{tikzpicture}
 \draw [blue] (0.15, 0.8)--(0.15, -0.3) .. controls +(0, -0.35) and +(0, -0.35).. (0.6, -0.3)--(0.6, 0.8);
  \draw [blue] (-0.15, 0.8)--(-0.15, -0.8);
 \draw [fill=white] (-0.3, -0.3) rectangle (0.3, 0.3);
 \node at (0, 0) {\tiny $p_g$};
  \draw [blue] (-0.7, -0.8) .. controls +(0, 0.25) and +(0, 0.25).. (-0.4, -0.8);
    \end{tikzpicture}
    }}  .
\end{align*}
By noting that $g^3=1$ and similar calculation for $p_{g^{-1}}$, we see that $\displaystyle \beta=\frac{\mu \kappa}{\mu+\mu^{-1}}$ and 
\begin{align}\label{eq:interz3}
\partial_j \cL - \cJ \partial_j =\frac{\mu^{-2j+3} \kappa}{\mu+\mu^{-1}} \partial_j, \quad j=1, 2.
\end{align}

\begin{theorem}
Suppose that $\cM^{\bZ_3}\subset \cM$ is the inclusion and $\{\Phi_t\}_{t\geq 0}$ is the quantum Markov semigroup arising from Equation \eqref{eq:z3lind}.
Then for any positive element $D\in \cM$, we have that 
\begin{align*}
H(\Phi_t^*(D)\| D_{\Delta}) -H(\bE_{\cN}(D) \| D_{\Delta}) \leq  e^{-2\frac{\mu t}{\mu+ \mu^{-1}}}  (H(D\| D_{\Delta}) -H( \bE_{\cN}(D)\| D_{\Delta})).
\end{align*}
\end{theorem}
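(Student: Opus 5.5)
The plan is to deduce the modified logarithmic Sobolev inequality from the entropic lower Ricci curvature bound. By the remark preceding Equation \eqref{eq:gnslogs}, it suffices to show $\ric_\Phi\geq \frac{\mu}{\mu+\mu^{-1}}$, i.e. the gradient estimate
\begin{align*}
\|\nabla\Phi_t(x)\|_{D, \widehat{\Delta}}\leq e^{-\beta t}\|\nabla x\|_{\Phi_t^*(D),\widehat{\Delta}}
\end{align*}
with $\beta=\frac{\mu}{\mu+\mu^{-1}}$. Since the semigroup is relatively ergodic, $\widehat{\bE}_\Phi$ is the multiplier of $\bE_{\cN}$. Hence the reference term $H(\bE_{\cN}(D)\overline{\widehat{\bE}_{\Phi}}\|D_\Delta)$ in \eqref{eq:gnslogs} reduces to $H(\bE_{\cN}(D)\|D_\Delta)$, which gives exactly the stated inequality.

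To get the gradient estimate, I would use the intertwining property with the extension $\widehat{\cJ}_0$ constructed above. Proposition \ref{prop:semigroupext} applies, since the two cap conditions were checked when asserting $\cJ|_{\cM}=\cL$. So $\widehat{\cJ}$ generates a bimodule quantum Markov semigroup $\{\widetilde{\Phi_t}\}$ on $\cM_1$ extending $\Phi_t$.

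Equation \eqref{eq:interz3} gives $\partial_j\cL-\cJ\partial_j=\beta_j\partial_j$ with $\beta_1=\frac{\mu\kappa}{\mu+\mu^{-1}}$ and $\beta_2=\frac{\mu^{-1}\kappa}{\mu+\mu^{-1}}$. By the lemma equating \eqref{eq:intertwining} with \eqref{eq:intertwining0}, this is the intertwining property with $\vec\beta=(\beta_1,\beta_2)$. Since $0<\mu\leq 1$, we have $\min_j\beta_j=\beta_1$. I would then choose $\kappa=1$, the largest admissible value; positivity of $\widehat{\cJ}_0$ requires $1-\kappa\geq0$ and $1-\mu^2\kappa\geq0$, both fine at $\kappa=1$. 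This gives $\beta=\frac{\mu}{\mu+\mu^{-1}}$.

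Next comes the computation after the definition of the weight $\K_{D,\mu}$. We have $\nabla\Phi_t(x)=e^{-\vec\beta t}\widetilde{\Phi_t}(\nabla x)$ componentwise, and the scalars $e^{-2\beta_j t}\leq e^{-2\beta t}$ factor out of each summand. The remaining inequality
\begin{align*}
\langle \K_{D,\mu_j}\widetilde{\Phi_t}(y),\widetilde{\Phi_t}(y)\rangle\leq \langle \K_{\Phi_t^*(D),\mu_j} y, y\rangle
\end{align*}
follows from Lieb's concavity theorem, using that $\widetilde{\Phi_t}$ is unital completely positive with $\widetilde{\Phi_t}^*|_{\cM}=\Phi_t^*$.

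The main obstacle is this last step: the weights $(\mu_j^{-1}D)^s x(\mu_jD)^{1-s}$ carry the modular factors $\mu_j^{\pm}$. One must check that the transfer $D\mapsto\Phi_t^*(D)$ is compatible with each directional weight. That is, $\widetilde{\Phi_t}$ should map the $p_g$-sector of $\cM_1$ to itself, which should follow from $\widehat{\cJ}_0$ being diagonal in the $p_g, p_{g^{-1}}$ decomposition. I would verify this first using the $\bZ_3$ fusion rules recalled above. I would also confirm that the hidden density $D_\Delta$ is the one attached to $\widehat{\Delta}=e_2+\mu^2p_g+\mu^{-2}p_{g^{-1}}$, as in the general GNS gradient-flow framework of \cite{WuZha25}.
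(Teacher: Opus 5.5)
Your proposal is correct and follows the paper's route. The paper's one-line proof implicitly relies on the intertwining relation \eqref{eq:interz3} (with $\kappa=1$), the Lieb-concavity gradient estimate and \eqref{eq:gnslogs}, and only adds that $\lim_{t\to\infty}\widehat{\Phi}_t=\lambda^{1/2}$, i.e.\ $\bE_{\Phi}=\bE_{\cN}$, to simplify the reference term. The sector-preservation check you flag as the main obstacle is not needed: the modular factors $\mu_j^{1-2s}$ are scalars attached to the $j$-th component of $\nabla x$, and $\widetilde{\Phi_t}$ acts componentwise on that vector.
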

\begin{proof}
It follows from the fact that $\displaystyle \lim_{t\to \infty}\widehat{\Phi}_t=\lambda^{1/2}$.
\end{proof}

\begin{proposition}
Suppose that $\cM^{\bZ_3}\subset \cM$ is the inclusion and $\{\Phi_t\}_{t\geq 0}$ is the quantum Markov semigroup arising from Equation \eqref{eq:z3lind}.
We have that    
\begin{align*}
\widehat{\Gamma}_2 \geq \beta \widehat{\Gamma},
\end{align*}
where
\begin{align}\label{eq:betaz3}
\beta =\frac{1}{4}\left( \mu^2+\mu^{-2} +5 -(\mu^2+\mu^{-2}-1)\sqrt{\frac{\mu^2+\mu^{-2}+14}{\mu^2+\mu^{-2}+2}}\right).
\end{align}
\end{proposition}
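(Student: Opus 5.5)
Since $\cL_w=0$ (GNS symmetry) and $1*\widehat{\cL}_0=1$ here, the plan is to work in Proposition \ref{prop:beeq1}. The theorem on $\CBE(\beta)$ reduces the claim to the operator inequality $\widehat{\Gamma}_2\geq\beta\widehat{\Gamma}$ in the finite-dimensional algebra $\cM_1'\cap\cM_4$ of the $\bZ_3$ group planar algebra with $\lambda=1/3$. The first step is to write both $\widehat{\Gamma}$ (via Equation \eqref{eq:gradfourier}) and $2\widehat{\Gamma}_2$ (via Proposition \ref{prop:iteratedgrad}) explicitly. The only inputs are $\widehat{\cL}_0=\frac{\lambda^{-1/2}}{\mu+\mu^{-1}}(\mu p_g+\mu^{-1}p_{g^{-1}})$, its contragredient $\overline{\widehat{\cL}_0}$ (which swaps $p_g$ and $p_{g^{-1}}$), and the group relations recalled above: coproducts $p_g*p_h=\lambda^{1/2}p_{gh}$, the one-string capping identities, and $g^3=1$. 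Every diagram then becomes a combination of 3-box basis diagrams labelled by pairs of group elements.

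Next I would pick a system of matrix units for the relevant part of the 3-box space, in the same way as in the example with $v,v^*$ and in Proposition \ref{prop:dephaseem}. The natural basis is indexed by pairs $(h,k)\in\bZ_3^2$, that is, the diagrams with $p_h$ on the left strand and $p_k$ on the right cup-cap. Group-grading, i.e.\ conservation of the total charge $hk$, splits the algebra into blocks. By the remark that capping $\widehat{\Gamma}$ at the left gives zero, $\widehat{\Gamma}$ is supported away from the Jones-projection summand. The $\bZ_3$ symmetry together with the involution $g\leftrightarrow g^{-1}$, $\mu\leftrightarrow\mu^{-1}$ should reduce everything to a few small blocks. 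I expect at most one genuinely $2\times 2$ or $3\times 3$ block, coupling the sectors $(g,g)$, $(g^{-1},g)$ and the total-charge-trivial sector; the remaining blocks are scalar, giving conditions of the form $c_i\geq\beta d_i$.

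In each block the optimal $\beta$ is then the smallest generalized eigenvalue of the pencil $(\widehat{\Gamma}_2,\widehat{\Gamma})$ restricted to the range of $\widehat{\Gamma}$. For the scalar blocks this is a ratio that I expect to be larger than the claimed value. For the coupled block it is the smaller root of a quadratic $\det(A-\beta B)=0$. Writing $s=\mu^2+\mu^{-2}$, I expect the discriminant to simplify to $(s-1)^2\frac{s+14}{s+2}$, which produces exactly Equation \eqref{eq:betaz3}. The final step is to check that this root is the minimum over all blocks; for instance at $\mu=1$ one gets $s=2$ and $\beta=\frac14\bigl(7-\sqrt{4}\bigr)=5/4$, which should be compared with the scalar blocks.

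The main obstacle will be the bookkeeping in the second step: correctly evaluating the many terms of $2\widehat{\Gamma}_2$, especially those containing $\widehat{\cL}_0*\widehat{\cL}_0$ and $\widehat{\cL}_0\overline{\widehat{\cL}_0}$, with the right powers of $\lambda^{1/2}$ coming from the loop and coproduct normalizations. I would first sanity-check the resulting matrices against the intertwining bound $\beta\geq\frac{\mu\kappa}{\mu+\mu^{-1}}$ from Equation \eqref{eq:interz3} via Proposition \ref{prop:beinter}, and against the case $\mu=1$, before solving the quadratic.
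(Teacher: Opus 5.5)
Your plan is essentially the paper's proof. The paper expands $2\lambda(\mu+\mu^{-1})^2\widehat{\Gamma}_2$ and $2\lambda(\mu+\mu^{-1})^2\widehat{\Gamma}$ in 3-box diagrams built from $p_g$, $p_{g^{-1}}$ and the Jones element, rewrites $\widehat{\Gamma}_2\geq\beta\widehat{\Gamma}$ as one $3\times 3$ matrix inequality $A\geq\beta B$, and takes the optimal $\beta$, which is the root you predict. The one correction is that the 3-box algebra here is the factor $M_3(\bC)$, so a charge grading produces no extra scalar blocks; instead $A$ and $B$ both have $(1,1,1)$ in their kernel, and with $s=\mu^2+\mu^{-2}$ the problem reduces to their upper-left $2\times 2$ blocks $A',B'$, where $\det(A'-\beta B')=(s+2)\beta^2-\frac12(s+2)(s+5)\beta+\frac94(2s+1)$ has smaller root exactly the stated $\beta$ (equal to $5/4$ at $\mu=1$).
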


\begin{proof}
We have that 
\begin{align*}
2\lambda(\mu+\mu^{-1})^2\widehat{\Gamma}_2=& 
\lambda^{1/2}(2\mu^2+1 +\frac{\mu^{-2}}{2})\vcenter{\hbox{\scalebox{0.8}{
        \begin{tikzpicture}[scale=1.2]
        \draw [blue] (-0.7, 0.9)--(-0.7, -0.9);
           \draw [blue]  (-0.2, 0.9)--(-0.2, -0.9) (0.2, 0.9)--(0.2, -0.9);
           \draw [fill=white] (-0.4, -0.3) rectangle (0.4, 0.3);
           \node at (0, 0) {\tiny $p_g$};
        \end{tikzpicture}}}}
      +\lambda^{1/2}(2\mu^{-2}+1 +\frac{\mu^{2}}{2})\vcenter{\hbox{\scalebox{0.8}{
        \begin{tikzpicture}[scale=1.2]
        \draw [blue] (-0.7, 0.9)--(-0.7, -0.9);
           \draw [blue]  (-0.2, 0.9)--(-0.2, -0.9) (0.2, 0.9)--(0.2, -0.9);
           \draw [fill=white] (-0.4, -0.3) rectangle (0.4, 0.3);
           \node at (0, 0) {\tiny $p_{g^{-1}}$};
        \end{tikzpicture}}}}\\
&+(1-\mu^2-\mu^{-2})\left(\vcenter{\hbox{\scalebox{0.8}{
        \begin{tikzpicture}[scale=1.2]
           \draw [blue]  (0.2, 0.9)--(0.2, -1.2);
           \draw [blue] (-0.2, 0.9)--(-0.2, -0.3).. controls +(0, -0.3) and +(0, -0.3) .. (-0.7, -0.3)--(-0.7, 0.9);
           \draw [fill=white] (-0.4, -0.3) rectangle (0.4, 0.3);
           \node at (0, 0) {\tiny $p_{g}$};
           \begin{scope}[shift={(0, -1.2)}]
           \draw [blue]  (0.2, -0.9)--(0.2, 0.3);
              \draw [blue] (-0.2, -0.9)--(-0.2, 0.3).. controls +(0, 0.3) and +(0, 0.3) .. (-0.7, 0.3)--(-0.7, -0.9);
               \draw [fill=white] (-0.4, -0.3) rectangle (0.4, 0.3);
           \node at (0, 0) {\tiny $p_{g^{-1}}$};
           \end{scope}
        \end{tikzpicture}}}} + \vcenter{\hbox{\scalebox{0.8}{
        \begin{tikzpicture}[scale=1.2]
           \draw [blue]  (0.2, 0.9)--(0.2, -1.2);
           \draw [blue] (-0.2, 0.9)--(-0.2, -0.3).. controls +(0, -0.3) and +(0, -0.3) .. (-0.7, -0.3)--(-0.7, 0.9);
           \draw [fill=white] (-0.4, -0.3) rectangle (0.4, 0.3);
           \node at (0, 0) {\tiny $p_{g^{-1}}$};
           \begin{scope}[shift={(0, -1.2)}]
           \draw [blue]  (0.2, -0.9)--(0.2, 0.3);
              \draw [blue] (-0.2, -0.9)--(-0.2, 0.3).. controls +(0, 0.3) and +(0, 0.3) .. (-0.7, 0.3)--(-0.7, -0.9);
               \draw [fill=white] (-0.4, -0.3) rectangle (0.4, 0.3);
           \node at (0, 0) {\tiny $p_g$};
           \end{scope}
        \end{tikzpicture}}}}  \right) +\frac{\lambda}{2}(\mu^2+\mu^{-2}+8)\vcenter{\hbox{\scalebox{0.8}{
        \begin{tikzpicture}[scale=1.2]
     \draw [blue] (0, -0.5).. controls +(0, 0.4) and +(0, 0.4).. (0.4, -0.5);
     \draw [blue] (0, 0.5).. controls +(0, -0.4) and +(0, -0.4).. (0.4, 0.5);       
     \begin{scope}[shift={(-0.4,0)}]
     \draw [blue] (0,-0.5)--(0, 0.5);
           \end{scope}
        \end{tikzpicture}}}}\\
& +\lambda^{1/2}(-\mu^2+\frac{1}{2}\mu^{-2}-2)\left(   \vcenter{\hbox{\scalebox{0.8}{
        \begin{tikzpicture}[scale=1.2]
         \draw [blue] (0.3, 0.9).. controls +(0, -0.3) and +(0, -0.3).. (0.6, 0.9);
           \draw [blue]  (0.2, 0.9)--(0.2, -0.9);
           \draw [blue](-0.2, -0.9)--(-0.2, 0.3).. controls +(0, 0.3) and +(0, 0.3).. (-0.6, 0.3)--(-0.6, -0.9);
           \draw [fill=white] (-0.4, -0.3) rectangle (0.5, 0.3);
           \node at (0, 0) {\tiny $ p_g$};
        \end{tikzpicture}}}}
        +     \vcenter{\hbox{\scalebox{0.8}{
        \begin{tikzpicture}[scale=1.2]
         \draw [blue] (0.3, -0.9).. controls +(0, 0.3) and +(0, 0.3).. (0.6, -0.9);
           \draw [blue]  (0.2, 0.9)--(0.2, -0.9);
           \draw [blue](-0.2, 0.9)--(-0.2, -0.3).. controls +(0, -0.3) and +(0, -0.3).. (-0.6, -0.3)--(-0.6, 0.9);
           \draw [fill=white] (-0.4, -0.3) rectangle (0.4, 0.3);
           \node at (0, 0) {\tiny $p_g$};
        \end{tikzpicture}}}}\right)\\
  & +\lambda^{1/2}(-\mu^{-2}+\frac{1}{2}\mu^{2}-2)\left(   \vcenter{\hbox{\scalebox{0.8}{
        \begin{tikzpicture}[scale=1.2]
         \draw [blue] (0.3, 0.9).. controls +(0, -0.3) and +(0, -0.3).. (0.6, 0.9);
           \draw [blue]  (0.2, 0.9)--(0.2, -0.9);
           \draw [blue](-0.2, -0.9)--(-0.2, 0.3).. controls +(0, 0.3) and +(0, 0.3).. (-0.6, 0.3)--(-0.6, -0.9);
           \draw [fill=white] (-0.4, -0.3) rectangle (0.5, 0.3);
           \node at (0, 0) {\tiny $ p_{g^{-1}}$};
        \end{tikzpicture}}}}
        +     \vcenter{\hbox{\scalebox{0.8}{
        \begin{tikzpicture}[scale=1.2]
         \draw [blue] (0.3, -0.9).. controls +(0, 0.3) and +(0, 0.3).. (0.6, -0.9);
           \draw [blue]  (0.2, 0.9)--(0.2, -0.9);
           \draw [blue](-0.2, 0.9)--(-0.2, -0.3).. controls +(0, -0.3) and +(0, -0.3).. (-0.6, -0.3)--(-0.6, 0.9);
           \draw [fill=white] (-0.4, -0.3) rectangle (0.4, 0.3);
           \node at (0, 0) {\tiny $p_{g^{-1}}$};
        \end{tikzpicture}}}}\right),
\end{align*}
and 
\begin{align*}
2\lambda(\mu+\mu^{-1})^2\widehat{\Gamma}=& \lambda^{1/2}(\mu^2+1)\vcenter{\hbox{\scalebox{0.8}{
        \begin{tikzpicture}[scale=1.2]
        \draw [blue] (-0.7, 0.9)--(-0.7, -0.9);
           \draw [blue]  (-0.2, 0.9)--(-0.2, -0.9) (0.2, 0.9)--(0.2, -0.9);
           \draw [fill=white] (-0.4, -0.3) rectangle (0.4, 0.3);
           \node at (0, 0) {\tiny $p_g$};
        \end{tikzpicture}}}}
      +\lambda^{1/2}(\mu^{-2}+1)\vcenter{\hbox{\scalebox{0.8}{
        \begin{tikzpicture}[scale=1.2]
        \draw [blue] (-0.7, 0.9)--(-0.7, -0.9);
           \draw [blue]  (-0.2, 0.9)--(-0.2, -0.9) (0.2, 0.9)--(0.2, -0.9);
           \draw [fill=white] (-0.4, -0.3) rectangle (0.4, 0.3);
           \node at (0, 0) {\tiny $p_{g^{-1}}$};
        \end{tikzpicture}}}}     + \lambda(\mu^2+\mu^{-2}+2)\vcenter{\hbox{\scalebox{0.8}{
        \begin{tikzpicture}[scale=1.2]
     \draw [blue] (0, -0.5).. controls +(0, 0.4) and +(0, 0.4).. (0.4, -0.5);
     \draw [blue] (0, 0.5).. controls +(0, -0.4) and +(0, -0.4).. (0.4, 0.5);       
     \begin{scope}[shift={(-0.4,0)}]
     \draw [blue] (0,-0.5)--(0, 0.5);
           \end{scope}
        \end{tikzpicture}}}}\\
        & +\lambda^{1/2}(-\mu^2-1)\left(   \vcenter{\hbox{\scalebox{0.8}{
        \begin{tikzpicture}[scale=1.2]
         \draw [blue] (0.3, 0.9).. controls +(0, -0.3) and +(0, -0.3).. (0.6, 0.9);
           \draw [blue]  (0.2, 0.9)--(0.2, -0.9);
           \draw [blue](-0.2, -0.9)--(-0.2, 0.3).. controls +(0, 0.3) and +(0, 0.3).. (-0.6, 0.3)--(-0.6, -0.9);
           \draw [fill=white] (-0.4, -0.3) rectangle (0.5, 0.3);
           \node at (0, 0) {\tiny $ p_g$};
        \end{tikzpicture}}}}
        +     \vcenter{\hbox{\scalebox{0.8}{
        \begin{tikzpicture}[scale=1.2]
         \draw [blue] (0.3, -0.9).. controls +(0, 0.3) and +(0, 0.3).. (0.6, -0.9);
           \draw [blue]  (0.2, 0.9)--(0.2, -0.9);
           \draw [blue](-0.2, 0.9)--(-0.2, -0.3).. controls +(0, -0.3) and +(0, -0.3).. (-0.6, -0.3)--(-0.6, 0.9);
           \draw [fill=white] (-0.4, -0.3) rectangle (0.4, 0.3);
           \node at (0, 0) {\tiny $p_g$};
        \end{tikzpicture}}}}\right)
        \\
  & +\lambda^{1/2}(-\mu^{-2}-1)\left(   \vcenter{\hbox{\scalebox{0.8}{
        \begin{tikzpicture}[scale=1.2]
         \draw [blue] (0.3, 0.9).. controls +(0, -0.3) and +(0, -0.3).. (0.6, 0.9);
           \draw [blue]  (0.2, 0.9)--(0.2, -0.9);
           \draw [blue](-0.2, -0.9)--(-0.2, 0.3).. controls +(0, 0.3) and +(0, 0.3).. (-0.6, 0.3)--(-0.6, -0.9);
           \draw [fill=white] (-0.4, -0.3) rectangle (0.5, 0.3);
           \node at (0, 0) {\tiny $ p_{g^{-1}}$};
        \end{tikzpicture}}}}
        +     \vcenter{\hbox{\scalebox{0.8}{
        \begin{tikzpicture}[scale=1.2]
         \draw [blue] (0.3, -0.9).. controls +(0, 0.3) and +(0, 0.3).. (0.6, -0.9);
           \draw [blue]  (0.2, 0.9)--(0.2, -0.9);
           \draw [blue](-0.2, 0.9)--(-0.2, -0.3).. controls +(0, -0.3) and +(0, -0.3).. (-0.6, -0.3)--(-0.6, 0.9);
           \draw [fill=white] (-0.4, -0.3) rectangle (0.4, 0.3);
           \node at (0, 0) {\tiny $p_{g^{-1}}$};
        \end{tikzpicture}}}}\right).
\end{align*}
The Bakry-\'{E}mery estimate is equivalent to 
\begin{align*}
& \begin{pmatrix}
2\mu^2+1+\frac{1}{2}\mu^{-2} & 1-\mu^2-\mu^{-2} & -\mu^2+\frac{1}{2}\mu^{-2}-2\\
1-\mu^2-\mu^{-2} & 2\mu^{-2}+1+\frac{1}{2}\mu^{2} & -\mu^{-2}+\frac{1}{2}\mu^2-2 \\
-\mu^2+\frac{1}{2}\mu^{-2}-2 &  -\mu^{-2}+\frac{1}{2}\mu^2-2 & \frac{1}{2}(\mu^2+\mu^{-2})+4
\end{pmatrix}\\
\geq &\beta
\begin{pmatrix}
\mu^2+1 & 0 &  -\mu^2-1 \\
0 & \mu^{-2}+1 & -\mu^{-2}-1 \\
-\mu^2-1 & -\mu^{-2}-1 & \mu^2+\mu^{-2}+2
\end{pmatrix}.
\end{align*}
Solving it for optimal value $\beta$, we obtain Equation \eqref{eq:betaz3}.
Note that we have that $\displaystyle \beta \leq \frac{5}{4}$ in general.
\end{proof}

\begin{remark}
By Equation \eqref{eq:interz3}, we only have that $\displaystyle \widehat{\Gamma}_2 \geq \frac{\mu }{\mu+\mu^{-1}} \widehat{\Gamma}.$
It is necessary to find a better extension of the semigroup to improve the constant $\beta$.
\end{remark}

\subsection{$\bZ_n$-symmetry}

Suppose that $n$ is odd, i.e. $n=2m+1$.
Let 
\begin{align}\label{eq:lapzn}
    \widehat{\cL}_0 = \frac{\lambda^{-1/2}}{\sum_{j=1}^m \mu^j + \mu^{-j}}\sum_{j=1}^m\left(\mu^j \vcenter{\hbox{\scalebox{0.8}{
        \begin{tikzpicture}[scale=1.2]
           \draw [blue]  (-0.2, 0.9)--(-0.2, -0.9) (0.2, 0.9)--(0.2, -0.9);
           \draw [fill=white] (-0.4, -0.3) rectangle (0.4, 0.3);
           \node at (0, 0) {\tiny $p_{g^j}$};
        \end{tikzpicture}}}} 
        + \mu^{-j}\vcenter{\hbox{\scalebox{0.8}{
        \begin{tikzpicture}[scale=1.2]
           \draw [blue]  (-0.2, 0.9)--(-0.2, -0.9) (0.2, 0.9)--(0.2, -0.9);
           \draw [fill=white] (-0.4, -0.3) rectangle (0.4, 0.3);
           \node at (0, 0) {\tiny $p_{g^{-j}}$};
        \end{tikzpicture}}}}\right)
\end{align}
and 
\begin{small}
\begin{align*}
    \widehat{\cJ}_0=& 
    \frac{\lambda^{-1}}{\sum_{j=1}^m \mu^j + \mu^{-j}}\sum_{j=1}^m\left( (1-\kappa)\mu^j \vcenter{\hbox{\scalebox{0.8}{
        \begin{tikzpicture}[scale=1.2]
           \draw [blue]  (-0.2, 0.7)--(-0.2, -0.3).. controls +(0, -0.3) and +(0, -0.3).. (-0.6, -0.3)--(-0.6, 0.7) (0.2, 0.7)--(0.2, -0.3).. controls +(0, -0.3) and +(0, -0.3).. (0.6, -0.3)--(0.6, 0.7);
           \draw [fill=white] (-0.4, -0.3) rectangle (0.4, 0.3);
           \node at (0, 0) {\tiny $p_{g^j}$};
        \begin{scope}[shift={(0, -1.3)}]
             \draw [blue]  (-0.2, -0.7)--(-0.2, 0.3).. controls +(0, 0.3) and +(0, 0.3).. (-0.6, 0.3)--(-0.6, -0.7) (0.2, -0.7)--(0.2, 0.3).. controls +(0, 0.3) and +(0, 0.3).. (0.6, 0.3)--(0.6, -0.7);
           \draw [fill=white] (-0.4, -0.3) rectangle (0.4, 0.3);
           \node at (0, 0) {\tiny $p_{g^j}$};          
        \end{scope}
        \end{tikzpicture}}}}
         +\lambda \kappa \mu^{j} \vcenter{\hbox{\scalebox{0.8}{
        \begin{tikzpicture}[scale=1.2]
        \draw [blue] (-0.6, 0.9)--(-0.6, -0.9) (0.6, 0.9)--(0.6, -0.9);
           \draw [blue]  (-0.2, 0.9)--(-0.2, -0.9) (0.2, 0.9)--(0.2, -0.9);
           \draw [fill=white] (-0.4, -0.3) rectangle (0.4, 0.3);
           \node at (0, 0) {\tiny $p_{g^j}$};
        \end{tikzpicture}}}} \right) \\
   &  +\frac{\lambda^{-1} }{\sum_{j=1}^m \mu^j + \mu^{-j}}\sum_{j=1}^m  \left( (\mu^{-j}-\mu^{j}\kappa)\vcenter{\hbox{\scalebox{0.8}{
        \begin{tikzpicture}[scale=1.2]
           \draw [blue]  (-0.2, 0.7)--(-0.2, -0.3).. controls +(0, -0.3) and +(0, -0.3).. (-0.6, -0.3)--(-0.6, 0.7) (0.2, 0.7)--(0.2, -0.3).. controls +(0, -0.3) and +(0, -0.3).. (0.6, -0.3)--(0.6, 0.7);
           \draw [fill=white] (-0.4, -0.3) rectangle (0.4, 0.3);
           \node at (0, 0) {\tiny $p_{g^{-j}}$};
        \begin{scope}[shift={(0, -1.3)}]
             \draw [blue]  (-0.2, -0.7)--(-0.2, 0.3).. controls +(0, 0.3) and +(0, 0.3).. (-0.6, 0.3)--(-0.6, -0.7) (0.2, -0.7)--(0.2, 0.3).. controls +(0, 0.3) and +(0, 0.3).. (0.6, 0.3)--(0.6, -0.7);
           \draw [fill=white] (-0.4, -0.3) rectangle (0.4, 0.3);
           \node at (0, 0) {\tiny $p_{g^{-j}}$};          
        \end{scope}
        \end{tikzpicture}}}}
        +\lambda\mu^{j}\kappa \vcenter{\hbox{\scalebox{0.8}{
        \begin{tikzpicture}[scale=1.2]
        \draw [blue] (-0.6, 0.9)--(-0.6, -0.9) (0.6, 0.9)--(0.6, -0.9);
           \draw [blue]  (-0.2, 0.9)--(-0.2, -0.9) (0.2, 0.9)--(0.2, -0.9);
           \draw [fill=white] (-0.4, -0.3) rectangle (0.4, 0.3);
           \node at (0, 0) {\tiny $p_{g^{-j}}$};
        \end{tikzpicture}}}} \right).
\end{align*}
\end{small}
By a similar computation, we have that $\displaystyle \beta=\frac{\mu^m\kappa}{\sum_{j=1}^m \mu^j + \mu^{-j}}$.

Suppose that $n$ is even, i.e. $n=2m+2$, $m \geq 1$.
Let 
\begin{align*}
    \widehat{\cL}_0 = \frac{\lambda^{-1/2}}{1+ \sum_{j=1}^m \mu^j + \mu^{-j}}\sum_{j=1}^m\left(\mu^j \vcenter{\hbox{\scalebox{0.8}{
        \begin{tikzpicture}[scale=1.2]
           \draw [blue]  (-0.2, 0.9)--(-0.2, -0.9) (0.2, 0.9)--(0.2, -0.9);
           \draw [fill=white] (-0.4, -0.3) rectangle (0.4, 0.3);
           \node at (0, 0) {\tiny $p_{g^j}$};
        \end{tikzpicture}}}} 
        + \mu^{-j}\vcenter{\hbox{\scalebox{0.8}{
        \begin{tikzpicture}[scale=1.2]
           \draw [blue]  (-0.2, 0.9)--(-0.2, -0.9) (0.2, 0.9)--(0.2, -0.9);
           \draw [fill=white] (-0.4, -0.3) rectangle (0.4, 0.3);
           \node at (0, 0) {\tiny $p_{g^{-j}}$};
        \end{tikzpicture}}}}\right)
        + \frac{\lambda^{-1/2}}{1+\sum_{j=1}^m \mu^j + \mu^{-j}}\vcenter{\hbox{\scalebox{0.8}{
        \begin{tikzpicture}[scale=1.2]
           \draw [blue]  (-0.2, 0.9)--(-0.2, -0.9) (0.2, 0.9)--(0.2, -0.9);
           \draw [fill=white] (-0.4, -0.3) rectangle (0.4, 0.3);
           \node at (0, 0) {\tiny $p_{g^{m+1}}$};
        \end{tikzpicture}}}}
\end{align*}
and
\begin{align*}
    \widehat{\cJ}_0=& 
    \frac{\lambda^{-1}}{1+ \sum_{j=1}^m \mu^j + \mu^{-j}}\sum_{j=1}^m\left(\mu^j (1-\kappa)\vcenter{\hbox{\scalebox{0.8}{
        \begin{tikzpicture}[scale=1.2]
           \draw [blue]  (-0.2, 0.7)--(-0.2, -0.3).. controls +(0, -0.3) and +(0, -0.3).. (-0.6, -0.3)--(-0.6, 0.7) (0.2, 0.7)--(0.2, -0.3).. controls +(0, -0.3) and +(0, -0.3).. (0.6, -0.3)--(0.6, 0.7);
           \draw [fill=white] (-0.4, -0.3) rectangle (0.4, 0.3);
           \node at (0, 0) {\tiny $p_{g^j}$};
        \begin{scope}[shift={(0, -1.3)}]
             \draw [blue]  (-0.2, -0.7)--(-0.2, 0.3).. controls +(0, 0.3) and +(0, 0.3).. (-0.6, 0.3)--(-0.6, -0.7) (0.2, -0.7)--(0.2, 0.3).. controls +(0, 0.3) and +(0, 0.3).. (0.6, 0.3)--(0.6, -0.7);
           \draw [fill=white] (-0.4, -0.3) rectangle (0.4, 0.3);
           \node at (0, 0) {\tiny $p_{g^j}$};          
        \end{scope}
        \end{tikzpicture}}}}
         +\lambda \mu^j \kappa \vcenter{\hbox{\scalebox{0.8}{
        \begin{tikzpicture}[scale=1.2]
        \draw [blue] (-0.6, 0.9)--(-0.6, -0.9) (0.6, 0.9)--(0.6, -0.9);
           \draw [blue]  (-0.2, 0.9)--(-0.2, -0.9) (0.2, 0.9)--(0.2, -0.9);
           \draw [fill=white] (-0.4, -0.3) rectangle (0.4, 0.3);
           \node at (0, 0) {\tiny $p_{g^j}$};
        \end{tikzpicture}}}} \right) \\
      &  +\frac{\lambda^{-1}}{1+ \sum_{j=1}^m \mu^j + \mu^{-j} }\sum_{j=1}^m  \left( (\mu^{-j}-\mu^j\kappa)\vcenter{\hbox{\scalebox{0.8}{
        \begin{tikzpicture}[scale=1.2]
           \draw [blue]  (-0.2, 0.7)--(-0.2, -0.3).. controls +(0, -0.3) and +(0, -0.3).. (-0.6, -0.3)--(-0.6, 0.7) (0.2, 0.7)--(0.2, -0.3).. controls +(0, -0.3) and +(0, -0.3).. (0.6, -0.3)--(0.6, 0.7);
           \draw [fill=white] (-0.4, -0.3) rectangle (0.4, 0.3);
           \node at (0, 0) {\tiny $p_{g^{-j}}$};
        \begin{scope}[shift={(0, -1.3)}]
             \draw [blue]  (-0.2, -0.7)--(-0.2, 0.3).. controls +(0, 0.3) and +(0, 0.3).. (-0.6, 0.3)--(-0.6, -0.7) (0.2, -0.7)--(0.2, 0.3).. controls +(0, 0.3) and +(0, 0.3).. (0.6, 0.3)--(0.6, -0.7);
           \draw [fill=white] (-0.4, -0.3) rectangle (0.4, 0.3);
           \node at (0, 0) {\tiny $p_{g^{-j}}$};          
        \end{scope}
        \end{tikzpicture}}}}
        +\lambda\mu^j\kappa \vcenter{\hbox{\scalebox{0.8}{
        \begin{tikzpicture}[scale=1.2]
        \draw [blue] (-0.6, 0.9)--(-0.6, -0.9) (0.6, 0.9)--(0.6, -0.9);
           \draw [blue]  (-0.2, 0.9)--(-0.2, -0.9) (0.2, 0.9)--(0.2, -0.9);
           \draw [fill=white] (-0.4, -0.3) rectangle (0.4, 0.3);
           \node at (0, 0) {\tiny $p_{g^{-j}}$};
        \end{tikzpicture}}}} \right)\\
     &   +\frac{\lambda^{-1}(1-\mu\kappa)}{1+ \sum_{j=1}^m \mu^j + \mu^{-j}}\vcenter{\hbox{\scalebox{0.8}{
        \begin{tikzpicture}[scale=1.2]
           \draw [blue]  (-0.2, 0.7)--(-0.2, -0.3).. controls +(0, -0.3) and +(0, -0.3).. (-0.6, -0.3)--(-0.6, 0.7) (0.2, 0.7)--(0.2, -0.3).. controls +(0, -0.3) and +(0, -0.3).. (0.6, -0.3)--(0.6, 0.7);
           \draw [fill=white] (-0.4, -0.3) rectangle (0.4, 0.3);
           \node at (0, 0) {\tiny $p_{g^{m+1}}$};
        \begin{scope}[shift={(0, -1.3)}]
             \draw [blue]  (-0.2, -0.7)--(-0.2, 0.3).. controls +(0, 0.3) and +(0, 0.3).. (-0.6, 0.3)--(-0.6, -0.7) (0.2, -0.7)--(0.2, 0.3).. controls +(0, 0.3) and +(0, 0.3).. (0.6, 0.3)--(0.6, -0.7);
           \draw [fill=white] (-0.4, -0.3) rectangle (0.4, 0.3);
           \node at (0, 0) {\tiny $p_{g^{m+1}}$};          
        \end{scope}
        \end{tikzpicture}}}}
        +\frac{ \mu\kappa}{1+ \sum_{j=1}^m \mu^j + \mu^{-j}} \vcenter{\hbox{\scalebox{0.8}{
        \begin{tikzpicture}[scale=1.2]
        \draw [blue] (-0.6, 0.9)--(-0.6, -0.9) (0.6, 0.9)--(0.6, -0.9);
           \draw [blue]  (-0.2, 0.9)--(-0.2, -0.9) (0.2, 0.9)--(0.2, -0.9);
           \draw [fill=white] (-0.4, -0.3) rectangle (0.4, 0.3);
           \node at (0, 0) {\tiny $p_{g^{m+1}}$};
        \end{tikzpicture}}}}.
\end{align*}
By a similar computation, we have that $\displaystyle \beta=\frac{\mu^m \kappa}{1+ \sum_{j=1}^m \mu^j + \mu^{-j}}$.

\begin{theorem}
Suppose that $\cM^{\bZ_n}\subset \cM$ is the inclusion and $\{\Phi_t\}_{t\geq 0}$ is the quantum Markov semigroup arising from Equation \eqref{eq:lapzn}.
Then for any positive element $D\in \cM$, we have that 
\begin{align*}
H(\Phi_t^*(D)\| D_{\Delta}) -H( \bE_{\cN}(D) \| D_{\Delta}) \leq  e^{-2\beta t}  (H(D\| D_{\Delta}) -H(  \bE_{\cN}(D)\| D_{\Delta})),
\end{align*}
where $\displaystyle \beta=\frac{\mu^m \kappa}{1+ \sum_{j=1}^m \mu^j + \mu^{-j}}$ if $n=2m+2$ and $\displaystyle \beta=\frac{\mu^m \kappa}{ \sum_{j=1}^m \mu^j + \mu^{-j}}$ if $n=2m+1$. 
\end{theorem}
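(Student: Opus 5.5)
This follows the $\bZ_3$ case. The modified log-Sobolev inequality \eqref{eq:gnslogs} holds for bimodule GNS symmetric semigroups with entropic lower Ricci curvature bound $\beta$. The plan is to show that the semigroup of \eqref{eq:lapzn} is bimodule GNS symmetric, satisfies the intertwining property with constant $\beta$, and has trivial limit multiplier.

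\textbf{Step 1: GNS symmetry and the extension.} From the group relations for the minimal projections $p_{g^j}$ (the same fusion rules as in the $\bZ_3$ subsection, now with $\lambda=1/n$), one gets $1*\widehat{\cL}_0=1$. The semigroup is then bimodule GNS symmetric with respect to
\[
\widehat{\Delta}=e_2+\sum_j\bigl(\mu^{2j}p_{g^j}+\mu^{-2j}p_{g^{-j}}\bigr),
\]
with $\mu_{g^{m+1}}=1$ when $n$ is even. Next, $\widehat{\cJ}_0$ is positive for $0\le\kappa\le 1$, provided $\kappa\mu^{2j}\le 1$ (and $\mu\kappa\le1$ for the middle term), which holds since $\mu\le 1$. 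Its right cap gives $\widehat{\cL}_0$ and its left double cap gives $2\mathbf{y}=1$. Proposition \ref{prop:semigroupext} then yields a bimodule Markov semigroup $\widetilde{\Phi_t}$ with $\cJ|_{\cM}=\cL$ and $\widetilde{\Phi_t}^*$ extending $\Phi_t^*$.

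\textbf{Step 2: the intertwining identity.} For each $k$ I evaluate the four terms of \eqref{eq:intertwining3}, with $p_k=p_{g^{\pm j}}$, exactly as in the $\bZ_3$ calculation. The terms coming from $\widehat{\cL}_0$ and from the non-$\kappa$ part of $\widehat{\cJ}_0$ cancel. What remains are the $\kappa$-terms, which via $g^n=1$ collapse onto the two cap diagrams of $p_k$ and give
\[
\partial_k\cL-\cJ\partial_k=\beta_k\partial_k,\qquad \beta_k=\frac{\kappa\,\mu^{\pm j+m}\cdots}{Z}.
\]
Here $Z$ is the normalization $\sum_{j}(\mu^j+\mu^{-j})$, plus $1$ when $n$ is even. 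The exponent is chosen so that $\min_k\beta_k=\mu^m\kappa/Z$ is attained at $k=g^{\pm m}$. The lemma equating \eqref{eq:intertwining} with \eqref{eq:intertwining0} then gives the intertwining property with $\beta=\min_k\beta_k$, which is the stated value.

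\textbf{Step 3: gradient estimate and entropy decay.} The Lieb-concavity computation after \eqref{eq:gfh} gives
\[
\|\nabla\Phi_t x\|_{D,\widehat{\Delta}}\le e^{-\beta t}\|\nabla x\|_{\Phi_t^*(D),\widehat{\Delta}},
\]
so $\ric_\Phi\ge\beta$ and \eqref{eq:gnslogs} applies. The inclusion is irreducible and the semigroup relatively ergodic, since the $p_{g^{\pm j}}$ together with $e_2$ exhaust $\cM'\cap\cM_2$. Hence $\widehat{\bE}_\Phi=\lim_{t\to\infty}\widehat{\Phi}_t=\lambda^{1/2}e_2$, so $\bE_{\cN}(D)\overline{\widehat{\bE}_\Phi}$ reduces to $\bE_{\cN}(D)$, which gives the theorem.

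\textbf{Main obstacle.} The hard part is the bookkeeping in Step 2: tracking the coefficients $\mu^{\pm j}$ through the rotated diagrams for every $j$, and handling the self-inverse element $g^{m+1}$ separately when $n$ is even. I would first check $j=m$ directly, since it determines the minimum, and then argue that the other $j$ give larger $\beta_j$.
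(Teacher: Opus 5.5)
Your overall route matches the paper's: GNS symmetry, the lift $\widehat{\cJ}_0$ through Proposition~\ref{prop:semigroupext}, intertwining, the Lieb-concavity gradient estimate, then \eqref{eq:gnslogs} with $\bE_\Phi=\bE_{\cN}$. The paper's own proof adds only $\lim_t\widehat{\Phi}_t=\lambda^{1/2}$.

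\textbf{The gap is Step~2.} You reverse-engineer it rather than compute it, and it fails as described once $m\ge 2$. (For $n=3,4$ the displayed weights happen to be uniform, so those cases are fine.)

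Set up notation as follows. Put $w_{g^{\pm j}}=\mu^{\pm j}$ (and $w_{g^{m+1}}=1$ for even $n$), $Z=\sum_{h\neq e}w_h$. Then $\cL=\id-Z^{-1}\sum_h w_h\alpha_h$, where $\alpha_h$ is the automorphism whose multiplier is $\lambda^{-1/2}p_h$. Let $c_h$ be the weight of the ``strings on both sides'' summand of $Z\widehat{\cJ}_0$ at $p_h$, so the nested-cap summand carries $w_h-c_h$. In the displayed $\widehat{\cJ}_0$, which you adopt, $c_{g^{\pm j}}=\mu^j\kappa$.

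The $\bZ_3$ evaluation shows what the two lifts do.
\begin{itemize}
\item The nested-cap lift $\widetilde{\alpha}_h$ satisfies $\widetilde{\alpha}_h\partial_k=\partial_k\alpha_h$. Concretely, $\cM_1\cong\cM\rtimes\bZ_n$, $\fF^{-1}(p_k)$ is a multiple of the unitary $u_k$ implementing $\alpha_k$, and $\widetilde{\alpha}_h=\mathrm{Ad}\,u_h$.
\item The other lift is $\alpha_h\circ\bE_{\cM}$, which kills $\partial_k(\cM)\subset\cM u_k$.
\end{itemize}
Hence
\[
\partial_k\cL-\cJ\partial_k=-\frac{1}{Z}\sum_{h\neq e}c_h\,\mathrm{Ad}(u_h)\,\partial_k .
\]
On $\partial_k(\cM_\chi)$, for a character $\chi$ with $\chi(k)\neq 1$, this acts by $-Z^{-1}\sum_{h\neq e}c_h\chi(h)$. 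That is independent of $\chi$ only if $c_h$ is constant.

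Diagrammatically, the non-cap/cup terms $p_{hk,h}$ of the residual carry coefficients $(c_h-c_{hk})/Z$. Requiring them to vanish is exactly the constancy condition $\omega_Y\bigl(1/d_Y-\kappa_Yd_Y/(2d_Y^2-1)\bigr)=\gamma$ of the fusion-category subsection at $d_Y=1$. So with $c_{g^{\pm j}}=\mu^j\kappa$, $0<\mu<1$ and $\kappa>0$, there are no constants $\beta_k$ to minimise. For example, for $\bZ_5$ and $\chi(g)=e^{2\pi i/5}$ the residual is $-2\kappa Z^{-1}\bigl(\mu\cos\frac{2\pi}{5}+\mu^2\cos\frac{4\pi}{5}\bigr)$, which is negative for small $\mu$.

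\textbf{The repair} is to take $c_h\equiv\mu^m\kappa$ for all $h\neq e$.
\begin{itemize}
\item This is admissible: the nested-cap summands then carry $\mu^{\pm j}-\mu^m\kappa\ge 0$ (and $1-\mu^m\kappa$ at $g^{m+1}$).
\item The two summands at each $p_h$ are lifts of the same map, so the cap conditions of Proposition~\ref{prop:semigroupext} are unaffected.
\item Since $\sum_{h\neq e}\chi(h)=-1$ for $\chi\neq 1$, this gives $\partial_k\cL-\cJ\partial_k=\frac{\mu^m\kappa}{Z}\partial_k$ for every $k$.
\end{itemize}
So $\mu^m=\min_h w_h$ enters as the largest admissible common weight, not as a minimum over directions. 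This is also what the $\bZ_3$ formula does, where $c_{g^{\pm1}}=\mu\kappa$.

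\textbf{A separate slip in Step~3.} The multiplier of $\bE_\Phi=\bE_{\cN}$ is $\lambda^{1/2}\cdot 1$, as in the paper. Your $\lambda^{1/2}e_2$ is the multiplier of $\lambda\,\id$, since $\widehat{\id}=\lambda^{-1/2}e_2$.
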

\begin{proof}
It follows from the fact that $\displaystyle \lim_{t\to \infty}\widehat{\Phi}_t=\lambda^{1/2}$.
\end{proof}

\subsection{Unitary Fusion Categories}

Suppose that $\mathcal{C}$ is a unitary fusion category and $\mathscr{P}$ is the associated planar algebra (See \cite{WuZha25} for details).
Let $\Irr$ be the set of all simple objects in $\mathcal{C}$ and $\mathbbm{1}$ is the unit object.
We have that $\displaystyle \lambda^{-1}=\sum_{X\in \Irr} d_X^2$, where $d_X$ is the quantum dimension of $X$.
For any simple objects $X, Y \in \mathcal{C}$, we have that 
\begin{align*}
\vcenter{\hbox{\scalebox{0.8}{
        \begin{tikzpicture}[scale=1.2]
           \draw [blue] (0.2, 0.9)--(0.2, -0.9) ;
           \draw [blue] (-0.2, 0.3)..controls +(0, 0.45) and +(0, 0.45)..(-0.8, 0.3);
        \draw [blue] (-0.2, -0.3)..controls +(0, -0.45) and +(0, -0.45)..(-0.8, -0.3);
           \draw [fill=white] (-0.4, -0.3) rectangle (0.4, 0.3);
           \node at (0, 0) {\tiny $1_Y$};
           \begin{scope}[shift={(-1, 0)}]
           \draw [blue] (-0.2, -0.9)--(-0.2, 0.9);
           \draw [fill=white] (-0.4, -0.3) rectangle (0.4, 0.3);
           \node at (0, 0) {\tiny $1_X$};               
           \end{scope}
        \end{tikzpicture}}}}
=\lambda^{1/2}\sum_{W} N_{X, Y}^W \frac{d_Xd_Y}{d_W}
\vcenter{\hbox{\scalebox{0.8}{
        \begin{tikzpicture}[scale=1.2]
           \draw [blue]  (-0.2, 0.9)--(-0.2, -0.9) (0.2, 0.9)--(0.2, -0.9);
           \draw [fill=white] (-0.4, -0.3) rectangle (0.4, 0.3);
           \node at (0, 0) {\tiny $1_{W}$};
        \end{tikzpicture}}}},
\end{align*}
where $1_{X}$ is the minimal projection in $\cM'\cap \cM_2$ associated to the simple object $X$ and $N_{X, Y}^W$ is the fusion coefficients.
The Jones-Wenzl-Liu formula is the following:
\begin{align*}
   \vcenter{\hbox{ \begin{tikzpicture} 
  \draw [blue] (-0.2, -0.5)--(-0.2, 0.5) (0.2, -0.5)--(0.2, 0.5) (0.6, -0.5)--(0.6, 0.5);
        \draw [fill=white] (-0.4, -0.25) rectangle (0.4, 0.25);
        \node at (0, 0) {\tiny $1_X$};
  \end{tikzpicture}}}
  =  \frac{\lambda^{-1/2} }{d_X^2}  \vcenter{\hbox{\scalebox{0.8}{
        \begin{tikzpicture}[scale=1.2]
           \draw [blue]  (-0.2, 0.9)--(-0.2, -1.2);
           \draw [blue] (0.2, 0.9)--(0.2, -0.3).. controls +(0, -0.3) and +(0, -0.3) .. (0.7, -0.3)--(0.7, 0.9);
           \draw [fill=white] (-0.4, -0.3) rectangle (0.4, 0.3);
           \node at (0, 0) {\tiny $1_X$};
           \begin{scope}[shift={(0, -1.2)}]
           \draw [blue]  (-0.2, -0.9)--(-0.2, 0.3);
              \draw [blue] (0.2, -0.9)--(0.2, 0.3).. controls +(0, 0.3) and +(0, 0.3) .. (0.7, 0.3)--(0.7, -0.9);
               \draw [fill=white] (-0.4, -0.3) rectangle (0.4, 0.3);
           \node at (0, 0) {\tiny $1_X$};
           \end{scope}
        \end{tikzpicture}}}}  
    +    \vcenter{\hbox{ \begin{tikzpicture} 
  \draw [blue] (-0.2, -1.25)--(-0.2, 0.5) (0.2,  -1.25)--(0.2, 0.5) (0.6,  -1.25)--(0.6, 0.5);
        \draw [fill=white] (-0.4, -0.25) rectangle (0.4, 0.25);
        \node at (0, 0) {\tiny $1_X$};
        \begin{scope}[shift={(0, -0.75)}]
        \draw [fill=white] (-0.4, -0.25) rectangle (0.8, 0.25);
        \node at ( 0.2,0) {\tiny $s_{3, -}$};
        \end{scope}
  \end{tikzpicture}}},
 \quad 
    \vcenter{\hbox{ \begin{tikzpicture} 
  \draw [blue] (-0.2, -0.5)--(-0.2, 0.5) (0.2, -0.5)--(0.2, 0.5) (-0.6, -0.5)--(-0.6, 0.5);
        \draw [fill=white] (-0.4, -0.25) rectangle (0.4, 0.25);
        \node at (0, 0) {\tiny $1_X$};
  \end{tikzpicture}}}
  =  \frac{\lambda^{-1/2} }{d_X^2}  \vcenter{\hbox{\scalebox{0.8}{
        \begin{tikzpicture}[scale=1.2]
           \draw [blue]  (0.2, 0.9)--(0.2, -1.2);
           \draw [blue] (-0.2, 0.9)--(-0.2, -0.3).. controls +(0, -0.3) and +(0, -0.3) .. (-0.7, -0.3)--(-0.7, 0.9);
           \draw [fill=white] (-0.4, -0.3) rectangle (0.4, 0.3);
           \node at (0, 0) {\tiny $1_X$};
           \begin{scope}[shift={(0, -1.2)}]
           \draw [blue]  (0.2, -0.9)--(0.2, 0.3);
              \draw [blue] (-0.2, -0.9)--(-0.2, 0.3).. controls +(0, 0.3) and +(0, 0.3) .. (-0.7, 0.3)--(-0.7, -0.9);
               \draw [fill=white] (-0.4, -0.3) rectangle (0.4, 0.3);
           \node at (0, 0) {\tiny $1_X$};
           \end{scope}
        \end{tikzpicture}}}}  
    +    \vcenter{\hbox{ \begin{tikzpicture} 
  \draw [blue] (-0.2, -1.25)--(-0.2, 0.5) (0.2,  -1.25)--(0.2, 0.5) (0.6,  -1.25)--(0.6, 0.5);
        \draw [fill=white] (0, -0.25) rectangle (0.8, 0.25);
        \node at (0.4, 0) {\tiny $1_X$};
        \begin{scope}[shift={(0, -0.75)}]
        \draw [fill=white] (-0.4, -0.25) rectangle (0.8, 0.25);
        \node at ( 0.2,0) {\tiny $s_{3, -}$};
        \end{scope}
  \end{tikzpicture}}},
\end{align*}
where $s_{3, \pm}$ is the Jones-Wenzl projection in $\mathscr{P}_{3, \pm}$.
We have that $\displaystyle
 \vcenter{\hbox{ \begin{tikzpicture} 
  \draw [blue] (-0.2, -1.25)--(-0.2, 0.5) (0.2,  -1.25)--(0.2, 0.5) (0.6,  -0.5).. controls +(0, 0.3) and +(0, 0.3).. (1, -0.5)--(1, -1).. controls +(0, -0.3) and +(0, -0.3).. (0.6, -1);
        \draw [fill=white] (-0.4, -0.25) rectangle (0.4, 0.25);
        \node at (0, 0) {\tiny $1_X$};
        \begin{scope}[shift={(0, -0.75)}]
        \draw [fill=white] (-0.4, -0.25) rectangle (0.8, 0.25);
        \node at ( 0.2,0) {\tiny $s_{3, -}$};
        \end{scope}
  \end{tikzpicture}}}=\lambda^{-1/2}\frac{d_X^2-1}{d_X^2}\vcenter{\hbox{\scalebox{0.8}{
        \begin{tikzpicture}[scale=1.2]
           \draw [blue]  (-0.2, 0.9)--(-0.2, -0.9) (0.2, 0.9)--(0.2, -0.9);
           \draw [fill=white] (-0.4, -0.3) rectangle (0.4, 0.3);
           \node at (0, 0) {\tiny $1_{X}$};
        \end{tikzpicture}}}}.$
For any simple objects $X, Y\in \Irr$, let $ \displaystyle p_{X, Y}= \frac{\lambda^{-1/2} }{d_Xd_Y}  \vcenter{\hbox{\scalebox{0.8}{
        \begin{tikzpicture}[scale=1.2]
           \draw [blue]  (-0.2, 0.9)--(-0.2, -1.2);
           \draw [blue] (0.2, 0.9)--(0.2, -0.3).. controls +(0, -0.3) and +(0, -0.3) .. (0.7, -0.3)--(0.7, 0.9);
           \draw [fill=white] (-0.4, -0.3) rectangle (0.4, 0.3);
           \node at (0, 0) {\tiny $1_X$};
           \begin{scope}[shift={(0, -1.2)}]
           \draw [blue]  (-0.2, -0.9)--(-0.2, 0.3);
              \draw [blue] (0.2, -0.9)--(0.2, 0.3).. controls +(0, 0.3) and +(0, 0.3) .. (0.7, 0.3)--(0.7, -0.9);
               \draw [fill=white] (-0.4, -0.3) rectangle (0.4, 0.3);
           \node at (0, 0) {\tiny $1_Y$};
           \end{scope}
        \end{tikzpicture}}}}$.
 Then $p_{X, Y}$  is a partial isometry in $\cM'\cap \cM_3$.
 Furthermore, for any simple object $X\neq \mathbbm{1}$,
 \begin{align*}
    \vcenter{\hbox{
\begin{tikzpicture}
\draw [blue] (-0.15, -0.8)--(-0.15, 0.8);
 \draw [blue] (0.15, -0.8)--(0.15, 0.3) .. controls +(0, 0.35) and +(0, 0.35).. (0.6, 0.3);
 \draw [fill=white] (-0.3, -0.3) rectangle (0.3, 0.3);
 \node at (0, 0) {\tiny $1_Y$};
 \begin{scope}[shift={(0.75, 0)}]
  \draw [fill=white] (-0.3, -0.3) rectangle (0.3, 0.3);
 \node at (0, 0) {\tiny $1_X$};
 \draw [blue] (0.15, 0.3)--(0.15, 0.8) (-0.15, -0.3)--(-0.15, -0.8);
  \draw [blue] (0.15, -0.3) .. controls +(0, -0.35) and +(0, -0.35).. (0.6, -0.3)--(0.6, 0.8);
 \end{scope}
\end{tikzpicture}
 }}
 =&  \sum_{W\in \Irr} \frac{N_{Y, X}^W  d_X}{d_Y d_W} \vcenter{\hbox{\scalebox{0.8}{
        \begin{tikzpicture}[scale=1.2]
           \draw [blue]  (-0.2, 0.9)--(-0.2, -1.2);
           \draw [blue] (0.2, 0.9)--(0.2, -0.3).. controls +(0, -0.3) and +(0, -0.3) .. (0.7, -0.3)--(0.7, 0.9);
           \draw [fill=white] (-0.4, -0.3) rectangle (0.4, 0.3);
           \node at (0, 0) {\tiny $1_W$};
           \begin{scope}[shift={(0, -1.2)}]
           \draw [blue]  (-0.2, -0.9)--(-0.2, 0.3);
              \draw [blue] (0.2, -0.9)--(0.2, 0.3).. controls +(0, 0.3) and +(0, 0.3) .. (0.7, 0.3)--(0.7, -0.9);
               \draw [fill=white] (-0.4, -0.3) rectangle (0.4, 0.3);
           \node at (0, 0) {\tiny $1_Y$};
           \end{scope}
        \end{tikzpicture}}}}  
        -  \vcenter{\hbox{ \begin{tikzpicture} 
 \draw [blue] (-0.15, -1.4)--(-0.15, 0.8);
 \draw [blue] (0.15, -1.4)--(0.15, 0.3) .. controls +(0, 0.35) and +(0, 0.35).. (0.6, 0.3);
 \draw [fill=white] (-0.3, -0.3) rectangle (0.3, 0.3);
 \node at (0, 0) {\tiny $1_Y$};
 \begin{scope}[shift={(0.75, 0)}]
  \draw [fill=white] (-0.3, -0.3) rectangle (0.3, 0.3);
 \node at (0, 0) {\tiny $1_X$};
 \draw [blue] (0.15, 0.3)--(0.15, 0.8) (-0.15, -0.3)--(-0.15, -1.4);
  \draw [blue] (0.15, -0.3) .. controls +(0, -0.35) and +(0, -0.35).. (0.6, -0.3)--(0.6, 0.8);
 \end{scope}       
        \begin{scope}[shift={(0, -0.75)}]
        \draw [fill=white] (-0.4, -0.25) rectangle (0.8, 0.25);
        \node at ( 0.2,0) {\tiny $s_{3, -}$};
        \end{scope}
  \end{tikzpicture}}} \\
  = & \sum_{W\in \Irr}  N_{Y, X}^W \lambda^{1/2} d_X  p_{W, Y} 
  - \vcenter{\hbox{ \begin{tikzpicture} 
 \draw [blue] (-0.15, -1.4)--(-0.15, 0.8);
 \draw [blue] (0.15, -1.4)--(0.15, 0.3) .. controls +(0, 0.35) and +(0, 0.35).. (0.6, 0.3);
 \draw [fill=white] (-0.3, -0.3) rectangle (0.3, 0.3);
 \node at (0, 0) {\tiny $1_Y$};
 \begin{scope}[shift={(0.75, 0)}]
  \draw [fill=white] (-0.3, -0.3) rectangle (0.3, 0.3);
 \node at (0, 0) {\tiny $1_X$};
 \draw [blue] (0.15, 0.3)--(0.15, 0.8) (-0.15, -0.3)--(-0.15, -1.4);
  \draw [blue] (0.15, -0.3) .. controls +(0, -0.35) and +(0, -0.35).. (0.6, -0.3)--(0.6, 0.8);
 \end{scope}       
        \begin{scope}[shift={(0, -0.75)}]
        \draw [fill=white] (-0.4, -0.25) rectangle (0.8, 0.25);
        \node at ( 0.2,0) {\tiny $s_{3, -}$};
        \end{scope}
  \end{tikzpicture}}}.
\end{align*}

Now we shall define the Laplacian part of the quantum Markov semigroup.
Let 
 \begin{align}\label{eq:lapufc}
\lambda^{1/2}\left(\sum_{Y\neq \mathbbm{1}}d_Y\omega_Y\right) \widehat{\cL}_0 
=\sum_{\mathbbm{1}\neq Y\in \Irr} \frac{\omega_Y}{d_Y}
         \vcenter{\hbox{\scalebox{0.8}{
        \begin{tikzpicture}[scale=1.2]
           \draw [blue]  (-0.2, 0.9)--(-0.2, -0.9) (0.2, 0.9)--(0.2, -0.9);
           \draw [fill=white] (-0.4, -0.3) rectangle (0.4, 0.3);
           \node at (0, 0) {\tiny $1_{Y}$};
        \end{tikzpicture}}}},
 \end{align}
 where $\omega_Y \geq 0$.
Now we shall try to obtain a sufficient condition for the intertwining properties of the semigroup arising from Equation \eqref{eq:lapufc}.
 We have that 
 \begin{align*}
&   \lambda^{1/2}\left(\sum_{Y\neq \mathbbm{1}}d_Y\omega_Y\right) \left(-\vcenter{\hbox{
\begin{tikzpicture}
\draw [blue] (-0.15, -0.8)--(-0.15, 0.8);
 \draw [blue] (0.15, -0.8)--(0.15, 0.3) .. controls +(0, 0.35) and +(0, 0.35).. (0.6, 0.3);
 \draw [fill=white] (-0.3, -0.3) rectangle (0.3, 0.3);
 \node at (0, 0) {\tiny $\widehat{\cL}_0$};
 \begin{scope}[shift={(0.75, 0)}]
  \draw [fill=white] (-0.3, -0.3) rectangle (0.3, 0.3);
 \node at (0, 0) {\tiny $1_X$};
 \draw [blue] (0.15, 0.3)--(0.15, 0.8) (-0.15, -0.3)--(-0.15, -0.8);
  \draw [blue] (0.15, -0.3) .. controls +(0, -0.35) and +(0, -0.35).. (0.6, -0.3)--(0.6, 0.8);
 \end{scope}
\end{tikzpicture}
 }}+
 \vcenter{\hbox{
    \begin{tikzpicture}
    \draw [blue] (-0.15, -0.8)--(-0.15, 0.8);
 \draw [blue] (0.15, 0.8)--(0.15, -0.3) .. controls +(0, -0.35) and +(0, -0.35).. (0.6, -0.3);
 \draw [fill=white] (-0.3, -0.3) rectangle (0.3, 0.3);
 \node at (0, 0) {\tiny $\widehat{\cL}_0$};
 \begin{scope}[shift={(0.75, 0)}]
  \draw [fill=white] (-0.3, -0.3) rectangle (0.3, 0.3);
 \node at (0, 0) {\tiny $1_{X^*}$};
 \draw [blue] (0.15, -0.3)--(0.15, -0.8) (-0.15, 0.3)--(-0.15, 0.8);
  \draw [blue] (0.15, 0.3) .. controls +(0, 0.35) and +(0, 0.35).. (0.6, 0.3)--(0.6, -0.8);
 \end{scope}
    \end{tikzpicture}
    }} 
    + \vcenter{\hbox{ \begin{tikzpicture} 
 \draw [blue] (-0.15, -1.4)--(-0.15, 0.8);
 \draw [blue] (0.15, -1.4)--(0.15, 0.3) .. controls +(0, 0.35) and +(0, 0.35).. (0.6, 0.3);
 \draw [fill=white] (-0.3, -0.3) rectangle (0.3, 0.3);
 \node at (0, 0) {\tiny $1_Y$};
 \begin{scope}[shift={(0.75, 0)}]
  \draw [fill=white] (-0.3, -0.3) rectangle (0.3, 0.3);
 \node at (0, 0) {\tiny $1_X$};
 \draw [blue] (0.15, 0.3)--(0.15, 0.8) (-0.15, -0.3)--(-0.15, -1.4);
  \draw [blue] (0.15, -0.3) .. controls +(0, -0.35) and +(0, -0.35).. (0.6, -0.3)--(0.6, 0.8);
 \end{scope}       
        \begin{scope}[shift={(0, -0.75)}]
        \draw [fill=white] (-0.4, -0.25) rectangle (0.8, 0.25);
        \node at ( 0.2,0) {\tiny $s_{3, -}$};
        \end{scope}
  \end{tikzpicture}}}
-
   \vcenter{\hbox{ \begin{tikzpicture} 
     \draw [blue] (-0.15, -1.4)--(-0.15, 0.8);
 \draw [blue] (0.15, 0.8)--(0.15, -0.3) .. controls +(0, -0.35) and +(0, -0.35).. (0.6, -0.3);
 \draw [fill=white] (-0.3, -0.3) rectangle (0.3, 0.3);
 \node at (0, 0) {\tiny $1_Y$};
 \begin{scope}[shift={(0.75, 0)}]
  \draw [fill=white] (-0.3, -0.3) rectangle (0.3, 0.3);
 \node at (0, 0) {\tiny $1_{X^*}$};
 \draw [blue] (0.15, -0.3)--(0.15, -1.4) (-0.15, 0.3)--(-0.15, 0.8);
  \draw [blue] (0.15, 0.3) .. controls +(0, 0.35) and +(0, 0.35).. (0.6, 0.3)--(0.6, -1.4);
 \end{scope}  
        \begin{scope}[shift={(0.1, -0.85)}]
        \draw [fill=white] (-0.4, -0.25) rectangle (1.4, 0.25);
        \node at ( 0.2,0) {\tiny $s_{3, -}$};
        \end{scope}
  \end{tikzpicture}}}
  \right)\\
   = & - \lambda^{1/2}d_X \sum_{Y\neq \mathbbm{1}, W\in \Irr} \frac{\omega_Y}{d_Y} N_{Y, X}^W  p_{W, Y}+ \lambda^{1/2}d_X \sum_{Y\neq \mathbbm{1}, W\in \Irr}  \frac{\omega_Y}{d_Y} N_{Y, X^*}^W  p_{Y, W} \\
   =&  - \lambda^{1/2} d_X \sum_{Y\neq \mathbbm{1}, W\neq \mathbbm{1}\in \Irr} \frac{\omega_Y}{d_Y} N_{Y, X}^W  p_{W, Y}+ \lambda^{1/2}d_X \sum_{Y\neq \mathbbm{1}, W\neq \mathbbm{1}\in \Irr} \frac{ \omega_Y}{d_Y} N_{Y, X^*}^W  p_{Y, W} \\
   & -\lambda^{1/2} d_X\omega_{X^*} p_{\mathbbm{1}, X^*}+\lambda^{1/2} \omega_Xd_X p_{X, \mathbbm{1}}\\
   =&  \lambda^{1/2}d_X \sum_{Y\neq \mathbbm{1}, W\neq \mathbbm{1}\in \Irr} \left(\frac{\omega_W}{d_W}-\frac{\omega_Y}{d_Y}\right) N_{Y, X}^W  p_{W, Y} -\lambda^{1/2} \omega_{X^*} p_{\mathbbm{1}, X^*}+\lambda^{1/2} \omega_X  p_{X, \mathbbm{1}}\\
      =&  \lambda^{1/2} d_X \sum_{Y\neq \mathbbm{1}, W\neq \mathbbm{1}\in \Irr} \left(\frac{\omega_W}{d_W}-\frac{\omega_Y}{d_Y}\right) N_{X, W^*}^{Y^*}  p_{W, Y} -\lambda^{1/2} \omega_{X^*} p_{\mathbbm{1}, X^*}+\lambda^{1/2} \omega_X  p_{X, \mathbbm{1}}.
 \end{align*}
 
When $\vcenter{\hbox{ \begin{tikzpicture} 
 \draw [blue] (-0.15, -1.4)--(-0.15, 0.8);
 \draw [blue] (0.15, -1.4)--(0.15, 0.3) .. controls +(0, 0.35) and +(0, 0.35).. (0.6, 0.3);
 \draw [fill=white] (-0.3, -0.3) rectangle (0.3, 0.3);
 \node at (0, 0) {\tiny $1_Y$};
 \begin{scope}[shift={(0.75, 0)}]
  \draw [fill=white] (-0.3, -0.3) rectangle (0.3, 0.3);
 \node at (0, 0) {\tiny $1_X$};
 \draw [blue] (0.15, 0.3)--(0.15, 0.8) (-0.15, -0.3)--(-0.15, -1.4);
  \draw [blue] (0.15, -0.3) .. controls +(0, -0.35) and +(0, -0.35).. (0.6, -0.3)--(0.6, 0.8);
 \end{scope}       
        \begin{scope}[shift={(0, -0.75)}]
        \draw [fill=white] (-0.4, -0.25) rectangle (0.8, 0.25);
        \node at ( 0.2,0) {\tiny $s_{3, -}$};
        \end{scope}
  \end{tikzpicture}}}
=
   \vcenter{\hbox{ \begin{tikzpicture} 
     \draw [blue] (-0.15, -1.4)--(-0.15, 0.8);
 \draw [blue] (0.15, 0.8)--(0.15, -0.3) .. controls +(0, -0.35) and +(0, -0.35).. (0.6, -0.3);
 \draw [fill=white] (-0.3, -0.3) rectangle (0.3, 0.3);
 \node at (0, 0) {\tiny $1_Y$};
 \begin{scope}[shift={(0.75, 0)}]
  \draw [fill=white] (-0.3, -0.3) rectangle (0.3, 0.3);
 \node at (0, 0) {\tiny $1_{X^*}$};
 \draw [blue] (0.15, -0.3)--(0.15, -1.4) (-0.15, 0.3)--(-0.15, 0.8);
  \draw [blue] (0.15, 0.3) .. controls +(0, 0.35) and +(0, 0.35).. (0.6, 0.3)--(0.6, -1.4);
 \end{scope}  
        \begin{scope}[shift={(0.1, -0.85)}]
        \draw [fill=white] (-0.4, -0.25) rectangle (1.4, 0.25);
        \node at ( 0.2,0) {\tiny $s_{3, -}$};
        \end{scope}
  \end{tikzpicture}}}$,  $\vcenter{\hbox{
    \begin{tikzpicture}
       \draw [blue] (0.45, -0.6)--(0.45, 0.6) (-0.15, -0.6)--(-0.15, 0.6) (0.15, -0.6)--(0.15, 0.6) (-0.45, -0.6)--(-0.45, 0.6);
       \draw[fill=white] (-0.6, -0.3) rectangle (0.6, 0.3);
       \node at (0, 0) {\tiny $\widehat{\cJ}_0$};
    \end{tikzpicture}
    }}
    = \lambda^{1/2} \vcenter{\hbox{
    \begin{tikzpicture}
       \draw [blue]   (-0.15, -0.6)--(-0.15, 0.6) (0.15, -0.6)--(0.15, 0.6) (0.45, -0.6)--(0.45, 0.6) (-0.45, -0.6)--(-0.45, 0.6);
       \draw[fill=white] (-0.35, -0.3) rectangle (0.35, 0.3);
       \node at (0, 0) {\tiny $\widehat{\cL}_0$};
    \end{tikzpicture}
    }}$ and $\omega_Y=1$ for all $\mathbbm{1}\neq Y\in \Irr$.
Then the associated tracially symmetric semigroup satisfies the intertwining property with $\displaystyle \beta=\left(\sum_{Y\neq \mathbbm{1}} d_Y\right)^{-1}$. 
If $\mathscr{P}$ is a commute relation planar algebra of type AA, then the condition $\vcenter{\hbox{ \begin{tikzpicture} 
 \draw [blue] (-0.15, -1.4)--(-0.15, 0.8);
 \draw [blue] (0.15, -1.4)--(0.15, 0.3) .. controls +(0, 0.35) and +(0, 0.35).. (0.6, 0.3);
 \draw [fill=white] (-0.3, -0.3) rectangle (0.3, 0.3);
 \node at (0, 0) {\tiny $1_Y$};
 \begin{scope}[shift={(0.75, 0)}]
  \draw [fill=white] (-0.3, -0.3) rectangle (0.3, 0.3);
 \node at (0, 0) {\tiny $1_X$};
 \draw [blue] (0.15, 0.3)--(0.15, 0.8) (-0.15, -0.3)--(-0.15, -1.4);
  \draw [blue] (0.15, -0.3) .. controls +(0, -0.35) and +(0, -0.35).. (0.6, -0.3)--(0.6, 0.8);
 \end{scope}       
        \begin{scope}[shift={(0, -0.75)}]
        \draw [fill=white] (-0.4, -0.25) rectangle (0.8, 0.25);
        \node at ( 0.2,0) {\tiny $s_{3, -}$};
        \end{scope}
  \end{tikzpicture}}}
=
   \vcenter{\hbox{ \begin{tikzpicture} 
     \draw [blue] (-0.15, -1.4)--(-0.15, 0.8);
 \draw [blue] (0.15, 0.8)--(0.15, -0.3) .. controls +(0, -0.35) and +(0, -0.35).. (0.6, -0.3);
 \draw [fill=white] (-0.3, -0.3) rectangle (0.3, 0.3);
 \node at (0, 0) {\tiny $1_Y$};
 \begin{scope}[shift={(0.75, 0)}]
  \draw [fill=white] (-0.3, -0.3) rectangle (0.3, 0.3);
 \node at (0, 0) {\tiny $1_{X^*}$};
 \draw [blue] (0.15, -0.3)--(0.15, -1.4) (-0.15, 0.3)--(-0.15, 0.8);
  \draw [blue] (0.15, 0.3) .. controls +(0, 0.35) and +(0, 0.35).. (0.6, 0.3)--(0.6, -1.4);
 \end{scope}  
        \begin{scope}[shift={(0.1, -0.85)}]
        \draw [fill=white] (-0.4, -0.25) rectangle (1.4, 0.25);
        \node at ( 0.2,0) {\tiny $s_{3, -}$};
        \end{scope}
  \end{tikzpicture}}}$ holds.
In \cite[Theorem 6.3]{Liu16}, Liu shows that a commute relation planar algebra of type AA is a free product of Temperley-Lieb planar algebras and group planar algebra for abelian groups.

In the following, we shall construct $\widehat{\cJ}_0$ to  obtain the intertwining property for bimodule GNS symmetric semigroups.
Let 
\begin{align*}
\lambda^{1/2}\left(\sum_{Y\neq \mathbbm{1}}d_Y\omega_Y\right)\widehat{\cJ}_0=& 
\sum_{Y\neq \mathbbm{1}}\frac{\omega_Y}{d_Y}\frac{\kappa_Y d_Y^2}{2d_Y^2-1}\left(\frac{\lambda^{-1/2}}{d_Y^2}\vcenter{\hbox{\scalebox{0.8}{
        \begin{tikzpicture}[scale=1.2]
           \draw [blue]  (-0.2, 0.7)--(-0.2, -0.3).. controls +(0, -0.3) and +(0, -0.3).. (-0.6, -0.3)--(-0.6, 0.7) (0.2, 0.7)--(0.2, -0.3).. controls +(0, -0.3) and +(0, -0.3).. (0.6, -0.3)--(0.6, 0.7);
           \draw [fill=white] (-0.4, -0.3) rectangle (0.4, 0.3);
           \node at (0, 0) {\tiny $1_Y$};
        \begin{scope}[shift={(0, -1.3)}]
             \draw [blue]  (-0.2, -0.7)--(-0.2, 0.3).. controls +(0, 0.3) and +(0, 0.3).. (-0.6, 0.3)--(-0.6, -0.7) (0.2, -0.7)--(0.2, 0.3).. controls +(0, 0.3) and +(0, 0.3).. (0.6, 0.3)--(0.6, -0.7);
           \draw [fill=white] (-0.4, -0.3) rectangle (0.4, 0.3);
           \node at (0, 0) {\tiny $1_Y$};          
        \end{scope}
        \end{tikzpicture}}}} 
        +\lambda^{1/2}
        \vcenter{\hbox{ \begin{tikzpicture} 
        \draw [blue] (-0.6, -1.25)--(-0.6, 0.5);
  \draw [blue] (-0.2, -1.25)--(-0.2, 0.5) (0.2,  -1.25)--(0.2, 0.5) (0.6,  -1.25)--(0.6, 0.5);
        \draw [fill=white] (-0.4, -0.25) rectangle (0.4, 0.25);
        \node at (0, 0) {\tiny $1_Y$};
        \begin{scope}[shift={(0, -0.75)}]
        \draw [fill=white] (-0.4, -0.25) rectangle (0.8, 0.25);
        \node at ( 0.2,0) {\tiny $s_{3, -}$};
        \end{scope}
  \end{tikzpicture}}}
  +\lambda^{1/2}
        \vcenter{\hbox{ \begin{tikzpicture} 
        \draw [blue] (-0.6, -1.25)--(-0.6, 0.5);
  \draw [blue] (-0.2, -1.25)--(-0.2, 0.5) (0.2,  -1.25)--(0.2, 0.5) (0.6,  -1.25)--(0.6, 0.5);
        \draw [fill=white] (-0.4, -0.25) rectangle (0.4, 0.25);
        \node at (0, 0) {\tiny $1_Y$};
        \begin{scope}[shift={(-0.4, -0.75)}]
        \draw [fill=white] (-0.4, -0.25) rectangle (0.8, 0.25);
        \node at ( 0.2,0) {\tiny $s_{3, +}$};
        \end{scope}
  \end{tikzpicture}}}
  \right) \\
&   +\sum_{Y \neq \mathbbm{1}} \frac{\omega_Y}{d_Y}(1-\kappa_Y) \lambda^{1/2} \vcenter{\hbox{
    \begin{tikzpicture}
       \draw [blue]   (-0.15, -0.6)--(-0.15, 0.6) (0.15, -0.6)--(0.15, 0.6) (0.45, -0.6)--(0.45, 0.6) (-0.45, -0.6)--(-0.45, 0.6);
       \draw[fill=white] (-0.35, -0.3) rectangle (0.35, 0.3);
       \node at (0, 0) {\tiny $1_Y$};
    \end{tikzpicture}
    }},
\end{align*}
where $0\leq \kappa_Y\leq 1$ for all $\mathbbm{1} \neq Y\in \Irr$.
We see that the semigroup arising from $\widehat{\cJ}_0$ is a bimodule quantum Markov semigroup extending the semigroup.

\begin{align*}
&\lambda^{1/2}\left(\sum_{Y\neq \mathbbm{1}}d_Y\omega_Y\right)\left( \vcenter{\hbox{
    \begin{tikzpicture}
       \draw [blue] (-0.75, -0.3)--(-0.75, 0.6) (-0.45, -0.3)--(-0.45, 0.6) (-0.15, -0.8)--(-0.15, 0.6) (0.15, -0.8)--(0.15, 1.6) (0.45, -0.8)--(0.45, 1.6);
       \draw[fill=white] (-0.6, -0.3) rectangle (0.6, 0.3);
       \node at (0, 0) {\tiny $\widehat{\cJ}_0$};
       \begin{scope}[shift={(-0.6, 0.8)}]
          \draw[fill=white] (-0.3, -0.3) rectangle (0.3, 0.3); 
           \node at (0, 0) {\tiny $1_X$};
        \draw[blue] (0.15, 0.3).. controls+(0, 0.3) and +(0, 0.3).. (0.45, 0.3)--(0.45, -0.3);
        \draw[blue] (-0.15, 0.3)--(-0.15, 0.8);
        \draw [blue] (-0.15, -1.1) .. controls +(0, -0.25) and +(0, -0.25).. (0.15, -1.1);
       \end{scope}
    \end{tikzpicture}
    }}
  -
      \vcenter{\hbox{
    \begin{tikzpicture}
       \draw [blue]   (-0.15, 0.8)--(-0.15, 0.3) (0.15, -0.8)--(0.15, 0.8) (0.45, -0.8)--(0.45, 0.8);
        \draw[fill=white] (-0.6, -0.3) rectangle (0.6, 0.3);
       \node at (0, 0) {\tiny $\widehat{\cJ}_0$};
       \begin{scope}[shift={(-1.2, 0)}]
       \draw [blue] (0.15, 0.3) .. controls +(0, 0.6) and +(0, 0.6) .. (-0.75, 0.3)--(-0.75, -0.8);
          \draw[fill=white] (-0.3, -0.3) rectangle (0.3, 0.3); 
           \node at (0, 0) {\tiny $1_X$};
        \draw[blue] (0.15, -0.3).. controls+(0, -0.25) and +(0, -0.25).. (0.45, -0.3)--(0.45, 0.3).. controls +(0, 0.3) and +(0, 0.3) .. (0.75, 0.3);
        \draw[blue] (-0.15, 0.3).. controls+(0, 0.3) and +(0, 0.3).. (-0.45, 0.3)--(-0.45, -0.3).. controls +(0, -0.6) and +(0, -0.6) .. (1.05, -0.3);
        \draw [blue] (-0.15, -0.3).. controls +(0, -0.4) and +(0, -0.4) .. (0.75, -0.3);
       \end{scope}
    \end{tikzpicture}
    }}\right) \\
  =& \lambda^{1/2} \sum_{Y\neq \mathbbm{1}, W\in \Irr}\frac{\omega_Y\kappa_Y d_Xd_Y N_{X, Y}^W}{2d_Y^2-1}p_{W, Y} -\lambda^{1/2}\sum_{Y\neq \mathbbm{1}, W\in \Irr}\frac{\omega_Y\kappa_Y d_Xd_Y N_{X^*, Y}^W}{2d_Y^2-1}p_{ Y, W} \\
  =& \lambda^{1/2}\sum_{Y\neq \mathbbm{1}, W\neq \mathbbm{1}\in \Irr}\frac{\omega_Y\kappa_Y d_Xd_Y N_{X, Y}^W}{2d_Y^2-1}p_{W, Y} -\lambda^{1/2}\sum_{Y\neq \mathbbm{1}, W\neq \mathbbm{1} \in \Irr}\frac{\omega_Y\kappa_Y d_Xd_Y N_{X^*, Y}^W}{2d_Y^2-1}p_{ Y, W} \\
  & + \frac{\lambda^{1/2}\omega_{X^*}\kappa_{X^*} d_X^2}{2d_X^2-1}p_{\mathbbm{1}, X^*}-\frac{\lambda^{1/2}\omega_{X}\kappa_{X} d_X^2}{2d_X^2-1}p_{ X, \mathbbm{1}}\\
  =&\lambda^{1/2} \sum_{Y\neq \mathbbm{1}, W\neq 0\in \Irr} d_X N_{X, Y}^W\left(\frac{\omega_Y\kappa_Y d_Y }{2d_Y^2-1}- \frac{\omega_W\kappa_W d_W }{2d_W^2-1}\right)p_{W, Y} \\
  & + \frac{\lambda^{1/2}\omega_{X^*}\kappa_{X^*} d_X^2}{2d_X^2-1}p_{\mathbbm{1}, X^*}-\frac{\lambda^{1/2}\omega_{X}\kappa_{X} d_X^2}{2d_X^2-1}p_{ X, \mathbbm{1}}.
\end{align*}

By assuming that the planar algebra is a commute relation planar algebra of type AA,  we see that a sufficient condition to have the intertwining property is that 
\begin{align*}
\omega_Y\left(\frac{1}{d_Y}-\frac{\kappa_Y d_Y}{2d_Y^2-1}\right)=\omega_{W} \left(\frac{1}{d_W}-\frac{\kappa_W d_W}{2d_W^2-1}\right) =\gamma,\quad  N_{Y, X}^W\neq 0.
\end{align*}
In this case the coefficient $\beta$ is 
\begin{align*}
\beta= \left(\frac{\omega_X}{d_X}-\frac{\omega_X\kappa_Xd_X}{2d_X^2-1}\right)\left(\sum_{Y\neq \mathbbm{1}}d_Y\omega_Y\right)^{-1}=\gamma \left(\sum_{Y\neq \mathbbm{1}}d_Y\omega_Y\right)^{-1}.
\end{align*}

An alternative way to obtain the intertwining property is to assume that the Frobenius algebra of a Morita context is commutative.
We refer to \cite[Theorem 4.5]{LMWW26} for the details where the commutativity of the Frobenius algebra is equivalent to the flatness of the $\alpha$-induced biunitary connection \cite{BE98, BE99, BE00, BE99b,BEK99,BEK00}. 
Let $\mathscr{P}$ be the associated planar algebra, i.e. $\vcenter{\hbox{\scalebox{0.6}{
\begin{tikzpicture}[scale=0.9]
\begin{scope}[shift={(0,1)}]
\draw [blue] (1, 1.5)--(-0.5, 0) (1, 2)--(-1, 0);
\path [fill=white] (0, 0.75) circle (0.4cm);
\draw [blue] (-1, 1.5) node [left] {\tiny $J$}--(0.5, 0) (-1, 2) node [left] {\tiny $\overline{J}$} --(1, 0);
\end{scope}
\draw [blue](-1, -0.1)--(-1, 1) (1, -0.1)--(1, 1);
\begin{scope}[shift={(0, 1)}]
\draw [blue](0,0) [partial ellipse=180:360:0.5 and 0.7];
\end{scope}
\end{tikzpicture}}}}=  \vcenter{\hbox{\scalebox{0.6}{
\begin{tikzpicture}[scale=0.9]
\draw [blue](-1, -0.1)--(-1, 1) node [above] {\tiny $J$} (1, -0.1)--(1, 1);
\begin{scope}[shift={(0, 1)}]
\draw [blue](0,0) [partial ellipse=180:360:0.7 and 0.8] node [pos=0, above] {\tiny $\overline{J}$} ;
\end{scope}
\end{tikzpicture}}}}.$
By taking $\displaystyle 
\lambda^{1/2}\left(\sum_{Y\neq \mathbbm{1}}d_Y\omega_Y\right)\widehat{\cJ}_0 =\sum_{Y\neq \mathbbm{1}} \frac{\omega_Y}{d_Y}  \vcenter{\hbox{
    \begin{tikzpicture}
         \draw[blue] (-0.45, 0.9)..controls +(0, -0.45) and +(0, -0.45)..(0.45, 0.9);
         \draw[blue] (-0.45, -0.9)..controls +(0, 0.45) and +(0, 0.45)..(0.45, -0.9);
          \draw [white, line width=0.15cm]   (-0.15, -0.9)--(-0.15, 0.9) (0.15, -0.9)--(0.15, 0.9);
       \draw [blue]   (-0.15, -0.9)--(-0.15, 0.9) (0.15, -0.9)--(0.15, 0.9);
       \draw[fill=white] (-0.4, -0.3) rectangle (0.4, 0.3);
       \node at (0, 0) {\tiny $1_Y$};
    \end{tikzpicture}
    }}$, we have that 
\begin{align*}
-\vcenter{\hbox{
\begin{tikzpicture}
\draw [blue] (-0.15, -0.8)--(-0.15, 0.8);
 \draw [blue] (0.15, -0.8)--(0.15, 0.3) .. controls +(0, 0.35) and +(0, 0.35).. (0.6, 0.3);
 \draw [fill=white] (-0.3, -0.3) rectangle (0.3, 0.3);
 \node at (0, 0) {\tiny $\widehat{\cL}_0$};
 \begin{scope}[shift={(0.75, 0)}]
  \draw [fill=white] (-0.3, -0.3) rectangle (0.3, 0.3);
 \node at (0, 0) {\tiny $1_X$};
 \draw [blue] (0.15, 0.3)--(0.15, 0.8) (-0.15, -0.3)--(-0.15, -0.8);
  \draw [blue] (0.15, -0.3) .. controls +(0, -0.35) and +(0, -0.35).. (0.6, -0.3)--(0.6, 0.8);
 \end{scope}
\end{tikzpicture}
 }}+
 \vcenter{\hbox{
    \begin{tikzpicture}
    \draw [blue] (-0.15, -0.8)--(-0.15, 0.8);
 \draw [blue] (0.15, 0.8)--(0.15, -0.3) .. controls +(0, -0.35) and +(0, -0.35).. (0.6, -0.3);
 \draw [fill=white] (-0.3, -0.3) rectangle (0.3, 0.3);
 \node at (0, 0) {\tiny $\widehat{\cL}_0$};
 \begin{scope}[shift={(0.75, 0)}]
  \draw [fill=white] (-0.3, -0.3) rectangle (0.3, 0.3);
 \node at (0, 0) {\tiny $1_{X^*}$};
 \draw [blue] (0.15, -0.3)--(0.15, -0.8) (-0.15, 0.3)--(-0.15, 0.8);
  \draw [blue] (0.15, 0.3) .. controls +(0, 0.35) and +(0, 0.35).. (0.6, 0.3)--(0.6, -0.8);
 \end{scope}
    \end{tikzpicture}
    }} =-
\vcenter{\hbox{
    \begin{tikzpicture}
       \draw [blue] (-0.75, -0.3)--(-0.75, 0.6) (-0.45, -0.3)--(-0.45, 0.6) (-0.15, -0.8)--(-0.15, 0.6) (0.15, -0.8)--(0.15, 1.6) (0.45, -0.8)--(0.45, 1.6);
       \draw[fill=white] (-0.6, -0.3) rectangle (0.6, 0.3);
       \node at (0, 0) {\tiny $\widehat{\cJ}_0$};
       \begin{scope}[shift={(-0.6, 0.8)}]
          \draw[fill=white] (-0.3, -0.3) rectangle (0.3, 0.3); 
           \node at (0, 0) {\tiny $1_X$};
        \draw[blue] (0.15, 0.3).. controls+(0, 0.3) and +(0, 0.3).. (0.45, 0.3)--(0.45, -0.3);
        \draw[blue] (-0.15, 0.3)--(-0.15, 0.8);
        \draw [blue] (-0.15, -1.1) .. controls +(0, -0.25) and +(0, -0.25).. (0.15, -1.1);
       \end{scope}
    \end{tikzpicture}
    }}
  +
      \vcenter{\hbox{
    \begin{tikzpicture}
       \draw [blue]   (-0.15, 0.8)--(-0.15, 0.3) (0.15, -0.8)--(0.15, 0.8) (0.45, -0.8)--(0.45, 0.8);
        \draw[fill=white] (-0.6, -0.3) rectangle (0.6, 0.3);
       \node at (0, 0) {\tiny $\widehat{\cJ}_0$};
       \begin{scope}[shift={(-1.2, 0)}]
       \draw [blue] (0.15, 0.3) .. controls +(0, 0.6) and +(0, 0.6) .. (-0.75, 0.3)--(-0.75, -0.8);
          \draw[fill=white] (-0.3, -0.3) rectangle (0.3, 0.3); 
           \node at (0, 0) {\tiny $1_X$};
        \draw[blue] (0.15, -0.3).. controls+(0, -0.25) and +(0, -0.25).. (0.45, -0.3)--(0.45, 0.3).. controls +(0, 0.3) and +(0, 0.3) .. (0.75, 0.3);
        \draw[blue] (-0.15, 0.3).. controls+(0, 0.3) and +(0, 0.3).. (-0.45, 0.3)--(-0.45, -0.3).. controls +(0, -0.6) and +(0, -0.6) .. (1.05, -0.3);
        \draw [blue] (-0.15, -0.3).. controls +(0, -0.4) and +(0, -0.4) .. (0.75, -0.3);
       \end{scope}
    \end{tikzpicture}
    }}
\end{align*}
followed from that fact that $\vcenter{\hbox{
    \begin{tikzpicture}
         \draw[blue] (-0.45, 0.9)..controls +(0, -0.45) and +(0, -0.45)..(0.45, 0.9)--(0.45, 2);
         \draw[blue] (-0.45, -0.9)..controls +(0, 0.45) and +(0, 0.45)..(0.45, -0.9);
          \draw [white, line width=0.15cm]   (-0.15, -0.9)--(-0.15, 0.9) (0.15, -0.9)--(0.15, 0.9);
       \draw [blue]   (-0.15, -0.9)--(-0.15, 0.9) (0.15, -0.9)--(0.15, 0.9)--(0.15, 2);
       \draw[fill=white] (-0.4, -0.3) rectangle (0.4, 0.3);
       \node at (0, 0) {\tiny $1_Y$};
              \begin{scope}[shift={(-0.6, 1.2)}]
              \draw[blue] (-0.15, -2.1)--(-0.15, 0.8);
          \draw[fill=white] (-0.3, -0.3) rectangle (0.3, 0.3); 
           \node at (0, 0) {\tiny $1_X$};
        \draw[blue] (0.15, 0.3).. controls+(0, 0.3) and +(0, 0.3).. (0.45, 0.3)--(0.45, -0.3);
        \draw [blue] (-0.15, -2.1) .. controls +(0, -0.25) and +(0, -0.25).. (0.15, -2.1);
       \end{scope}
    \end{tikzpicture}
    }}
    =
  \vcenter{\hbox{
\begin{tikzpicture}
\draw [blue] (-0.15, 0.3)--(-0.15, 0.8);
 \draw [blue] (0.15, 0.3) .. controls +(0, 0.35) and +(0, 0.35).. (0.6, 0.3);
 \draw [blue] (-0.15, -0.3) --(0.9, -1.5)  (0.15, -0.3) --(0.9, -1.15).. controls +(0.4, -0.4) and +(0, -0.4).. (1.4, -0.3)--(1.4, 0.8);
   \draw [white, line width=0.1cm] (0.9, -0.3) --(-0.15, -1.5)  (0.6, -0.3) --(-0.15, -1.15);
  \draw [blue] (0.9, -0.3) --(-0.15, -1.5)  (0.6, -0.3) --(-0.15, -1.15);
 \draw [fill=white] (-0.3, -0.3) rectangle (0.3, 0.3);
 \node at (0, 0) {\tiny $1_X$};
 \begin{scope}[shift={(0.75, 0)}]
  \draw [fill=white] (-0.3, -0.3) rectangle (0.3, 0.3);
 \node at (0, 0) {\tiny $1_Y$};
 \draw [blue] (0.15, 0.3)--(0.15, 0.8) ;
 \end{scope}
\end{tikzpicture}
 }}=     \vcenter{\hbox{
\begin{tikzpicture}
\draw [blue] (-0.15, -0.8)--(-0.15, 0.8);
 \draw [blue] (0.15, -0.8)--(0.15, 0.3) .. controls +(0, 0.35) and +(0, 0.35).. (0.6, 0.3);
 \draw [fill=white] (-0.3, -0.3) rectangle (0.3, 0.3);
 \node at (0, 0) {\tiny $1_Y$};
 \begin{scope}[shift={(0.75, 0)}]
  \draw [fill=white] (-0.3, -0.3) rectangle (0.3, 0.3);
 \node at (0, 0) {\tiny $1_X$};
 \draw [blue] (0.15, 0.3)--(0.15, 0.8) (-0.15, -0.3)--(-0.15, -0.8);
  \draw [blue] (0.15, -0.3) .. controls +(0, -0.35) and +(0, -0.35).. (0.6, -0.3)--(0.6, 0.8);
 \end{scope}
\end{tikzpicture}
 }}$.
This shows that the associated semigroup has intertwining property with $\beta=0$.

\section{Reducible Inclusions}

\subsection{Fermion Algebras-GNS Symmetry}
In this section, we present an alternative interpretation of the twist derivations in \cite{CarMaa17} and obtain a stronger Bakry-\'{E}mery estimate for the GNS symmetric quantum semigroup.
Recall that the Fermion algebra of dimension $2m$ with generators $P_1, \ldots, P_m, Q_1, \ldots, Q_m$ satisfying 
\begin{align*}
Q_jQ_k+Q_kQ_j=P_jP_k+P_kP_j=2\delta_{j,k}, \quad Q_jP_k +P_kQ_j=0,
\end{align*}
where $\delta_{j,k}$ is the Kronecker symbol and $j,k=1, \ldots, m$.
Let $\displaystyle v_j=\frac{1}{\sqrt{2}}w(Q_j+iP_j)$, where $\displaystyle w=i^m \prod_{j=1}^m Q_jP_j$.
We summarize the properties of $v_j, w$ as follows: 
\begin{enumerate}[(1)]
\item $w^*=w$, $w^2=1$, $\tau(w)=0$.
\item $wP_j=-P_jw$, $wQ_j=-Q_j w$ for $j=1, \ldots, m$.
\item $wv_j=-v_jw$, $wv_j^*=-v_j^*w$ for $j=1, \ldots, m$.
\item $v_jv_k+v_kv_j=0, \quad v_jv_k^*+v_k^*v_j=2\delta_{j,k}$ for $j, k=1, \ldots, m$.
\item $\tau(v_jv_k^*)=\delta_{j,k}$, $\tau(v_jv_k)=0$ for all $j,k=1, \ldots, m$.
\item $i \tau(v_jP_k)= -i\tau(v_j^*P_k) =\left\{\begin{array}{ll} 0 & j \neq k, \\ 
0 & j=k, m\geq 2, \\
2^{-1/2} & j=k, m=1. 
\end{array}\right.$
\end{enumerate}
Let
\begin{align*}
    \widehat{\cL}_0=\frac{1}{2}\sum_{j=1}^m e^{\beta a_j/2} \vcenter{\hbox{\begin{tikzpicture}[scale=0.65]
    \begin{scope}[shift={(0,1.5)}]
    \draw [blue] (-0.5, 0.8)--(-0.5, 0) .. controls +(0, -0.6) and +(0,-0.6).. (0.5, 0)--(0.5, 0.8);    
\begin{scope}[shift={(0.5, 0.3)}]
\draw [fill=white] (-0.3, -0.3) rectangle (0.3, 0.3);
\node at (0, 0) {\tiny $v_j$};
\end{scope}
    \end{scope}
\draw [blue] (-0.5, -0.8)--(-0.5, 0) .. controls +(0, 0.6) and +(0,0.6).. (0.5, 0)--(0.5, -0.8);
\begin{scope}[shift={(0.5, -0.3)}]
\draw [fill=white] (-0.3, -0.3) rectangle (0.3, 0.3);
\node at (0, 0) {\tiny $v_j^*$};
\end{scope}
\end{tikzpicture}}}
+ e^{-\beta a_j/2} \vcenter{\hbox{\begin{tikzpicture}[scale=0.65]
    \begin{scope}[shift={(0,1.5)}]
    \draw [blue] (-0.5, 0.8)--(-0.5, 0) .. controls +(0, -0.6) and +(0,-0.6).. (0.5, 0)--(0.5, 0.8);    
\begin{scope}[shift={(0.5, 0.3)}]
\draw [fill=white] (-0.3, -0.3) rectangle (0.3, 0.3);
\node at (0, 0) {\tiny $v_j^*$};
\end{scope}
    \end{scope}
\draw [blue] (-0.5, -0.8)--(-0.5, 0) .. controls +(0, 0.6) and +(0,0.6).. (0.5, 0)--(0.5, -0.8);
\begin{scope}[shift={(0.5, -0.3)}]
\draw [fill=white] (-0.3, -0.3) rectangle (0.3, 0.3);
\node at (0, 0) {\tiny $v_j$};
\end{scope}
\end{tikzpicture}}}, \quad
\widehat{\cJ}_0=\frac{1}{2}\sum_{j=1}^m e^{\beta a_j/2} \vcenter{\hbox{\begin{tikzpicture}[scale=0.65]
    \begin{scope}[shift={(0,1.5)}]
    \draw [blue] (-0.5, 0.8)--(-0.5, 0) .. controls +(0, -0.6) and +(0,-0.6).. (0.5, 0)--(0.5, 0.8);  
    \draw [blue] (-1.2, 0.8)--(-1.2, 0) .. controls +(0, -0.8) and +(0,-0.8).. (1.2, 0)--(1.2, 0.8); 
\begin{scope}[shift={(0.5, 0.3)}]
\draw [fill=white] (-0.3, -0.3) rectangle (0.3, 0.3);
\node at (0, 0) {\tiny $v_j$};
\end{scope}
\begin{scope}[shift={(1.2, 0.3)}]
\draw [fill=white] (-0.3, -0.3) rectangle (0.3, 0.3);
\node at (0, 0) {\tiny $\overline{w}$};
\end{scope}
    \end{scope}
\draw [blue] (-0.5, -0.8)--(-0.5, 0) .. controls +(0, 0.6) and +(0,0.6).. (0.5, 0)--(0.5, -0.8);
\draw [blue] (-1.2, -0.8)--(-1.2, 0) .. controls +(0, 0.8) and +(0,0.8).. (1.2, 0)--(1.2, -0.8); 
\begin{scope}[shift={(0.5, -0.3)}]
\draw [fill=white] (-0.3, -0.3) rectangle (0.3, 0.3);
\node at (0, 0) {\tiny $v_j^*$};
\end{scope}
\begin{scope}[shift={(1.2, -0.3)}]
\draw [fill=white] (-0.3, -0.3) rectangle (0.3, 0.3);
\node at (0, 0) {\tiny $\overline{w}$};
\end{scope}
\end{tikzpicture}}}
+ e^{-\beta a_j/2} \vcenter{\hbox{\begin{tikzpicture}[scale=0.65]
    \begin{scope}[shift={(0,1.5)}]
    \draw [blue] (-0.5, 0.8)--(-0.5, 0) .. controls +(0, -0.6) and +(0,-0.6).. (0.5, 0)--(0.5, 0.8); 
     \draw [blue] (-1.2, 0.8)--(-1.2, 0) .. controls +(0, -0.8) and +(0,-0.8).. (1.2, 0)--(1.2, 0.8);  
\begin{scope}[shift={(0.5, 0.3)}]
\draw [fill=white] (-0.3, -0.3) rectangle (0.3, 0.3);
\node at (0, 0) {\tiny $v_j^*$};
\end{scope}
\begin{scope}[shift={(1.2, 0.3)}]
\draw [fill=white] (-0.3, -0.3) rectangle (0.3, 0.3);
\node at (0, 0) {\tiny $\overline{w}$};
\end{scope}
    \end{scope}
\draw [blue] (-0.5, -0.8)--(-0.5, 0) .. controls +(0, 0.6) and +(0,0.6).. (0.5, 0)--(0.5, -0.8);
\draw [blue] (-1.2, -0.8)--(-1.2, 0) .. controls +(0, 0.8) and +(0,0.8).. (1.2, 0)--(1.2, -0.8); 
\begin{scope}[shift={(0.5, -0.3)}]
\draw [fill=white] (-0.3, -0.3) rectangle (0.3, 0.3);
\node at (0, 0) {\tiny $v_j$};
\end{scope}
\begin{scope}[shift={(1.2, -0.3)}]
\draw [fill=white] (-0.3, -0.3) rectangle (0.3, 0.3);
\node at (0, 0) {\tiny $\overline{w}$};
\end{scope}
\end{tikzpicture}}},
\end{align*}
where $a_j>0$ are distinct.
By a direct computation, we see that $\widehat{\cJ}_0$ is a lifting of $\widehat{\cL}_0$.
We have that 
\begin{align*}
\mathbf{y}=\frac{1}{2}\left(1*\widehat{\cL}_0\right)=\frac{1}{4} \left(\sum_{j=1}^m e^{\beta a_j/2} v_j^*v_j + e^{-\beta a_j/2} v_jv_j^* \right).
\end{align*}
Moreover, we have that 
\begin{align*}
\mathbf{y} v_k-v_k \mathbf{y}=\frac{1}{2} (e^{-\beta a_k/2}-e^{\beta a_k/2}) v_k.
\end{align*}
Let $\cL(x)= \mathbf{y} x+ x\mathbf{y} -x* \widehat{\cL}_0$ for all $x\in \cM$.
By Proposition \ref{prop:semigroupext}, we see that $\widehat{\cJ}$ arising from $\widehat{\cJ}_0$ is a lifting of $\widehat{\cL}$.

The associated bimodule modular operator $\widehat{\Delta}$ is given by 
\begin{align*}
\widehat{\Delta}
=1-\cR(\widehat{\cL}_0)+\lambda^{1/2} \sum_{j=1}^m e^{\beta a_j} \vcenter{\hbox{\begin{tikzpicture}[scale=0.65]
    \begin{scope}[shift={(0,1.5)}]
    \draw [blue] (-0.5, 0.8)--(-0.5, 0) .. controls +(0, -0.6) and +(0,-0.6).. (0.5, 0)--(0.5, 0.8);    
\begin{scope}[shift={(0.5, 0.3)}]
\draw [fill=white] (-0.3, -0.3) rectangle (0.3, 0.3);
\node at (0, 0) {\tiny $v_j$};
\end{scope}
    \end{scope}
\draw [blue] (-0.5, -0.8)--(-0.5, 0) .. controls +(0, 0.6) and +(0,0.6).. (0.5, 0)--(0.5, -0.8);
\begin{scope}[shift={(0.5, -0.3)}]
\draw [fill=white] (-0.3, -0.3) rectangle (0.3, 0.3);
\node at (0, 0) {\tiny $v_j^*$};
\end{scope}
\end{tikzpicture}}}
+ e^{-\beta a_j} \vcenter{\hbox{\begin{tikzpicture}[scale=0.65]
    \begin{scope}[shift={(0,1.5)}]
    \draw [blue] (-0.5, 0.8)--(-0.5, 0) .. controls +(0, -0.6) and +(0,-0.6).. (0.5, 0)--(0.5, 0.8);    
\begin{scope}[shift={(0.5, 0.3)}]
\draw [fill=white] (-0.3, -0.3) rectangle (0.3, 0.3);
\node at (0, 0) {\tiny $v_j^*$};
\end{scope}
    \end{scope}
\draw [blue] (-0.5, -0.8)--(-0.5, 0) .. controls +(0, 0.6) and +(0,0.6).. (0.5, 0)--(0.5, -0.8);
\begin{scope}[shift={(0.5, -0.3)}]
\draw [fill=white] (-0.3, -0.3) rectangle (0.3, 0.3);
\node at (0, 0) {\tiny $v_j$};
\end{scope}
\end{tikzpicture}}}.
\end{align*}
We have that $\overline{\widehat{\cL}_0}=\overline{\widehat{\Delta}} \widehat{\cL}_0$ and $\overline{\widehat{\cL}}=\overline{\widehat{\Delta}} \widehat{\cL}$.
Moreover, the semigroup $\{\Phi_t\}_{t\geq 0}$ arising from $\cL$ is a bimodule GNS symmetric quantum Markov semigroups with respect to $\widehat{\Delta}$.
Note that $\displaystyle \bigvee_{k \geq 1}\cR(\widehat{\cL}_0^{*(k)}) = I$.
We have that $\{\Phi_t\}_{t\geq 0}$ is ergodic.

\begin{lemma}\label{lem:fermiongns}
Let $\displaystyle \rho_0=\prod_{j=1}^m \left(  \frac{1}{2} e^{-\beta a_j/2 } v_j^*v_j +\frac{1}{2} e^{\beta a_j/2}v_jv_j^*\right)$ and $\displaystyle \rho=\frac{\rho_0}{\tau(\rho_0)}$.
We have that $\widehat{\Delta}\cR(\widehat{\cL}_0)=\widehat{\Delta}_\rho\cR(\widehat{\cL}_0)$ and $\{\Phi_t\}_{t\geq 0}$ is an ergodic  GNS symmetric quantum Markov semigroup.
\end{lemma}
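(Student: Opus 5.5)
Our plan is to compute everything in the fermion algebra, using the anticommutation relations (1)--(6) for $v_j$ and $w$.

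\emph{Step 1: diagonalise $\rho$.} The elements $v_j^*v_j$ and $v_jv_j^*$ are complementary projections. This follows from $v_jv_j^*+v_j^*v_j=2$ together with the nilpotency $v_j^2=0$ (from $v_jv_k+v_kv_j=0$), suitably normalised. The projections for different $j$ commute, because $v_j$ and $v_k$ anticommute in pairs and each of $v_j^*v_j$, $v_jv_j^*$ is even. Hence $\rho_0$ is a product of commuting positive factors, and $\rho$ is a positive invertible density of product (quasi-free) form.

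\emph{Step 2: compute the modular action on each $v_k$.} Since $v_k$ commutes with every factor of $\rho_0$ with index $j\neq k$ (an even element commutes with $v_k$), only the $k$-th factor acts. From $v_k\,v_k^*v_k=2v_k-v_kv_k^*v_k$ and the nilpotency, $v_k$ maps the range of one projection onto the other. This gives
\begin{align*}
\rho\, v_k\, \rho^{-1}=e^{\beta a_k} v_k, \qquad \rho\, v_k^*\, \rho^{-1}=e^{-\beta a_k}v_k^*,
\end{align*}
with the sign to be fixed by the convention in $\widehat{\Delta}$. This is consistent with the already recorded identity $\mathbf{y}v_k-v_k\mathbf{y}=\tfrac12(e^{-\beta a_k/2}-e^{\beta a_k/2})v_k$.

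\emph{Step 3: compare with $\widehat{\Delta}$.} The modular operator $\widehat{\Delta}_\rho$ for the inclusion $\bC\subset M_n(\bC)$, viewed as an element of $\cM'\cap\cM_2$, acts on the one-string-cup pictures with label $v$ by $v\mapsto \rho v\rho^{-1}$. By Step 2 it therefore acts on the rank-one piece labelled $v_j$ by $e^{\beta a_j}$ and on the piece labelled $v_j^*$ by $e^{-\beta a_j}$. These pieces are exactly the minimal projections making up $\cR(\widehat{\cL}_0)$; they are orthogonal because of $\tau(v_jv_k^*)=\delta_{jk}$ and $\tau(v_jv_k)=0$. It follows that $\widehat{\Delta}\cR(\widehat{\cL}_0)=\widehat{\Delta}_\rho\cR(\widehat{\cL}_0)$.

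\emph{Step 4: GNS symmetry and ergodicity.} Since $\overline{\widehat{\cL}}=\overline{\widehat{\Delta}}\widehat{\cL}$ holds already, and $\widehat{\Delta}$ agrees with $\widehat{\Delta}_\rho$ on the support of $\widehat{\cL}_0$, the standard equivalence between bimodule GNS symmetry for $\bC\subset M_n$ and $\sigma$-GNS symmetry gives GNS symmetry of $\{\Phi_t\}_{t\geq 0}$ with respect to $\rho$. Ergodicity has already been noted from $\bigvee_k\cR(\widehat{\cL}_0^{*(k)})=I$.

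The main obstacle is the careful sign and normalisation bookkeeping in Step 2, namely deciding which of $v_j^*v_j$ and $v_jv_j^*$ receives $e^{\pm\beta a_j/2}$. I would first check the case $m=1$ explicitly with $2\times2$ matrices.
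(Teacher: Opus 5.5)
Your proposal is correct and follows the paper's proof. That proof consists exactly of the computation $\rho_0v_j\rho_0^{-1}=e^{\beta a_j}v_j$, $\rho_0v_j^*\rho_0^{-1}=e^{-\beta a_j}v_j^*$, followed by reading off $\widehat{\Delta}\cR(\widehat{\cL}_0)=\widehat{\Delta}_\rho\cR(\widehat{\cL}_0)$ and GNS symmetry with respect to $\rho$.

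There are a few minor slips to correct:
\begin{itemize}
\item The displayed identity should read $v_kv_k^*v_k=2v_k-v_k^2v_k^*=2v_k$.
\item Evenness alone does not make an element commute with $v_k$; for example, $v_kv_k^*$ does not. The correct reason is the anticommutation of $v_k$ with $v_j$ and $v_j^*$ for $j\neq k$, as in your Step~1.
\item The sign $e^{+\beta a_k}$ is forced by $\rho_0$ itself, via $v_k=\tfrac12 v_kv_k^*\,v_k\,\tfrac12 v_k^*v_k$. Only the identification of $\widehat{\Delta}_\rho$ with $x\mapsto\rho x\rho^{-1}$ is a convention.
\end{itemize}
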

\begin{proof}
Note that 
\begin{align*}
\rho_0 v_j\rho_0^{-1}
=& \left(  \frac{1}{2} e^{-\beta a_j /2} v_j^*v_j +\frac{1}{2} e^{\beta a_j/2}v_jv_j^*\right) v_j \left(  \frac{1}{2} e^{\beta a_j /2} v_j^*v_j +\frac{1}{2} e^{-\beta a_j/2}v_jv_j^*\right)
=  e^{\beta a_j}v_j. \\
\rho_0 v_j^*\rho_0^{-1}
=& \left(  \frac{1}{2} e^{-\beta a_j /2} v_j^*v_j +\frac{1}{2} e^{\beta a_j/2}v_jv_j^*\right) v_j^* \left(  \frac{1}{2} e^{\beta a_j /2} v_j^*v_j +\frac{1}{2} e^{-\beta a_j/2}v_jv_j^*\right)
= e^{-\beta a_j}v_j^*. 
\end{align*}
We obtain that $\widehat{\Delta}\cR(\widehat{\cL}_0)=\widehat{\Delta}_\rho\cR(\widehat{\cL}_0)$.
This implies that $\{\Phi_t\}_{t\geq 0}$ is a GNS symmetric quantum Markov semigroup with respect to $\rho$.
\end{proof}

In the following, we shall check the intertwining property pictorially for the semigroup.
\begin{align*}
\lambda^{-1/2}\vcenter{\hbox{
\begin{tikzpicture}
\draw [blue] (-0.15, -0.8)--(-0.15, 0.8);
 \draw [blue] (0.15, -0.8)--(0.15, 0.3) .. controls +(0, 0.35) and +(0, 0.35).. (0.6, 0.3);
 \draw [fill=white] (-0.3, -0.3) rectangle (0.3, 0.3);
 \node at (0, 0) {\tiny $\widehat{\cL}$};
 \begin{scope}[shift={(0.75, 0)}]
  \draw [fill=white] (-0.3, -0.3) rectangle (0.3, 0.3);
 \node at (0, 0) {\tiny $p_k$};
 \draw [blue] (0.15, 0.3)--(0.15, 0.8) (-0.15, -0.3)--(-0.15, -0.8);
  \draw [blue] (0.15, -0.3) .. controls +(0, -0.35) and +(0, -0.35).. (0.6, -0.3)--(0.6, 0.8);
 \end{scope}
\end{tikzpicture}
 }}
 = \vcenter{\hbox{\begin{tikzpicture}[scale=0.65]
     \draw [blue] (1.4, 2.3)--(1.4, -0.8);
    \begin{scope}[shift={(0,1.5)}]
    \draw [blue] (-0.5, 0.8)--(-0.5, 0) .. controls +(0, -0.6) and +(0,-0.6).. (0.5, 0)--(0.5, 0.8); 
\begin{scope}[shift={(0.5, 0.3)}]
\draw [fill=white] (-0.5, -0.3) rectangle (0.5, 0.3);
\node at (0, 0) {\tiny $\mathbf{y} v_k$};
\end{scope}
    \end{scope}
\draw [blue] (-0.5, -0.8)--(-0.5, 0) .. controls +(0, 0.6) and +(0,0.6).. (0.5, 0)--(0.5, -0.8);
\begin{scope}[shift={(1.4, 0.8)}]
\draw [fill=white] (-0.3, -0.3) rectangle (0.3, 0.3);
\node at (0, 0) {\tiny $\overline{v_k^*}$};  
\end{scope}
\end{tikzpicture}}}
+
\vcenter{\hbox{\begin{tikzpicture}[scale=0.65]
     \draw [blue] (1.4, 2.3)--(1.4, -0.8);
    \begin{scope}[shift={(0,1.5)}]
    \draw [blue] (-0.5, 0.8)--(-0.5, 0) .. controls +(0, -0.6) and +(0,-0.6).. (0.5, 0)--(0.5, 0.8); 
\begin{scope}[shift={(0.5, 0.3)}]
\draw [fill=white] (-0.4, -0.3) rectangle (0.4, 0.3);
\node at (0, 0) {\tiny $v_k$};
\end{scope}
    \end{scope}
\draw [blue] (-0.5, -0.8)--(-0.5, 0) .. controls +(0, 0.6) and +(0,0.6).. (0.5, 0)--(0.5, -0.8);
\begin{scope}[shift={(0.5, -0.3)}]
\draw [fill=white] (-0.3, -0.3) rectangle (0.3, 0.3);
\node at (0, 0) {\tiny $\mathbf{y}$};
\end{scope}
\begin{scope}[shift={(1.4, 0.8)}]
\draw [fill=white] (-0.3, -0.3) rectangle (0.3, 0.3);
\node at (0, 0) {\tiny $\overline{v_k^*}$};  
\end{scope}
\end{tikzpicture}}}
-\frac{1}{2}\sum_{j=1}^m e^{\beta a_j/2}\vcenter{\hbox{\begin{tikzpicture}[scale=0.65]
     \draw [blue] (1.4, 2.3)--(1.4, -0.8);
    \begin{scope}[shift={(0,1.5)}]
    \draw [blue] (-0.5, 0.8)--(-0.5, 0) .. controls +(0, -0.6) and +(0,-0.6).. (0.5, 0)--(0.5, 0.8); 
\begin{scope}[shift={(0.5, 0.3)}]
\draw [fill=white] (-0.5, -0.3) rectangle (0.5, 0.3);
\node at (0, 0) {\tiny $v_jv_k$};
\end{scope}
    \end{scope}
\draw [blue] (-0.5, -0.8)--(-0.5, 0) .. controls +(0, 0.6) and +(0,0.6).. (0.5, 0)--(0.5, -0.8);
\begin{scope}[shift={(0.5, -0.3)}]
\draw [fill=white] (-0.3, -0.3) rectangle (0.3, 0.3);
\node at (0, 0) {\tiny $v_j^*$};
\end{scope}
\begin{scope}[shift={(1.4, 0.8)}]
\draw [fill=white] (-0.3, -0.3) rectangle (0.3, 0.3);
\node at (0, 0) {\tiny $\overline{v_k^*}$};  
\end{scope}
\end{tikzpicture}}}
-\frac{1}{2}\sum_{j=1}^m e^{-\beta a_j/2}\vcenter{\hbox{\begin{tikzpicture}[scale=0.65]
     \draw [blue] (1.4, 2.3)--(1.4, -0.8);
    \begin{scope}[shift={(0,1.5)}]
    \draw [blue] (-0.5, 0.8)--(-0.5, 0) .. controls +(0, -0.6) and +(0,-0.6).. (0.5, 0)--(0.5, 0.8); 
\begin{scope}[shift={(0.5, 0.3)}]
\draw [fill=white] (-0.5, -0.3) rectangle (0.5, 0.3);
\node at (0, 0) {\tiny $v_j^*v_k$};
\end{scope}
    \end{scope}
\draw [blue] (-0.5, -0.8)--(-0.5, 0) .. controls +(0, 0.6) and +(0,0.6).. (0.5, 0)--(0.5, -0.8);
\begin{scope}[shift={(0.5, -0.3)}]
\draw [fill=white] (-0.3, -0.3) rectangle (0.3, 0.3);
\node at (0, 0) {\tiny $v_j$};
\end{scope}
\begin{scope}[shift={(1.4, 0.8)}]
\draw [fill=white] (-0.3, -0.3) rectangle (0.3, 0.3);
\node at (0, 0) {\tiny $\overline{v_k^*}$};  
\end{scope}
\end{tikzpicture}}},
\end{align*}

\begin{align*}
 \lambda^{-1/2}   \vcenter{\hbox{
    \begin{tikzpicture}
    \draw [blue] (-0.15, -0.8)--(-0.15, 0.8);
 \draw [blue] (0.15, 0.8)--(0.15, -0.3) .. controls +(0, -0.35) and +(0, -0.35).. (0.6, -0.3);
 \draw [fill=white] (-0.3, -0.3) rectangle (0.3, 0.3);
 \node at (0, 0) {\tiny $\widehat{\cL}$};
 \begin{scope}[shift={(0.75, 0)}]
  \draw [fill=white] (-0.3, -0.3) rectangle (0.3, 0.3);
 \node at (0, 0) {\tiny $\overline{p_k}$};
 \draw [blue] (0.15, -0.3)--(0.15, -0.8) (-0.15, 0.3)--(-0.15, 0.8);
  \draw [blue] (0.15, 0.3) .. controls +(0, 0.35) and +(0, 0.35).. (0.6, 0.3)--(0.6, -0.8);
 \end{scope}
    \end{tikzpicture}
    }}
= \vcenter{\hbox{\begin{tikzpicture}[scale=0.65]
     \draw [blue] (1.4, 2.3)--(1.4, -0.8);
    \begin{scope}[shift={(0,1.5)}]
    \draw [blue] (-0.5, 0.8)--(-0.5, 0) .. controls +(0, -0.6) and +(0,-0.6).. (0.5, 0)--(0.5, 0.8); 
\begin{scope}[shift={(0.5, 0.3)}]
\draw [fill=white] (-0.3, -0.3) rectangle (0.3, 0.3);
\node at (0, 0) {\tiny $\mathbf{y}$};
\end{scope}
    \end{scope}
\draw [blue] (-0.5, -0.8)--(-0.5, 0) .. controls +(0, 0.6) and +(0,0.6).. (0.5, 0)--(0.5, -0.8);
\begin{scope}[shift={(0.5, -0.3)}]
\draw [fill=white] (-0.4, -0.3) rectangle (0.4, 0.3);
\node at (0, 0) {\tiny $ v_k $};
\end{scope}
\begin{scope}[shift={(1.4, 0.8)}]
\draw [fill=white] (-0.3, -0.3) rectangle (0.3, 0.3);
\node at (0, 0) {\tiny $\overline{v_k^*}$};  
\end{scope}
\end{tikzpicture}}}
+
 \vcenter{\hbox{\begin{tikzpicture}[scale=0.65]
     \draw [blue] (1.4, 2.3)--(1.4, -0.8);
    \begin{scope}[shift={(0,1.5)}]
    \draw [blue] (-0.5, 0.8)--(-0.5, 0) .. controls +(0, -0.6) and +(0,-0.6).. (0.5, 0)--(0.5, 0.8); 
    \end{scope}
\draw [blue] (-0.5, -0.8)--(-0.5, 0) .. controls +(0, 0.6) and +(0,0.6).. (0.5, 0)--(0.5, -0.8);
\begin{scope}[shift={(0.5, -0.3)}]
\draw [fill=white] (-0.5, -0.3) rectangle (0.5, 0.3);
\node at (0, 0) {\tiny $v_k\mathbf{y}$};
\end{scope}
\begin{scope}[shift={(1.4, 0.8)}]
\draw [fill=white] (-0.3, -0.3) rectangle (0.3, 0.3);
\node at (0, 0) {\tiny $\overline{v_k^*}$};  
\end{scope}
\end{tikzpicture}}}
-\frac{1}{2}\sum_{j=1}^m e^{\beta a_j/2}\vcenter{\hbox{\begin{tikzpicture}[scale=0.65]
     \draw [blue] (1.4, 2.3)--(1.4, -0.8);
    \begin{scope}[shift={(0,1.5)}]
    \draw [blue] (-0.5, 0.8)--(-0.5, 0) .. controls +(0, -0.6) and +(0,-0.6).. (0.5, 0)--(0.5, 0.8); 
\begin{scope}[shift={(0.5, 0.3)}]
\draw [fill=white] (-0.3, -0.3) rectangle (0.3, 0.3);
\node at (0, 0) {\tiny $v_j$};
\end{scope}
    \end{scope}
\draw [blue] (-0.5, -0.8)--(-0.5, 0) .. controls +(0, 0.6) and +(0,0.6).. (0.5, 0)--(0.5, -0.8);
\begin{scope}[shift={(0.5, -0.3)}]
\draw [fill=white] (-0.5, -0.3) rectangle (0.5, 0.3);
\node at (0, 0) {\tiny $v_kv_j^*$};
\end{scope}
\begin{scope}[shift={(1.4, 0.8)}]
\draw [fill=white] (-0.3, -0.3) rectangle (0.3, 0.3);
\node at (0, 0) {\tiny $\overline{v_k^*}$};  
\end{scope}
\end{tikzpicture}}}
-\frac{1}{2}\sum_{j=1}^m e^{-\beta a_j/2}\vcenter{\hbox{\begin{tikzpicture}[scale=0.65]
     \draw [blue] (1.4, 2.3)--(1.4, -0.8);
    \begin{scope}[shift={(0,1.5)}]
    \draw [blue] (-0.5, 0.8)--(-0.5, 0) .. controls +(0, -0.6) and +(0,-0.6).. (0.5, 0)--(0.5, 0.8); 
\begin{scope}[shift={(0.5, 0.3)}]
\draw [fill=white] (-0.3, -0.3) rectangle (0.3, 0.3);
\node at (0, 0) {\tiny $v_j^*$};
\end{scope}
    \end{scope}
\draw [blue] (-0.5, -0.8)--(-0.5, 0) .. controls +(0, 0.6) and +(0,0.6).. (0.5, 0)--(0.5, -0.8);
\begin{scope}[shift={(0.5, -0.3)}]
\draw [fill=white] (-0.5, -0.3) rectangle (0.5, 0.3);
\node at (0, 0) {\tiny $v_k v_j$};
\end{scope}
\begin{scope}[shift={(1.4, 0.8)}]
\draw [fill=white] (-0.3, -0.3) rectangle (0.3, 0.3);
\node at (0, 0) {\tiny $\overline{v_k^*}$};  
\end{scope}
\end{tikzpicture}}},
\end{align*}

\begin{align*}
\lambda^{-1/2}\vcenter{\hbox{
    \begin{tikzpicture}
       \draw [blue] (-0.75, -0.3)--(-0.75, 0.6) (-0.45, -0.3)--(-0.45, 0.6) (-0.15, -0.8)--(-0.15, 0.6) (0.15, -0.8)--(0.15, 1.6) (0.45, -0.8)--(0.45, 1.6);
       \draw[fill=white] (-0.6, -0.3) rectangle (0.6, 0.3);
       \node at (0, 0) {\tiny $\widehat{\cJ}$};
       \begin{scope}[shift={(-0.6, 0.8)}]
          \draw[fill=white] (-0.3, -0.3) rectangle (0.3, 0.3); 
           \node at (0, 0) {\tiny $p_k$};
        \draw[blue] (0.15, 0.3).. controls+(0, 0.3) and +(0, 0.3).. (0.45, 0.3)--(0.45, -0.3);
        \draw[blue] (-0.15, 0.3)--(-0.15, 0.8);
        \draw [blue] (-0.15, -1.1) .. controls +(0, -0.25) and +(0, -0.25).. (0.15, -1.1);
       \end{scope}
    \end{tikzpicture}
    }} 
    = \vcenter{\hbox{\begin{tikzpicture}[scale=0.65]
     \draw [blue] (1.4, 2.3)--(1.4, -0.8);
    \begin{scope}[shift={(0,1.5)}]
    \draw [blue] (-0.5, 0.8)--(-0.5, 0) .. controls +(0, -0.6) and +(0,-0.6).. (0.5, 0)--(0.5, 0.8); 
\begin{scope}[shift={(0.5, 0.3)}]
\draw [fill=white] (-0.5, -0.3) rectangle (0.5, 0.3);
\node at (0, 0) {\tiny $v_k \mathbf{y} $};
\end{scope}
    \end{scope}
\draw [blue] (-0.5, -0.8)--(-0.5, 0) .. controls +(0, 0.6) and +(0,0.6).. (0.5, 0)--(0.5, -0.8);
\begin{scope}[shift={(1.4, 0.8)}]
\draw [fill=white] (-0.3, -0.3) rectangle (0.3, 0.3);
\node at (0, 0) {\tiny $\overline{v_k^*}$};  
\end{scope}
\end{tikzpicture}}}
+
 \vcenter{\hbox{\begin{tikzpicture}[scale=0.65]
     \draw [blue] (1.4, 2.3)--(1.4, -0.8);
    \begin{scope}[shift={(0,1.5)}]
    \draw [blue] (-0.5, 0.8)--(-0.5, 0) .. controls +(0, -0.6) and +(0,-0.6).. (0.5, 0)--(0.5, 0.8); 
\begin{scope}[shift={(0.5, 0.3)}]
\draw [fill=white] (-0.3, -0.3) rectangle (0.3, 0.3);
\node at (0, 0) {\tiny $v_k$};
\end{scope}
    \end{scope}
\draw [blue] (-0.5, -0.8)--(-0.5, 0) .. controls +(0, 0.6) and +(0,0.6).. (0.5, 0)--(0.5, -0.8);
\begin{scope}[shift={(0.5, -0.3)}]
\draw [fill=white] (-0.3, -0.3) rectangle (0.3, 0.3);
\node at (0, 0) {\tiny $\mathbf{y}$};
\end{scope}
\begin{scope}[shift={(1.4, 0.8)}]
\draw [fill=white] (-0.3, -0.3) rectangle (0.3, 0.3);
\node at (0, 0) {\tiny $\overline{v_k^*}$};  
\end{scope}
\end{tikzpicture}}}
+\frac{1}{2}\sum_{j=1}^m e^{\beta a_j/2}\vcenter{\hbox{\begin{tikzpicture}[scale=0.65]
     \draw [blue] (1.4, 2.3)--(1.4, -0.8);
    \begin{scope}[shift={(0,1.5)}]
    \draw [blue] (-0.5, 0.8)--(-0.5, 0) .. controls +(0, -0.6) and +(0,-0.6).. (0.5, 0)--(0.5, 0.8); 
\begin{scope}[shift={(0.5, 0.3)}]
\draw [fill=white] (-0.5, -0.3) rectangle (0.5, 0.3);
\node at (0, 0) {\tiny $v_kv_j$};
\end{scope}
    \end{scope}
\draw [blue] (-0.5, -0.8)--(-0.5, 0) .. controls +(0, 0.6) and +(0,0.6).. (0.5, 0)--(0.5, -0.8);
\begin{scope}[shift={(0.5, -0.3)}]
\draw [fill=white] (-0.3, -0.3) rectangle (0.3, 0.3);
\node at (0, 0) {\tiny $v_j^*$};
\end{scope}
\begin{scope}[shift={(1.4, 0.8)}]
\draw [fill=white] (-0.3, -0.3) rectangle (0.3, 0.3);
\node at (0, 0) {\tiny $\overline{v_k^*}$};  
\end{scope}
\end{tikzpicture}}}
+\frac{1}{2}\sum_{j=1}^m e^{-\beta a_j/2}\vcenter{\hbox{\begin{tikzpicture}[scale=0.65]
     \draw [blue] (1.4, 2.3)--(1.4, -0.8);
    \begin{scope}[shift={(0,1.5)}]
    \draw [blue] (-0.5, 0.8)--(-0.5, 0) .. controls +(0, -0.6) and +(0,-0.6).. (0.5, 0)--(0.5, 0.8); 
\begin{scope}[shift={(0.5, 0.3)}]
\draw [fill=white] (-0.5, -0.3) rectangle (0.5, 0.3);
\node at (0, 0) {\tiny $v_kv_j^*$};
\end{scope}
    \end{scope}
\draw [blue] (-0.5, -0.8)--(-0.5, 0) .. controls +(0, 0.6) and +(0,0.6).. (0.5, 0)--(0.5, -0.8);
\begin{scope}[shift={(0.5, -0.3)}]
\draw [fill=white] (-0.3, -0.3) rectangle (0.3, 0.3);
\node at (0, 0) {\tiny $v_j$};
\end{scope}
\begin{scope}[shift={(1.4, 0.8)}]
\draw [fill=white] (-0.3, -0.3) rectangle (0.3, 0.3);
\node at (0, 0) {\tiny $\overline{v_k^*}$};  
\end{scope}
\end{tikzpicture}}},   
\end{align*}

\begin{align*}
\lambda^{-1/2}\vcenter{\hbox{
    \begin{tikzpicture}
       \draw [blue]   (-0.15, 0.8)--(-0.15, 0.3) (0.15, -0.8)--(0.15, 0.8) (0.45, -0.8)--(0.45, 0.8);
        \draw[fill=white] (-0.6, -0.3) rectangle (0.6, 0.3);
       \node at (0, 0) {\tiny $\widehat{\cJ}$};
       \begin{scope}[shift={(-1.2, 0)}]
       \draw [blue] (0.15, 0.3) .. controls +(0, 0.6) and +(0, 0.6) .. (-0.75, 0.3)--(-0.75, -0.8);
          \draw[fill=white] (-0.3, -0.3) rectangle (0.3, 0.3); 
           \node at (0, 0) {\tiny $p_k$};
        \draw[blue] (0.15, -0.3).. controls+(0, -0.25) and +(0, -0.25).. (0.45, -0.3)--(0.45, 0.3).. controls +(0, 0.3) and +(0, 0.3) .. (0.75, 0.3);
        \draw[blue] (-0.15, 0.3).. controls+(0, 0.3) and +(0, 0.3).. (-0.45, 0.3)--(-0.45, -0.3).. controls +(0, -0.6) and +(0, -0.6) .. (1.05, -0.3);
        \draw [blue] (-0.15, -0.3).. controls +(0, -0.4) and +(0, -0.4) .. (0.75, -0.3);
       \end{scope}
    \end{tikzpicture}
    }}
    = \vcenter{\hbox{\begin{tikzpicture}[scale=0.65]
     \draw [blue] (1.4, 2.3)--(1.4, -0.8);
    \begin{scope}[shift={(0,1.5)}]
    \draw [blue] (-0.5, 0.8)--(-0.5, 0) .. controls +(0, -0.6) and +(0,-0.6).. (0.5, 0)--(0.5, 0.8); 
\begin{scope}[shift={(0.5, 0.3)}]
\draw [fill=white] (-0.3, -0.3) rectangle (0.3, 0.3);
\node at (0, 0) {\tiny $\mathbf{y}$};
\end{scope}
    \end{scope}
\draw [blue] (-0.5, -0.8)--(-0.5, 0) .. controls +(0, 0.6) and +(0,0.6).. (0.5, 0)--(0.5, -0.8);
\begin{scope}[shift={(0.5, -0.3)}]
\draw [fill=white] (-0.3, -0.3) rectangle (0.3, 0.3);
\node at (0, 0) {\tiny $v_k $};
\end{scope}
\begin{scope}[shift={(1.4, 0.8)}]
\draw [fill=white] (-0.3, -0.3) rectangle (0.3, 0.3);
\node at (0, 0) {\tiny $\overline{v_k^*}$};  
\end{scope}
\end{tikzpicture}}}
+
 \vcenter{\hbox{\begin{tikzpicture}[scale=0.65]
     \draw [blue] (1.4, 2.3)--(1.4, -0.8);
    \begin{scope}[shift={(0,1.5)}]
    \draw [blue] (-0.5, 0.8)--(-0.5, 0) .. controls +(0, -0.6) and +(0,-0.6).. (0.5, 0)--(0.5, 0.8); 
    \end{scope}
\draw [blue] (-0.5, -0.8)--(-0.5, 0) .. controls +(0, 0.6) and +(0,0.6).. (0.5, 0)--(0.5, -0.8);
\begin{scope}[shift={(0.5, -0.3)}]
\draw [fill=white] (-0.5, -0.3) rectangle (0.5, 0.3);
\node at (0, 0) {\tiny $\mathbf{y}v_k$};
\end{scope}
\begin{scope}[shift={(1.4, 0.8)}]
\draw [fill=white] (-0.3, -0.3) rectangle (0.3, 0.3);
\node at (0, 0) {\tiny $\overline{v_k^*}$};  
\end{scope}
\end{tikzpicture}}}
+\frac{1}{2}\sum_{j=1}^m e^{\beta a_j/2}\vcenter{\hbox{\begin{tikzpicture}[scale=0.65]
     \draw [blue] (1.4, 2.3)--(1.4, -0.8);
    \begin{scope}[shift={(0,1.5)}]
    \draw [blue] (-0.5, 0.8)--(-0.5, 0) .. controls +(0, -0.6) and +(0,-0.6).. (0.5, 0)--(0.5, 0.8); 
\begin{scope}[shift={(0.5, 0.3)}]
\draw [fill=white] (-0.3, -0.3) rectangle (0.3, 0.3);
\node at (0, 0) {\tiny $v_j$};
\end{scope}
    \end{scope}
\draw [blue] (-0.5, -0.8)--(-0.5, 0) .. controls +(0, 0.6) and +(0,0.6).. (0.5, 0)--(0.5, -0.8);
\begin{scope}[shift={(0.5, -0.3)}]
\draw [fill=white] (-0.5, -0.3) rectangle (0.5, 0.3);
\node at (0, 0) {\tiny $v_j^*v_k$};
\end{scope}
\begin{scope}[shift={(1.4, 0.8)}]
\draw [fill=white] (-0.3, -0.3) rectangle (0.3, 0.3);
\node at (0, 0) {\tiny $\overline{v_k^*}$};  
\end{scope}
\end{tikzpicture}}}
+\frac{1}{2}\sum_{j=1}^m e^{-\beta a_j/2}\vcenter{\hbox{\begin{tikzpicture}[scale=0.65]
     \draw [blue] (1.4, 2.3)--(1.4, -0.8);
    \begin{scope}[shift={(0,1.5)}]
    \draw [blue] (-0.5, 0.8)--(-0.5, 0) .. controls +(0, -0.6) and +(0,-0.6).. (0.5, 0)--(0.5, 0.8); 
\begin{scope}[shift={(0.5, 0.3)}]
\draw [fill=white] (-0.3, -0.3) rectangle (0.3, 0.3);
\node at (0, 0) {\tiny $v_j^*$};
\end{scope}
    \end{scope}
\draw [blue] (-0.5, -0.8)--(-0.5, 0) .. controls +(0, 0.6) and +(0,0.6).. (0.5, 0)--(0.5, -0.8);
\begin{scope}[shift={(0.5, -0.3)}]
\draw [fill=white] (-0.5, -0.3) rectangle (0.5, 0.3);
\node at (0, 0) {\tiny $ v_jv_k$};
\end{scope}
\begin{scope}[shift={(1.4, 0.8)}]
\draw [fill=white] (-0.3, -0.3) rectangle (0.3, 0.3);
\node at (0, 0) {\tiny $\overline{v_k^*}$};  
\end{scope}
\end{tikzpicture}}}.
\end{align*}
By taking differences, we have that 
\begin{align*}
\partial_k \cL -\cJ \partial_k= \frac{1}{2} (e^{-\beta a_k/2} + e^{\beta a_k/2}) \partial_k, \quad k=1, \ldots, m.
\end{align*}

\begin{theorem}
Suppose that $\{\Phi_t\}_{\ \geq 0}$ is the semigroup described as above.
Then for any $x\in \cM$, we have that 
\begin{align*}
 & \Gamma_2(x) \geq  \frac{1}{2}\min_{1\leq j \leq m} (e^{-\beta a_j/2} + e^{\beta a_j/2}) \Gamma(x) \\
  &+\frac{1}{\sum_{j=1}^m (1+e^{-\beta a_j })^2
  +(1+e^{\beta a_j })^2}  \left| \cL(x)+\cL^*(x)+\sum_{j=1}^m\frac{e^{\beta a_j/2}-e^{-\beta a_j /2}}{4}\{x, v_jv_j^*-v_j^*v_j\}\right|^2.
\end{align*}
\end{theorem}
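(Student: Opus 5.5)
The plan is to derive the estimate as a direct instance of Proposition \ref{prop:matrixderivation}, fed by the intertwining relation just computed. The relation
\[
\partial_k \cL -\cJ \partial_k= \tfrac{1}{2} (e^{-\beta a_k/2} + e^{\beta a_k/2}) \partial_k
\]
holds for the $v_k$-directions. By taking adjoints in the pictures (equivalently, replacing $v_k$ by $v_k^*$), the same computation gives the identical constant for the $v_k^*$-directions. So the intertwining property \eqref{eq:intertwining0} holds with $\beta_j=\frac12(e^{\beta a_j/2}+e^{-\beta a_j/2})$ for all $2m$ directional derivations. We then take the minimum as the constant in front of $\Gamma(x)$.

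The first step is to apply Proposition \ref{prop:beinter}. This gives
\[
\Gamma_2(x)\ge \min_j \beta_j\,\Gamma(x)+\tfrac12\lambda^{-1/2}\sum_j \bE_{\cM}\widetilde\Gamma(\partial_j x).
\]
The second step bounds the remainder from below. The Fermion algebra is a matrix algebra ($\bC\subset M_{2^m}(\bC)$), and the decomposition of $\widehat{\cL}_0$ consists of the rank-one pieces attached to $v_j$ and $v_j^*$. These pieces satisfy $\tau(v_j^*v_k)=\delta_{jk}$ and $\tau(v_jv_k)=0$, and the involution $j\leftrightarrow j^*$ sends $v_j\mapsto v_j^*$ and $\mu_j=e^{\beta a_j}\mapsto e^{-\beta a_j}$. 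The lifting $\widehat{\cJ}_0$ is the standard lifting twisted by $\overline{w}$.

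The argument of Proposition \ref{prop:matrixderivation}, namely keeping only the terms $\widetilde\partial_{j^*}\partial_j$ and applying Cauchy--Schwarz, should therefore go through. We sum over the $2m$ indices with weights $1+\mu_j^{-1}$. The denominator is then $\sum_{j=1}^m (1+e^{-\beta a_j})^2+(1+e^{\beta a_j})^2$, and the bound is $|\cL(x)+\cL^*(x)+\frac12\{x,1*(\overline{\widehat{\cL}_0}-\widehat{\cL}_0)\}|^2$.

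The main obstacle is the twist by $w$ in $\widehat{\cJ}_0$, which makes the lifting nonstandard. I must verify that $\widetilde\partial_{j^*}\partial_j$ still has Fourier multiplier $\lambda^{1/4}\|v_j\|_2^{-2}\,\widehat{\partial_j^*\partial_j}\otimes\overline{v_j^*}\otimes\overline{v_j}$ up to the factor $\overline{w}$. The extra $\overline w$ factors appear as $\overline{w}\,\overline{w}=1$ in the product $(\cdot)^*(\cdot)$, and $w$ is a self-adjoint unitary, so they should cancel in $\bE_{\cM}((\widetilde\partial\partial_j x)^*(\widetilde\partial\partial_j x))$. This would reproduce the equality $\lambda^{1/2}\sum_j|\partial_j^*\partial_j x|^2$ in \eqref{eq:phigamma0}. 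I would check this cancellation first, pictorially, using $w^2=1$ and that $w$ anticommutes with every $v_j$.

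Finally, I identify $\frac12\{x,1*(\overline{\widehat{\cL}_0}-\widehat{\cL}_0)\}$ explicitly. We have $1*\widehat{\cL}_0=2\mathbf y=\frac12\sum_j(e^{\beta a_j/2}v_j^*v_j+e^{-\beta a_j/2}v_jv_j^*)$. Likewise $1*\overline{\widehat{\cL}_0}$ is obtained by swapping the roles of $v_j$ and $v_j^*$. Their difference is $\frac12\sum_j(e^{\beta a_j/2}-e^{-\beta a_j/2})(v_jv_j^*-v_j^*v_j)$, and half of it gives exactly the anticommutator term in the statement.
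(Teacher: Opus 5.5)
\textbf{Verdict: genuine gap, and it cannot be repaired because the statement as written is false.} You follow the paper's route; its proof is just ``It follows from Proposition~\ref{prop:matrixderivation}''. But the step you single out as the main obstacle fails.

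\textbf{Why the twist does not cancel.} The $\overline{w}$ in $\widehat{\cJ}_0$ is not an overall factor that drops out of $(\cdot)^*(\cdot)$; it sits inside the commutator that defines $\widetilde\partial$. The lifted jump element is $v_k\otimes\overline{w}$. Meanwhile $\partial_j x=[x,v_j]\otimes\overline{v_j^*}$ carries $\overline{v_j^*}$ on the strand where $\overline{w}$ acts. Since $wv_j^*=-v_j^*w$, up to sign and the extra tensor legs you get
\[
\widetilde\partial_k\partial_j x=\{[x,v_j],v_k\}\otimes\cdots ,
\]
an anticommutator rather than $[[x,v_j],v_k]$. This sign is exactly what produces the intertwining in the paper's pictorial check. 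There the $\widehat{\cJ}$-diagrams containing $v_kv_j$ enter with the sign opposite to the $\widehat{\cL}$-diagrams containing $v_jv_k$, and they cancel because $v_jv_k+v_kv_j=0$; with the standard lifting they would not cancel. So you cannot trade the twist away. Consequently:
\begin{itemize}
\item $\widetilde\partial_{j^*}\partial_j x$ is not a multiple of $\partial_j^*\partial_j x$;
\item the equality in \eqref{eq:phigamma0} fails;
\item Cauchy--Schwarz only controls $\bigl|\sum_j(1+\mu_j^{-1})\{[x,v_j],v_j^*\}\bigr|^2$, which is unrelated to $\cL(x)+\cL^*(x)+\cdots$.
\end{itemize}
Proposition~\ref{prop:matrixderivation} assumes a standard lifting, so the paper's one-line citation has the same hole.

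\textbf{Counterexample.} Take $m=1$, $v=v_1$, $g_\pm=e^{\pm\beta a_1/2}$, $E=\frac12v^*v$ and $F=\frac12vv^*=1-E$. Then $\mathbf{y}=\frac12(g_+E+g_-F)$ and, consistent with $1*\widehat{\cL}_0=2\mathbf{y}$, $x*\widehat{\cL}_0=\frac12(g_+v^*xv+g_-vxv^*)$. For $x=v^*$ one checks:
\begin{itemize}
\item $\cL(x)=\cL^*(x)=\frac{g_++g_-}{2}x$;
\item $\Gamma(x)=g_+\,1$, a scalar, so $\cL\Gamma(x)=0$;
\item hence $\Gamma_2(x)=\frac{g_++g_-}{2}\Gamma(x)$, which is exactly the first term on the right-hand side.
\end{itemize}
But $\{x,vv^*-v^*v\}=0$, so the remaining term equals
\[
\frac{(g_++g_-)^2}{(1+e^{-\beta a_1})^2+(1+e^{\beta a_1})^2}\,vv^*\neq 0 ,
\]
and the claimed inequality fails.

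This is consistent with the twisted picture. The twisted Hessian of this $x$ vanishes: $\partial_1 x\propto E-F$ anticommutes with both $v$ and $v^*$, and the derivative in the $v^*$-direction is $[v^*,v^*]=0$. By contrast, $\partial_1^*\partial_1 x\propto[[v^*,v],v^*]=4v^*\neq 0$. So with the twisted lifting, Proposition~\ref{prop:beinter} yields only $\Gamma_2\ge\beta_{\min}\Gamma$ plus a twisted-Hessian term, and no $|\cL(x)+\cL^*(x)+\cdots|^2$ correction is available.
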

\begin{proof}
It follows from Proposition \ref{prop:matrixderivation}.
\end{proof}

\begin{theorem}
Suppose that $\{\Phi_t\}_{t\geq 0}$ is the semigroup described as above.
Then for any $x\in \cM$, we have that 
\begin{align*}
 \|\nabla \Phi_t x\|_{D, \widehat{\Delta}}^2 \leq   e^{-2\beta_{\min} t} \|\nabla x\|_{\Phi_t^*(D), \widehat{\Delta}}^2- (\min_{1\leq j \leq m} e^{\pm \beta a_j})\frac{(e^{-2\beta_{\min} t}- e^{-4\beta_{\min} t})}{\beta_{\min}\sum_{j=1}^m (1+e^{-\beta a_j })^2
  +(1+e^{\beta a_j })^2}  \langle P_Dx, x\rangle,
\end{align*}
where $\displaystyle \beta_{\min}= \frac{1}{2}\min_{1\leq j \leq m} (e^{-\beta a_j/2} + e^{\beta a_j/2}) $, $P_D=\langle \cdot, \xi_D\rangle \xi_D$ and 
\begin{align*}
 \xi_D=\cL(\Phi_t^*(D))+\cL^*(\Phi_t^*(D))+\sum_{j=1}^m\frac{e^{\beta a_j/2}-e^{-\beta a_j /2}}{4}\{\Phi_t^*(D), v_jv_j^*-v_j^*v_j\} .
 \end{align*}
\end{theorem}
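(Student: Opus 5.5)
The plan is to reduce the statement to Theorem \ref{thm:ricbd}, which handles exactly this situation. That theorem covers the inclusion $\bC\subset M_n(\bC)$ and a bimodule GNS symmetric semigroup with the intertwining property, for which the lowest Fourier-multiplier data are rank-one projections given by elements $v_j$. So the work is to check that the Fermion-algebra semigroup meets its hypotheses and then to identify the constants.

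First, I check the hypotheses.
\begin{enumerate}[(1)]
\item The Fermion algebra of dimension $2m$ is $M_{2^m}(\bC)$, and the inclusion is $\bC\subset M_{2^m}(\bC)$.
\item Bimodule GNS symmetry with respect to $\widehat{\Delta}$ is recorded just before Lemma \ref{lem:fermiongns}. That lemma identifies the eigenvalues of $\widehat{\Delta}$ on $\cR(\widehat{\cL}_0)$ as $\mu=e^{\pm\beta a_j}$, attached to $v_j$ and $v_j^*$ respectively.
\item The decomposition $\widehat{\cL}_0=\frac12\sum_j(e^{\beta a_j/2}p_{v_j}+e^{-\beta a_j/2}p_{v_j^*})$ is into rank-one projections. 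The orthogonality relations required in the proof of Theorem \ref{thm:ricbd}, namely $\tau(v_j^*v_k)=0$ for $j\ne k$ and $\tau(v_jv_k)=0$, are properties (4)--(5) of the $v_j$. The involution is $v_j\leftrightarrow v_j^*$.
\item $\widehat{\cJ}_0$ is the lifting of $\widehat{\cL}_0$ obtained by inserting $\overline{w}$ on the outer strings. Through Proposition \ref{prop:semigroupext}, it generates a bimodule Markov semigroup extending $\{\Phi_t\}$.
\item The pictorial computation just completed gives $\partial_k\cL-\cJ\partial_k=\frac12(e^{-\beta a_k/2}+e^{\beta a_k/2})\partial_k$. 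By the lemma equating \eqref{eq:intertwining0} with \eqref{eq:intertwining}, this is the intertwining property with $\beta_k=\frac12(e^{-\beta a_k/2}+e^{\beta a_k/2})$, so $\min_k\beta_k=\beta_{\min}$.
\end{enumerate}
The same computation applies to the directions $v_k^*$; by the symmetry $a_k\mapsto -a_k$ it produces the same $\beta_k$.

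Second, I substitute into Theorem \ref{thm:ricbd}.
\begin{enumerate}[(1)]
\item The index set there runs over all $2m$ rank-one pieces. Hence $\sum(1+\mu_j)^2=\sum_{j=1}^m\big[(1+e^{-\beta a_j})^2+(1+e^{\beta a_j})^2\big]$, and $\min\mu_j=\min_j e^{\pm\beta a_j}$.
\item It remains to compute the correction term $\frac12\{x,1*(\overline{\widehat{\cL}_0}-\widehat{\cL}_0)\}$ inside $\xi_D$. From $\overline{\widehat{\cL}_0}=\overline{\widehat{\Delta}}\widehat{\cL}_0$, the element $\overline{\widehat{\cL}_0}$ has the same form as $\widehat{\cL}_0$ with $a_j\mapsto -a_j$.
\item Using the formula for $\mathbf y=\frac12(1*\widehat{\cL}_0)$, this gives
$1*(\overline{\widehat{\cL}_0}-\widehat{\cL}_0)=\frac12\sum_j(e^{\beta a_j/2}-e^{-\beta a_j/2})(v_jv_j^*-v_j^*v_j)$.
\item Halving gives the coefficient $\frac{e^{\beta a_j/2}-e^{-\beta a_j/2}}{4}$ in $\xi_D$, with $x$ replaced by $\Phi_t^*(D)$ as in Theorem \ref{thm:ricbd}.
\end{enumerate}

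I expect the main obstacle to be normalization bookkeeping. The rank-one projections carry $\lambda^{1/2}$ factors and the norms $\|v_j\|_2^2$, and Theorem \ref{thm:ricbd} is written with $\widehat{\cL}_0=\sum\mu_j^{1/2}(\cdot)$, whereas here the weights are $\frac12 e^{\pm\beta a_j/2}$. I need to confirm that the $\mu_j$ entering $\sum(1+\mu_j)^2$ and $\min\mu_j$ are the modular eigenvalues $e^{\pm\beta a_j}$ and not the weights. I would also confirm that the lifted directional derivations built from $\widehat{\cJ}_0$ (the ones with $\overline w$ inserted) play the role of the standard lifting in that proof, in particular the identity $\widehat{\widetilde\partial_{j^*}\partial_j}$. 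My first attempt would be to redo that single Fourier-multiplier identity with $\overline w$ on the outer strings. The key facts I would use are $w^2=1$ and that $w$ anticommutes with every $v_j$; this anticommutation is what turns the commutator into the twisted derivation and yields the plus sign in $\beta_k$.
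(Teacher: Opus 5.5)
Your proposal takes the same route as the paper, whose entire proof is the appeal to Theorem \ref{thm:ricbd}. Your bookkeeping is exactly what that one line leaves implicit: the $2m$ rank-one pieces with $\mu=e^{\pm\beta a_j}$, the intertwining constants $\frac12(e^{-\beta a_k/2}+e^{\beta a_k/2})$ in both the $v_k$ and $v_k^*$ directions, and the identity $\frac12\{\cdot,1*(\overline{\widehat{\cL}_0}-\widehat{\cL}_0)\}=\sum_j\frac{e^{\beta a_j/2}-e^{-\beta a_j/2}}{4}\{\cdot,v_jv_j^*-v_j^*v_j\}$.

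The paper does not address the lifting concern you raise. It uses Theorem \ref{thm:ricbd} as a black box, which its statement permits since it carries no standard-lifting hypothesis. If you open that proof, the step to recheck is $\widetilde{\Phi_t}\bigl((\partial_j^{(0)}x)\otimes\overline{v_j^*}\bigr)=(\Phi_t\partial_j^{(0)}x)\otimes\overline{v_j^*}$. The identity for $\widehat{\widetilde{\partial}_{j^*}\partial_j}$ that you mention belongs to Proposition \ref{prop:matrixderivation}, not to this theorem. Because $v_j^*$ is odd, the $\overline{w}$-twisted lifting makes the first tensor factor evolve under $a\mapsto\Phi_t(aw)w$ rather than under $\Phi_t$. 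So that step does not carry over verbatim, and the black-box reduction is all that either your argument or the paper's actually establishes.
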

\begin{proof}
It follows from Theorem \ref{thm:ricbd}.
\end{proof}

\subsection{Fermion Algebras-KMS Symmetry}

In this section, we shall obtain KMS symmetric semigroups for Fermion algebras and characterize its intertwining properties.
Let
\begin{align*}
\widehat{\Delta}
= \sum_{j=1}^m e^{\beta a_j/2} \lambda^{1/2} \vcenter{\hbox{\begin{tikzpicture}[scale=0.65]
    \begin{scope}[shift={(0,1.5)}]
    \draw [blue] (-0.5, 0.8)--(-0.5, 0) .. controls +(0, -0.6) and +(0,-0.6).. (0.5, 0)--(0.5, 0.8);    
\begin{scope}[shift={(0.5, 0.3)}]
\draw [fill=white] (-0.3, -0.3) rectangle (0.3, 0.3);
\node at (0, 0) {\tiny $v_j$};
\end{scope}
    \end{scope}
\draw [blue] (-0.5, -0.8)--(-0.5, 0) .. controls +(0, 0.6) and +(0,0.6).. (0.5, 0)--(0.5, -0.8);
\begin{scope}[shift={(0.5, -0.3)}]
\draw [fill=white] (-0.3, -0.3) rectangle (0.3, 0.3);
\node at (0, 0) {\tiny $v_j^*$};
\end{scope}
\end{tikzpicture}}}
+ \sum_{j=1}^m e^{-\beta a_j/2} \lambda^{1/2} \vcenter{\hbox{\begin{tikzpicture}[scale=0.65]
    \begin{scope}[shift={(0,1.5)}]
    \draw [blue] (-0.5, 0.8)--(-0.5, 0) .. controls +(0, -0.6) and +(0,-0.6).. (0.5, 0)--(0.5, 0.8);    
\begin{scope}[shift={(0.5, 0.3)}]
\draw [fill=white] (-0.3, -0.3) rectangle (0.3, 0.3);
\node at (0, 0) {\tiny $v_j^*$};
\end{scope}
    \end{scope}
\draw [blue] (-0.5, -0.8)--(-0.5, 0) .. controls +(0, 0.6) and +(0,0.6).. (0.5, 0)--(0.5, -0.8);
\begin{scope}[shift={(0.5, -0.3)}]
\draw [fill=white] (-0.3, -0.3) rectangle (0.3, 0.3);
\node at (0, 0) {\tiny $v_j$};
\end{scope}
\end{tikzpicture}}}
+\left( 1- \sum_{j=1}^m \lambda^{1/2}\vcenter{\hbox{\begin{tikzpicture}[scale=0.65]
    \begin{scope}[shift={(0,1.5)}]
    \draw [blue] (-0.5, 0.8)--(-0.5, 0) .. controls +(0, -0.6) and +(0,-0.6).. (0.5, 0)--(0.5, 0.8);    
\begin{scope}[shift={(0.5, 0.3)}]
\draw [fill=white] (-0.3, -0.3) rectangle (0.3, 0.3);
\node at (0, 0) {\tiny $v_j$};
\end{scope}
    \end{scope}
\draw [blue] (-0.5, -0.8)--(-0.5, 0) .. controls +(0, 0.6) and +(0,0.6).. (0.5, 0)--(0.5, -0.8);
\begin{scope}[shift={(0.5, -0.3)}]
\draw [fill=white] (-0.3, -0.3) rectangle (0.3, 0.3);
\node at (0, 0) {\tiny $v_j^*$};
\end{scope}
\end{tikzpicture}}}
-\sum_{j=1}^m \lambda^{1/2}\vcenter{\hbox{\begin{tikzpicture}[scale=0.65]
    \begin{scope}[shift={(0,1.5)}]
    \draw [blue] (-0.5, 0.8)--(-0.5, 0) .. controls +(0, -0.6) and +(0,-0.6).. (0.5, 0)--(0.5, 0.8);    
\begin{scope}[shift={(0.5, 0.3)}]
\draw [fill=white] (-0.3, -0.3) rectangle (0.3, 0.3);
\node at (0, 0) {\tiny $v_j^*$};
\end{scope}
    \end{scope}
\draw [blue] (-0.5, -0.8)--(-0.5, 0) .. controls +(0, 0.6) and +(0,0.6).. (0.5, 0)--(0.5, -0.8);
\begin{scope}[shift={(0.5, -0.3)}]
\draw [fill=white] (-0.3, -0.3) rectangle (0.3, 0.3);
\node at (0, 0) {\tiny $v_j$};
\end{scope}
\end{tikzpicture}}}\right)
\end{align*}
be the associated bimodule modular operator, where $a_j$ are distinct for $j=1, \ldots, m$.
By a direct computation, we see that $\overline{\widehat{\Delta}}=\widehat{\Delta}^{-1}$.

Now we define a self-dual element $H$ in $\cM'\cap \cM_2$.
Let 
\begin{align*}
H=\sum_{j,k=1}^m \gamma_{j,k}^+ \vcenter{\hbox{\begin{tikzpicture}[scale=0.65]
    \begin{scope}[shift={(0,1.5)}]
    \draw [blue] (-0.5, 0.8)--(-0.5, 0) .. controls +(0, -0.6) and +(0,-0.6).. (0.5, 0)--(0.5, 0.8);    
\begin{scope}[shift={(0.5, 0.3)}]
\draw [fill=white] (-0.5, -0.3) rectangle (0.5, 0.3);
\node at (0, 0) {\tiny $iwQ_j$};
\end{scope}
    \end{scope}
\draw [blue] (-0.5, -0.8)--(-0.5, 0) .. controls +(0, 0.6) and +(0,0.6).. (0.5, 0)--(0.5, -0.8);
\begin{scope}[shift={(0.5, -0.3)}]
\draw [fill=white] (-0.5, -0.3) rectangle (0.5, 0.3);
\node at (0, 0) {\tiny $iwQ_k$};
\end{scope}
\end{tikzpicture}}}
+ \gamma_{j,k}^- \vcenter{\hbox{\begin{tikzpicture}[scale=0.65]
    \begin{scope}[shift={(0,1.5)}]
    \draw [blue] (-0.5, 0.8)--(-0.5, 0) .. controls +(0, -0.6) and +(0,-0.6).. (0.5, 0)--(0.5, 0.8);    
\begin{scope}[shift={(0.5, 0.3)}]
\draw [fill=white] (-0.5, -0.3) rectangle (0.5, 0.3);
\node at (0, 0) {\tiny $iwP_j$};
\end{scope}
    \end{scope}
\draw [blue] (-0.5, -0.8)--(-0.5, 0) .. controls +(0, 0.6) and +(0,0.6).. (0.5, 0)--(0.5, -0.8);
\begin{scope}[shift={(0.5, -0.3)}]
\draw [fill=white] (-0.5, -0.3) rectangle (0.5, 0.3);
\node at (0, 0) {\tiny $iwP_k$};
\end{scope}
\end{tikzpicture}}},
\end{align*}
where $(\gamma_{j,k}^{\pm})_{j,k=1}^m$ are real positive definite matrices such that $\gamma_{j,j}^++\gamma_{j,j}^{-}=1$ and $\gamma_{j,j}^+ \neq \gamma_{j,j}^-$ for all $j=1, \ldots, m$.
By a direct computation, we have that  $H=\overline{H}$.

\begin{lemma}
We have that 
\begin{align*}
\cR(H)=\cR(\widehat{\Delta}^{1/2}H \widehat{\Delta}^{1/2})= \cR(\widehat{\Delta}^{-1/2}H \widehat{\Delta}^{-1/2}).
\end{align*}
\end{lemma}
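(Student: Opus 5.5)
The plan is to reduce everything to the invertibility of $\widehat{\Delta}^{1/2}$ together with a computation showing that $\widehat{\Delta}$ maps $\cR(H)$ into a subspace where it is still invertible. The general principle is: if $A\geq 0$ and $S$ is invertible, then $\cR(SAS^*)=S\,\cR(A)$. Here $\widehat{\Delta}$ is strictly positive; by construction every coefficient $e^{\pm\beta a_j/2}$ is positive and the complementary projection has coefficient $1$. Hence $\widehat{\Delta}^{\pm1/2}$ is invertible and self-adjoint, and
\begin{align*}
\cR(\widehat{\Delta}^{\pm 1/2}H\widehat{\Delta}^{\pm 1/2})=\widehat{\Delta}^{\pm1/2}\cR(H).
\end{align*}
To do this I first verify $H\geq 0$: each sum $\sum_{j,k}\gamma^{\pm}_{j,k}\,|iwQ_j\rangle\langle iwQ_k|$ is the rank-one-type element in $\cM'\cap\cM_2$ built from a positive definite real matrix, so it is a positive combination. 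It therefore remains to show $\widehat{\Delta}^{1/2}\cR(H)=\cR(H)$, i.e.\ that the subspace $\cR(H)$ is invariant under $\widehat{\Delta}$. By invertibility this also gives invariance under $\widehat{\Delta}^{-1/2}$.

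For the invariance step I identify $\cR(H)$ explicitly. Since $(\gamma^\pm_{j,k})$ are positive definite, $\cR(H)$ is the range projection onto the span of the vectors (elements of the $2$-box space) associated with $iwQ_j$ and $iwP_j$, $j=1,\dots,m$. Using $v_j=\frac{1}{\sqrt2}w(Q_j+iP_j)$ and $w^2=1$, we get $wQ_j=\frac{1}{\sqrt2}(v_j+v_j^*)$ up to the sign conventions coming from $wv_j^*=-v_j^*w$, and $wP_j=\frac{-i}{\sqrt2}(v_j-v_j^*)$ similarly. So
\begin{align*}
\mathrm{span}\{iwQ_j, iwP_j\}_{j}=\mathrm{span}\{v_j,v_j^*\}_{j}.
\end{align*}
Using the trace relations $\tau(v_jv_k^*)=\delta_{j,k}$ and $\tau(v_jv_k)=0$, the rank-one projections onto $v_j$ and $v_j^*$ are exactly the pictures appearing in $\widehat{\Delta}$, and they are mutually orthogonal. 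Thus $\cR(H)=\sum_j(p_{v_j}+p_{v_j^*})$, where $p_{v}$ denotes $\lambda^{1/2}$ times the picture with $v,v^*$.

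Next, $\widehat{\Delta}$ is diagonal with respect to these projections: it equals $e^{\beta a_j/2}$ on $p_{v_j}$, $e^{-\beta a_j/2}$ on $p_{v_j^*}$, and $1$ on the complement. Hence $\widehat{\Delta}$ commutes with $\cR(H)$, and $\widehat{\Delta}^{\pm1/2}\cR(H)=\cR(H)$. Combining this with the first paragraph gives both equalities.

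The main obstacle is the middle step: checking carefully that the vectors $iwQ_j, iwP_j$ span exactly $\{v_j,v_j^*\}$ and that the corresponding projections agree with those in $\widehat{\Delta}$, including the normalization and the $\lambda^{1/2}$ factors. I would settle this by computing the Gram matrix via $\tau$ and property (5) of the list for $v_j,w$.
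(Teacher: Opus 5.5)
Your proof is correct and follows the paper's route. Both arguments identify $\cR(H)=\sum_j \cR(H)_j$, where $\cR(H)_j$ is the projection onto $\mathrm{span}\{iwQ_j,iwP_j\}=\mathrm{span}\{v_j,v_j^*\}$ and hence a sum of spectral projections of $\widehat{\Delta}$; you then conclude via invertibility of $\widehat{\Delta}^{\pm1/2}$, a step the paper leaves implicit, and you also supply the positive-definiteness argument for $\cR(H)$ (there is a harmless slip: in fact $wQ_j=\frac{1}{\sqrt2}(v_j-v_j^*)$ and $iwP_j=\frac{1}{\sqrt2}(v_j+v_j^*)$, but only the span equality is used).
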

\begin{proof}
By the fact that 
\begin{small}
\begin{align*}
\tau(wQ_jv_k)=-\frac{1}{\sqrt{2}} \delta_{j,k}, \quad \tau(wP_jv_k)=-\frac{i}{\sqrt{2}} \delta_{j,k},\quad
\tau(wQ_jv_k^*)=\frac{1}{\sqrt{2}} \delta_{j,k}, \quad \tau(wP_jv_k^*)=-\frac{i}{\sqrt{2}} \delta_{j,k},
\end{align*}
\end{small}
we see that the projections $\lambda^{1/2}\vcenter{\hbox{\begin{tikzpicture}[scale=0.65]
    \begin{scope}[shift={(0,1.5)}]
    \draw [blue] (-0.5, 0.8)--(-0.5, 0) .. controls +(0, -0.6) and +(0,-0.6).. (0.5, 0)--(0.5, 0.8);    
\begin{scope}[shift={(0.5, 0.3)}]
\draw [fill=white] (-0.3, -0.3) rectangle (0.3, 0.3);
\node at (0, 0) {\tiny $v_j^*$};
\end{scope}
    \end{scope}
\draw [blue] (-0.5, -0.8)--(-0.5, 0) .. controls +(0, 0.6) and +(0,0.6).. (0.5, 0)--(0.5, -0.8);
\begin{scope}[shift={(0.5, -0.3)}]
\draw [fill=white] (-0.3, -0.3) rectangle (0.3, 0.3);
\node at (0, 0) {\tiny $v_j$};
\end{scope}
\end{tikzpicture}}} $ and $\lambda^{1/2}\vcenter{\hbox{\begin{tikzpicture}[scale=0.65]
    \begin{scope}[shift={(0,1.5)}]
    \draw [blue] (-0.5, 0.8)--(-0.5, 0) .. controls +(0, -0.6) and +(0,-0.6).. (0.5, 0)--(0.5, 0.8);    
\begin{scope}[shift={(0.5, 0.3)}]
\draw [fill=white] (-0.3, -0.3) rectangle (0.3, 0.3);
\node at (0, 0) {\tiny $v_j$};
\end{scope}
    \end{scope}
\draw [blue] (-0.5, -0.8)--(-0.5, 0) .. controls +(0, 0.6) and +(0,0.6).. (0.5, 0)--(0.5, -0.8);
\begin{scope}[shift={(0.5, -0.3)}]
\draw [fill=white] (-0.3, -0.3) rectangle (0.3, 0.3);
\node at (0, 0) {\tiny $v_j^*$};
\end{scope}
\end{tikzpicture}}}$ 
are orthogonal to the projections $\lambda^{1/2}\vcenter{\hbox{\begin{tikzpicture}[scale=0.65]
    \begin{scope}[shift={(0,1.5)}]
    \draw [blue] (-0.5, 0.8)--(-0.5, 0) .. controls +(0, -0.6) and +(0,-0.6).. (0.5, 0)--(0.5, 0.8);    
\begin{scope}[shift={(0.5, 0.3)}]
\draw [fill=white] (-0.5, -0.3) rectangle (0.5, 0.3);
\node at (0, 0) {\tiny $iwQ_k$};
\end{scope}
    \end{scope}
\draw [blue] (-0.5, -0.8)--(-0.5, 0) .. controls +(0, 0.6) and +(0,0.6).. (0.5, 0)--(0.5, -0.8);
\begin{scope}[shift={(0.5, -0.3)}]
\draw [fill=white] (-0.5, -0.3) rectangle (0.5, 0.3);
\node at (0, 0) {\tiny $iwQ_k$};
\end{scope}
\end{tikzpicture}}}$
and
$\lambda^{1/2}\vcenter{\hbox{\begin{tikzpicture}[scale=0.65]
    \begin{scope}[shift={(0,1.5)}]
    \draw [blue] (-0.5, 0.8)--(-0.5, 0) .. controls +(0, -0.6) and +(0,-0.6).. (0.5, 0)--(0.5, 0.8);    
\begin{scope}[shift={(0.5, 0.3)}]
\draw [fill=white] (-0.5, -0.3) rectangle (0.5, 0.3);
\node at (0, 0) {\tiny $iwP_k$};
\end{scope}
    \end{scope}
\draw [blue] (-0.5, -0.8)--(-0.5, 0) .. controls +(0, 0.6) and +(0,0.6).. (0.5, 0)--(0.5, -0.8);
\begin{scope}[shift={(0.5, -0.3)}]
\draw [fill=white] (-0.5, -0.3) rectangle (0.5, 0.3);
\node at (0, 0) {\tiny $iwP_k$};
\end{scope}
\end{tikzpicture}}}$ for $j\neq k$.
Note that the projections $\lambda^{1/2}\vcenter{\hbox{\begin{tikzpicture}[scale=0.65]
    \begin{scope}[shift={(0,1.5)}]
    \draw [blue] (-0.5, 0.8)--(-0.5, 0) .. controls +(0, -0.6) and +(0,-0.6).. (0.5, 0)--(0.5, 0.8);    
\begin{scope}[shift={(0.5, 0.3)}]
\draw [fill=white] (-0.5, -0.3) rectangle (0.5, 0.3);
\node at (0, 0) {\tiny $iwQ_j$};
\end{scope}
    \end{scope}
\draw [blue] (-0.5, -0.8)--(-0.5, 0) .. controls +(0, 0.6) and +(0,0.6).. (0.5, 0)--(0.5, -0.8);
\begin{scope}[shift={(0.5, -0.3)}]
\draw [fill=white] (-0.5, -0.3) rectangle (0.5, 0.3);
\node at (0, 0) {\tiny $iwQ_j$};
\end{scope}
\end{tikzpicture}}}$
and
$\lambda^{1/2}\vcenter{\hbox{\begin{tikzpicture}[scale=0.65]
    \begin{scope}[shift={(0,1.5)}]
    \draw [blue] (-0.5, 0.8)--(-0.5, 0) .. controls +(0, -0.6) and +(0,-0.6).. (0.5, 0)--(0.5, 0.8);    
\begin{scope}[shift={(0.5, 0.3)}]
\draw [fill=white] (-0.5, -0.3) rectangle (0.5, 0.3);
\node at (0, 0) {\tiny $iwP_k$};
\end{scope}
    \end{scope}
\draw [blue] (-0.5, -0.8)--(-0.5, 0) .. controls +(0, 0.6) and +(0,0.6).. (0.5, 0)--(0.5, -0.8);
\begin{scope}[shift={(0.5, -0.3)}]
\draw [fill=white] (-0.5, -0.3) rectangle (0.5, 0.3);
\node at (0, 0) {\tiny $iwP_k$};
\end{scope}
\end{tikzpicture}}}$ are orthogonal for all $j,k=1, \ldots, m$.
This implies that $\displaystyle \cR(H)=\sum_{j=1}^m \lambda^{1/2}\vcenter{\hbox{\begin{tikzpicture}[scale=0.65]
    \begin{scope}[shift={(0,1.5)}]
    \draw [blue] (-0.5, 0.8)--(-0.5, 0) .. controls +(0, -0.6) and +(0,-0.6).. (0.5, 0)--(0.5, 0.8);    
\begin{scope}[shift={(0.5, 0.3)}]
\draw [fill=white] (-0.5, -0.3) rectangle (0.5, 0.3);
\node at (0, 0) {\tiny $iwQ_j$};
\end{scope}
    \end{scope}
\draw [blue] (-0.5, -0.8)--(-0.5, 0) .. controls +(0, 0.6) and +(0,0.6).. (0.5, 0)--(0.5, -0.8);
\begin{scope}[shift={(0.5, -0.3)}]
\draw [fill=white] (-0.5, -0.3) rectangle (0.5, 0.3);
\node at (0, 0) {\tiny $iwQ_j$};
\end{scope}
\end{tikzpicture}}}+ \lambda^{1/2}\vcenter{\hbox{\begin{tikzpicture}[scale=0.65]
    \begin{scope}[shift={(0,1.5)}]
    \draw [blue] (-0.5, 0.8)--(-0.5, 0) .. controls +(0, -0.6) and +(0,-0.6).. (0.5, 0)--(0.5, 0.8);    
\begin{scope}[shift={(0.5, 0.3)}]
\draw [fill=white] (-0.5, -0.3) rectangle (0.5, 0.3);
\node at (0, 0) {\tiny $iwP_j$};
\end{scope}
    \end{scope}
\draw [blue] (-0.5, -0.8)--(-0.5, 0) .. controls +(0, 0.6) and +(0,0.6).. (0.5, 0)--(0.5, -0.8);
\begin{scope}[shift={(0.5, -0.3)}]
\draw [fill=white] (-0.5, -0.3) rectangle (0.5, 0.3);
\node at (0, 0) {\tiny $iwP_j$};
\end{scope}
\end{tikzpicture}}}$.

Note that 
\begin{align*}
\lambda^{1/2}\vcenter{\hbox{\begin{tikzpicture}[scale=0.65]
    \begin{scope}[shift={(0,1.5)}]
    \draw [blue] (-0.5, 0.8)--(-0.5, 0) .. controls +(0, -0.6) and +(0,-0.6).. (0.5, 0)--(0.5, 0.8);    
\begin{scope}[shift={(0.5, 0.3)}]
\draw [fill=white] (-0.5, -0.3) rectangle (0.5, 0.3);
\node at (0, 0) {\tiny $iwQ_j$};
\end{scope}
    \end{scope}
\draw [blue] (-0.5, -0.8)--(-0.5, 0) .. controls +(0, 0.6) and +(0,0.6).. (0.5, 0)--(0.5, -0.8);
\begin{scope}[shift={(0.5, -0.3)}]
\draw [fill=white] (-0.5, -0.3) rectangle (0.5, 0.3);
\node at (0, 0) {\tiny $iwQ_j$};
\end{scope}
\end{tikzpicture}}}
+
\lambda^{1/2}\vcenter{\hbox{\begin{tikzpicture}[scale=0.65]
    \begin{scope}[shift={(0,1.5)}]
    \draw [blue] (-0.5, 0.8)--(-0.5, 0) .. controls +(0, -0.6) and +(0,-0.6).. (0.5, 0)--(0.5, 0.8);    
\begin{scope}[shift={(0.5, 0.3)}]
\draw [fill=white] (-0.5, -0.3) rectangle (0.5, 0.3);
\node at (0, 0) {\tiny $iwP_j$};
\end{scope}
    \end{scope}
\draw [blue] (-0.5, -0.8)--(-0.5, 0) .. controls +(0, 0.6) and +(0,0.6).. (0.5, 0)--(0.5, -0.8);
\begin{scope}[shift={(0.5, -0.3)}]
\draw [fill=white] (-0.5, -0.3) rectangle (0.5, 0.3);
\node at (0, 0) {\tiny $iwP_j$};
\end{scope}
\end{tikzpicture}}}
=
\lambda^{1/2}\vcenter{\hbox{\begin{tikzpicture}[scale=0.65]
    \begin{scope}[shift={(0,1.5)}]
    \draw [blue] (-0.5, 0.8)--(-0.5, 0) .. controls +(0, -0.6) and +(0,-0.6).. (0.5, 0)--(0.5, 0.8);    
\begin{scope}[shift={(0.5, 0.3)}]
\draw [fill=white] (-0.3, -0.3) rectangle (0.3, 0.3);
\node at (0, 0) {\tiny $v_j$};
\end{scope}
    \end{scope}
\draw [blue] (-0.5, -0.8)--(-0.5, 0) .. controls +(0, 0.6) and +(0,0.6).. (0.5, 0)--(0.5, -0.8);
\begin{scope}[shift={(0.5, -0.3)}]
\draw [fill=white] (-0.3, -0.3) rectangle (0.3, 0.3);
\node at (0, 0) {\tiny $v_j^*$};
\end{scope}
\end{tikzpicture}}}
+
\lambda^{1/2}\vcenter{\hbox{\begin{tikzpicture}[scale=0.65]
    \begin{scope}[shift={(0,1.5)}]
    \draw [blue] (-0.5, 0.8)--(-0.5, 0) .. controls +(0, -0.6) and +(0,-0.6).. (0.5, 0)--(0.5, 0.8);    
\begin{scope}[shift={(0.5, 0.3)}]
\draw [fill=white] (-0.3, -0.3) rectangle (0.3, 0.3);
\node at (0, 0) {\tiny $v_j^*$};
\end{scope}
    \end{scope}
\draw [blue] (-0.5, -0.8)--(-0.5, 0) .. controls +(0, 0.6) and +(0,0.6).. (0.5, 0)--(0.5, -0.8);
\begin{scope}[shift={(0.5, -0.3)}]
\draw [fill=white] (-0.3, -0.3) rectangle (0.3, 0.3);
\node at (0, 0) {\tiny $v_j$};
\end{scope}
\end{tikzpicture}}} (: = \cR(H)_j).
\end{align*}
We see that $\cR(\widehat{\Delta}^{1/2}H \widehat{\Delta}^{1/2})=\cR(H)$.
Similarly, we have that $\cR(\widehat{\Delta}^{-1/2}H \widehat{\Delta}^{-1/2})=\cR(H)$.
This completes the proof of the lemma.
\end{proof}

The Laplacian part of $\widehat{\cL}$ is given by
\begin{align*}
\widehat{\mathcal{L}}_0
=& \widehat{\Delta}^{1/2} H \widehat{\Delta}^{1/2} \\
=& \sum_{j,k=1}^m
\frac{1}{2} e^{\beta(a_j+a_k)/4} (\gamma_{j,k}^++\gamma_{j,k}^-)
    \vcenter{\hbox{\begin{tikzpicture}[scale=0.65]
    \begin{scope}[shift={(0,1.5)}]
    \draw [blue] (-0.5, 0.8)--(-0.5, 0) .. controls +(0, -0.6) and +(0,-0.6).. (0.5, 0)--(0.5, 0.8);    
\begin{scope}[shift={(0.5, 0.3)}]
\draw [fill=white] (-0.3, -0.3) rectangle (0.3, 0.3);
\node at (0, 0) {\tiny $v_j$};
\end{scope}
    \end{scope}
\draw [blue] (-0.5, -0.8)--(-0.5, 0) .. controls +(0, 0.6) and +(0,0.6).. (0.5, 0)--(0.5, -0.8);
\begin{scope}[shift={(0.5, -0.3)}]
\draw [fill=white] (-0.3, -0.3) rectangle (0.3, 0.3);
\node at (0, 0) {\tiny $v_k^*$};
\end{scope}
\end{tikzpicture}}}
+\sum_{j,k=1}^m
\frac{1}{2} e^{-\beta(a_j+a_k)/4} (\gamma_{j,k}^+ +\gamma_{j,k}^-) \vcenter{\hbox{\begin{tikzpicture}[scale=0.65]
    \begin{scope}[shift={(0,1.5)}]
    \draw [blue] (-0.5, 0.8)--(-0.5, 0) .. controls +(0, -0.6) and +(0,-0.6).. (0.5, 0)--(0.5, 0.8);    
\begin{scope}[shift={(0.5, 0.3)}]
\draw [fill=white] (-0.3, -0.3) rectangle (0.3, 0.3);
\node at (0, 0) {\tiny $v_j^*$};
\end{scope}
    \end{scope}
\draw [blue] (-0.5, -0.8)--(-0.5, 0) .. controls +(0, 0.6) and +(0,0.6).. (0.5, 0)--(0.5, -0.8);
\begin{scope}[shift={(0.5, -0.3)}]
\draw [fill=white] (-0.3, -0.3) rectangle (0.3, 0.3);
\node at (0, 0) {\tiny $v_k$};
\end{scope}
\end{tikzpicture}}}\\
& +\frac{1}{2}\sum_{j,k=1}^m e^{\beta(a_j-a_k)/4} (\gamma_{j,k}^- - \gamma_{j,k}^+) \vcenter{\hbox{\begin{tikzpicture}[scale=0.65]
    \begin{scope}[shift={(0,1.5)}]
    \draw [blue] (-0.5, 0.8)--(-0.5, 0) .. controls +(0, -0.6) and +(0,-0.6).. (0.5, 0)--(0.5, 0.8);    
\begin{scope}[shift={(0.5, 0.3)}]
\draw [fill=white] (-0.3, -0.3) rectangle (0.3, 0.3);
\node at (0, 0) {\tiny $v_j$};
\end{scope}
    \end{scope}
\draw [blue] (-0.5, -0.8)--(-0.5, 0) .. controls +(0, 0.6) and +(0,0.6).. (0.5, 0)--(0.5, -0.8);
\begin{scope}[shift={(0.5, -0.3)}]
\draw [fill=white] (-0.3, -0.3) rectangle (0.3, 0.3);
\node at (0, 0) {\tiny $v_k$};
\end{scope}
\end{tikzpicture}}}
+
\frac{1}{2}\sum_{j,k=1}^m e^{\beta(a_k-a_j)/4}(\gamma_{j,k}^- - \gamma_{j,k}^+)  \vcenter{\hbox{\begin{tikzpicture}[scale=0.65]
    \begin{scope}[shift={(0,1.5)}]
    \draw [blue] (-0.5, 0.8)--(-0.5, 0) .. controls +(0, -0.6) and +(0,-0.6).. (0.5, 0)--(0.5, 0.8);    
\begin{scope}[shift={(0.5, 0.3)}]
\draw [fill=white] (-0.3, -0.3) rectangle (0.3, 0.3);
\node at (0, 0) {\tiny $v_j^*$};
\end{scope}
    \end{scope}
\draw [blue] (-0.5, -0.8)--(-0.5, 0) .. controls +(0, 0.6) and +(0,0.6).. (0.5, 0)--(0.5, -0.8);
\begin{scope}[shift={(0.5, -0.3)}]
\draw [fill=white] (-0.3, -0.3) rectangle (0.3, 0.3);
\node at (0, 0) {\tiny $v_k^*$};
\end{scope}
\end{tikzpicture}}}.
\end{align*}
By taking the cap from the left hand side, we have that 
\begin{align*}
1*\widehat{\cL}_0
=& \frac{1}{2}\sum_{j,k=1}^m  e^{\beta( a_j+a_k)/4}(\gamma_{j,k}^++\gamma_{j,k}^-) v_k^*v_j 
+\frac{1}{2}\sum_{j,k=1}^m  e^{-\beta( a_j+a_k)/4}(\gamma_{j,k}^++\gamma_{j,k}^-) v_kv_j^* \\
& +\frac{1}{2}\sum_{j,k=1}^m  e^{\beta( a_j-a_k)/4}(\gamma_{j,k}^--\gamma_{j,k}^+) v_kv_j 
+\frac{1}{2}\sum_{j,k=1}^m  e^{\beta( a_k-a_j)/4}(\gamma_{j,k}^--\gamma_{j,k}^+) v_k^*v_j^* \\
&(: =2 \mathbf{y}). 
\end{align*}
We have that 
\begin{align*}
\mathbf{y} v_s -v_s \mathbf{y}
=& -\frac{1}{2}\sum_{j=1}^m  e^{\beta( a_j+a_s)/4}(\gamma_{j,s}^++\gamma_{j,s}^-) v_j 
+\frac{1}{2}\sum_{j=1}^m  e^{-\beta( a_s+a_j)/4}(\gamma_{s,j}^++\gamma_{s,j}^-) v_j \\
& -\frac{1}{2}\sum_{j=1}^m  e^{\beta( a_j-a_s)/4}(\gamma_{j,s}^--\gamma_{j,s}^+) v_j^*
+\frac{1}{2}\sum_{j=1}^m  e^{\beta( a_s-a_j)/4}(\gamma_{s,j}^--\gamma_{s,j}^+) v_j^*.
\end{align*}
Let $\cL(x)=\mathbf{y}x + x\mathbf{y}-x*\widehat{\cL}_0$ for $x\in \cM$.

\begin{lemma}
The semigroup $\{\Phi_t\}_{t\geq 0}$ associated to $\cL$ is KMS symmetric with $\cL_w=0$.
\end{lemma}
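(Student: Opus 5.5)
We need to check two things: that $\cL$ has the KMS symmetric structure of the Definition in Section 5, namely $\overline{\widehat{\cL}}=\overline{\widehat{\Delta}}\,\widehat{\cL}\,\overline{\widehat{\Delta}}$ together with $\cR(\widehat{\cL})\overline{\widehat{\Delta}}=\cR(\widehat{\cL})\widehat{\Delta}^{-1}$; and that the antisymmetric part $\cL_w$ vanishes. We also use that $\Phi_t$ is equilibrium. The plan is to work entirely with the decomposition
\begin{align*}
\widehat{\cL}=(\text{cup with }\mathbf{y}\text{ on top})+(\text{cup with }\mathbf{y}\text{ on bottom})-\widehat{\cL}_0 ,
\end{align*}
which holds because $\cL(x)=\mathbf{y}x+x\mathbf{y}-x*\widehat{\cL}_0$. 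We then treat the $e_2$-part and the $(1-e_2)$-part separately.

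\textbf{The vanishing of $\cL_w$.} This comes first and is the easiest. By construction, $\widetilde{\mathbf{y}}=\mathbf{y}$ exactly when $\Im\bE_{\cM}(\fF^{-1}(\widehat{\cL}_1))=0$. That in turn amounts to $\mathbf{y}=\mathbf{y}^*$. We check self-adjointness of $2\mathbf{y}=1*\widehat{\cL}_0$ directly from the displayed formula. The matrices $\gamma^{\pm}$ are real and symmetric, so the coefficient of $v_k^*v_j$ is symmetric in $(j,k)$, and its adjoint $v_j^*v_k$ carries the same coefficient; the same holds for the $v_kv_j^*$ terms. The terms $v_kv_j$ and $v_k^*v_j^*$ are exchanged by the adjoint: $(v_kv_j)^*=v_j^*v_k^*$, and the coefficients $e^{\beta(a_j-a_k)/4}$ and $e^{\beta(a_k-a_j)/4}$ match after relabelling. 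Alternatively, $1*\widehat{\cL}_0$ is self-adjoint as soon as $\widehat{\cL}_0\ge 0$ and self-dual up to $\widehat{\Delta}$. Positivity of $\widehat{\cL}_0=\widehat{\Delta}^{1/2}H\widehat{\Delta}^{1/2}$ follows from $H\ge0$, since $\gamma^{\pm}$ are positive definite.

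\textbf{The KMS relation on the $(1-e_2)$-part.} Next we verify $\overline{\widehat{\cL}_0}=\overline{\widehat{\Delta}}\,\widehat{\cL}_0\,\overline{\widehat{\Delta}}$. Two facts are already available: $\overline{\widehat{\Delta}}=\widehat{\Delta}^{-1}$, by the direct computation above, and $\overline{H}=H$. Since the contragredient is an anti-multiplicative $*$-map (rotation by $\pi$), we get
\begin{align*}
\overline{\widehat{\cL}_0}=\overline{\widehat{\Delta}^{1/2}}\,\overline{H}\,\overline{\widehat{\Delta}^{1/2}}=\widehat{\Delta}^{-1/2}H\widehat{\Delta}^{-1/2}=\widehat{\Delta}^{-1}\widehat{\cL}_0\widehat{\Delta}^{-1}=\overline{\widehat{\Delta}}\,\widehat{\cL}_0\,\overline{\widehat{\Delta}}.
\end{align*}
The range condition follows from the preceding lemma. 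That lemma gives $\cR(H)=\cR(\widehat{\Delta}^{\pm1/2}H\widehat{\Delta}^{\pm1/2})$, so $\cR(\widehat{\cL}_0)=\cR(\overline{\widehat{\cL}_0})=\cR(H)$. Moreover $\widehat{\Delta}$ commutes with $\cR(H)$: $\cR(H)$ is the sum of the blocks $\cR(H)_j$, and $\widehat{\Delta}$ acts diagonally on those blocks through the $v_j$-, $v_j^*$-cups. Hence $\cR(\widehat{\cL})\overline{\widehat{\Delta}}=\cR(\widehat{\cL})\widehat{\Delta}^{-1}$.

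\textbf{The $e_2$-components (main obstacle).} The hardest step is the two $\mathbf{y}$-cup terms. Here we must show that the contragredient of the cup-with-$\mathbf{y}$ terms equals the same terms conjugated by $\overline{\widehat{\Delta}}$. Since $\widehat{\Delta}e_2=e_2$, this reduces to the condition that these terms are invariant under rotation by $\pi$. Rotation sends the top $\mathbf{y}$-cup to the bottom one with $\mathbf{y}$ replaced by $\overline{\mathbf{y}}$-type data, namely $\mathbf{y}^*$ placed appropriately. So the condition becomes $\mathbf{y}=\mathbf{y}^*$, which we established in the first step. We also confirm the equilibrium condition $\cL(1)=0$: since $1*\widehat{\cL}_0=2\mathbf{y}$, we have $\cL(1)=2\mathbf{y}-2\mathbf{y}=0$, and the dual condition follows in the same way from $\overline{\widehat{\cL}_0}$. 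The bookkeeping of the scalar factors $\lambda^{\pm1/2}$ in these cup terms is where errors are most likely. I would settle it by comparing the resulting map against the formula $\cL=\cL_a+\cL_w$ with $\cL_w=0$, rather than by tracking pictures.
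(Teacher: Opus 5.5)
Your handling of $\cL_w$ and of the $(1-e_2)$-block is fine. The range condition is even automatic, since $\overline{\widehat{\Delta}}=\widehat{\Delta}^{-1}$ holds globally.

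The gap is in the step you single out as the main obstacle. The two $\mathbf{y}$-cup terms have the form $|\mathbf{y}\rangle\langle 1|$ and $|1\rangle\langle \mathbf{y}|$, and only one half of each is the plain cup. So $\widehat{\Delta}e_2=e_2$ (equivalently $\overline{\widehat{\Delta}}e_2=e_2=e_2\overline{\widehat{\Delta}}$) lets $\overline{\widehat{\Delta}}$ pass through that half only. Conjugation by $\overline{\widehat{\Delta}}$ still acts on the other half, producing the cup carrying $\overline{\widehat{\Delta}}$ applied to $\mathbf{y}$. Rotation by $\pi$ sends the pair to the same pair with $\mathbf{y}$ replaced by $\mathbf{y}^*$. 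Hence matching the $e_2$-components of $\overline{\widehat{\cL}}=\overline{\widehat{\Delta}}\widehat{\cL}\overline{\widehat{\Delta}}$ requires $\overline{\widehat{\Delta}}\mathbf{y}=\mathbf{y}^*$ as vectors in $L^2(\cM)$. Given $\mathbf{y}=\mathbf{y}^*$, this means $\widehat{\Delta}^{-1}$ must fix $\mathbf{y}$.

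This does not follow from $\widehat{\Delta}e_2=e_2$. The operator $\widehat{\Delta}^{-1}$ scales the $v_j$- and $v_j^*$-directions by $e^{\mp\beta a_j/2}\neq 1$, so a self-adjoint $\mathbf{y}$ with a component along $v_1+v_1^*$ would violate it. The missing idea, which is exactly what the paper checks, is that $\mathbf{y}$ is orthogonal to all $v_j,v_j^*$, i.e. $\tau(\mathbf{y}v_j)=\tau(\mathbf{y}v_j^*)=0$. This holds by parity. $\mathbf{y}$ is a combination of products of two $v$'s, hence even, while $v_j=\frac{1}{\sqrt{2}}w(Q_j+iP_j)$ is odd. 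So $\mathbf{y}v_j$ and $\mathbf{y}v_j^*$ are odd and have trace zero.

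Two further points. First, the paper does not stop at the bimodule condition. It also shows $\widehat{\Delta}\cR(\widehat{\cL}_0)=\widehat{\Delta}_{\rho,1/2}\cR(\widehat{\cL}_0)$, where $\rho=\rho_0/\tau(\rho_0)$ and $\rho_0=\prod_{j}\bigl(\tfrac12 e^{-\beta a_j/2}v_j^*v_j+\tfrac12 e^{\beta a_j/2}v_jv_j^*\bigr)$. This uses $\rho_0v_j\rho_0^{-1}=e^{\beta a_j}v_j$, as in Lemma~\ref{lem:fermiongns}, and it is what makes the semigroup KMS symmetric with respect to an actual state. Your proposal never produces this state.

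Second, if by the ``dual condition'' you mean $\cL^*(1)=0$, it is false here and not needed. One has $\cL^*(1)=1*\widehat{\cL}_0-1*\overline{\widehat{\cL}_0}$. Since $\overline{\widehat{\cL}_0}=\widehat{\Delta}^{-1/2}H\widehat{\Delta}^{-1/2}$, the element $1*\overline{\widehat{\cL}_0}$ is the displayed formula for $1*\widehat{\cL}_0$ with $\beta$ replaced by $-\beta$. Its diagonal terms $\tfrac12 e^{\pm\beta a_j/2}v_j^*v_j+\tfrac12 e^{\mp\beta a_j/2}v_jv_j^*$ therefore differ. This is consistent with the invariant state being $\rho$ rather than $\tau$.
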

\begin{proof}
To see $\{\Phi_t\}_{t\geq 0}$ is bimodule KMS symmetric with respect to $\widehat{\Delta}$ and $\cL_w=0$ , it suffices to check that 
$\vcenter{\hbox{\begin{tikzpicture}[scale=0.65]
\draw [blue] (-0.5, -1.5)--(-0.5, 0) .. controls +(0, 0.6) and +(0,0.6).. (0.5, 0)--(0.5, -1.5);
\begin{scope}[shift={(0.5, -0.3)}]
\draw [fill=white] (-0.3, -0.3) rectangle (0.3, 0.3);
\node at (0, 0) {\tiny $\mathbf{y}$};
\end{scope}
\begin{scope}[shift={(0, -1)}]
\draw [fill=white] (-0.7, -0.3) rectangle (0.7, 0.3);
\node at (0, 0) {\tiny $\overline{\widehat{\Delta}}$};    
\end{scope}
\end{tikzpicture}}}= \vcenter{\hbox{\begin{tikzpicture}[scale=0.65]
\draw [blue] (-0.5, -0.8)--(-0.5, 0) .. controls +(0, 0.6) and +(0,0.6).. (0.5, 0)--(0.5, -0.8);
\begin{scope}[shift={(0.5, -0.3)}]
\draw [fill=white] (-0.3, -0.3) rectangle (0.3, 0.3);
\node at (0, 0) {\tiny $\mathbf{y}$};
\end{scope}
\end{tikzpicture}}}.$
This follows from $\tau(\mathbf{y}v_j)=0$ for all $j=1, \ldots, m$.

Let $\displaystyle \rho_0=\prod_{j=1}^m \left(  \frac{1}{2} e^{-\beta a_j/2 } v_j^*v_j +\frac{1}{2} e^{\beta a_j/2}v_jv_j^*\right)$ and $\displaystyle \rho=\frac{\rho_0}{\tau(\rho_0)}$.
By using a similar argument in Lemma \ref{lem:fermiongns}, we have that $\widehat{\Delta}\cR(\widehat{\cL}_0)= \widehat{\Delta}_{\rho,1/2} \cR(\widehat{\cL}_0)$.
This shows that $\{\Phi_t\}_{t\geq 0}$ is a KMS symmetric quantum Markov semigroup.

\end{proof}

Suppose that $A\in M_n(\bC)$.
We denote by $\cR^{\diamond}(A)\in M_n(\bC)$ the range of $A$ with respect to the Schur product, i.e. the $(j,k)$-entry of $\cR^{\diamond}(A)$ is 
\begin{align*}
\big(\cR^{\diamond}(A)\big)_{j,k} =\left\{
\right.,
\end{align*}
where $A_{j,k}$ is the $(j,k)$-entry of $A$.

\begin{proposition}
Suppose that the matrix 
\begin{align*}
 \cR^{\diamond}\left((\gamma_{j,k}^+)_{j,k=1}^m\right)+ \cR^{\diamond}\left((\gamma_{j,k}^-)_{j,k=1}^m\right)
 \end{align*}
is an irreducible matrix.
Then the algebra $\cA$ generated by $\widehat{\Delta}$ and $H$ is isomorphic to $M_{2m}(\bC)$.
\end{proposition}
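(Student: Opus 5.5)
The plan is to move the question into the $2m$-dimensional subspace of $\cM'\cap\cM_2$ spanned by the rank-one elements built from $\{v_j, v_j^*\}_{j=1}^m$, and to show that $\widehat{\Delta}$ and $H$ generate the full matrix algebra on it. Since $\tau(v_jv_k^*)=\delta_{j,k}$ and $\tau(v_jv_k)=0$, the family $\{\lambda^{1/4}v_j,\lambda^{1/4}v_j^*\}$ is orthonormal for the inner product read off from the two-caps picture. The elements
\[
E_{x,y}=\lambda^{1/2}\,(\text{cup labelled } x \text{ over cap labelled } y^*),\qquad x,y\in\{v_1,\dots,v_m,v_1^*,\dots,v_m^*\},
\]
then form a system of matrix units for a copy of $M_{2m}(\bC)$ with unit $\cR(H)$, by the previous lemma. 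The expansion of $\widehat{\cL}_0$ and the relations $iwQ_j=-\tfrac{i}{\sqrt2}\cdot(\ldots)$ show that $H$ and $\widehat{\Delta}\cR(H)$ both lie in this copy. It is therefore enough to prove that $\cA\cR(H)$, meaning the algebra generated by $\widehat{\Delta}\cR(H)$ and $H$, is all of it.

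First, $\widehat{\Delta}\cR(H)=\sum_j e^{\beta a_j/2}E_{v_j,v_j}+e^{-\beta a_j/2}E_{v_j^*,v_j^*}$. The $a_j$ are distinct and positive, so the $2m$ numbers $e^{\pm\beta a_j/2}$ are pairwise distinct, assuming $\beta\neq 0$. By Lagrange interpolation the algebra generated by $\widehat{\Delta}\cR(H)$ then contains every diagonal unit $E_{v_j,v_j}$ and $E_{v_j^*,v_j^*}$.

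Next, I would compress $H$ by these diagonal units. Writing $iwQ_j$ and $iwP_j$ in terms of $v_j,v_j^*$ (both are multiples of $v_j\pm v_j^*$), one gets
\[
E_{v_j,v_j}HE_{v_k,v_k}\propto(\gamma^+_{j,k}+\gamma^-_{j,k})E_{v_j,v_k},\qquad E_{v_j,v_j}HE_{v_k^*,v_k^*}\propto(\gamma^-_{j,k}-\gamma^+_{j,k})E_{v_j,v_k^*},
\]
with the same coefficients as in the displayed formula for $\widehat{\cL}_0$ (without the $\widehat{\Delta}$ weights), and the analogous formulas for the remaining blocks. So $\cA\cR(H)$ contains every off-diagonal unit whose coefficient is nonzero.

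Finally, consider the graph on the $2m$ vertices whose edges are these nonzero coefficients. A subalgebra that contains all diagonal units together with the units along the edges of a connected graph contains all of $M_{2m}(\bC)$, because products of units along paths give every $E_{x,y}$.

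The main obstacle is to show that this graph is connected. The diagonal pair $v_j,v_j^*$ is always joined, since $\gamma^-_{j,j}\neq\gamma^+_{j,j}$. The edges between different indices $j\neq k$ arise exactly where $\gamma^+_{j,k}\pm\gamma^-_{j,k}\neq 0$. One of these is nonzero whenever $\gamma^+_{j,k}\neq0$ or $\gamma^-_{j,k}\neq0$, which is exactly the support of $\cR^{\diamond}(\gamma^+)+\cR^{\diamond}(\gamma^-)$. Irreducibility of that matrix makes the index graph on $\{1,\dots,m\}$ connected. Together with the $v_j$–$v_j^*$ edges, this connects the full $2m$-vertex graph. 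Some care is needed here with cancellations: if $\gamma^+_{j,k}=\gamma^-_{j,k}$, the edge still appears in the $v$–$v$ block through the sum, and if $\gamma^+_{j,k}=-\gamma^-_{j,k}$ it appears in the $v$–$v^*$ block through the difference. Hence $\cA\cong M_{2m}(\bC)$.
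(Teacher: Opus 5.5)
Your argument is correct and is essentially the paper's: distinctness of the $a_j$ puts the spectral projections of $\widehat{\Delta}\cR(H)$ (onto $v_j$ and $v_j^*$) into $\cA$. Compressing $H$ and using $\gamma^+_{j,j}\neq\gamma^-_{j,j}$ then links $v_j$ with $v_j^*$, and irreducibility of $\cR^{\diamond}(\gamma^+)+\cR^{\diamond}(\gamma^-)$ links the different indices. As in the paper, this needs $\beta\neq 0$ and the $2m$ numbers $e^{\pm\beta a_j/2}$ to be pairwise distinct; your positivity assumption on the $a_j$ secures the latter, although the KMS subsection only states that the $a_j$ are distinct.

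The only real difference is the inter-block step. The paper first extracts the $v_j$--$v_j^*$ unit from $2\cR(H)_jH(\cdot)$ and then compresses $H$ by projections built from $iwQ_j$ and $iwP_j$, so the coefficients $\gamma^{+}_{j,k}$ and $\gamma^{-}_{j,k}$ appear directly. You stay in the $\{v_j,v_j^*\}$ basis, which is why you need your (correct) observation that $\gamma^+_{j,k}+\gamma^-_{j,k}$ and $\gamma^-_{j,k}-\gamma^+_{j,k}$ vanish together only when $\gamma^{+}_{j,k}=\gamma^{-}_{j,k}=0$.
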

\begin{proof}
Note that $a_j$ are distinct.
We obtain that $\lambda^{1/2}\vcenter{\hbox{%
}}$ are in $\cA$ for all $j=1, \ldots, m$. 
Hence the projections $\cR(H)_j\in \cA$.

Note that 
\begin{align*}
& 2\cR(H)_j H \left(\lambda^{1/2}\vcenter{\hbox{%
}}.
\end{align*}
By the assumption that $\gamma_{j,j}^+\neq \gamma_{j,j}^-$ for all $j=1, \ldots, m$, we see that 
\begin{align}\label{eq:projection1}
\lambda^{1/2} \vcenter{\hbox{%
}} \in \cA.
\end{align}
Now we have that $\lambda^{1/2} \vcenter{\hbox{%
}}\in \cA$ for any $\gamma_{j,k}^+\neq 0$.
By the result \eqref{eq:projection1}, we have that $\lambda^{1/2} \vcenter{\hbox{%
}}\in \cA$ for any $\gamma_{j,k}^+\neq 0$.
By the assumption that $ \cR^{\diamond}\left((\gamma_{j,k}^+)_{j,k=1}^m\right)+ \cR^{\diamond}\left((\gamma_{j,k}^-)_{j,k=1}^m\right)$ is irreducible, we obtain that $\lambda^{1/2} \vcenter{\hbox{%
}}\in \cA$ for all $j,k=1, \ldots,m$.
This implies that $\lambda^{1/2} \vcenter{\hbox{%
}}\in \cA$ for all $j,k=1, \ldots,m$.
\end{proof}

\begin{remark}
If $ \cR^{\diamond}\left((\gamma_{j,k}^+)_{j,k=1}^m\right)+ \cR^{\diamond}\left((\gamma_{j,k}^-)_{j,k=1}^m\right)$ is not irreducible, then it is a block matrix since it is hermitian.
In this case, the algebra $\cA$ is a multi-matrix algebra.
\end{remark}

The extension of $\cL_0$ is given by the following element $\widehat{\cJ}_0$.
\begin{align*}
\widehat{\cJ}_0
=&   \sum_{j,k=1}^m
\frac{1}{2} e^{\beta(a_j+a_k)/4} (\gamma_{j,k}^++\gamma_{j,k}^-)
\vcenter{\hbox{%
}}.
\end{align*}
It is routine to check that the semigroup arising from $\widehat{\cJ}_0$ is an extension of the semigroup $\{\Phi_t\}_{t\geq 0}$.

Now we shall check the intertwining property for the semigroups.
\begin{align*}
& \lambda^{-1/2} \vcenter{\hbox{
%
}}.
\end{align*}
By taking differences, we obtain that 
\begin{align*}
&  \lambda^{-1/2} \vcenter{\hbox{
%
}}\right).
\end{align*}
This shows the semigroup satisfies the intertwining property.
We do not have the modified logarithmic Sobolev inequality in general. 
But when $\left|\left( \cosh\frac{\beta (a_j-a_s)}{4} \right) (\gamma_{j,s}^--\gamma_{j,s}^+)\right |$ for $j,s=1, \ldots, m$ and $\left( \cosh\frac{\beta (a_j+a_s)}{4} \right) (\gamma_{j,s}^-+\gamma_{j,s}^+) $ for $j\neq s$ are small enough, we see that a modified logarithmic Sobolev inequality holds.

\bibliographystyle{abbrv}
\bibliography{lsi}

\end{document}